\documentclass[twoside]{article}  

\usepackage{microtype}
\usepackage[margin=2cm,bottom=2.5cm]{geometry} 
\usepackage{amsmath,amsthm,amssymb,latexsym}
\usepackage[v2,tips]{xy}

\newcommand{\vnten}{\overline\otimes}
\newcommand{\aone}{\Box}
\newcommand{\atwo}{\Diamond}
\newcommand{\proten}{{\widehat{\otimes}}}
\newcommand{\hten}{\otimes^h}
\newcommand{\ehten}{\otimes^{eh}}
\newcommand{\nucten}{\otimes^{nuc}}
\newcommand{\lin}{\operatorname{lin}}
\newcommand{\mf}[1]{\mathfrak{#1}}
\newcommand{\mc}[1]{\mathcal{#1}}
\newcommand{\ip}[2]{{\langle {#1} , {#2} \rangle}}
\newcommand{\ipp}[2]{{\langle\langle {#1} , {#2} \rangle\rangle}}
\newcommand{\id}{\operatorname{id}}
\newcommand{\wap}{\operatorname{WAP}}

\newcommand{\module}{\operatorname{mod}}
\newcommand{\G}{{\mathbb G}}
\newcommand{\tfac}{\Gamma^2_c}

\theoremstyle{plain}
\newtheorem{proposition}{Proposition}[section]
\newtheorem{theorem}[proposition]{Theorem}
\newtheorem{corollary}[proposition]{Corollary}
\newtheorem{lemma}[proposition]{Lemma}
\theoremstyle{definition}

\newtheorem{example}[proposition]{Example}

\begin{document}

\large
\title{Multipliers, Self-Induced and Dual Banach Algebras}
\author{Matthew Daws}
\maketitle

\begin{abstract}
In the first part of the paper, we present a short survey of the theory of
multipliers, or double centralisers, of Banach algebras and completely
contractive Banach algebras.  Our approach is very algebraic: this is a
deliberate attempt to separate essentially algebraic arguments from topological
arguments.  We concentrate upon the problem of how to extend module actions,
and homomorphisms, from algebras to multiplier algebras.  We then consider the
special cases when we have a bounded approximate identity, and when our algebra
is self-induced.  In the second part of the paper, we mainly concentrate upon
dual Banach algebras.  We provide a simple criterion for when a multiplier
algebra is a dual Banach algebra.  This is applied to show that the multiplier
algebra of the convolution algebra of a locally compact quantum group is always
a dual Banach algebra.  We also study this problem within the framework of
abstract Pontryagin duality, and show that we construct the same
weak$^*$-topology.  We explore the notion of a Hopf convolution algebra, and
show that in many cases, the use of the extended Haagerup tensor product can be
replaced by a multiplier algebra.

Subject classification: 43A20, 43A30, 46H05, 46H25, 46L07, 46L89 (Primary);
16T05, 43A22, 81R50 (Secondary).

Keywords: Multiplier, Double centraliser, Fourier algebra, locally compact quantum
group, dual Banach algebra, Hopf convolution algebra.
\end{abstract}

\tableofcontents

\section{Introduction}

Multipliers are a useful way of embedding a non-unital algebra into a unital
algebra: a problem which occurs often in algebraic analysis.  The theory has reached
maturity when applied to C$^*$-algebras (see, for example, \cite[Chapter~2]{WO})
where it is best studied in the context of Hilbert C$^*$-modules, \cite{lance}.
Indeed, one can also study ``unbounded operators'' for C$^*$-algebras, \cite{woro},
which form a vital tool in the study of quantum groups.
For Banach algebras with a bounded approximate identity, much of the theory
carries over (see \cite[Section~1.d]{johnmem} or \cite[Theorem~2.9.49]{dales})
although we remark that there seems to be no parallel to the unbounded theory.

This paper starts with a survey of multipliers; we start with some generality,
working with multipliers of modules, and not just algebras.  This material is
surely well-known to experts, but we are not aware of any particularly definitive
source.  For example, in \cite{ng}, Ng uses similar ideas (but for C$^*$-algebras,
working in the category of operator modules) motived by the study of cohomology
theories for Hopf operator algebras (that is, loosely speaking, quantum groups).
However, most of the proofs are left in an unpublished manuscript.  The particular
aspects of the theory which we develop is somewhat motivated by Ng's presentation.

We quickly turn to discussing Banach algebras, but we shall not (as is usually the
case) require a bounded approximate identity at this stage.  Instead, we proceed
in a very algebraic manner: the key point is that under some mild assumptions
on the ability to extend module actions, most of the theory can be developed
without worrying about \emph{how} such an extension can be found.

In Section~\ref{sec::bai}, we do consider the classical case of when we have
a bounded approximate identity.  There are two main ideas here: the use of
Cohen's factorisation theorem, and the use of Arens products.  Again, this
section is mostly a survey, although we, as usual, proceed with more generality
than usual.

In Section~\ref{dba_section}, we turn our attention to dual Banach algebras:
Banach algebras which are dual spaces, such that the multiplication is
separately weak$^*$-continuous.  Here we apply the idea of using the Arens
products to give a very short, algebraic proof of \cite[Theorem~5.6]{is}.

In Section~\ref{selfind}, we look at \emph{self-induced} algebras, see \cite{gronbaek},
which we argue form a larger, natural class of algebras where multipliers are
well-behaved.  We show how various extension problems for multipliers can
be solved in the self-induced case, but we do not give a complete survey.

Many algebras which arise in abstract harmonic analysis, for example the
Fourier algebra $A(G)$, are best studied in the category of Operator Spaces.
Rather than develop the theory twice, once for bounded maps, and then again
for completely bounded maps, we try to take a ``categorical'' approach
throughout, so that we can develop both theories in parallel.  For example,
we introduce the Arens products in Section~\ref{sec::bai} in an unusual way,
making more explicit links with the projective tensor product.  This seems
unnecessary in the Banach algebra case, but it does mean that our proofs work
\emph{mutatis mutandis} in the Operator space setting.  In Section~\ref{sec::ccba}
we quickly check that everything we have so far developed does work for
completely contractive Banach algebras.

We then look at the Fourier algebra in more depth: in particular, we show
that $A(G)$ is always self-induced, as a completely contractive Banach algebra.
Of course, $A(G)$ has a bounded approximate identity only when $G$ is amenable.
This provides some motivation for looking at the larger class of self-induced
algebras.

There has been considerable interest in multipliers as applied to abstract
(quantum) harmonic analysis, see \cite{nh, jnr, is, spronk, neufang} for example.
Part of our motivation for writing this paper is to argue that a slightly more
systematic approach to multipliers allows one to separate out the abstract Banach
algebra arguments from specific arguments, say from abstract harmonic analysis.
We take up this study seriously in Section~\ref{mult_dual_sec}, where we provide
a simple criterion (and construction) for showing that the multiplier algebra
of a Banach algebra is actually a dual Banach algebra (something one would not
actually expect to be true by analogy with the C$^*$-algebra setting).
We quickly check that our abstract result agrees with more concrete constructions
for $L^1(G)$ and $A(G)$.  We show that dual Banach algebras always have
multiplier algebras which are dual, and we make links with the case when we
have a bounded approximate identity: here we can make our construction more
concrete.

In Section~\ref{lcqg} we apply these ideas to the study of the convolution algebra of
a locally compact quantum group $\G$.  Indeed, we show that $M(L^1(\G))$ and
$M_{cb}(L^1(\G))$ are always dual Banach algebras: we need remarkably little
theory to show this!  We check that, again, our work carries over to the
Operator Space setting with little effort.  Multipliers and dual space
structures were consider by Kraus and Ruan in \cite{KR} for Kac algebras,
using the duality theory of Kac algebras.  We generalise (some of) their work
to locally compact quantum groups, and show that the resulting dual Banach
algebra structure on $M_{cb}(L^1(\G))$ agrees with that given by
our abstract construction.

In the final section, we look at the notion of a Hopf convolution algebra,
\cite{ERhc}.  The operator algebra approach to quantum groups usually starts with
a C$^*$-algebra or a von Neumann algebra which carries a co-product, hence turning the
dual or predual into a Banach algebra, which we term a convolution algebra.
However, in abstract harmonic analysis, one usually privileges the convolution
algebra as being the object of study.  Hence, can we give the convolution algebra
a coproduct?  By analogy with the C$^*$-algebra setup, we would expect the
coproduct to map into a multiplier algebra.  We explore this possibility,
and show that in many cases (including the Fourier algebra, and discrete and compact
quantum groups) this is possible.
We also develop an abstract theory of corepresentations in this setting.

A final word on notation.  We generally follow \cite{dales} for matters
related to Banach algebras, but we write $E^*$ for the dual space of a Banach
space $E$.  We write $\kappa_E$ for the canonical map $E\rightarrow E^{**}$ from
a Banach (or Operator) space to its bidual.

\section{Multipliers}\label{mults}

In this section, we shall present some general background theory about multipliers.
We shall develop the theory in a rather general context, namely for modules and
not just algebras.  This material, as applied to algebras, is well-known, but to
our knowledge, has not been systematically presented in this general context.
As such, we make no particular claim to originality, and we shall try to give
references, and sometimes just sketch proofs, where appropriate.  We take a little
care to present the material in a manner which clearly holds both for Banach spaces,
and for Operator spaces.

Multipliers seem to go back to work of Hochschild, Dauns \cite{dauns} and Johnson \cite{johnson}.
See \cite[Section~1.2]{palmer} for more historical remarks.
For C$^*$-algebras, all the standard texts cover
multipliers; both \cite{WO} and \cite[Section~II.7.3]{blackadar} are very readable.
An approach using double centralisers (see below) is taken by \cite{murphy}
while \cite{tak1} follows a bidual approach (compare with Theorem~\ref{arens} below),
and \cite{pedersen} explains the links between these two approaches.

Let $\mc A$ be a (complex) algebra, and let $E$ be an $\mc A$-bimodule.  We say that $E$
is \emph{faithful} if, for $x\in E$, we have that $a\cdot x\cdot b=0$ for all
$a,b\in\mc A$, then $x=0$.  We remark that, for C$^*$-algebras, the term
\emph{non-degenerate} is commonly used for this property.
Notice that $E_0=\{ x\in E : a\cdot x\cdot b=0 \ (a,b\in\mc A) \}$
is a submodule of $E$, and that $E/E_0$ is faithful.  Unless
otherwise stated, we shall always assume that modules are faithful, and, furthermore,
that $\mc A$ is faithful as module over itself.

A \emph{multiplier} of $E$ is a pair $(L,R)$ of maps $\mc A\rightarrow E$ such
that $a\cdot L(b) = R(a)\cdot b$ for $a,b\in\mc A$.  This notion (at least when $E=\mc A$)
is often called a \emph{centraliser} in the literature (see \cite{johnson} or \cite{jnr}).
We write $M(E)$ or $M_{\mc A}(E)$ for the collection of multipliers of $E$.
Each $x\in E$ induces a multiplier $(L_x,R_x)$ given by
\[ L_x(a) = x\cdot a, \quad R_x(a) = a\cdot x \qquad (a\in\mc A). \]
As $E$ is faithful, this gives an inclusion $E\rightarrow M(E)$.

We shall, occasionally, use the following notions.  Write $M_l(E)$ for the
collection of \emph{left multipliers} of $E$, that is, maps $L:\mc A \rightarrow E$
with $L(ab) = L(a)\cdot b$ for $a,b\in\mc A$.  Similarly, we define $M_r(E)$,
the collection of \emph{right multipliers}, those maps $R:\mc A\rightarrow E$
with $R(ab)=a\cdot R(b)$ for $a,b\in\mc A$.

\begin{lemma}
Let $(L,R)$ be a multiplier of $E$.  Then $L$ and $R$ are linear, $L$ is
a right module homomorphism (that is, a left multiplier), and $R$ is a
left module homomorphism (that is, a right multiplier).
When $\mc A$ is unital, $M(E) \cong E$.
\end{lemma}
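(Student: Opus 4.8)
My proof rests entirely on the two-sided faithfulness of $E$: for $z \in E$, the only route to $z = 0$ is to show $a \cdot z \cdot b = 0$ for all $a, b \in \mc A$. The plan is to treat all four assertions uniformly. Starting from the defining identity $a \cdot L(b) = R(a) \cdot b$ and using only bilinearity and associativity of the bimodule actions, I manufacture, for each claim, an element $z$ that is annihilated on one side by every element of $\mc A$; multiplying by an arbitrary element on the other side and invoking faithfulness then forces $z = 0$.

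For linearity of $L$, I would fix $b, b' \in \mc A$, $\lambda \in \mathbb{C}$ and compute
\[
a \cdot \bigl( L(b + \lambda b') - L(b) - \lambda L(b') \bigr) = R(a)\cdot(b+\lambda b') - R(a)\cdot b - \lambda\, R(a)\cdot b' = 0
\]
for every $a$, so right-multiplication and faithfulness give the result; linearity of $R$ is symmetric. The module-homomorphism properties run identically: for $L(ab) = L(a)\cdot b$ one has, for all $c$,
\[
c \cdot L(ab) = R(c)\cdot(ab) = (R(c)\cdot a)\cdot b = (c\cdot L(a))\cdot b = c\cdot(L(a)\cdot b),
\]
and the symmetric computation $R(ab)\cdot c = (ab)\cdot L(c) = a\cdot(R(b)\cdot c)$ yields $R(ab) = a\cdot R(b)$.

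For the unital case, the inclusion $E \to M(E)$, $x \mapsto (L_x, R_x)$, is already injective by faithfulness, so I only need surjectivity. Given $(L,R)$ I set $x = L(1)$; since $L$ is a left multiplier, $L(a) = L(1\cdot a) = L(1)\cdot a = x\cdot a = L_x(a)$, while the multiplier identity at $b=1$ gives $R(a)\cdot 1 = a\cdot L(1) = a\cdot x$. The one step that needs care --- and the point I would flag as the main obstacle --- is passing from $R(a)\cdot 1$ to $R(a)$: this requires $E$ to be a \emph{unital} bimodule, which is not postulated. It does, however, follow from faithfulness, since for any $x$ the element $1\cdot x - x$ satisfies $a\cdot(1\cdot x - x) = (a1)\cdot x - a\cdot x = 0$ for all $a$ (and symmetrically on the right), hence vanishes. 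With unitality established, $R = R_x$, so the inclusion is onto and $M(E)\cong E$.
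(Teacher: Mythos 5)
Your proof is correct, and for linearity and the two module-homomorphism identities it is the same argument as the paper's: derive $a\cdot z=0$ for all $a$ (or $z\cdot a=0$ for all $a$) from the defining relation $a\cdot L(b)=R(a)\cdot b$, pad with a second factor on the other side, and invoke two-sided faithfulness. The only divergence is in the unital case, where you evaluate the multiplier identity at $b=1$ to get $R(a)\cdot 1=a\cdot L(1)$ and are then forced to show that $E$ is a unital bimodule; your derivation of $1\cdot x=x$ from faithfulness is valid, but the detour is avoidable. The paper instead applies the already-established identity $R(ab)=a\cdot R(b)$ with $b=1$, giving $R(a)=R(a1)=a\cdot R(1)$ with no unitality of $E$ required, and then concludes $L(1)=R(1)$ from $a\cdot L(1)\cdot b=a\cdot L(b)=R(a)\cdot b=a\cdot R(1)\cdot b$ and faithfulness, so that $(L,R)$ is induced by this single element of $E$. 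Either route yields surjectivity of $E\rightarrow M(E)$, and injectivity is, as you note, immediate from faithfulness.
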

\begin{proof}
When $\mc A=E$, this is well-known, compare \cite[Theorem~1.2.4]{palmer}.
For $a,b,c\in\mc A$ and $t\in\mathbb C$,
\[ a\cdot L(b+tc) = R(a)\cdot(b+tc) = R(a)\cdot b + t R(a)\cdot c
= a\cdot \big(L(b) + t  L(c)\big). \]
As $E$ is faithful, it follows that $L(b+tc) = L(b)+tL(c)$, so that $L$
is linear.  Furthermore,
\[ a\cdot L(bc) = R(a)\cdot bc = (R(a)\cdot b)\cdot c = (a\cdot L(b)) \cdot c
= a\cdot (L(b)\cdot c), \]
so that $L(bc) = L(b)\cdot c$, so that $L$ is a right module homomoprhisms.
The claims about $R$ follow analogously.

When $\mc A$ is unital, we see that
\[ L(a) = L(1a) = L(1)\cdot a, \quad R(a) = R(a1) = a\cdot R(1)
\qquad (a\in\mc A). \]
Furthermore, for $a,b\in\mc A$, we have
$a\cdot L(1)\cdot b = a\cdot L(b) = R(a)\cdot b = a \cdot R(1)\cdot b$, so
that $L(1)=R(1)$, and so $(L,R)$ is induced by $L(1)\in E$.
\end{proof}

When $E=\mc A$, we can turn $M(\mc A)$ into an algebra with the product
$(L,R)(L',R') = (LL',R'R)$.  In general, $M(E)$ is an $\mc A$-bimodule for the actions
\[ \begin{matrix} (a\cdot L)(b) = a\cdot L(b), \ \ (a\cdot R)(b) = R(ba), \\
(L\cdot a)(b) = L(ab), \ \ (R\cdot a)(b) = R(b)\cdot a \end{matrix}
\qquad (a,b\in\mc A, (L,R)\in M(E)). \]
These are well-defined, as, for example,
\[ a\cdot (a_0\cdot L)(b) = aa_0\cdot L(b) = R(aa_0)\cdot b
= (a_0\cdot R)(a) \cdot b \qquad (a,b,a_0\in\mc A, (L,R)\in M(E)). \]

If $E$ is a submodule of $F$, then the \emph{idealiser} of $E$ in $F$ is
$E_F = \{ x\in F : a\cdot x,x\cdot a\in E \ (a\in\mc A) \}$.  Then we have an
obvious map $E_F\rightarrow M(E); x\mapsto (L_x,R_x)$ where, as before,
$L_x(a) = x\cdot a, R_x(a) = a\cdot x$ for $a\in\mc A$.  If $F$ is faithful, this
map is injective; it is always an $\mc A$-bimodule homomorphism.

\begin{lemma}\label{idealiser}
$M(E)$ is a faithful $\mc A$-bimodule, and the idealiser of $E$ in
$M(E)$ is all of $M(E)$.
\end{lemma}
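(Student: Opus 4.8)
The statement splits into two independent claims, and in both cases my plan is to simply unwind the definitions of the four bimodule actions on $M(E)$ recorded just above the lemma, and then feed the result into faithfulness of $E$ together with the preceding lemma's conclusion that $L$ is a right module homomorphism and $R$ is a left module homomorphism. No extra machinery should be needed.

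For faithfulness, I would take $(L,R)\in M(E)$ with $a\cdot(L,R)\cdot b=0$ for all $a,b\in\mc A$ and evaluate componentwise. Chasing the definitions, the left coordinate of $a\cdot(L,R)\cdot b$ at the argument $c$ is $a\cdot L(bc)$, and the right coordinate at $c$ is $R(ca)\cdot b$, so both vanish for all $a,b,c\in\mc A$. The plan is then to rewrite these using $L(bc)=L(b)\cdot c$ and $R(ca)=c\cdot R(a)$, obtaining $a\cdot L(b)\cdot c=0$ and $c\cdot R(a)\cdot b=0$. Fixing the middle argument and letting the two outer arguments range over $\mc A$, faithfulness of $E$ forces $L(b)=0$ and $R(a)=0$ for every $a,b$, that is $(L,R)=0$.

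For the idealiser claim I must show that for every $(L,R)\in M(E)$ and every $a\in\mc A$, both $a\cdot(L,R)$ and $(L,R)\cdot a$ land in the image of $E$ in $M(E)$. The key point is that the multiplier relation $a\cdot L(b)=R(a)\cdot b$ collapses the smeared multipliers back into $E$. Explicitly, $(a\cdot L)(b)=a\cdot L(b)=R(a)\cdot b$ and $(a\cdot R)(b)=R(ba)=b\cdot R(a)$, so that $a\cdot(L,R)=(L_{R(a)},R_{R(a)})$ is the multiplier induced by $R(a)\in E$; symmetrically $(L\cdot a)(b)=L(ab)=L(a)\cdot b$ and $(R\cdot a)(b)=R(b)\cdot a=b\cdot L(a)$, so that $(L,R)\cdot a=(L_{L(a)},R_{L(a)})$ is induced by $L(a)\in E$. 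Hence every element of $M(E)$ satisfies the defining condition of the idealiser.

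I do not expect any genuine obstacle: the entire content lies in matching each of the four action formulas against the right identity. The one place where care is needed, and where a sign-of-side error is most likely, is the second coordinate of the right action: there one must recognise $R(b)\cdot a$ as $b\cdot L(a)$ via the multiplier relation itself (not via a module-homomorphism property) in order to identify it with $R_{L(a)}$. Keeping the left/right bookkeeping consistent throughout is the only thing that could plausibly go wrong.
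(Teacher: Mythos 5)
Your proposal is correct and follows essentially the same definition-chasing route as the paper: unwind the four action formulas, identify $a\cdot(L,R)$ and $(L,R)\cdot a$ with the multipliers induced by $R(a)$ and $L(a)$ via the multiplier relation and the module-homomorphism properties of $L$ and $R$, and then invoke faithfulness of $E$. The only (harmless) variation is in the faithfulness step, where you apply faithfulness of $E$ symmetrically to both coordinates $a\cdot L(b)\cdot c$ and $c\cdot R(a)\cdot b$, whereas the paper first kills $R$ using the right coordinate and then deduces $L=0$ from the relation $a\cdot L(b)=R(a)\cdot b$.
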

\begin{proof}
Let us consider the module actions on $M(E)$ in more detail.  Given $(L,R)\in M(E)$
and $a\in\mc A$, we have
\[ \begin{matrix} (a\cdot L)(b) = a\cdot L(b) = R(a)\cdot b = L_{R(a)}(b),\\
(a\cdot R)(b) = R(ba) = b \cdot R(a) = R_{R(a)}(b) \end{matrix}
\qquad (b\in\mc A), \]
so that $a\cdot (L,R) = (L_{R(a)}, R_{R(a)}) \in E$.  Similarly,
$(L,R)\cdot a = (L_{L(a)}, R_{L(a)})\in E$.
Thus the idealiser of $E$ in $M(E)$ is all of $M(E)$.

Furthermore, $a\cdot (L,R)\cdot b = (L_{R(a)\cdot b}, R_{R(a)\cdot b})$,
so if this equals $0$ for all $a,b\in\mc A$, then using the ``right'' map,
we see that $c\cdot R(a)\cdot b=0$ for all $a,b,c\in\mc A$.  As $E$ is
faithful, $R=0$, and hence $a\cdot L(b)=0$ for all $a,b\in\mc A$, so
that $L=0$.  Thus $M(E)$ is faithful.
\end{proof}

So, given any faithful module $F$ such that $E$ is a submodule of $F$ and $F$
idealises $E$, we have an injection $F\rightarrow M(E)$.  Given the previous
lemma, we can hence regard $M(E)$ as the ``largest'' faithful module containing $E$
as an idealised submodule.

When $E=\mc A$, these considerations take on a more familiar form.
The $\mc A$-module structure on $M(\mc A)$ is induced by
considering $\mc A$ as an ideal in $M(\mc A)$; that is,
$a\cdot (L,R) = (L_a,R_a)(L,R)$ and
$(L,R)\cdot a = (L,R)(L_a,R_a)$ for $a\in\mc A$ and $(L,R)\in M(\mc A)$.

If $\mc A$ is an ideal in an algebra $\mc B$, then $\mc B$ is an $\mc A$-bimodule,
and the notion of $\mc B$ being a faithful module corresponds to another notion.
For an algebra $\mc B$, we say that an ideal $I\subseteq\mc B$ is
\emph{thick} or \emph{essential} if whenever $J$ is an ideal, then $I\cap J=\{0\}$
implies that $J=\{0\}$.  The following lemma is surely folklore, but we give a sketch proof.

\begin{lemma}
Let $\mc B$ be an algebra and let $\mc A\subseteq\mc B$ be an ideal.
Consider the following properties:
\begin{enumerate}
\item\label{ess_one} Considering $\mc B$ as an $\mc A$-bimodule, $\mc B$ is faithful;
\item\label{ess_two} For $b\in\mc B$, if $aba'=0$ for all $a,a'\in\mc A$, then $b=0$;
\item\label{ess_three} $\mc A$ is essential.
\end{enumerate}
Then (\ref{ess_one})$\Leftrightarrow$(\ref{ess_two}) and
(\ref{ess_two})$\Rightarrow$(\ref{ess_three}).  If $\mc A$ is faithful over itself,
then (\ref{ess_three})$\Rightarrow$(\ref{ess_two}).
\end{lemma}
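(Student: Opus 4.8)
The plan is to handle the three implications separately, noting first a structural simplification: since $\mc A$ is an ideal in $\mc B$, the $\mc A$-bimodule action on $\mc B$ is just multiplication, so $a\cdot b\cdot a' = aba'$. Consequently the statement that $\mc B$ is a faithful $\mc A$-bimodule---that $a\cdot b\cdot a'=0$ for all $a,a'\in\mc A$ forces $b=0$---is word-for-word condition~(\ref{ess_two}). Thus (\ref{ess_one})$\Leftrightarrow$(\ref{ess_two}) needs no argument beyond unwinding the definition of a faithful bimodule.

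For (\ref{ess_two})$\Rightarrow$(\ref{ess_three}), I would take an ideal $J$ with $\mc A\cap J=\{0\}$ and aim to show $J=\{0\}$. Given $b\in J$ and $a\in\mc A$, the product $ab$ lies in $\mc A$ (as $\mc A$ is an ideal) and also in $J$ (as $J$ is an ideal), hence $ab\in\mc A\cap J=\{0\}$. Therefore $aba'=0$ for all $a,a'\in\mc A$, and (\ref{ess_two}) gives $b=0$, so $J=\{0\}$ and $\mc A$ is essential.

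For the converse (\ref{ess_three})$\Rightarrow$(\ref{ess_two}), under the extra hypothesis that $\mc A$ is faithful over itself, the key is to produce the right ideal to feed into essentiality; the natural candidate is the bimodule annihilator
\[ J = \{ x\in\mc B : axa'=0 \text{ for all } a,a'\in\mc A \}. \]
First I would verify that $J$ is a two-sided ideal of $\mc B$: for $x\in J$ and $c\in\mc B$ one computes $a(cx)a' = (ac)xa'=0$ and $a(xc)a'=ax(ca')=0$, precisely because $ac,ca'\in\mc A$ whenever $a,a'\in\mc A$ (again using that $\mc A$ is an ideal). Next I would check that $\mc A\cap J=\{0\}$: any $x$ in this intersection lies in $\mc A$ and satisfies $axa'=0$ for all $a,a'\in\mc A$, so the faithfulness of $\mc A$ as a module over itself forces $x=0$. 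Essentiality of $\mc A$ then yields $J=\{0\}$, and since any $b$ satisfying the hypothesis of (\ref{ess_two}) belongs to $J$ by definition, we conclude $b=0$.

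The genuinely non-routine point---and the only real obstacle---is recognising that the annihilator ideal $J$ is the correct object: its membership captures exactly the hypothesis of (\ref{ess_two}), while its intersection with $\mc A$ is controlled precisely by the assumption that $\mc A$ is faithful over itself. This is where that extra hypothesis is indispensable; without it one cannot rule out that $\mc A$ meets $J$ non-trivially, and the implication would fail.
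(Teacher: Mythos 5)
Your proof is correct and follows essentially the same route as the paper's: unwinding the definition of faithfulness for (\ref{ess_one})$\Leftrightarrow$(\ref{ess_two}), intersecting with the given ideal for (\ref{ess_two})$\Rightarrow$(\ref{ess_three}), and introducing the annihilator ideal $J=\{b\in\mc B : aba'=0\ (a,a'\in\mc A)\}$ for the converse. The only cosmetic difference is that you verify the left and right ideal properties of $J$ separately where the paper does both in one computation.
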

\begin{proof}
(\ref{ess_two}) is simply (\ref{ess_one}) written out in detail.
If (\ref{ess_two}) holds, then consider an ideal $J\subseteq\mc B$ with
$J\cap\mc A=\{0\}$.  For $b\in J$ and $a,a'\in\mc A$, we have that $aba'\in J$
as $J$ is an ideal, and $aba'\in \mc A$, as $\mc A$ is an ideal; so $aba'=0$.
Thus $b=0$, showing that $J=\{0\}$.  So $\mc A$ is essential.

If (\ref{ess_three}) holds then let $J=\{ b\in\mc B: aba'=0\ (a,a'\in\mc A)\}$.
For $b\in J$ and $c,c'\in\mc B$, we see that for $a,a'\in\mc A$, $acbc'a'
= (ac)b(c'a')=0$ as $\mc A$ is an ideal.  So $J$ is an ideal in $\mc B$.
If $\mc A$ is faithful over itself, then for $b\in J \cap \mc A$, we have that
$b=0$.  Thus $J=\{0\}$, showing (\ref{ess_two}).
\end{proof}

These ideas often appear in C$^*$-algebra theory, see for example
\cite[Chapter~III, Section~6]{tak1}.
Notice that if $\mc A$ is a C$^*$-algebra, then every ideal has a bounded
approximate identity, and hence is faithful over itself.

The above discussion hence shows that $M(\mc A)$ is the ``largest'' algebra which
contains $\mc A$ as an essential ideal.

To close this section, we introduce some notation.  We shall write a typical
element of $M(E)$ as $\hat x$, and will use the notation $\hat x=(L_{\hat x},R_{\hat x})$.
This notation is inspired the embedding of $E$ into $M(E)$.  For example, the calculations
in Lemma~\ref{idealiser} can be expressed as $a\cdot\hat x = R_{\hat x}(a),
\hat x\cdot a = L_{\hat x}(a)$ for $\hat x\in M(E), a\in\mc A$.  Hence we can write
the actions of the maps $L_{\hat x}, R_{\hat x}$ as module maps.  Similarly, when
$E = \mc A$, we write $\hat a=(L_{\hat a},R_{\hat a})$ for a typical element on $M(\mc A)$,
and then $L_{\hat a}(a) = \hat a a, R_{\hat a}(a) = a \hat a$ for $a\in\mc A$.

\subsection{For Banach algebras}

For a Banach algebra $\mc A$, it is customary to consider \emph{contractive}
modules, see \cite{dales} for example.  However, it will be convenient for us
to consider merely bounded modules.  We shall be careful to indicate procedures where
one starts with a contractive module but ends up with only a bounded module.
Furthermore, when $\mc A$ is a Banach algebra, by a (left/right/bi) $\mc A$-bimodule
$E$, unless otherwise stated, we will always mean that $E$ is a Banach space, and the
module actions are bounded.

An $\mc A$-bimodule $E$ is \emph{essential} when $\mc A\cdot E\cdot \mc A$ is
linearly dense in $E$.  Following Johnson, \cite{johnmem}, we shall say that $E$ is
\emph{neo-unital} if $E = \{ a\cdot x\cdot b : x\in E,a,b\in\mc A\}$.  Essential
to many of our arguments is the following result.  This, when $E=\mc A$, was
proved by Cohen in \cite{cohen}, and then extended to, essentially, the version
presented here by Hewitt in \cite{hewitt}, and independently by Curtis and
Fig\`a-Talamanca in \cite{cft}.

\begin{theorem}\label{cohen}
Let $\mc A$ be a Banach algebra with a bounded approximate identity with
bound $K>0$, and let $E$ be an essential left $\mc A$-module.  Then $E$ is neo-unital.
Indeed, for each $x\in E$ and $\epsilon>0$ there exists $y\in E$ and $a\in\mc A$
with $x=a\cdot y$, $\|x-y\|<\epsilon$ and $\|a\| \leq K$.  We can choose
$y$ in the closure of $\{ b\cdot x : b\in\mc A\}$.  A similar result holds
for right $\mc A$-modules.
\end{theorem}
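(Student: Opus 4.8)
The plan is to run the classical Cohen construction inside the unitisation $\mc A^\sharp$; I treat the left-module case, the right-module case being a mirror image, and I use the bounded approximate identity $(e_i)$ (bound $K$) as a left approximate identity, so that $e_i a\to a$ for $a\in\mc A$. The basic fact, used repeatedly, is that $e_i\cdot z\to z$ for \emph{every} $z\in E$: this is immediate on the dense subspace $\lin(\mc A\cdot E)$, since $e_i\cdot(a\cdot u)=(e_ia)\cdot u\to a\cdot u$, and it extends to all of $E$ by a routine three-$\epsilon$ estimate in which the uniform bound $\|e_i\|\le K$ controls the action on a nearby approximant. In particular $x\in\overline{\mc A\cdot x}$.

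For the construction, fix $\lambda\in(0,1)$ with $\lambda(1+K)<1$ and regard $E$ as a unital $\mc A^\sharp$-module (with unit $1$ acting as the identity). I build elements $w_1,w_2,\dots$ of $(e_i)$, so $\|w_n\|\le K$, and set
\[ a_n=\lambda\sum_{j=1}^n(1-\lambda)^{j-1}w_j+(1-\lambda)^n 1\in\mc A^\sharp,\qquad y_n=a_n^{-1}\cdot x, \]
starting from $a_0=1$, $y_0=x$. Passing from $a_{n-1}$ to $a_n$ is right-multiplication by $1-\lambda(1-\lambda)^{n-1}a_{n-1}^{-1}(1-w_n)$, a perturbation of the identity of size $\lambda(1-\lambda)^{n-1}$, so a Neumann series keeps $a_n$ invertible. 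At stage $n$, with $a_{n-1}$ and $y_{n-1}$ in hand, I choose $w_n$ from the approximate identity so that $\|y_{n-1}-w_n\cdot y_{n-1}\|$ is as small as I wish (possible by the basic fact applied to $z=y_{n-1}$). Since $(1-w_n)\cdot y_{n-1}=y_{n-1}-w_n\cdot y_{n-1}$, the increment $y_n-y_{n-1}$ equals the bounded operator $\big(1-\lambda(1-\lambda)^{n-1}a_{n-1}^{-1}(1-w_n)\big)^{-1}$ applied to $\lambda(1-\lambda)^{n-1}a_{n-1}^{-1}\cdot(y_{n-1}-w_n\cdot y_{n-1})$, which I force below $\epsilon 2^{-n}$.

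Passing to the limit then finishes the proof. From $\|a_n-a_{n-1}\|\le\lambda(1-\lambda)^{n-1}(1+K)$ the sequence $(a_n)$ converges to $a:=\lambda\sum_{j\ge1}(1-\lambda)^{j-1}w_j\in\mc A$, and summing the geometric series gives $\|a\|\le\lambda K\sum_{j\ge0}(1-\lambda)^j=K$. The Cauchy estimate gives $y_n\to y$ with $\|x-y\|\le\sum_n\|y_n-y_{n-1}\|<\epsilon$, and joint continuity of the action applied to the identity $a_n\cdot y_n=a_n\cdot(a_n^{-1}\cdot x)=x$ yields $a\cdot y=x$. Finally each $y_n\in\mathbb C x+\mc A\cdot x\subseteq\overline{\mc A\cdot x}$ (using $x\in\overline{\mc A\cdot x}$), so $y$ lies in the closure of $\{b\cdot x:b\in\mc A\}$, as required.

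The main obstacle is the simultaneous control demanded by the induction: the inverses $a_n^{-1}$ a priori have growing norm, and one must verify that $\|a_n^{-1}\|$ stays bounded so that both the Neumann series defining $a_n^{-1}$ and the increment estimate for $y_n-y_{n-1}$ close up. This is exactly where the geometric decay $\lambda(1-\lambda)^{n-1}$ is essential: because $\sum_n\lambda(1-\lambda)^{n-1}<\infty$, an inductive argument shows the relevant infinite product of norm-inflation factors converges, so that the freedom to shrink $\|y_{n-1}-w_n\cdot y_{n-1}\|$ (afforded by the basic fact) suffices to drive the whole scheme to a convergent conclusion.
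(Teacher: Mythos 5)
The paper itself offers no proof of this theorem (it simply cites Bonsall--Duncan, Hewitt--Ross and Dales), so your attempt should be measured against the classical Cohen argument, whose skeleton you have correctly reproduced: the unitisation, the elements $a_n=\lambda\sum_{j=1}^n(1-\lambda)^{j-1}w_j+(1-\lambda)^n1$, and the limiting argument at the end (which is fine, as is the ``basic fact''). The genuine gap is at the heart of the induction: the invertibility of $a_n$. You constrain $w_n$ \emph{only} through the module element $y_{n-1}$, and then claim that the geometric decay of the coefficients makes ``the infinite product of norm-inflation factors'' converge. This is false. The perturbation to be inverted is $\lambda(1-\lambda)^{n-1}a_{n-1}^{-1}(1-w_n)$, and the only estimate your scheme supplies for it is $c_nM_{n-1}$, where $c_n=\lambda(1-\lambda)^{n-1}(1+K)$ and $M_{n-1}$ is the current bound on $\|a_{n-1}^{-1}\|$; shrinking $\|y_{n-1}-w_n\cdot y_{n-1}\|$ gives no control whatsoever on the algebra norm $\|a_{n-1}^{-1}(1-w_n)\|$, since these live in different spaces. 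The sharp Neumann recursion $M_n=M_{n-1}/(1-c_nM_{n-1})$ gives exactly $1/M_n=1/M_{n-1}-c_n$, hence $1/M_n=1-\sum_{j\le n}c_j$; and since $\sum_{j\ge1}c_j=(1+K)\lambda\sum_{j\ge0}(1-\lambda)^j=1+K>1$, the quantity $1-\sum_{j\le n}c_j$ becomes non-positive at a finite stage, at which point the hypothesis $c_nM_{n-1}<1$ needed for the Neumann series can no longer be verified and the induction halts. (Note also that the target you name, boundedness of $\|a_n^{-1}\|$, is neither achievable nor needed: already for $\mc A=C_0(\mathbb R)$ with a positive contractive approximate identity one has $\|a_n^{-1}\|\ge(1-\lambda)^{-n}$. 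What is needed is invertibility at every stage, with a bound known at stage $n$ so that $w_{n+1}$ can be chosen against it.)

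The missing idea, present in every classical proof, is a \emph{second} requirement in the inductive choice of $w_n$, bearing on an algebra element rather than a module element. The character $\mc A^\sharp\rightarrow\mathbb C$ sends $a_{n-1}$ to $(1-\lambda)^{n-1}$, so $a_{n-1}^{-1}=(1-\lambda)^{-(n-1)}1+b_{n-1}$ with $b_{n-1}\in\mc A$. In your right-multiplication factorisation this gives $a_{n-1}^{-1}(1-w_n)=(1-\lambda)^{-(n-1)}(1-w_n)+b_{n-1}(1-w_n)$, whence $\lambda(1-\lambda)^{n-1}\|a_{n-1}^{-1}(1-w_n)\|\le\lambda(1+K)+\lambda\|b_{n-1}-b_{n-1}w_n\|$. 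So one must additionally choose $w_n$ far enough along the net that $\|b_{n-1}-b_{n-1}w_n\|<\delta$: this is possible because $b_{n-1}$ is a fixed element of $\mc A$ and the approximate identity acts on it (as stated it is a right-a.i.\ condition, harmless since the b.a.i.\ is two-sided; alternatively use the left factorisation $a_n=\bigl(1-\lambda(1-\lambda)^{n-1}(1-w_n)a_{n-1}^{-1}\bigr)a_{n-1}$, which needs only $\|b_{n-1}-w_nb_{n-1}\|<\delta$ and hence only a left a.i.). With this extra clause the perturbation has norm at most $\lambda(1+K)+\lambda\delta<1$ \emph{uniformly in} $n$ --- this is precisely where the hypothesis $\lambda(1+K)<1$ is genuinely used --- giving invertibility of $a_n$ and the stagewise bound $\|a_n^{-1}\|\le\|a_{n-1}^{-1}\|/(1-\lambda(1+K)-\lambda\delta)$. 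Both requirements on $w_n$ are cofinal conditions on the net, so they can be met simultaneously, and your increment estimate then closes the induction exactly as you intended; the remainder of your argument (the limits $a_n\rightarrow a$, $\|a\|\le K$, $y_n\rightarrow y$, $a\cdot y=x$, and $y\in\overline{\mc A\cdot x}$) is correct as written.
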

\begin{proof}
See, for example, \cite[Chapter~11]{bd}, \cite[Theorem~32.22]{HR} or
\cite[Corollary~2.9.25]{dales}.
\end{proof}

Let $\mc A$ be a Banach algebra and let $E$ be an $\mc A$-bimodule.  It is natural
to consider $M(E)$ to be those multipliers $(L,R)$ such that $L,R\in\mc B(\mc A,E)$.
However, this is automatic from the Closed Graph Theorem.  Indeed, suppose that
$a_n\rightarrow a$ in $\mc A$ and that $L(a_n)\rightarrow x$ in $E$.  Then
\[ b\cdot x = \lim_n b\cdot L(a_n) = \lim_n R(b)\cdot a_n = R(b)\cdot a
= b\cdot L(a) \qquad (b\in\mc A), \]
so as $E$ is assumed faithful, $L(a)=x$, and we conclude that $L$ is bounded.
Similarly, $R$ is bounded.  We norm $M(E)$ by considering $M(E)$ as a subset of
$\mc B(\mc A,E)\oplus_\infty\mc B(\mc A,E)$, so that
$\|(L,R)\| = \max( \|L\|, \|R\| )$.
The \emph{strict topology} on $\mc B(\mc A,E)\oplus \mc B(\mc A,E)$ is defined by
the seminorms
\[ (L,R) \mapsto \|L(a)\|+\|R(a)\| \qquad (a\in\mc A). \]

\begin{proposition}\label{strict_closed}
$M(E)$ is a closed subspace of $\mc B(\mc A,E)\oplus_\infty \mc B(\mc A,E)$
in both the norm and strict topologies.  In particular, $M(E)$ is a Banach space.
\end{proposition}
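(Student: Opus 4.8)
The plan is to show directly that $M(E)$ is closed in the strict topology, and then to deduce norm-closedness for free by comparing the two topologies. The defining relation $a\cdot L(b) = R(a)\cdot b$ is, for each fixed pair $a,b\in\mc A$, a single $E$-valued equation, and the whole point is that both sides depend continuously (in the relevant topology) on $(L,R)$.

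First I would set up the strict case. Suppose $(L_i,R_i)_i$ is a net in $M(E)$ converging strictly to some $(L,R)\in\mc B(\mc A,E)\oplus_\infty\mc B(\mc A,E)$; by the definition of the strict seminorms this means $L_i(a)\to L(a)$ and $R_i(a)\to R(a)$ in $E$ for every $a\in\mc A$. Fix $a,b\in\mc A$. Since the module actions are bounded, left multiplication by $a$ and right multiplication by $b$ are continuous linear maps on $E$, so $a\cdot L_i(b)\to a\cdot L(b)$ and $R_i(a)\cdot b\to R(a)\cdot b$. As each $(L_i,R_i)$ is a multiplier, $a\cdot L_i(b)=R_i(a)\cdot b$ for all $i$, and passing to the limit gives $a\cdot L(b)=R(a)\cdot b$. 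As $a,b$ were arbitrary, $(L,R)\in M(E)$, so $M(E)$ is strictly closed. One must argue with nets here, since the strict topology need not be metrizable.

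For the norm topology, I would simply observe that each strict seminorm $(L,R)\mapsto\|L(a)\|+\|R(a)\|$ is dominated by $2\|a\|\,\|(L,R)\|$, so the strict topology is coarser than the norm topology; hence any strictly closed set is automatically norm closed, and in particular so is $M(E)$. (Alternatively one repeats the argument above with a norm-convergent sequence, using that norm convergence implies strict convergence, but exploiting the comparison of topologies avoids a separate computation.)

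Finally, since $\mc B(\mc A,E)\oplus_\infty\mc B(\mc A,E)$ is a Banach space and $M(E)$ is a norm-closed linear subspace of it --- linearity being immediate from the fact that the defining relation is linear in $(L,R)$ --- it follows that $M(E)$ is itself a Banach space. I expect no genuine obstacle: the only points requiring care are remembering to work with nets in the non-metrizable strict topology, and noting the comparison of the two topologies so that the norm case does not need to be treated by hand.
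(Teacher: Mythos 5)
Your argument is correct and is essentially identical to the paper's proof: take a strictly convergent net of multipliers, pass to the limit in the defining relation $a\cdot L(b)=R(a)\cdot b$ using continuity of the module actions, and then deduce norm-closedness from the fact that norm convergence implies strict convergence. Nothing is missing.
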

\begin{proof}
Suppose that the net $(L_\alpha,R_\alpha)$ in $M(E)$ converges strictly to $(L,R)$
in $\mc B(\mc A,E)\oplus_\infty \mc B(\mc A,E)$.  For $a,b\in\mc A$, we have that
\[ a\cdot L(b) = \lim_\alpha a\cdot L_\alpha(b)
= \lim_\alpha R_\alpha(a)\cdot b = R(a)\cdot b, \]
so that $(L,R)\in M(E)$ as required.  As norm convergence implies strict convergence,
this completes the proof.
\end{proof}

In the rest of this section, we shall study various extension problems.
Almost all of these boil down to extending module actions to $M(\mc A)$,
with further extension problems following by purely algebraic methods.
We shall explore these methods here, deferring treatment of the original
module extension problem until later (where we study what extra properties of
$\mc A$, or the module in question, will ensure that such extensions exist).

Let us first consider the following problem, for a Banach algebra $\mc A$ and
an $\mc A$-bimodule $E$.  We showed above that $M(E)$ is an $\mc A$-bimodule.
Can we extend these module actions to turn $M(E)$ into an $M(\mc A)$-bimodule?

If $E$ is an $M(\mc A)$-bimodule, then we say that the module actions are
\emph{strictly continuous} if, whenever $\hat a_\alpha\rightarrow\hat a$ strictly
in $M(\mc A)$, we have that $\hat a_\alpha\cdot x \rightarrow \hat a\cdot x$,
$x\cdot\hat a_\alpha \rightarrow x\cdot\hat a$, in norm, for $x\in E$.

\begin{theorem}\label{mult_mod_is_mult_mod}
Let $\mc A$ be a Banach algebra, let $E$ be a $\mc A$-bimodule.  Suppose that
$E$ is also an $M(\mc A)$-bimodule, with actions extending those of $\mc A$.
Then there is a $M(\mc A)$-bimodule structure on $M(E)$ given by
\[ \begin{matrix}
L_{\hat a\cdot\hat x}(a) = \hat a \cdot L_{\hat x}(a),
\quad R_{\hat a\cdot\hat x}(a) = R_{\hat x}(a\hat a) \\
L_{\hat x\cdot\hat a}(a) = L_{\hat x}(\hat a a), \quad
R_{\hat x\cdot\hat a}(a) = R_{\hat x}(a) \cdot \hat a
\end{matrix} \qquad ( a\in\mc A, \hat a\in M(\mc A), \hat x\in M(E)). \]
These satisfy:
\begin{enumerate}
\item\label{mmimm:one} the module actions extend both those of $\mc A$ on $M(E)$
  and $M(\mc A)$ on $E$;
\item\label{mmimm:two} when the action of $M(\mc A)$ on $E$ is strictly continuous,
  the module action $M(\mc A)\times M(E)\rightarrow M(E)$ is
  strictly continuous in either variable; and analogously for
  $M(E)\times M(\mc A)\rightarrow M(E)$;
\end{enumerate}
With respect to condition (1), the definitions of $L_{\hat a\cdot\hat x}$ and
$R_{\hat x\cdot\hat a}$ are unique.  If $\mc A$ is essential over itself, then the
definitions of $R_{\hat a\cdot\hat x}$ and $L_{\hat x\cdot\hat a}$ are also unique.
\end{theorem}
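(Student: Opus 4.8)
The plan is to read the four displayed formulas as the \emph{definition} of the left and right $M(\mc A)$-actions on $M(E)$, and then verify in turn: (a) that each prescribed pair is genuinely a multiplier of $E$; (b) that the resulting operations obey the bimodule axioms; (c) the two itemised properties; and (d) the uniqueness assertions. For (a) I would treat the left action as the representative case and check $a\cdot L_{\hat a\cdot\hat x}(b) = R_{\hat a\cdot\hat x}(a)\cdot b$ for $a,b\in\mc A$. Expanding the left-hand side gives $a\cdot(\hat a\cdot L_{\hat x}(b))$, which equals $(a\hat a)\cdot L_{\hat x}(b)$ by associativity of the given $M(\mc A)$-action on $E$ (recall $a\hat a = R_{\hat a}(a)\in\mc A$); since $\hat x$ is a multiplier this is $R_{\hat x}(a\hat a)\cdot b = R_{\hat a\cdot\hat x}(a)\cdot b$, as wanted, and the right action is symmetric. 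The only inputs are that $\hat x$ is a multiplier, that $\mc A$ sits in $M(\mc A)$ as an ideal with $\hat a a,a\hat a\in\mc A$, and associativity of the $M(\mc A)$-action on $E$.

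Step (b) is bookkeeping of the same flavour: to check, say, $(\hat a\hat b)\cdot\hat x = \hat a\cdot(\hat b\cdot\hat x)$ one compares $L$- and $R$-components separately and pushes each identity down to associativity of the action on $E$ and of multiplication in $M(\mc A)$; the mixed relation $(\hat a\cdot\hat x)\cdot\hat b = \hat a\cdot(\hat x\cdot\hat b)$ goes the same way. For (c), property (\ref{mmimm:one}) is immediate on substitution: setting $\hat a\in\mc A$ recovers the $\mc A$-actions on $M(E)$ recorded just before Lemma~\ref{idealiser}, while setting $\hat x\in E$ and using $\hat x\cdot a = L_{\hat x}(a)$, $a\cdot\hat x = R_{\hat x}(a)$ together with associativity recovers the image in $M(E)$ of the $M(\mc A)$-action on $E$. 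Property (\ref{mmimm:two}) unwinds, via the definition of the strict topology, to two routine observations: for the $L$-component, $\hat a_\alpha\cdot L_{\hat x}(a)\to\hat a\cdot L_{\hat x}(a)$ is precisely strict continuity of the action on $E$, and norm-continuity of a fixed $\hat a$ handles variation of $\hat x$; for the $R$-component one uses that $a\hat a_\alpha = R_{\hat a_\alpha}(a)\to R_{\hat a}(a)$ in norm while $R_{\hat x}$ is bounded.

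The delicate part is (d). For any bimodule structure on $M(E)$ extending the given actions, the value $L_{\hat a\cdot\hat x}(a)$ is already forced: using $L_{\hat y}(a) = \hat y\cdot a$ for $\hat y = \hat a\cdot\hat x$ and the compatibility axiom, $L_{\hat a\cdot\hat x}(a) = (\hat a\cdot\hat x)\cdot a = \hat a\cdot(\hat x\cdot a) = \hat a\cdot L_{\hat x}(a)$, and the final term is the action of $\hat a$ on the \emph{element} $L_{\hat x}(a)\in E$, which condition (\ref{mmimm:one}) fixes. The reversed computation forces $R_{\hat x\cdot\hat a}$. For the two inner components, however, the only thing that comes for free is the multiplier relation: with $L_{\hat a\cdot\hat x}$ known it yields merely $R_{\hat a\cdot\hat x}(a)\cdot b = R_{\hat x}(a\hat a)\cdot b$ for all $b$, so the candidate is pinned down only modulo the right annihilator of $\mc A$ in $E$. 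This is exactly the obstacle, and it is where the non-degeneracy hypothesis enters. I would close it by observing that $R_{\hat a\cdot\hat x}$ and $a\mapsto R_{\hat x}(a\hat a)$ are both left $\mc A$-module maps (Lemma~\ref{idealiser} and the module-homomorphism half of the opening lemma), so their difference $\delta$ satisfies $c\cdot\delta(a) = \delta(ca)$; combined with $\delta(a)\cdot b = 0$ this gives $c\cdot\delta(a)\cdot b = 0$ for all $b,c$, whence $\delta=0$. The assumption that $\mc A$ is essential over itself is the clean hypothesis guaranteeing this last, annihilator-killing step, and $L_{\hat x\cdot\hat a}$ is handled symmetrically. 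I expect verifying this non-degeneracy step — rather than any of the formal bimodule calculations — to be the genuine content of the proof.
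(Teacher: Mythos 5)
Your construction and verification of the bimodule structure, the extension property (1), and the strict continuity in (2) match the paper's proof essentially step for step, as does your derivation that $L_{\hat a\cdot\hat x}(a)=(\hat a\cdot\hat x)\cdot a=\hat a\cdot L_{\hat x}(a)$ is forced. The one place you genuinely diverge is the uniqueness of the inner components, and there you have simultaneously proved more than the theorem claims and mislabelled the hypothesis doing the work. Your closing step --- $\delta(a)\cdot b=0$ for all $b$, hence $c\cdot\delta(a)\cdot b=0$ for all $b,c$, hence $\delta(a)=0$ --- is an appeal to the standing assumption that $E$ is \emph{faithful}, not to $\mc A$ being essential over itself; density of products in $\mc A$ plays no role anywhere in your argument. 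Taken at face value, your route shows that $R_{\hat a\cdot\hat x}$ and $L_{\hat x\cdot\hat a}$ are already unique without the essentiality hypothesis: once $L_{\hat a\cdot\hat x}$ is known, the multiplier identity gives $R(a)\cdot b=R_{\hat x}(a\hat a)\cdot b$ for all $b$, and in a faithful bimodule an element annihilated by $\mc A$ on one side is zero (indeed, the $L$-component of any multiplier of a faithful bimodule determines its $R$-component). The paper argues differently: it does not invoke the multiplier identity at this point, but instead computes $R(ba)=(a\cdot R)(b)=b\cdot(a\hat a\cdot\hat x)=R_{\hat x}(ba\hat a)$ from the module axioms alone, which pins $R$ down only on products $ba$; essentiality of $\mc A$, together with the automatic boundedness of $R$ from the Closed Graph Theorem, is then what allows passage to all of $\mc A$. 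So your argument is sound once you replace ``essential over itself'' by ``faithful'' in the final step, and it is in fact sharper than the paper's --- but you should be clear that the two hypotheses are not interchangeable, and that the theorem's stated need for essentiality reflects the paper's module-theoretic derivation rather than an obstruction your annihilator argument actually has to overcome.
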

\begin{proof}
These definitions are motivated by, and clearly extend, the module actions of $\mc A$
on $M(E)$.  For example, it is easy to see that then the pair
$(L_{\hat a\cdot\hat x}, R_{\hat a\cdot\hat x})$ is indeed a multiplier, and
similarly $\hat x\cdot\hat a$ is well-defined.  For $x\in E$ and $\hat a\in M(\mc A)$,
\[ (\hat a \cdot x)\cdot a = \hat a \cdot (x\cdot a)
= \hat a \cdot L_x(a) = L_{\hat a\cdot x}(a)
\qquad (a\in\mc A), \]
and so forth, showing that these actions extend those of $M(\mc A)$ on $E$.
We shall henceforth use fully the notation introduced at the end of
the previous section, and write, for example, the first definition as
$(\hat a\cdot \hat x)\cdot a = \hat a \cdot (\hat x\cdot a)$.

If $\hat a_\alpha\rightarrow \hat a$ strictly in $M(\mc A)$, then for $\hat x\in M(E)$
and $a\in\mc A$,
\[ (\hat a_\alpha\cdot\hat x)\cdot a = \hat a_\alpha \cdot (\hat x\cdot a)
\rightarrow \hat a\cdot (\hat x\cdot a) = (\hat a \cdot \hat x)\cdot a, \]
and similarly $a\cdot (\hat a_\alpha\cdot\hat x) \rightarrow a\cdot (\hat a\cdot\hat x)$.
Thus $\hat a_\alpha\cdot\hat x \rightarrow \hat a\cdot \hat x$ strictly in $M(E)$.
Now suppose that $\hat x_\alpha\rightarrow \hat x$ strictly in $M(E)$.
Then, for $\hat a\in M(\mc A)$,
\[ (\hat a\cdot\hat x_\alpha)\cdot a = \hat a\cdot(\hat x_\alpha\cdot a)
\rightarrow \hat a\cdot(\hat x\cdot a) = (\hat a\cdot \hat x)\cdot a
\qquad (a\in\mc A), \]
showing that $\hat a\cdot\hat x_\alpha \rightarrow \hat a\cdot\hat x$ strictly.
Analogously, these hold for the right module action.

Suppose now that we have some left-module action of $M(\mc A)$ on $M(E)$ satisfying
(1) and (2).  Let $\hat a\in M(\mc A)$ and $\hat x\in M(E)$, and let $(L,R) = \hat a
\cdot \hat x$.  Then
\[ a\cdot L(b) = (a\cdot L)(b) = (a\cdot \hat a\cdot \hat x)\cdot b
= a\hat a \cdot L_{\hat x}(b) \qquad (a,b\in\mc A). \]
As $E$ is faithful, we conclude that $L = L_{\hat a\cdot\hat x}$ as defined above.
Similarly, we see that
\[ R(ba) = (a\cdot R)(b) = b \cdot (a\cdot\hat a \cdot \hat x)
= R_{\hat x}(ba\hat a) \qquad (a,b\in\mc A), \]
so if products are dense in $\mc A$, then $R=R_{\hat a\cdot\hat x}$ as defined above.
The arguments for the right action are analogous.
\end{proof}

It might seem un-natural to first define $E$ as an $M(\mc A)$-bimodule: perhaps
it would be easier to extend the action of $\mc A$ on $E$ directly to an action
of $M(\mc A)$ on $M(E)$.  The following shows that, if we can do this, then in
many cases, $E$ will automatically be an $M(\mc A)$-submodule of $M(E)$.

\begin{proposition}
Let $E$ be an essential $\mc A$-bimodule, and suppose that $M(E)$ is an
$M(\mc A)$-bimodule satisfying condition (1) from the previous theorem.
Then $E$ is an $M(\mc A)$-bimodule, with the module actions extending those
of $\mc A$.  Furthermore, when $\mc A$ is essential over itself, the action
of $M(\mc A)$ on $M(E)$ is given by the definitions in the previous theorem.

If, further, $E =\{ a\cdot x\cdot b : a,b\in\mc A, x\in E \}$ then the action of
$M(\mc A)$ on $E$ is strictly continuous, and condition (2) holds.
\end{proposition}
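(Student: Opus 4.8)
The plan is to regard $E$ as a subspace of $M(E)$ via the canonical embedding $x\mapsto(L_x,R_x)$, to read off the putative $M(\mc A)$-action on $E$ from the given action on $M(E)$, and to reduce the entire proposition to the single assertion that this action leaves the embedded copy of $E$ invariant. Two structural facts do all the work. First, $\mc A$ is an ideal in $M(\mc A)$, so $\hat a a,a\hat a\in\mc A$ whenever $\hat a\in M(\mc A)$ and $a\in\mc A$. Second, by Lemma~\ref{idealiser} the idealiser of $E$ in $M(E)$ is everything, i.e.\ $\mc A\cdot M(E)$ and $M(E)\cdot\mc A$ both lie in $E$. Extension of the $\mc A$-actions is then immediate, since restricting the $M(\mc A)$-action on $M(E)$ to $\mc A$ recovers the $\mc A$-action by condition~(1) of Theorem~\ref{mult_mod_is_mult_mod}.

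First I would prove invariance on factored elements. If $x=a'\cdot u'$ with $a'\in\mc A$, $u'\in E$, then associativity of the $M(\mc A)$-action on $M(E)$ and the ideal property give $\hat a\cdot x=\hat a\cdot(a'\cdot u')=(\hat a a')\cdot u'$, and since $\hat a a'\in\mc A$ this is genuinely an element of $E$; symmetrically $x\cdot\hat a\in E$ when $x=u''\cdot b''$. Independence of the factorisation, and the identification of the value inside $E$, follow by testing against $\mc A$: the relation $a\cdot(\hat a\cdot x)=(a\hat a)\cdot x$ (its left-hand side lying in $E$ by the idealiser property, its right-hand side by the ideal property) pins the value down via faithfulness of $E$. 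This already settles the invariance needed for the final sentence, where $E$ is neo-unital, and exhibits the action explicitly.

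For the general essential case I would approximate a typical $x$ by elements of $\mc A\cdot E\cdot\mc A$; the computation above carries each approximant into $E$, and boundedness of the action on $M(E)$ lets me pass to the limit. The main obstacle lives exactly here: the embedding $E\to M(E)$ need not be bounded below, so a priori the limit only lies in the closure of $E$ inside $M(E)$. The crux is therefore to show that $\hat a\cdot(-)$ is bounded in the \emph{$E$-norm} on the dense subspace $\mc A\cdot E\cdot\mc A$, so that the approximants are $E$-Cauchy and their $E$-limit $y$ satisfies $\iota(y)=\hat a\cdot x$; under the neo-unital hypothesis this difficulty simply disappears, since the factorisation produces the value in $E$ outright. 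Once invariance is in hand, the module axioms for $E$ are inherited from those of $M(E)$, and, when $\mc A$ is essential over itself, the uniqueness clause of Theorem~\ref{mult_mod_is_mult_mod} forces the given action on $M(E)$ to coincide with the formulas stated there.

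Finally, for the strict continuity in the neo-unital case I would reuse the factorisation. If $\hat a_\alpha\to\hat a$ strictly then, directly from the defining seminorms, $\hat a_\alpha a'\to\hat a a'$ and $b''\hat a_\alpha\to b''\hat a$ in $\mc A$; substituting into $\hat a_\alpha\cdot x=(\hat a_\alpha a')\cdot u'$ and $x\cdot\hat a_\alpha=u''\cdot(b''\hat a_\alpha)$ and invoking boundedness of the $\mc A$-action on $E$ yields norm convergence $\hat a_\alpha\cdot x\to\hat a\cdot x$ and $x\cdot\hat a_\alpha\to x\cdot\hat a$, i.e.\ strict continuity of the $M(\mc A)$-action on $E$. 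Condition~(2) for the action on $M(E)$ then follows by repeating the argument in the proof of Theorem~\ref{mult_mod_is_mult_mod}, which uses only strict continuity of the action on $E$ together with the associativity relation $(\hat a\cdot\hat x)\cdot a=\hat a\cdot(\hat x\cdot a)$, completing the proof.
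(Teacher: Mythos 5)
Your proof follows the paper's own argument essentially step for step: factored elements are handled via the ideal property $\hat a a\in\mc A$ together with condition~(1), well-definedness and the identification of the value come from testing against $\mc A$ and using faithfulness, strict continuity in the neo-unital case comes from the substitution $\hat a_\alpha\cdot(a'\cdot u')=(\hat a_\alpha a')\cdot u'$, and the formulas for the action on $M(E)$ are recovered by running the uniqueness argument of Theorem~\ref{mult_mod_is_mult_mod} in reverse. The one step you explicitly leave open --- that $x\mapsto\hat a\cdot x$ is bounded for the norm of $E$ on the dense subspace $\lin(\mc A\cdot E)$, so that the action passes from factored elements to all of an essential $E$ --- is precisely the step the paper itself dispatches with the bare phrase ``by density, as $E$ is essential''. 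Your worry is legitimate: a bounded map $E\rightarrow M(E)$ carrying a dense subspace into the image of $E$ only carries $E$ into the \emph{closure} of that image, and the embedding $E\rightarrow M(E)$ need not be bounded below nor have closed range in this generality (that it does is part of what Theorem~\ref{arens} establishes, and only under a bounded approximate identity). Equivalently, what is needed is that the bounded balanced bilinear map $(a,y)\mapsto(\hat a a)\cdot y$ descend through the product map $\mc A\proten_{\mc A}E\rightarrow E$, i.e.\ that $E$ be induced in the sense of Section~\ref{selfind} --- which holds under a bounded approximate identity by Proposition~\ref{bai_si}, and trivially under the neo-unital hypothesis of the final clause. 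So you have not missed an idea that the paper's proof supplies; where your argument is complete (the neo-unital case, which is where the strict continuity and condition~(2) claims live) it coincides with the paper's, and where it is incomplete you have, if anything, been more candid than the source about what remains to be checked.
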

\begin{proof}
Let $\hat a\in M(\mc A), x\in E$ and let $(L,R) = \hat a\cdot (L_x,R_x)\in M(E)$.
Suppose that $x=a\cdot y$ for some $a\in\mc A$ and $y\in E$, so that
$(L_x,R_x) = a\cdot (L_y,R_y)$ and thus $(L,R) = \hat a a\cdot (L_y,R_y) \in E$.
By density, as $E$ is essential, this shows that we have a module action
$M(\mc A)\times E\rightarrow E$ which extends the module action of $\mc A$.
Let $\hat a_\alpha\rightarrow\hat a$ in $M(\mc A)$.  Then
\[ \hat a_\alpha \cdot (a\cdot y) = (\hat a_\alpha a)\cdot y \rightarrow
(\hat a a)\cdot y = \hat a \cdot (a\cdot y) \qquad (a\in\mc A, y\in E), \]
so we have strict continuity, under the stronger condition on $E$ (notice that
we cannot assume that $(\hat a_\alpha)$ is bounded).

We now essentially reverse the uniqueness argument in the preeceding proof.
For $\hat x\in M(E), \hat a\in M(\mc A)$ and $a,b\in\mc A$, if $(L,R)=\hat a\cdot \hat x$,
then, as before, $R$ has a unique definition, and $a\cdot L(b) = a\hat a\cdot \hat x\cdot b
= a\hat a \cdot L_{\hat x}(b)$ for $a,b\in\mc A$.  Then $\hat a\cdot L_{\hat x}(b)\in E$
by the previous paragraph, and so $L(b) = \hat a\cdot L_{\hat x}(b)$ for $b\in\mc A$.

The arguments ``on the right'' follow analogously.  When $E$ satisfies the stronger
condition, uniqueness shows that (2) holds.
\end{proof}

We shall address the question of when $E$ \emph{is} an $M(\mc A)$-bimodule in later sections.

Another typical problem in the theory of multipliers is to extend (module) homomorphisms
to the level of multipliers.  At the level of modules, this is just algebra, as the following
proof shows.

\begin{theorem}\label{always_ext_mod_homos}
Let $E$ and $F$ be $\mc A$-modules, and let $\psi:E\rightarrow F$ be
an $\mc A$-bimodule homomorphism.  There exists a unique extension
$\tilde\psi:M(E)\rightarrow M(F)$ which is an $\mc A$-bimodule homomorphism.
Furthermore, $\tilde\psi$ is strictly continuous.

If $E$ and $F$ are also $M(\mc A)$-bimodules, with actions extending
those of $\mc A$, then use Theorem~\ref{mult_mod_is_mult_mod} to turn
$M(E)$ and $M(F)$ into $M(\mc A)$-bimodules.  Then $\tilde\psi$ is an
$M(\mc A)$-bimodule homomorphism.
\end{theorem}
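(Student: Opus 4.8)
The plan is to define the extension by post-composition: for $\hat x=(L_{\hat x},R_{\hat x})\in M(E)$, set $\tilde\psi(\hat x)=(\psi\circ L_{\hat x},\psi\circ R_{\hat x})$. First I would check this really lands in $M(F)$: for $a,b\in\mc A$, since $\psi$ is an $\mc A$-bimodule homomorphism,
\[ a\cdot(\psi\circ L_{\hat x})(b)=\psi\big(a\cdot L_{\hat x}(b)\big)=\psi\big(R_{\hat x}(a)\cdot b\big)=(\psi\circ R_{\hat x})(a)\cdot b, \]
which is exactly the multiplier condition. Evaluating on the embedded copy of $E$ (where $L_x(a)=x\cdot a$ and $R_x(a)=a\cdot x$) and using that $\psi$ intertwines the actions shows $\tilde\psi(x)=(L_{\psi(x)},R_{\psi(x)})$, so $\tilde\psi$ extends $\psi$. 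That $\tilde\psi$ is an $\mc A$-bimodule homomorphism is then a direct check against the explicit formulas for the $\mc A$-actions on $M(E)$ and $M(F)$ recorded in Section~\ref{mults}; each of the four identities collapses to the fact that $\psi$ commutes with the relevant one-sided $\mc A$-action, e.g. $(\psi\circ(a\cdot L_{\hat x}))(b)=\psi(a\cdot L_{\hat x}(b))=a\cdot(\psi\circ L_{\hat x})(b)$, while the $R$-parts merely reindex the argument.

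For uniqueness I would reverse the idealiser computation from Lemma~\ref{idealiser}. Any $\mc A$-bimodule homomorphism $\Phi\colon M(E)\to M(F)$ extending $\psi$ must respect the identities $a\cdot\hat x=R_{\hat x}(a)$ and $\hat x\cdot a=L_{\hat x}(a)$, which place $a\cdot\hat x$ and $\hat x\cdot a$ in the embedded copy of $E$; hence $R_{\Phi(\hat x)}(a)=a\cdot\Phi(\hat x)=\Phi(a\cdot\hat x)=\psi(R_{\hat x}(a))$ and likewise $L_{\Phi(\hat x)}(a)=\psi(L_{\hat x}(a))$, forcing $\Phi=\tilde\psi$. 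Strict continuity is then immediate: if $\hat x_\alpha\to\hat x$ strictly then $L_{\hat x_\alpha}(a)\to L_{\hat x}(a)$ and $R_{\hat x_\alpha}(a)\to R_{\hat x}(a)$ in norm for each $a\in\mc A$, and since $\psi$ is continuous these limits are preserved after composing with $\psi$.

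The main obstacle is the final $M(\mc A)$-bimodule claim, because verifying $\tilde\psi(\hat a\cdot\hat x)=\hat a\cdot\tilde\psi(\hat x)$ at the level of the $L$-components appears to require that $\psi$ itself be $M(\mc A)$-linear on $E$, which is not assumed. The trick I would use is to compare only one component and then invoke faithfulness of $M(F)$ (Lemma~\ref{idealiser}). Using the formulas of Theorem~\ref{mult_mod_is_mult_mod}, the $R$-component of $\tilde\psi(\hat a\cdot\hat x)$ is $b\mapsto\psi(R_{\hat a\cdot\hat x}(b))=\psi(R_{\hat x}(b\hat a))$, while that of $\hat a\cdot\tilde\psi(\hat x)$ is $b\mapsto R_{\tilde\psi(\hat x)}(b\hat a)=\psi(R_{\hat x}(b\hat a))$; the key point is that $b\hat a\in\mc A$ because $\mc A$ is an ideal in $M(\mc A)$, so only the $\mc A$-linearity of $\psi$ is needed. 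The two multipliers thus share the same $R$-component, so their difference $\hat z$ has $R_{\hat z}=0$, whence $a\cdot\hat z\cdot b=R_{\hat z}(a)\cdot b=0$ for all $a,b\in\mc A$, and faithfulness of $M(F)$ gives $\hat z=0$. The right action is handled symmetrically by matching $L$-components, using $L_{\hat x\cdot\hat a}(a)=L_{\hat x}(\hat a a)$ with $\hat a a\in\mc A$, and concluding once more from faithfulness.
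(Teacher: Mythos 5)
Your proposal is correct and follows essentially the same route as the paper: the extension is defined by composing the components of a multiplier with $\psi$ (the paper writes this as $L(a)=\psi(\hat x\cdot a)$, $R(a)=\psi(a\cdot\hat x)$, which is the same map), uniqueness and strict continuity are obtained exactly as you describe, and the $M(\mc A)$-linearity is reduced to $\mc A$-linearity of $\psi$ via the observation that $b\hat a$ and $\hat a a$ lie in $\mc A$, with faithfulness of $F$ supplying the remaining component. The only cosmetic difference is that you match one component exactly and recover the other from faithfulness, whereas the paper checks both components after multiplying by an auxiliary element of $\mc A$; both are the same argument in substance.
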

\begin{proof}
We first define $\tilde\psi$ as follows.  For $\hat x \in M(E)$ define
$\tilde\psi(\hat x)=(L,R)$ where
\[ L(a) = \psi(\hat x\cdot a), \quad R(a) = \psi(a\cdot\hat x)
\qquad (a\in\mc A). \]
For $a,b\in\mc A$, $a\cdot L(b) = \psi(a\cdot\hat x\cdot b) = R(a)\cdot b$
as $\psi$ is a module homomorphism, and so $(L,R)\in M(F)$.  For $a,b\in\mc A$,
\[ \tilde\psi(a\cdot\hat x)\cdot b = \psi((a\cdot\hat x)\cdot b)
= \psi(a\cdot(\hat x\cdot b)) = a\cdot(\tilde\psi(\hat x)\cdot b), \]
and similarly $b\cdot\tilde\psi(a\cdot\hat x) = b\cdot(a\cdot\tilde\psi(\hat x))$
so that $\tilde\psi(a\cdot\hat x) = a\cdot\tilde\psi(\hat x)$.  Similarly
$\tilde\psi(\hat x\cdot a) = \tilde\psi(\hat x)\cdot a$, so that $\tilde\psi$
is an $\mc A$-module homomorphism.  Clearly $\tilde\psi$ is linear, is an extension
of $\psi$, and satisfies $\|\tilde\psi\|\leq \|\psi\|$.

If $\phi:M(E)\rightarrow M(F)$ is another extension, then for $a,b\in\mc A$,
and $\hat x\in M(E)$, we have that
\[ (a\cdot\phi(\hat x))\cdot b = \phi(a\cdot\hat x)\cdot b = \psi(a\cdot\hat x)\cdot b
= (a\cdot\tilde\psi(\hat x))\cdot b, \]
using that $\phi$ is an $\mc A$-module homomorphism, and that $a\cdot\hat x\in E$.
As $F$ is faithful, $a\cdot\phi(\hat x) = a\cdot\tilde\psi(\hat x)$, and a
similar argument establishes that $\phi(\hat x)\cdot a=\tilde\psi(\hat x)\cdot a$.
Thus $\phi=\tilde\psi$.

If $\hat x_\alpha \rightarrow \hat x$ in $M(E)$ then for $a\in\mc A$,
\[ \tilde\psi(\hat x_\alpha)\cdot a = \psi(\hat x_\alpha\cdot a)
\rightarrow \psi(\hat x\cdot a) = \tilde\psi(\hat x)\cdot a, \]
and similarly $a\cdot\tilde\psi(\hat x_\alpha)\rightarrow a\cdot\tilde\psi(\hat x)$.
Thus $\tilde\psi$ is strictly continuous.

Now suppose that $E$ and $F$ are $M(\mc A)$-bimodules, with actions extending those
of $\mc A$, and apply Theorem~\ref{mult_mod_is_mult_mod}.  Let $a,b\in\mc A$,
$\hat a\in M(\mc A)$ and $\hat x \in M(E)$.  Then
\begin{align*} b\cdot\big(\tilde\psi(\hat a\cdot\hat x) \cdot a\big)
&= b\cdot \psi ( (\hat a\cdot\hat x)\cdot a)
= b\cdot \psi( \hat a \cdot (\hat x\cdot a) 
= \psi( b\hat a \cdot (\hat x\cdot a) ) \\
&= b\hat a \cdot \psi(\hat x\cdot a)
= b\cdot\big( (\hat a \cdot \tilde\psi(\hat x))\cdot a \big ).
\end{align*}
We also have that
\begin{align*}
(\hat a \cdot \tilde\psi(\hat x))\cdot a
= \hat a \cdot (\tilde\psi(\hat x)\cdot a)
= \hat a \cdot \psi(\hat x\cdot a)
\end{align*}
As $F$ is faithful, it follows that $\tilde\psi(\hat a\cdot\hat x) \cdot a
= (\hat a \cdot \tilde\psi(\hat x))\cdot a$.  Similarly,
\[ a\cdot\tilde\psi(\hat a \cdot\hat x) = \psi( a\cdot(\hat a\cdot\hat x))
= \psi( a\hat a \cdot\hat x)
= a\hat a \cdot \tilde\psi(\hat x) = a\cdot(\hat a\cdot\tilde\psi(\hat x)). \]
We conclude that $\tilde\psi(\hat a\cdot\hat x) = \hat a \cdot \tilde\psi(\hat x)$.
Analogously, one can show that $\tilde\psi(\hat x\cdot\hat a) = \tilde\psi(\hat x)
\cdot \hat a$.
\end{proof}

Now suppose that $\mc B$ is a Banach algebra and that $\theta:\mc A\rightarrow M(\mc B)$
is a bounded homomorphism.  Then $\mc B$ becomes a bounded (but maybe not contractive!)
$\mc A$-bimodule for the actions
\[ a\cdot b = \theta(a)b, \quad b\cdot a= b\theta(a)
\qquad (a\in\mc A, b\in\mc B). \]
There appears to be no simple criterion on $\theta$ to ensure that $\mc B$ is then
a faithful $\mc A$-module.  If $\mc B$ is faithful, then we can apply the above theorem
to find an extension $\tilde\theta:M(\mc A)\rightarrow M_{\mc A}(\mc B)$ which is an
$\mc A$-bimodule homomorphism.
There is a linear contraction $M(\mc B)\rightarrow M_{\mc A}(\mc B)$ given by
$(L,R)\mapsto (L\theta,R\theta)$.
However, it is far from clear when $\tilde\theta$ maps into (the image of) $M(\mc B)$.

\subsection{Extending module actions and homomorphisms}

We saw in the previous section that the ability to extend the bimodule actions of
$\mc A$ on $E$ to $M(\mc A)$ actions is a sufficient (and often necessary)
condition for $M(E)$ to become an $M(\mc A)$-bimodule, in a natural way.
In this section, we shall see that extending the action on $E$ has a close
relation with the problem of extending homomorphisms between algebras.

Let $\mc A$ be a Banach algebra and let $E$ be a bimodule.  Recall that
$E$ is \emph{essential} if $\mc A\cdot E\cdot \mc A$ is linearly dense in $E$.
As an aside, we note that in the pure algebra setting, we would ask that the linear span
of $\mc A\cdot E\cdot \mc A$ be all of $E$.  In this setting, the proofs
are similar, compare with \cite[Appendix]{vandaele} for example.  We shall concentrate
upon the Banach algebra case.

Now suppose that $\mc B$ is a Banach algebra and that $\theta:\mc A\rightarrow M(\mc B)$
is a bounded homomorphism.  Then $\mc B$ becomes a bounded $\mc A$-bimodule as above.
Then $\mc B$ is an essential $\mc A$-bimodule if the linear span of
$\{ \theta(a_1) b \theta(a_2) : a_1,a_2\in\mc A, b\in\mc B \}$ is dense in $\mc B$.
This is often referred to as $\theta$ being \emph{non-degenerate}.  Notice that the
following does not need that $\mc B$ is a faithful $\mc A$-bimodule.

\begin{proposition}\label{homo_to_mod}
Let $\theta:\mc A\rightarrow M(\mc B)$ be a non-degenerate homomorphism.
Then the following are equivalent:
\begin{enumerate}
\item\label{htm:one} the module actions on $\mc B$ can be
extended to bounded $M(\mc A)$-module actions;
\item\label{htm:two} there is a bounded homomorphism
$\tilde\theta:M(\mc A)\rightarrow M(\mc B)$ extending $\theta$.
\end{enumerate}
We may replace ``bounded'' by ``contractive''.
The extensions, if they exist, are unique and strictly continuous.
\end{proposition}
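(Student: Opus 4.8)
The plan is to prove the two implications separately. The implication $(\ref{htm:two})\Rightarrow(\ref{htm:one})$ is essentially free: given the homomorphism $\tilde\theta$, I would pull the canonical $M(\mc B)$-module structure on $\mc B$ back along $\tilde\theta$, setting $\hat a\cdot b = L_{\tilde\theta(\hat a)}(b)$ and $b\cdot\hat a = R_{\tilde\theta(\hat a)}(b)$ for $\hat a\in M(\mc A)$ and $b\in\mc B$. The bimodule axioms then reduce to $\tilde\theta$ being a homomorphism together with associativity in $M(\mc B)$; boundedness, respectively contractivity, is inherited from $\tilde\theta$ and from the contractive action of $M(\mc B)$ on $\mc B$; and the actions extend those of $\mc A$ because $\tilde\theta$ extends $\theta$.

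For the substantive implication $(\ref{htm:one})\Rightarrow(\ref{htm:two})$, I would, for each $\hat a\in M(\mc A)$, define $\tilde\theta(\hat a) = (L,R)$ by $L(b) = \hat a\cdot b$ and $R(b) = b\cdot\hat a$, using the given $M(\mc A)$-module structure. Boundedness of $L$ and $R$ is immediate from boundedness of the module actions, so the crux---and the step I expect to be the main obstacle---is to check that $(L,R)$ really is a multiplier of $\mc B$, i.e. that $b_1(\hat a\cdot b_2) = (b_1\cdot\hat a)b_2$ for all $b_1,b_2\in\mc B$. Since both sides are norm-continuous in $b_2$ and $\theta$ is non-degenerate, so that $\theta(\mc A)\mc B$ is dense, it suffices to treat $b_2 = \theta(c)b' = c\cdot b'$ with $c\in\mc A$, $b'\in\mc B$. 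Here I would exploit that $\mc A$ is an ideal in $M(\mc A)$, so $\hat a c\in\mc A$, together with associativity of the $M(\mc A)$-action and the fact that it restricts to the $\theta$-action on $\mc A$; these rewrite both $b_1(\hat a\cdot(c\cdot b'))$ and $(b_1\cdot\hat a)(c\cdot b')$ as $b_1\theta(\hat a c)b'$, giving the identity.

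Once $\tilde\theta$ is well defined, the remaining claims for this direction are routine. It is multiplicative because $L_{\tilde\theta(\hat a_1\hat a_2)} = L_{\tilde\theta(\hat a_1)}L_{\tilde\theta(\hat a_2)}$ amounts to $(\hat a_1\hat a_2)\cdot b = \hat a_1\cdot(\hat a_2\cdot b)$, with the dual statement on the right; it extends $\theta$ since for $a\in\mc A$ the pair $(L,R)$ with $L(b) = \theta(a)b$, $R(b) = b\theta(a)$ is exactly $\theta(a)\in M(\mc B)$; and $\|\tilde\theta(\hat a)\| = \max(\|L\|,\|R\|)$ is bounded by the norm of the module action applied to $\hat a$, which yields the bounded/contractive alternative.

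Finally, for uniqueness and strict continuity I would note that any homomorphism extension must satisfy $\tilde\theta(\hat a)\theta(c) = \theta(\hat a c)$, hence $L_{\tilde\theta(\hat a)}(\theta(c)b') = \theta(\hat a c)b'$; as $\theta(\mc A)\mc B$ is dense and $L_{\tilde\theta(\hat a)}$ is continuous, $L_{\tilde\theta(\hat a)}$, and likewise $R_{\tilde\theta(\hat a)}$, is completely determined, giving uniqueness (and, dually, uniqueness of the module extension). The same formula $\hat a\cdot(\theta(c)b') = \theta(\hat a c)b'$ delivers strict continuity on the dense set of factored elements, since $\hat a\mapsto\hat a c$ is strict-to-norm continuous by the very definition of the strict topology. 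I expect the one delicate point to be the passage from this dense set to all of $\mc B$: a strictly convergent net need not be norm-bounded, so this step goes through cleanly on bounded sets---the case relevant to the applications---or whenever the factorisation $\theta(\mc A)\mc B = \mc B$ happens to be exact.
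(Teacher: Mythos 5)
Your proof is correct and follows essentially the same route as the paper: define $\tilde\theta(\hat a)=(L,R)$ via the module action, verify the multiplier identity on the dense set supplied by non-degeneracy using that $\mc A$ is an ideal in $M(\mc A)$, and read off uniqueness and strict continuity from the relation $\tilde\theta(\hat a)\theta(c)b=\theta(\hat a c)b$. Your closing caveat about strict continuity for unbounded strictly convergent nets is well taken --- the paper's own proof likewise only checks convergence on the dense set of factored elements and passes silently to general elements of $\mc B$.
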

\begin{proof}
If (\ref{htm:one}) holds then let $\hat a\in M(\mc A)$ and define $L,R:\mc B\rightarrow\mc B$
by $L(b) = \hat a\cdot b$ and $R(b) = b\cdot\hat a$ for $b\in\mc B$.  Let $b_1,b_2\in\mc B$
and $a_1,a_2\in\mc A$, so that
\[ b_1\theta(a_1) L(\theta(a_2)b_2) = b_1\big( a_1 \cdot \hat a \cdot a_2 \cdot b_2 \big)
= (b_1\cdot a_1\hat a a_2) \cdot b_2 = R(b_1\theta(a_1)) \theta(a_2)b_2. \]
As $\theta$ is non-degenerate, this is enough to show that $(L,R)\in M(\mc B)$.
Denote $(L,R)$ by $\tilde\theta(\hat a)$.  Then $\tilde\theta:M(\mc A)\rightarrow M(\mc B)$
is bounded and linear; if the module action is contractive, then so is $\tilde\theta$.
As $\tilde\theta$ is built from a module action, it is clear that $\tilde\theta$ is a
homomorphism, showing (\ref{htm:two}).

Conversely, if such a $\tilde\theta$ exists, then as above, we find that $\mc B$ is
a $M(\mc A)$-bimodule, and obviously these module actions extend those of $\mc A$.

Let us consider (\ref{htm:two}).  If $\tilde\theta$ exists, then
\[ \tilde\theta(\hat a) \theta(a) b = \theta(\hat a a) b, \quad
b \theta(a) \tilde\theta(\hat a) = b \theta(a\hat a)
\qquad (\hat a\in M(\mc A), a\in\mc A, b\in\mc B), \]
which as $\theta$ is non-degenerate, uniquely determines $\tilde\theta$.
Similarly, if $(\hat a_\alpha)$ is a net in $M(\mc A)$ converging strictly to $\hat a$,
then
\[ \lim_\alpha \tilde\theta(\hat a_\alpha) \theta(a)b =
\lim_\alpha \theta(\hat a_\alpha a) b = \theta(\hat a a) b
= \tilde\theta(\hat a) \theta(a)b \qquad (a\in\mc A,b\in\mc B), \]
and similarly ``on the right'', which shows that $\tilde\theta$ is
strictly continuous.  Similar remarks apply to the case of extending the module actions.
\end{proof}

Consequently, the problem of extending module actions is more general than extending
homomorphisms, at least if we restrict to essential modules and non-degenerate
homomorphisms.
For Hilbert C$^*$-modules, the framework of \emph{adjointable operators} provides
a way to pass between modules and algebras in a more seamless way.

The notion of non-degenerate is \emph{de rigueur} in C$^*$-theory, see
\cite[Chapter~2]{lance} for example.  Analogously, it is usual to consider essential modules
in Banach algebra theory.  By the above, we know that extensions will always be unique
under such conditions.  If our algebra has a bounded approximate
identity then we can construct extensions, see Theorem~\ref{exten_thm}.
Essentiallity seems like a reasonable minimal condition to consider,
but at least in principle, it would be interesting to consider wider classes of modules.
We shall not consider this problem, except when dealing with dual Banach algebras,
where the weak$^*$-topology can be used to deal with the non-essential setting,
see Section~\ref{dba_section}.

\subsection{Tensor products}\label{ten_prod_sec}

A motivating example for us is the following.  Let $\mc A$ be an algebra, and let $E$
be a vector space.  Consider the vector space $\mc A\otimes E$ (here and elsewhere,
an unadorned tensor product means the algebraic tensor product) which is an
$\mc A$-bimodule for the actions
\[ a\cdot (b\otimes x) = ab\otimes x, \quad
(b\otimes x)\cdot a = ba\otimes x
\qquad (a\in\mc A, b\otimes x\in \mc A\otimes E). \]
Indeed, it is not too hard to show that $M(\mc A)\otimes E$ is isomorphic to
$M(\mc A\otimes E)$ under the map which sends $(l,r)\otimes x$ to $(L,R)$
where $L(a) = l(a)\otimes x$ and $R(a)=r(a)\otimes x$ for $a\in\mc A$.
The Banach algebra case is somewhat more subtle!

To consider a suitable Banach algebra version, we have to consider completions
of tensor products.  Again, we work with a little generality.  Let $\proten$ denote
the \emph{projective tensor product}, so for Banach spaces $E$ and $F$,  $E\proten F$
is the completion of $E\otimes F$ under the norm
\[ \|\tau\| = \inf\Big\{ \sum_{k=1}^n \|x_k\|\|y_k\| : \tau = \sum_{k=1}^n
x_k\otimes y_k \Big\} \qquad (\tau\in E\otimes F). \]
Then $\proten$ has the universal property that if $\phi:E\times F\rightarrow G$
is a bounded bilinear map to some Banach space $G$, then there is a unique bounded
linear map $\tilde\phi:E\proten F\rightarrow G$ linearising $\phi$.  The dual
of $E\proten F$ can be identified with $\mc B(E,F^*)$ or $\mc B(F,E^*)$ by
\[ \ip{T}{x\otimes y} = \ip{T(x)}{y}, \quad
\ip{S}{x\otimes y} = \ip{S(y)}{x}, \]
where $x\otimes y\in E\proten F$, $T\in\mc B(E,F^*)$ and $S\in\mc B(F,E^*)$.

Let $\mc A$ be a Banach algebra and let $E$ be a Banach space.
We shall say that a norm $\alpha$ on $\mc A\otimes E$ is \emph{admissible} if
$\alpha(a\otimes x) = \|a\| \|x\|$ for $a\in\mc A$ and $x\in E$ (which says that
$\alpha$ is a \emph{cross-norm}) and the module actions of $\mc A$ are contractive.
The triangle inequality then shows that the projective tensor norm dominates $\alpha$.
Denote by $\mc A\proten_\alpha E$ the completion of $\mc A\otimes E$ under $\alpha$.
Then $\mc A\proten E\rightarrow \mc A\proten_\alpha E$ is norm-decreasing with
dense range.  The adjoint of this map thus identifies $(\mc A\proten_\alpha E)^*$
with a subspace of $\mc B(\mc A,E^*)$, which we shall denote by $\mc B_\alpha(\mc A,E^*)$.
We equip $\mc B_\alpha(\mc A,E^*)$ with the norm induced by $(\mc A\proten_\alpha E)^*$,
say $\|\cdot\|_\alpha$.  Thus
\[ \|T\|_\alpha \geq \|T\|, \quad \|a\cdot T\|_\alpha \leq \|a\|\|T\|_\alpha,
\quad \|T\cdot a\|_\alpha \leq \|a\|\|T\|_\alpha \qquad (a\in\mc A, T\in\mc B_\alpha(\mc A,E^*)). \]
Here $(a\cdot T)(b) = T(ba)$ and $(T\cdot a)(b) = T(ab)$ for $b\in\mc A$.

It is not obvious that $\mc A\proten_\alpha E$ will be faithful if $\mc A$ is;
however, \emph{henceforth we assume} that $\mc A\proten_\alpha E$ is a faithful module.
In particular, in examples, this will need to be checked.

For a given $E$, suppose for all choices of $\mc A$ we have an admissible norm $\alpha$
on $\mc A\otimes E$, and that if $T:\mc A\rightarrow\mc B$ is a contraction, then
$T\otimes I_E:\mc A\otimes E\rightarrow\mc B\otimes E$ is a contraction with respect
to $\alpha$.  Then we say that $\alpha$ is \emph{$E$-admissible}.

\begin{lemma}\label{lemma::tensor}
Let $\mc A$ be a Banach algebra, let $E$ be a Banach space, and let $\alpha$ be an admissible
norm.  There is a natural embedding $M(\mc A) \otimes E \rightarrow M(\mc A\proten_\alpha E)$
which is an $\mc A$-bimodule homomorphism.
If $\alpha$ is $E$-admissible, this extends to a contraction $M(\mc A)\proten_\alpha E
\rightarrow M(\mc A\proten_\alpha E)$.
\end{lemma}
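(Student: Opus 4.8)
The plan is to define the map on elementary tensors, extend by bilinearity to get the algebraic embedding, and then use $E$-admissibility to obtain the norm bound needed to pass to the completion. First I would define, for $\hat a\in M(\mc A)$ and $x\in E$, the pair $(L,R)$ of maps $\mc A\to\mc A\proten_\alpha E$ by $L(a)=L_{\hat a}(a)\otimes x$ and $R(a)=R_{\hat a}(a)\otimes x$, the right-hand sides lying in the algebraic tensor product $\mc A\otimes E\subseteq\mc A\proten_\alpha E$. Since $\hat a$ is a multiplier of $\mc A$, the identity $a\cdot L(b)=aL_{\hat a}(b)\otimes x=R_{\hat a}(a)b\otimes x=R(a)\cdot b$ holds, so $(L,R)\in M(\mc A\proten_\alpha E)$. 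This assignment is bilinear in $(\hat a,x)$, hence factors through a linear map $\Phi\colon M(\mc A)\otimes E\to M(\mc A\proten_\alpha E)$; this is just the algebraic isomorphism $M(\mc A)\otimes E\cong M(\mc A\otimes E)$ recorded earlier, post-composed with the inclusion $\mc A\otimes E\hookrightarrow\mc A\proten_\alpha E$.

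For injectivity I would take $\tau=\sum_i\hat a_i\otimes x_i$ with the $x_i$ linearly independent and suppose $\Phi(\tau)=0$; then $\sum_i L_{\hat a_i}(a)\otimes x_i=0$ in $\mc A\proten_\alpha E$ for every $a$, and since $\alpha$ is a cross-norm the algebraic tensor product embeds in the completion, so linear independence forces each $L_{\hat a_i}(a)=0$, and likewise $R_{\hat a_i}(a)=0$, whence $\tau=0$. That $\Phi$ is an $\mc A$-bimodule homomorphism is then routine unwinding: using $a\cdot\hat x=R_{\hat x}(a)$ and $\hat x\cdot a=L_{\hat x}(a)$ on both $M(\mc A)$ and $M(\mc A\proten_\alpha E)$, one checks that $\Phi\big((a\cdot\hat a)\otimes x\big)$ and $a\cdot\Phi(\hat a\otimes x)$ are both the multiplier induced by the element $R_{\hat a}(a)\otimes x\in\mc A\proten_\alpha E$, and symmetrically on the right.

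The substance of the lemma, and where I expect the only real difficulty, is the contractivity of the extension. The naive attempt — to bound $\|L(a)\|$ against the norm of $\tau$ through the inclusion $\mc A\proten_\alpha E\to M(\mc A)\proten_\alpha E$ — fails, since that inclusion runs the wrong way and is only norm-decreasing; there is no retraction $M(\mc A)\to\mc A$. The fix is to read the formula $L(a)=\sum_i L_{\hat a_i}(a)\otimes x_i$ as $(\theta_a\otimes I_E)(\tau)$, where $\theta_a\colon M(\mc A)\to\mc A$ is the evaluation $\hat a\mapsto L_{\hat a}(a)=\hat a\cdot a$, which does land in $\mc A$ because $\mc A$ is idealised in $M(\mc A)$ by Lemma~\ref{idealiser}. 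Since $\|\theta_a(\hat a)\|=\|L_{\hat a}(a)\|\le\|a\|\,\|\hat a\|$, the map $\theta_a$ is bounded by $\|a\|$, so $E$-admissibility (applied to $\theta_a/\|a\|$) gives that $\theta_a\otimes I_E$ extends to a bounded map $M(\mc A)\proten_\alpha E\to\mc A\proten_\alpha E$ of norm at most $\|a\|$; thus $\alpha\big(L(a)\big)\le\|a\|\,\alpha(\tau)$, where $\alpha(\tau)$ is the norm of $\tau$ in $M(\mc A)\proten_\alpha E$. The parallel argument with $\rho_a\colon\hat a\mapsto R_{\hat a}(a)=a\cdot\hat a$ yields $\alpha\big(R(a)\big)\le\|a\|\,\alpha(\tau)$. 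Taking the supremum over $\|a\|\le1$ gives $\|\Phi(\tau)\|=\max(\|L\|,\|R\|)\le\alpha(\tau)$, so $\Phi$ is contractive for the $\alpha$-norm and extends to the claimed contraction $M(\mc A)\proten_\alpha E\to M(\mc A\proten_\alpha E)$. The one thing to keep straight throughout is the bookkeeping of which completion each element inhabits: the whole point is that $E$-admissibility lets us evaluate the norm of $L(a)$ inside $\mc A\proten_\alpha E$ as the image of a contraction \emph{out of} $M(\mc A)\proten_\alpha E$, rather than attempting to embed the latter into the former.
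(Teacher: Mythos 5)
Your proof is correct and follows essentially the same route as the paper's: the same definition on elementary tensors, the same linear-independence argument for injectivity, and the same key device for contractivity, namely writing $L(a)=(\theta_a\otimes I_E)(\tau)$ for the evaluation map $\theta_a:M(\mc A)\to\mc A$, $\hat a\mapsto\hat a a$ (the paper's maps $S$ and $T$), and invoking $E$-admissibility. The only difference is cosmetic: the paper fixes $\|a\|\leq 1$ at the outset so that $S$ and $T$ are contractions directly, whereas you normalise by $\|a\|$.
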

\begin{proof}
Let $\hat a\in M(\mc A)$ and $x\in E$, and define
$L,R:\mc A\rightarrow \mc A\proten_\alpha E$ by
\[ L(a) = \hat a a \otimes x, \quad R(a) = a \hat a \otimes x
\qquad (a\in\mc A). \]
Then, for $a,b\in\mc A$, we have that $a\cdot L(b) = a \hat a b \otimes x = R(a) \cdot b$,
so that $(L,R)\in M(A\proten_\alpha E)$.  Write $\beta(\hat a\otimes x)$ for $(L,R)$, so
that $\beta:M(\mc A)\times E \rightarrow M(\mc A \proten_\alpha E)$ is bilinear.
Thus $\beta$ extends to a linear map on $M(\mc A)\otimes E$.  Suppose that
$\sum_{k=1}^n \hat a_k \otimes x_k$ is mapped to the zero multiplier.  We may
suppose that $\{x_k\}$ is linearly independent, so for each $a\in\mc A$,
$\hat a_ka=0$ for each $k$.  Hence $\hat a_k=0$ for each $k$, and we conclude that
$\beta$ is an injection.

For $\hat a\otimes x\in M(\mc A)\otimes E$ and $a\in\mc A$, we have
\[ \beta(a\cdot(\hat a\otimes x))\cdot b = (a\hat a)b \otimes x
= a\cdot (\beta(\hat a\otimes x)\cdot b) \qquad (b\in\mc A). \]
Similar calculations establish that $\beta$ is an $\mc A$-bimodule homomorphism.

If $\alpha$ is $E$-admissible, then fix $a\in\mc A$ with $\|a\|\leq 1$,
and define contractions $S,T:M(\mc A)\rightarrow\mc A$ by
\[ S(\hat a) = \hat a a, \quad T(\hat a) = a\hat a \qquad(\hat a\in M(\mc A)). \]
Then notice that for $\tau\in M(\mc A)\otimes E$, if $\beta(\tau) = (L,R)$, then
\[ L(a) = (S\otimes I_E)\tau, \quad R(a) = (T\otimes I_E)\tau. \]
Thus $\max(\|L(a)\|,\|R(a)\|) \leq \|\tau\|$.  As $a$ was arbitrary, we conclude
that $\|(L,R)\| \leq \|\tau\|$.  Thus $\beta$ is a contraction, and so extends by
continuity to $M(\mc A)\proten_\alpha E$.
\end{proof}

Let $\alpha$ be an $E$-admissible norm.  Given a non-degenerate homomorphism
$\theta:\mc A\rightarrow M(\mc B)$, we have the chain of maps
\[ \xymatrix{ \mc A\proten_\alpha E \ar[r]^{\theta\otimes I_E} &
M(\mc B)\proten_\alpha E \ar[r] & M_{\mc B}(\mc B\proten_\alpha E). } \]
The composition is an $\mc A$-bimodule homomorphism, if $\mc A$ acts on $\mc B$
(and hence on $\mc B\proten_\alpha E$) in the usual way induced by $\theta$.
Then we can apply Theorem~\ref{always_ext_mod_homos} to find a unique,
strictly-continuous extensions
\[ \theta\otimes I_E : M_{\mc A}(\mc A\proten_\alpha E) \rightarrow
M_{\mc B}(\mc B\proten_\alpha E). \]

Extending maps defined on $E$ is also easy, provided we have suitable tensor
norms.  Suppose for all $E$ we have an admissible norm $\alpha$ on $\mc A\otimes E$,
and that if $T:E\rightarrow F$ is a contraction, then $I_{\mc A}\otimes T$ extends to
a contraction $\mc A\proten_\alpha E\rightarrow \mc A\proten_\alpha F$.  Then we say that
$\alpha$ is \emph{uniformly admissible}.

\begin{proposition}\label{tensor_ext_easy}
Let $\mc A$ be a Banach algebra, $\alpha$ be a uniformly admissible norm, and
let $E$ and $F$ be Banach spaces.  Any $T\in\mc B(E,F)$ can be extended uniquely to
an $\mc A$-bimodule homomorphism $T_{\mc A}:M(\mc A\proten_\alpha E) \rightarrow
M(\mc A\proten_\alpha F)$ which satisfies $T_{\mc A}(\hat a\otimes x) = \hat a\otimes T(x)$
for $\hat a\in M(\mc A)$ and $x\in E$, and with $\|T_{\mc A}\| \leq \|T\|$.
\end{proposition}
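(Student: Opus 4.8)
The plan is to push the whole problem down to the level of the completed tensor products and then invoke Theorem~\ref{always_ext_mod_homos}. The role of uniform admissibility is exactly to make $I_{\mc A}\otimes T$ available as a bounded operator $\mc A\proten_\alpha E\rightarrow\mc A\proten_\alpha F$; once this is in hand, the passage to the multiplier modules is purely formal.

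First I would construct $\psi:\mc A\proten_\alpha E\rightarrow\mc A\proten_\alpha F$. When $\|T\|\leq 1$, uniform admissibility says $I_{\mc A}\otimes T$ extends to a contraction; for general $T\neq 0$ I would apply this to $T/\|T\|$ and rescale to get $\|\psi\|\leq\|T\|$ (the case $T=0$ being trivial). On the dense subspace $\mc A\otimes E$ one checks $\psi(a\cdot(b\otimes x))=ab\otimes T(x)=a\cdot\psi(b\otimes x)$, and likewise on the right, so by continuity $\psi$ is an $\mc A$-bimodule homomorphism. Since we assume throughout that $\mc A\proten_\alpha E$ and $\mc A\proten_\alpha F$ are faithful modules, Theorem~\ref{always_ext_mod_homos} applies directly: it produces a unique, strictly continuous $\mc A$-bimodule homomorphism $T_{\mc A}:=\tilde\psi:M(\mc A\proten_\alpha E)\rightarrow M(\mc A\proten_\alpha F)$ extending $\psi$, with $\|T_{\mc A}\|\leq\|\psi\|\leq\|T\|$. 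This settles existence and the norm estimate.

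To verify $T_{\mc A}(\hat a\otimes x)=\hat a\otimes T(x)$, where $\hat a\otimes x$ denotes the multiplier supplied by Lemma~\ref{lemma::tensor}, I would feed it into the explicit formula $\tilde\psi(\hat z)=(L,R)$ with $L(a)=\psi(\hat z\cdot a)$ and $R(a)=\psi(a\cdot\hat z)$ from the proof of Theorem~\ref{always_ext_mod_homos}. Using $\hat z\cdot a=L_{\hat z}(a)=\hat a a\otimes x$ and $a\cdot\hat z=R_{\hat z}(a)=a\hat a\otimes x$ (Lemma~\ref{lemma::tensor} together with the end-of-section notation), and that $\psi$ agrees with $I_{\mc A}\otimes T$ on $\mc A\otimes E$, I obtain $L(a)=\hat a a\otimes T(x)$ and $R(a)=a\hat a\otimes T(x)$; these are precisely the coordinates of the multiplier attached to $\hat a\otimes T(x)$, as required.

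For uniqueness I would first observe that on $\mc A\otimes E$ the Lemma~\ref{lemma::tensor} embedding, restricted to $\mc A\subseteq M(\mc A)$, coincides with the canonical embedding of $\mc A\proten_\alpha E$ into its multiplier module, both sending $a\otimes x$ to the multiplier with $L$-part $b\mapsto ab\otimes x$. Hence any bounded $\mc A$-bimodule homomorphism satisfying the tensor formula agrees with $\psi$ on $\mc A\otimes E$, and, being bounded and so continuous into $M(\mc A\proten_\alpha F)$, agrees with $\psi$ on all of $\mc A\proten_\alpha E$ by density; the uniqueness clause of Theorem~\ref{always_ext_mod_homos} then forces it to equal $T_{\mc A}$. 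I expect this last step to be the only genuine subtlety: one must take care that a competitor restricts to $\psi$ on the whole completed module and not merely on the algebraic tensor product, so that the hypotheses of Theorem~\ref{always_ext_mod_homos} are literally met. Everything else is a routine calculation.
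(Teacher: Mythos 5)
Your proof is correct and takes essentially the same approach as the paper: the paper's proof defines $T_{\mc A}(\hat x)$ directly as the pair $\bigl((I_{\mc A}\otimes T)\circ L_{\hat x},\,(I_{\mc A}\otimes T)\circ R_{\hat x}\bigr)$, which is precisely the multiplier that Theorem~\ref{always_ext_mod_homos} produces when applied to $\psi=I_{\mc A}\otimes T$. Routing the argument through that theorem merely packages the same construction, while supplying explicitly the bimodule, norm and uniqueness verifications that the paper dismisses as ``easy to verify'' (your care over the competitor agreeing with $\psi$ on the whole completed module, not just $\mc A\otimes E$, is exactly the right point to check).
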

\begin{proof}
Given $\hat x\in M(\mc A\proten_\alpha E)$, define $L,R\in\mc B(\mc A,\mc A\proten_\alpha F)$ by
$L = (I_{\mc A}\otimes T)\circ L_{\hat x}$ and $R = (I_{\mc A}\otimes T)\circ R_{\hat x}$.
Then $(L,R)$ is a multiplier; denote this by $T_{\mc A}(\hat x)$.  It is now easy to verify
that $T_{\mc A}$ has the stated properties.
\end{proof}

\section{When we have a bounded approximate identity}\label{sec::bai}

In this section, we shall consider the case when a Banach algebra $\mc A$ admits
a bounded approximate identity.  We shall see that we can form extensions to multiplier
algebras, a fact known since the start of the theory, see \cite[Section~1.d]{johnmem}.
We shall develop the theory by using the Arens products, in a similar way to
\cite[Theorem~2.9.49]{dales} and \cite{mck}.

The Arens products are discussed in \cite[Theorem~2.6.15]{dales} and
\cite[Section~1.4]{palmer}.
We shall define the Arens products in a slightly unusual way, but our construction
will self-evidently generalise to operator spaces.  Recall that we identify the dual
of $\mc A\proten\mc A$ with $\mc B(\mc A,\mc A^*)$ by
\[ \ip{T}{a\otimes b} = \ip{T(a)}{b} \qquad (a\otimes b\in\mc A\proten\mc A,
T\in\mc B(\mc A,\mc A^*)). \]
We then have two embeddings of $\mc A^{**} \proten \mc A^{**}$ into
$(\mc A\proten\mc A)^{**} = \mc B(\mc A,\mc A^*)^*$, say
\[ \Phi\otimes\Psi \mapsto \Phi\otimes_{\aone}\Psi \text{ and }
\Phi\otimes_{\atwo}\Psi \qquad (\Phi,\Psi\in\mc A^{**}), \]
which are defined by
\[ \begin{matrix} \ip{\Phi\otimes_{\aone}\Psi}{T} = \ip{T^{**}(\Phi)}{\Psi},\\
\ip{\Phi\otimes_{\atwo}\Psi}{T} = \ip{T^{***}\kappa_{\mc A}^{**}(\Phi)}{\Psi}
\end{matrix}
\qquad (\Phi,\Psi\in\mc A^{**}, T\in\mc B(\mc A,\mc A^*)). \]
For operator spaces, the following observation will be useful.  Let $\alpha:
\mc B(\mc A,\mc A^*)\rightarrow\mc B(\mc A^{**},\mc A^{***})$ be the map $\alpha(T)
=T^{**}$, and let $\beta:\mc B(\mc A^{**},\mc A^{***}) \rightarrow (\mc A^{**}\proten
\mc A^{**})^*$ be the usual isomorphism.  Then the map $\Phi\otimes\Psi
\mapsto \Phi\otimes_{\aone}\Psi$ is simply the pre-adjoint of $\beta\circ\alpha$.
Similar remarks apply to $\otimes_{\atwo}$.

Let $\pi:\mc A\proten\mc A\rightarrow\mc A$ be the product map.  For $\mu\in\mc A^*$,
the map $\pi^*(\mu)\in\mc B(\mc A,\mc A^*)$ is the map $a\mapsto\mu\cdot a$, for $a\in\mc A$.
Then we define maps $\aone,\atwo:\mc A^{**}\proten\mc A^{**}\rightarrow\mc A^{**}$ by
\[ \ip{\Phi\aone\Psi}{\mu} = \ip{\Phi\otimes_{\aone}\Psi}{\pi^*(\mu)}, \quad
\ip{\Phi\atwo\Psi}{\mu} = \ip{\Phi\otimes_{\atwo}\Psi}{\pi^*(\mu)}
\qquad (\Phi,\Psi\in\mc A^{**}, \mu\in\mc A^*). \]
These are contractive associative algebra products on $\mc A^{**}$, called
the \emph{Arens products}, which extend the bimodule actions of $\mc A$ on $\mc A^{**}$,
where we embed $\mc A$ into $\mc A^{**}$ in the canonical fashion.  More conventionally,
we turn $\mc A^*$ into an $\mc A$-bimodule is the usual way, and then define actions
of $\mc A^{**}$ on $\mc A^*$ by
\[ \ip{\Phi\cdot\mu}{a} = \ip{\Phi}{\mu\cdot a}, \quad
\ip{\mu\cdot\Phi}{a} = \ip{\Phi}{a\cdot\mu} \qquad
(\Phi\in\mc A^{**}, \mu\in\mc A^*, a\in\mc A). \]
Then $\aone$ and $\atwo$ satisfy
\[ \ip{\Phi\aone\Psi}{\mu} = \ip{\Phi}{\Psi\cdot\mu}, \quad
\ip{\Phi\atwo\Psi}{\mu} = \ip{\Psi}{\mu\cdot\Phi}
\qquad (\Phi,\Psi\in\mc A^{**}, \mu\in\mc A^*). \]

The following is stated for Banach algebras with a contractive approximate identity in
\cite[Theorem~2.9.49]{dales} and is modeled on \cite{mck}.  Notice
that if $\mc A$ has a bounded approximate identity, then $\mc A$ is essential over itself.

\begin{theorem}\label{arens}
Let $\mc A$ be a Banach algebra with a bounded approximate identity $(e_\alpha)$.
Let $\Phi_0\in\mc A^{**}$ be a weak$^*$-accumulation point of $(e_\alpha)$.
For an essential $\mc A$-bimodule $E$ we have that:
\begin{enumerate}
\item\label{arens_one} $E$ is a closed submodule of $M(E)$ which is strictly dense;
\item\label{arens_two} $\theta:M(E)\rightarrow E^{**}$, defined by
$(L,R)\mapsto L^{**}(\Phi_0)$, is an $\mc A$-bimodule homomorphism, and
an isomorphism onto its range, with $\theta(x)=\kappa_E(x)$ for $x\in E$;
\item\label{arens_three} the image of $\theta$ is contained in the idealiser of $E$ in $E^{**}$.
Indeed, $R(a) = a \cdot L^{**}(\Phi_0)$
and $L(a) = L^{**}(\Phi_0)\cdot a$ for $a\in\mc A$ and $(L,R)\in M(E)$.
\end{enumerate}
When $E=\mc A$, the map $\theta$ is a homomorphism $M(\mc A)\rightarrow (\mc A^{**},\atwo)$.
\end{theorem}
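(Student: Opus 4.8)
The plan is to route everything through a single observation about $\Phi_0$ and then read off the four assertions in turn. For assertion (1), I would first show $E\to M(E)$ has closed range: since $(e_\alpha)$ is bounded by $K$ and $E$ is essential, $e_\alpha\cdot x\to x$ and $x\cdot e_\alpha\to x$ for all $x\in E$, so $\|x\|=\lim_\alpha\|R_x(e_\alpha)\|\leq K\|R_x\|\leq K\|(L_x,R_x)\|$, giving a lower bound; Lemma~\ref{idealiser} already exhibits $E$ as a submodule. For strict density I would run the net $e_\alpha\cdot\hat x=R_{\hat x}(e_\alpha)\in E$: testing at $a\in\mc A$, its left component is $R_{\hat x}(e_\alpha)\cdot a=e_\alpha\cdot L_{\hat x}(a)\to L_{\hat x}(a)$ (essentiality) and its right component is $a\cdot R_{\hat x}(e_\alpha)=R_{\hat x}(ae_\alpha)\to R_{\hat x}(a)$ (continuity of $R_{\hat x}$), so $e_\alpha\cdot\hat x\to\hat x$ strictly.

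The engine for the remaining parts is the remark that for any bounded $S:\mc A\to E$ the element $S^{**}(\Phi_0)$ is a weak$^*$-accumulation point of $(\kappa_E S(e_\alpha))_\alpha$; hence if $S(e_\alpha)\to z$ in norm then $S^{**}(\Phi_0)=\kappa_E(z)$. Taking $S=L_x$, for which $L_x(e_\alpha)=x\cdot e_\alpha\to x$, gives $\theta(x)=\kappa_E(x)$, the normalisation in (2). To prove (3) I would realise the two module actions on $E^{**}$ as double adjoints of left/right multiplication $\lambda_a,\rho_a$ on $E$: then $a\cdot\theta(\hat x)=(\lambda_a\circ L_{\hat x})^{**}(\Phi_0)$ and, because $(\lambda_a L_{\hat x})(e_\alpha)=a\cdot L_{\hat x}(e_\alpha)=R_{\hat x}(a)\cdot e_\alpha\to R_{\hat x}(a)$, the observation yields $a\cdot\theta(\hat x)=\kappa_E(R_{\hat x}(a))$; symmetrically $\theta(\hat x)\cdot a=(\rho_a L_{\hat x})^{**}(\Phi_0)=\kappa_E(L_{\hat x}(a))$ since $L_{\hat x}(e_\alpha a)\to L_{\hat x}(a)$. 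This is exactly (3) and places the range of $\theta$ inside the idealiser of $E$ in $E^{**}$.

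With (3) in hand the rest of (2) is quick. The map $\theta$ is an $\mc A$-bimodule homomorphism because $a\cdot\hat x=R_{\hat x}(a)$ lies in $E$, so $\theta(a\cdot\hat x)=\kappa_E(R_{\hat x}(a))=a\cdot\theta(\hat x)$ by the normalisation and (3), and symmetrically on the right. And $\theta$ is bounded below because (3) gives $\|L_{\hat x}(a)\|=\|\theta(\hat x)\cdot a\|\leq C\|\theta(\hat x)\|\,\|a\|$, and likewise for $R_{\hat x}$, whence $\|\hat x\|=\max(\|L_{\hat x}\|,\|R_{\hat x}\|)\leq C\|\theta(\hat x)\|$, where $C$ bounds the module action on $E^{**}$; together with $\|\theta(\hat x)\|\leq K\|\hat x\|$ this makes $\theta$ an isomorphism onto its range.

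Finally, for the case $E=\mc A$, the product on $M(\mc A)$ gives $L_{\hat a\hat b}=L_{\hat a}L_{\hat b}$, so $\theta(\hat a\hat b)=(L_{\hat a}L_{\hat b})^{**}(\Phi_0)$, and fixing a subnet with $\Phi_0=\lim_\beta\kappa_{\mc A}(e_{\alpha_\beta})$ weak$^*$ I would write $\theta(\hat b)$ as the weak$^*$-limit of $\kappa_{\mc A}(\hat b e_{\alpha_\beta})$. Using that $\Psi\mapsto\theta(\hat a)\atwo\Psi$ is weak$^*$-continuous and that $\theta(\hat a)\atwo\kappa_{\mc A}(c)=\theta(\hat a)\cdot c=\kappa_{\mc A}(\hat a c)$ by (3), I obtain $\theta(\hat a)\atwo\theta(\hat b)$ as the weak$^*$-limit of $\kappa_{\mc A}(\hat a\hat b e_{\alpha_\beta})$, which is precisely $\theta(\hat a\hat b)$. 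The main obstacle is this last matching of products. Both one-sided module actions of a fixed $a$ are weak$^*$-continuous on the bidual, so the Arens asymmetry never appears for single actions; it surfaces only here, and it is the weak$^*$-continuity of $\atwo$ in its \emph{second} variable, combined with the fact that $\Phi_0$ is approached by the $e_{\alpha_\beta}$ \emph{on the right}, that selects $\atwo$ rather than $\aone$ (running the same computation with $\aone$ instead strands an $e_{\alpha_\beta}$ between $\hat a$ and $\hat b$). Getting this bookkeeping right, rather than any single hard estimate, is where the care lies.
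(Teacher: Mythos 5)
Your proposal is correct and follows essentially the same route as the paper: everything is driven by identifying $L^{**}(\Phi_0)$ as the weak$^*$-limit of $\kappa_E L(e_\alpha)$ along a subnet, reading off the identities of part (3), and deducing the rest from them. The only differences are organisational --- you prove closedness of $E$ in $M(E)$ by showing the embedding is bounded below rather than via the Cauchy-sequence argument, and you package the $\atwo$-homomorphism step using weak$^*$-continuity of $\atwo$ in its second variable where the paper writes out the iterated limit explicitly; your closing remark about why $\aone$ fails (the stranded $e_{\alpha_\beta}$) is exactly the right diagnosis.
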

\begin{proof}
For (\ref{arens_one}), let $(L,R)\in M(E)$ and $a\in\mc A$, so that
$L(a) = \lim_\alpha L(e_\alpha a) = \lim_\alpha L(e_\alpha)\cdot a$.
As $E$ is essential, we see that $e_\alpha\cdot x\rightarrow x$ for each $x\in E$.
Thus $R(a) = \lim_\alpha R(a)\cdot e_\alpha = \lim_\alpha a\cdot L(e_\alpha)$.
We conclude that $L(e_\alpha) \rightarrow (L,R)$ strictly.

By passing to a subnet, we may suppose that $e_\alpha\rightarrow\Phi_0$
in the weak$^*$-topology on $E^{**}$.  It follows that $L(e_\alpha) \rightarrow
L^{**}(\Phi_0)$ weak$^*$ in $E^{**}$, for each $(L,R)\in M(E)$.  Thus, from above,
\[ L(a) = L^{**}(\Phi_0)\cdot a, \quad R(a) = a \cdot L^{**}(\Phi_0)
\qquad (a\in\mc A), \]
showing (\ref{arens_three}).  Notice that for $a\in\mc A$, $\|L(a)\|
= \|L^{**}(\Phi_0)\cdot a\| \leq \|L^{**}(\Phi_0)\| \|a\|$, so that $\|L\| \leq
\|L^{**}(\Phi_0)\| = \|\theta(L,R)\|$.  Similarly, $\|R\| \leq \|\theta(L,R)\|$,
and so $\theta$ is norm-increasing.  For $x\in E$, we have that
\[ \theta(x) = L_x^{**}(\Phi_0) = \lim_\alpha \kappa_EL_x(e_\alpha)
= \lim_\alpha \kappa_E(x\cdot e_\alpha) = \kappa_E(x), \]
again using that $E$ is essential.  In particular, if $(x_n)$ is a sequence
in $E$ converging to $(L,R)$ in norm in $M(E)$, then $(\theta(x_n))$ is
a Cauchy sequence in $E^{**}$, which means that $(\kappa_E(x_n))$ is Cauchy,
that is, $(x_n)$ is Cauchy.  So $(L,R)\in E$ and $E$ is closed in $M(E)$,
which completes showing (\ref{arens_one}).

Finally, for $(L,R)\in M(E)$ and $a,b\in\mc A$,
\[ a\cdot\theta(L,R) = a\cdot L^{**}(\Phi_0) = \lim_\alpha a\cdot L(e_\alpha)
= \lim_\alpha R(a)\cdot e_\alpha = R(a) = a\cdot(L,R), \]
from Lemma~\ref{idealiser}, showing that $\theta$ is a left $\mc A$-module
homomorphism.  It follows similarly that $\theta$ is a right $\mc A$-module
homomorphism, which completes showing (\ref{arens_two}).

When $E=\mc A$, for $(L_1,R_1),(L_2,R_2)\in M(\mc A)$ and $\mu\in\mc A^*$, we have that
\begin{align*} \ip{L_1^{**}(\Phi_0) \atwo L_2^{**}(\Phi_0)}{\mu}
&= \lim_\alpha \ip{\mu\cdot L_1^{**}(\Phi_0)}{L_2(e_\alpha)}
= \lim_\alpha \lim_\beta \ip{L_2(e_\alpha)\cdot\mu}{L_1(e_\beta)} \\
&= \lim_\alpha \lim_\beta \ip{\mu}{L_1\big( e_\beta L_2(e_\alpha) \big)}
= \lim_\alpha \ip{\mu}{L_1L_2(e_\alpha)} \\
&= \ip{(L_1L_2)^{**}(\Phi_0)}{\mu}.
\end{align*}
This shows that $\theta$ is a homomorphism for $\atwo$.
\end{proof}

Note that we cannot, in general, identify the image of $\theta$ with the idealiser
of $E$ in $E^{**}$ as $E^{**}$ may not be faithful.  However, we could instead
work with the canonical faithful quotient of $E^{**}$.  We shall explore a more
general idea shortly.

Extending module actions is rather easy in this setting, once we have
Theorem~\ref{cohen}.  We can then apply Theorem~\ref{mult_mod_is_mult_mod}
and Proposition~\ref{homo_to_mod} to find further extensions.

\begin{theorem}\label{exten_thm}
Let $\mc A$ be a Banach algebra with a bounded approximate identity, and let $E$ be
an essential $\mc A$-bimodule.  Then $E$ carries a unique bounded $M(\mc A)$-bimodule
structure extending the $\mc A$-bimodule structure.  If $\mc A$ has a contractive
approximate identity, then $E$ becomes a (contractive) $M(\mc A)$-bimodule.
\end{theorem}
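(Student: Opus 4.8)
The plan is to use Cohen's factorisation theorem (Theorem~\ref{cohen}) to define the $M(\mc A)$-actions on $E$ and then to read off the module axioms from the factorisations. Since $E$ is essential as a bimodule it is essential as a left module and as a right module, so Theorem~\ref{cohen} gives that $E$ is neo-unital on each side: every $x\in E$ can be written both as $x=a\cdot y$ and as $x=y'\cdot b$ with $a,b\in\mc A$ and $y,y'\in E$. For $\hat a\in M(\mc A)$ I would define the left action by
\[ \hat a\cdot x = (\hat a a)\cdot y \quad\text{whenever } x=a\cdot y, \]
noting that $\hat a a=L_{\hat a}(a)\in\mc A$ (as $\mc A$ is an ideal in $M(\mc A)$), so that the right-hand side only involves the given $\mc A$-action; the right action is defined symmetrically by $x\cdot\hat a=y'\cdot(b\hat a)$ when $x=y'\cdot b$.

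The main obstacle is well-definedness, since a priori the value $(\hat a a)\cdot y$ could depend on the chosen factorisation. Here the bounded approximate identity $(e_\alpha)$ re-enters to glue things together. Writing $C$ for the bound of the $\mc A$-action, I would compute, for any factorisation $x=a\cdot y$,
\[ (\hat a a)\cdot y = \lim_\alpha (\hat a e_\alpha a)\cdot y = \lim_\alpha (\hat a e_\alpha)\cdot(a\cdot y) = \lim_\alpha (\hat a e_\alpha)\cdot x, \]
using $e_\alpha a\to a$ together with the continuity of $L_{\hat a}$ and of the module action for the first equality, and the associativity of the $\mc A$-action (with $\hat a e_\alpha\in\mc A$) for the second. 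Since the final expression depends only on $x$, the left action is well-defined. Moreover $\|(\hat a e_\alpha)\cdot x\|\leq C\|\hat a\|\,\|e_\alpha\|\,\|x\|\leq CK\|\hat a\|\,\|x\|$, where $K$ bounds the approximate identity, so passing to the limit gives $\|\hat a\cdot x\|\leq CK\|\hat a\|\,\|x\|$; when the module and the approximate identity are both contractive, $C=K=1$ and the extended action is contractive.

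It then remains to check the module axioms, which I would verify directly from the factorisations. Taking $b\in\mc A\subseteq M(\mc A)$ shows the new action restricts to the old one; left associativity follows because $(\hat b a)\cdot y$ is already in factored form, so $\hat a\cdot(\hat b\cdot x)=(\hat a\hat b a)\cdot y=(\hat a\hat b)\cdot x$. For the bimodule compatibility $(\hat a\cdot x)\cdot\hat b=\hat a\cdot(x\cdot\hat b)$ I would first use neo-unitality on both sides to write $x=a\cdot z\cdot b$; both expressions then reduce to $(\hat a a)\cdot z\cdot(b\hat b)$ and hence agree.

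Finally, uniqueness is forced by essentiality. If $E$ carries any bounded $M(\mc A)$-bimodule structure extending the $\mc A$-structure, then for $x=a\cdot y$ the module axioms give $\hat a\cdot x=\hat a\cdot(a\cdot y)=(\hat a a)\cdot y$, which is exactly the action constructed above; since every $x$ factors, the left action is determined, and symmetrically for the right. I expect well-definedness to be the only genuine difficulty: the axioms and the norm estimates are then routine manipulations of the defining factorisations.
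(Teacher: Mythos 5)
Your proposal is correct and follows essentially the same route as the paper: Cohen's factorisation theorem to write elements of $E$ in factored form, definition of the $M(\mc A)$-actions through those factorisations, and uniqueness forced by essentiality. The one point where you go beyond the paper's sketch is well-definedness, where your identity $\hat a\cdot x=\lim_\alpha(\hat a e_\alpha)\cdot x$ neatly replaces the paper's brief appeal to faithfulness and also delivers the norm bound (and the contractive case) directly.
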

\begin{proof}
Johnson showed this in \cite[Section~1.d]{johnson}, so we only give a sketch.
By Theorem~\ref{cohen}, each $x\in E$ has the form $a\cdot y\cdot b$ for some
$a,b\in\mc A$ and $y\in E$.  Then we define
\[ \hat a\cdot x = (\hat a a)\cdot y\cdot b, \quad
x\cdot\hat a = a\cdot y \cdot (b \hat a) \qquad ( \hat a\in M(\mc A) ). \]
By using that $E$ is faithful, we may check that these are well-defined actions;
clearly they extend the module actions of $\mc A$.  Uniqueness follows from
Proposition~\ref{homo_to_mod}.

If $\mc A$ has a contractive approximate identity, then Theorem~\ref{cohen}
gives that for $x\in E$, we can write $x=a\cdot z$ where $z$ is arbitrarily close
to $x$, and $\|a\|\leq 1$.  Similarly, we can write $z = y\cdot b$ with $y$
arbitrarily close to $z$, and with $\|b\|\leq 1$.  Thus
\[ \|\hat a\cdot x\| = \|\hat a a \cdot y \cdot b\|
\leq \|\hat a a \| \|y\| \|b\| \leq \|\hat a\| \|y\|, \]
which can be made arbitrarily close to $\|\hat a\| \|x\|$.  Hence the left
module action of $M(\mc A)$ is contractive; similarly on the right.
\end{proof}

\subsection{Smaller submodules}

Theorem~\ref{arens} identifies $M(E)$ with a subspace of $E^{**}$.  However, $E^{**}$
can both be very large, and can fail to be faithful.  In applications, it is often
useful to work with a smaller $\mc A$-bimodule.

Let $F\subseteq E^*$ be a closed $\mc A$-submodule of $E^*$.  Denote by
$q_F$ the quotient map $E^{**}\rightarrow F^*=E^{**}/F^\perp$, which is an
$\mc A$-bimodule homomorphism.  Let $\theta_F=q_F\theta:M(E)
\rightarrow F^*$, where $\theta$ is given by Theorem~\ref{arens}.
Let $\iota_F:E\rightarrow F^*$ be the natural map, which is $q_F\kappa_E$.

\begin{theorem}\label{submod_rep_thm}
Let $\mc A$ be a Banach algebra with a bounded approximate identity, let $E$ be an
essential $\mc A$-bimodule, and let $F\subseteq E^*$ be a closed submodule such that
$F\cdot\mc A$ or $\mc A\cdot F$ is dense in $F$.  In particular, this holds if $F$
is essential.  Then:
\begin{enumerate}
\item\label{dbr:zero} for $\hat x\in M(E)$ and $a\in\mc A$, we have
  $\theta_F(\hat x)\cdot a = \iota_F(\hat x\cdot a)$ and
  $a\cdot\theta_F(\hat x) = \iota_F(a\cdot\hat x)$;
\item\label{dbr:one} $\theta_F:M(E)\rightarrow F^*$ is strictly-weak$^*$ continuous;
\item\label{dbr:two} $\iota_F:E\rightarrow F^*$ is injective
  if and only if $\theta_F$ is injective;
\item\label{dbr:three} $\iota_F$ is bounded below if and only if $\theta_F$ is bounded below;
\item\label{dbr:four} when $\theta_F$ is injective, the image of $\theta_F$ is the
  idealiser of $E$ in $F^*$.
\end{enumerate}
\end{theorem}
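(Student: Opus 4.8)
The plan is to make part~(\ref{dbr:zero}) the foundational identity and then read off everything else from it. First I would record that $\theta$ is an $\mc A$-bimodule homomorphism by Theorem~\ref{arens}, that $q_F$ is one by construction, and hence that $\theta_F=q_F\theta$ is an $\mc A$-bimodule homomorphism; moreover its restriction to $E\subseteq M(E)$ is exactly $\iota_F$, since $\theta_F(x)=q_F\kappa_E(x)=\iota_F(x)$ for $x\in E$. Part~(\ref{dbr:zero}) is then immediate: for $\hat x\in M(E)$ and $a\in\mc A$ we have $\hat x\cdot a=L_{\hat x}(a)\in E$ (Lemma~\ref{idealiser}), so $\theta_F(\hat x)\cdot a=\theta_F(\hat x\cdot a)=\iota_F(\hat x\cdot a)$, and symmetrically $a\cdot\theta_F(\hat x)=\iota_F(a\cdot\hat x)$.

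Parts~(\ref{dbr:two}) and~(\ref{dbr:three}) follow quickly. Because $\iota_F$ is the restriction of $\theta_F$ to the faithfully embedded submodule $E$, injectivity or boundedness below of $\theta_F$ passes to $\iota_F$ for free. For the converses I would use~(\ref{dbr:zero}): if $\iota_F$ is injective and $\theta_F(\hat x)=0$, then $\iota_F(\hat x\cdot a)=\theta_F(\hat x)\cdot a=0$ forces $\hat x\cdot a=0$ for all $a$, i.e.\ $L_{\hat x}=0$, and likewise $R_{\hat x}=0$, so $\hat x=0$. For boundedness below, if $\|\iota_F(x)\|\geq c\|x\|$ then from $\|\theta_F(\hat x)\cdot a\|=\|\iota_F(\hat x\cdot a)\|\geq c\|\hat x\cdot a\|$ together with boundedness of the module action on $F^*$, taking the supremum over $\|a\|\leq1$ recovers $\|L_{\hat x}\|$ and $\|R_{\hat x}\|$ up to a fixed constant, giving $\|\theta_F(\hat x)\|\geq c'\max(\|L_{\hat x}\|,\|R_{\hat x}\|)=c'\|\hat x\|$.

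For the continuity statement~(\ref{dbr:one}) I would test weak$^*$ convergence against $F$, and here the hypothesis that one of $\mc A\cdot F$, $F\cdot\mc A$ is dense does the work. Assuming say $\mc A\cdot F$ dense, for $f=a\cdot g$ part~(\ref{dbr:zero}) gives $\ip{\theta_F(\hat x)}{a\cdot g}=\ip{\theta_F(\hat x)\cdot a}{g}=\ip{\iota_F(\hat x\cdot a)}{g}=\ip{L_{\hat x}(a)}{g}$, which is manifestly continuous along a strict seminorm direction; thus $\theta_F$ is continuous into the topology of pointwise convergence on the dense set $\mc A\cdot F$. I expect the genuine technical point to be upgrading this to honest weak$^*$ continuity: passing from $\mc A\cdot F$ to all of $F$ requires controlling $\|\theta_F(\hat x_\alpha)\|$ along the net, so the clean statement is for norm-bounded strictly convergent nets (as occurs in the intended applications), and this is the step I would flag as the main obstacle.

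Finally, for part~(\ref{dbr:four}) the inclusion $\theta_F(M(E))\subseteq E_{F^*}$ into the idealiser of $E$ in $F^*$ is immediate from~(\ref{dbr:zero}), since $a\cdot\theta_F(\hat x)=\iota_F(a\cdot\hat x)$ and $\theta_F(\hat x)\cdot a=\iota_F(\hat x\cdot a)$ both lie in $\iota_F(E)$. For the reverse inclusion, given $\xi\in E_{F^*}$ I would use injectivity of $\iota_F$ (from~(\ref{dbr:two})) to define $L(a)=\iota_F^{-1}(\xi\cdot a)$ and $R(a)=\iota_F^{-1}(a\cdot\xi)$; a short check, again via injectivity of $\iota_F$, shows $(L,R)\in M(E)$, and~(\ref{dbr:zero}) yields $\theta_F(L,R)\cdot a=\xi\cdot a$ and $a\cdot\theta_F(L,R)=a\cdot\xi$ for all $a$. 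To conclude $\theta_F(L,R)=\xi$ I would invoke the density hypothesis a second time: if $\mc A\cdot F$ is dense then $\eta\cdot a=0$ for all $a$ forces $\eta=0$ in $F^*$, and symmetrically if $F\cdot\mc A$ is dense using $a\cdot\eta=0$, which is exactly the one-sided faithfulness needed to cancel the trailing $a$. Constructing this multiplier and justifying the final cancellation is the substantive part of the argument, and it is precisely where the one-sided density assumption is used.
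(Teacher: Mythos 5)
Your overall architecture matches the paper's: everything is derived from the identity in part~(\ref{dbr:zero}), which itself rests on Theorem~\ref{arens}, and your arguments for (\ref{dbr:zero}), (\ref{dbr:two}) and (\ref{dbr:three}) are essentially the ones in the text. Your treatment of (\ref{dbr:four}) is a mild variant: where the paper verifies $\theta_F(L,R)=\Phi$ by a direct computation against $\mu\cdot a$ using the approximate identity, you instead observe that $\theta_F(L,R)$ and $\xi$ have the same left and right $\mc A$-actions and cancel the trailing $a$ using the one-sided density hypothesis (if $\mc A\cdot F$ is dense then $\eta\cdot a=0$ for all $a$ forces $\eta=0$). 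That cancellation is correct and arguably cleaner.

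The genuine gap is the one you flagged yourself in part~(\ref{dbr:one}), and it does need to be closed: the theorem asserts strict-to-weak$^*$ continuity for arbitrary strictly convergent nets, not just bounded ones, and a strictly convergent net in $M(E)$ need not be bounded, so you cannot pass from pointwise convergence on the dense subspace $\lin(\mc A\cdot F)$ to convergence on all of $F$ by a $3\epsilon$ argument. The missing ingredient is Theorem~\ref{cohen} (Cohen--Hewitt factorisation): the hypothesis that $\mc A\cdot F$ (or $F\cdot\mc A$) is dense in $F$ makes $F$ an essential one-sided module, and Cohen's theorem then upgrades this to the exact factorisation $F=\{a\cdot\mu : a\in\mc A,\ \mu\in F\}$ (respectively $F=\{\mu\cdot a\}$). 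Hence \emph{every} functional in $F$ is of the form $a\cdot\mu$, and your computation $\ip{\theta_F(\hat x_\alpha)}{a\cdot\mu}=\ip{L_{\hat x_\alpha}(a)}{\mu}\rightarrow\ip{L_{\hat x}(a)}{\mu}$ already tests against all of $F$; no boundedness of the net is required. This is precisely why the hypothesis is phrased as a one-sided density condition rather than, say, weak$^*$-density: it is designed to feed into Cohen's theorem. With that observation inserted, your proof of (\ref{dbr:one}) closes and the remainder of your argument stands.
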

\begin{proof}
With reference to Theorem~\ref{arens}, let $\Phi_0\in\mc A^{**}$ be the
weak$^*$-limit of the bounded approximate identity $(e_\alpha)$, so that
$\theta_F(L,R) = q_F L^{**}(\Phi_0)$ for $(L,R)\in M(E)$.
Thus, for $a\in\mc A$ and $\hat x\in M(E)$,
\[ \theta_F(\hat x) \cdot a = q_F( \theta(\hat x)\cdot a )
= q_F \kappa_E (\hat x\cdot a) = \iota_F(\hat x\cdot a). \]
Similarly, $a\cdot\theta_F(\hat x) = \iota_F(a\cdot\hat x)$, showing (\ref{dbr:zero}).

Suppose that $F\cdot\mc A$ is dense in $F$, so by Theorem~\ref{cohen}, a typical
element of $F$ has the form $\mu\cdot a$ for some
$a\in\mc A$ and $\mu\in F$.  Suppose that $(L_\alpha,R_\alpha) \rightarrow (L,R)$
strictly in $M(E)$.  Then
\begin{align*} \lim_\alpha \ip{L_\alpha^{**}(\Phi_0)}{\mu\cdot a}
&= \lim_\alpha \lim_\beta \ip{\mu\cdot a}{L_\alpha(e_\beta)}
= \lim_\alpha \lim_\beta \ip{\mu}{R_\alpha(a)\cdot e_\beta}
= \lim_\alpha \ip{\mu}{R_\alpha(a)} \\
&= \ip{\mu}{R(a)} = \ip{L^{**}(\Phi_0)}{\mu\cdot a},
\end{align*}
using that $E$ is essential.  It follows that
$q_FL_\alpha^{**}(\Phi_0) \rightarrow q_FL^{**}(\Phi_0)$ weak$^*$ in $F^*$, as required.
The case when $\mc A\cdot F$ is dense in $F$ is similar, so we've shown (\ref{dbr:one}).

We have seen before that as $E$ is faithful, for $\hat x\in M(E)$, we have that
$\hat x=0$ if and only if $\hat x\cdot a=0$ for all $a\in\mc A$.
Let $\iota_F$ be injective, and suppose that $\theta_F(\hat x)=0$.  From
(\ref{dbr:zero}), it follows that $\iota_F(\hat x\cdot a) = \theta_F(\hat x)\cdot a = 0$
for all $a\in\mc A$, so that $\hat x=0$.  So $\theta_F$ injects.  Conversely,
as $\theta_F(x) = q_F \kappa_E(x) = \iota_F(x)$ for $x\in E$, if $\theta_F$ injects,
then certainly $\iota_F$ injects, so (\ref{dbr:two}) holds.

By Theorem~\ref{arens}, the inclusion $E\rightarrow M(E)$ is an isomorphism
onto its range.  As $\iota_F = \theta_F|_E$, it follows that if $\theta_F$ is
bounded below, then so is $\iota_F$.  
If $\iota_F$ is bounded below by $\delta>0$, then
\[ \|\theta_F(\hat x)\| \|a\| \geq \|\theta_F(\hat x)\cdot a\|
= \|\iota_F(\hat x\cdot a)\| \geq \delta \|L_{\hat x}(a)\| \qquad (a\in\mc A). \]
Thus we have that $\|\theta_F(\hat x)\| \geq \|L_{\hat x}\|$, and similarly,
$\|\theta_F(\hat x)\| \geq \|R_{\hat x}\|$.  So (\ref{dbr:three}) holds.

For (\ref{dbr:four}), we first note that by Theorem~\ref{arens}, $\theta_F$ maps
into the idealiser of $E$.  Conversely, if $\Phi\in F^*$ is such that
$\mc A\cdot\Phi, \Phi\cdot\mc A\subseteq \iota_F(E)$, then there exist
linear maps $L,R:\mc A \rightarrow E$ with
\[ \iota_F L(a) = \Phi\cdot a, \quad \iota_F R(a) = a\cdot\Phi
\qquad (a\in\mc A). \]
As $\iota_F$ is injective, it follows that $(L,R)\in M(E)$.  Suppose that
$\mc A\cdot F$ is dense in $F$ (the other case is similar) so that a typical
element of $F$ has the form $\mu\cdot a$ for some $a\in\mc A$
and $\mu\in F$.  Then
\begin{align*} \ip{\theta_F(L,R)}{\mu\cdot a}
&= \lim_\alpha \ip{\mu\cdot a}{L(e_\alpha)}
= \lim_\alpha \ip{\mu}{R(a)\cdot e_\alpha} = \ip{\mu}{R(a)} \\
&= \ip{a\cdot\Phi}{\mu} = \ip{\Phi}{\mu\cdot a}, \end{align*}
as $E$ is essential.  Thus $\theta_F(L,R) = \Phi$, as required.
The case when $\mc A\cdot F$ is dense in $F$ is similar.
\end{proof}

The power of this result is illustrated by the following example.  Let $G$ be a locally
compact group, and consider when $\mc A=E=L^1(G)$.  Then $\mc A$ has a contractive
bounded approximate identity, so Theorem~\ref{arens} applies, and we can consider
$M(\mc A)$ as a subalgebra of $L^1(G)^{**}$.  This, however, is a very large space!
Instead, consider $F=C_0(G)$ which is, see \cite[Theorem~3.3.23]{dales},
an essential submodule of $\mc A^*$.
Furthermore, the natural map $\mc A\rightarrow F^*$ is an isometry in this case.
Thus we may (isometrically) identify $M(\mc A)$ with the idealiser of $\mc A$ in $F^*$.
In this case, $F^*=M(G)$ the measure algebra, and so $M(\mc A)=F^*=M(G)$ (and
we have essentially reproved Wendel's Theorem, compare \cite[Theorem~3.3.40]{dales}).

We finish this section by proving a ``dual'' version of the above, which is an
easier result.

\begin{proposition}\label{dba_dual_mod}
Let $\mc A$ be a Banach algebra with a bounded approximate identity, let $E$ be an
essential $\mc A$-bimodule, and let $F\subseteq E^*$ be a closed submodule.  Then
there is a unique bounded $\mc A$-module homomorphism $\phi_F$ from $M(F)$ into the idealiser
of $F$ in $E^*$ which extends the inclusion $F\rightarrow E^*$.  Furthermore, $\phi_F$
satisfies:
\begin{enumerate}
\item $\hat x \cdot a = \phi_F(\hat x)\cdot a$ and $a\cdot\hat x = a\cdot\phi_F(\hat x)$
for $a\in\mc A$ and $\hat x\in M(F)$;
\item $\phi_F$ is strictly-weak$^*$ continuous;
\end{enumerate}
\end{proposition}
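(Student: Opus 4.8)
The plan is to construct $\phi_F$ by hand, prescribing its value on $E$ via Cohen factorisation, and then to read off every asserted property from the defining formula. Fix a bounded approximate identity $(e_\alpha)$ of bound $K$. Given $\hat x=(L_{\hat x},R_{\hat x})\in M(F)$, define a functional on $E$ by
\[ \ip{\phi_F(\hat x)}{x} = \lim_\alpha \ip{L_{\hat x}(e_\alpha)}{x} \qquad (x\in E), \]
where the pairing is that of $F\subseteq E^*$ with $E$. The one genuinely delicate point — the step I expect to be the main obstacle — is that this limit exists and depends on nothing but $x$. Here I would invoke Theorem~\ref{cohen}: as $E$ is essential, every $x$ factors as $x=a\cdot y$, and then, since $L_{\hat x}$ is a left multiplier (so $L_{\hat x}(e_\alpha a)=L_{\hat x}(e_\alpha)\cdot a$) and $e_\alpha a\to a$,
\[ \lim_\alpha \ip{L_{\hat x}(e_\alpha)}{a\cdot y} = \lim_\alpha \ip{L_{\hat x}(e_\alpha)\cdot a}{y} = \lim_\alpha \ip{L_{\hat x}(e_\alpha a)}{y} = \ip{L_{\hat x}(a)}{y}. \]
This simultaneously shows the limit exists, equals $\ip{L_{\hat x}(a)}{y}$ for \emph{any} factorisation $x=a\cdot y$, and, via the bound on $(e_\alpha)$, that $\|\phi_F(\hat x)\|\leq K\|\hat x\|$; so $\phi_F:M(F)\to E^*$ is well-defined, linear and bounded.

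Next I would establish (1), which also shows that $\phi_F$ lands in the idealiser. For $a\in\mc A$ and $y\in E$ the displayed computation gives $\ip{\phi_F(\hat x)\cdot a}{y}=\ip{\phi_F(\hat x)}{a\cdot y}=\ip{L_{\hat x}(a)}{y}$, whence $\phi_F(\hat x)\cdot a=L_{\hat x}(a)=\hat x\cdot a\in F$. Dually, using the multiplier identity $a\cdot L_{\hat x}(b)=R_{\hat x}(a)\cdot b$ together with $e_\alpha\cdot y\to y$, one finds $a\cdot\phi_F(\hat x)=R_{\hat x}(a)=a\cdot\hat x\in F$. Thus $\phi_F(\hat x)$ idealises $F$ and (1) holds. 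That $\phi_F$ extends the inclusion follows by putting $\hat x=\mu\in F$: then $\ip{\phi_F(\mu)}{x}=\lim_\alpha\ip{\mu\cdot e_\alpha}{x}=\lim_\alpha\ip{\mu}{e_\alpha\cdot x}=\ip{\mu}{x}$, again by essentiality. The module-homomorphism property is then formal: by Lemma~\ref{idealiser}, $a\cdot\hat x$ is the image in $M(F)$ of $R_{\hat x}(a)\in F$, so $\phi_F(a\cdot\hat x)=R_{\hat x}(a)=a\cdot\phi_F(\hat x)$ by (1), and symmetrically on the right.

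Finally, for uniqueness I would observe that any bounded $\mc A$-module homomorphism $\psi$ extending the inclusion must satisfy $\psi(\hat x)\cdot a=\psi(\hat x\cdot a)=\hat x\cdot a=\phi_F(\hat x)\cdot a$ for all $a$, since $\hat x\cdot a\in F$; as $E$ is essential, $\mu\cdot a=0$ for all $a$ forces $\mu=0$ in $E^*$, so $\psi=\phi_F$. For (2), strict-weak$^*$ continuity, I would again take $x=a\cdot y$ and use (1): $\ip{\phi_F(\hat x_\beta)}{x}=\ip{L_{\hat x_\beta}(a)}{y}$, which converges to $\ip{L_{\hat x}(a)}{y}=\ip{\phi_F(\hat x)}{x}$ whenever $\hat x_\beta\to\hat x$ strictly, giving pointwise — hence weak$^*$ — convergence on all of $E$. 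Everything past the well-definedness of the defining formula is thus routine bookkeeping.
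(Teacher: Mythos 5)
Your proposal is correct and follows essentially the same route as the paper: both define $\phi_F(\hat x)$ as the weak$^*$-limit of $\hat x\cdot e_\alpha = L_{\hat x}(e_\alpha)$ in $E^*$ and then verify properties (1), the extension property, uniqueness and strict-weak$^*$ continuity by pairing against elements $a\cdot y$ of $E$ and using essentiality. The only (harmless) differences are that you make the existence of the weak$^*$-limit explicit via Cohen factorisation, and you deduce the module-homomorphism property from Lemma~\ref{idealiser} rather than by testing against a further element of $\mc A$.
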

\begin{proof}
Let $(e_\alpha)$ be a bounded approximate identity for $\mc A$, and define
\[ \phi_F(\hat x) = \lim_\alpha \hat x\cdot e_\alpha \qquad(\hat x\in M(F)), \]
with the limit taken in the weak$^*$-topology on $E^*$.  Then, for $\hat x\in M(F)$
and $a\in\mc A$,
\[ \ip{\hat x\cdot a}{t} = \lim_\alpha \ip{\hat x\cdot(e_\alpha a)}{t}
= \lim_\alpha \ip{\hat x\cdot e_\alpha}{a\cdot t}
= \ip{\phi_F(\hat x)\cdot a}{t} \qquad (t\in E), \]
showing that $\hat x\cdot a = \phi_F(\hat x)\cdot a$.  Similarly,
\[ \ip{a\cdot\hat x}{b\cdot t} = \ip{\hat x\cdot b}{t\cdot a}
= \ip{\phi_F(\hat x)\cdot b}{t\cdot a}
= \ip{a\cdot\phi_F(\hat x)}{b\cdot t} \qquad (t\in E, b\in\mc A), \]
which shows that $a\cdot\hat x = a\cdot\phi_F(\hat x)$, using that $E$ is essential.

Let $\mu\in F$, so for $a\in\mc A$ and $t\in E$,
\[ \ip{\phi_F(\mu)}{a\cdot t} = \ip{\phi_F(\mu)\cdot a}{t}
= \ip{\mu\cdot a}{t} = \ip{\mu}{a\cdot t}, \]
so as $E$ is essential, $\phi_F(\mu) = \mu$ for $\mu\in F$.
Similarly, for $\hat x\in M(F)$ and $a,b\in\mc A$,
\[ \phi_F(a\cdot\hat x)\cdot b = (a\cdot\hat x)\cdot b = a\cdot(\hat x\cdot b)
= (a \cdot \phi_F(\hat x))\cdot b, \]
which shows that $\phi_F(a\cdot\hat x) = a\cdot\phi_F(\hat x)$.  Similarly,
$\phi_F(\hat x\cdot a) = \phi_F(\hat x)\cdot a$, so that $\phi_F$ is an $\mc A$-bimodule
homomorphism.

If $\phi:M(F)\rightarrow E^*$ is another extension of the inclusion $F\rightarrow E^*$
which is an $\mc A$-bimodule homomorphism, then for $\hat x\in M(F)$,
\[ \ip{\phi(\hat x)}{a\cdot t} = \ip{\phi(\hat x\cdot a)}{t} = \ip{\hat x\cdot a}{t}
= \ip{\phi_F(\hat x)}{a\cdot t} \qquad (a\in\mc A,t\in E), \]
so that $\phi(\hat x) = \phi_F(\hat x)$.  Hence $\phi_F$ is unique.

Let $\hat x_\alpha\rightarrow\hat x$ strictly in $M(F)$, so that
\[ \lim_\alpha \ip{\phi_F(\hat x_\alpha)}{a\cdot t} = \lim_\alpha \ip{\hat x_\alpha\cdot a}{t}
= \ip{\hat x\cdot a}{t} = \ip{\hat x}{a\cdot t} \qquad (a\in\mc A,t\in E), \]
and similarly for $t\cdot a$, showing that $\phi_F(\hat x_\alpha)\rightarrow \phi_F(\hat x)$
weak$^*$ in $E^*$, again, as $E$ is essential.
\end{proof}

Finally, we apply these ideas to extend module homomorphisms which map into dual modules.

\begin{proposition}\label{wsclosed_ext}
With the same hypotheses, use Theorem~\ref{exten_thm} to turn $E$, and hence also $E^*$,
into an $M(\mc A)$-bimodule.  If $F$ is weak$^*$-closed, then $F$ is an $M(\mc A)$-submodule,
and $\phi_F$ is an $M(\mc A)$-bimodule homomorphism.
\end{proposition}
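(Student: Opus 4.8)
The plan is to prove the two assertions in order: first that $F$ is an $M(\mc A)$-submodule of $E^*$, and then---once this makes the $M(\mc A)$-bimodule structure on $M(F)$ available through Theorem~\ref{mult_mod_is_mult_mod}---that $\phi_F$ respects it. Throughout I would use that the $M(\mc A)$-action on $E$ supplied by Theorem~\ref{exten_thm} is strictly continuous, and that $E^*$ carries the dual actions determined by $\ip{\hat a\cdot\mu}{x}=\ip{\mu}{x\cdot\hat a}$ and $\ip{\mu\cdot\hat a}{x}=\ip{\mu}{\hat a\cdot x}$. The one preliminary computation I need is that for a bounded approximate identity $(e_\alpha)$ and any $\hat a\in M(\mc A)$, both $\hat a e_\alpha\to\hat a$ and $e_\alpha\hat a\to\hat a$ strictly in $M(\mc A)$; this is immediate on checking the $L$- and $R$-components separately, using that $(e_\alpha)$ is an approximate identity on one side and continuity of multiplication by $\hat a$ on the other.

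For the submodule claim, fix $\mu\in F$ and $\hat a\in M(\mc A)$. Since $\hat a e_\alpha\in\mc A$ and $F$ is an $\mc A$-submodule, each $\mu\cdot(\hat a e_\alpha)$ lies in $F$, and $\ip{\mu\cdot(\hat a e_\alpha)}{x}=\ip{\mu}{(\hat a e_\alpha)\cdot x}\to\ip{\mu}{\hat a\cdot x}=\ip{\mu\cdot\hat a}{x}$ for every $x\in E$, by strict continuity of the action on $E$. Hence $\mu\cdot(\hat a e_\alpha)\to\mu\cdot\hat a$ weak$^*$, and as $F$ is weak$^*$-closed we get $\mu\cdot\hat a\in F$; the left action is symmetric, using $e_\alpha\hat a\to\hat a$ and the approximants $(e_\alpha\hat a)\cdot\mu$. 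This is the step I expect to be the real obstacle, and the only place the hypothesis is used: the approximants always lie in $F$, but their limit is only a weak$^*$-limit, so norm-closedness of $F$ would not suffice.

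With $F$ now an $M(\mc A)$-submodule of $E^*$, Theorem~\ref{mult_mod_is_mult_mod} equips $M(F)$ with an $M(\mc A)$-bimodule structure (the action on $F$ being the restriction of that on $E^*$), and it remains to check that $\phi_F$ intertwines it. Recalling that $\phi_F(\hat y)$ is the weak$^*$-limit of $(\hat y\cdot e_\alpha)=(L_{\hat y}(e_\alpha))$, I would compute, for the right action, $L_{\hat x\cdot\hat a}(e_\alpha)=L_{\hat x}(\hat a e_\alpha)=\hat x\cdot(\hat a e_\alpha)=\phi_F(\hat x)\cdot(\hat a e_\alpha)=(\phi_F(\hat x)\cdot\hat a)\cdot e_\alpha$, where successively I use the formula of Theorem~\ref{mult_mod_is_mult_mod}, part (1) of Proposition~\ref{dba_dual_mod} (valid since $\hat a e_\alpha\in\mc A$), and associativity of the action on $E^*$. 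Since $\nu\cdot e_\alpha\to\nu$ weak$^*$ for every $\nu\in E^*$ (essentiality of $E$), passing to the weak$^*$-limit gives $\phi_F(\hat x\cdot\hat a)=\phi_F(\hat x)\cdot\hat a$. The left action is entirely analogous, starting from $L_{\hat a\cdot\hat x}(e_\alpha)=\hat a\cdot L_{\hat x}(e_\alpha)=\hat a\cdot(\phi_F(\hat x)\cdot e_\alpha)=(\hat a\cdot\phi_F(\hat x))\cdot e_\alpha$. Beyond the weak$^*$-closedness argument, everything here is bookkeeping that transports the already-known $\mc A$-bimodule statement of Proposition~\ref{dba_dual_mod} across the weak$^*$-limits defining $\phi_F$.
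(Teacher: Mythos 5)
Your proof is correct and follows essentially the same route as the paper: approximate $\mu\cdot\hat a$ by the net $\mu\cdot(\hat a e_\alpha)\in F$ and invoke weak$^*$-closedness, then transport property (1) of Proposition~\ref{dba_dual_mod} across the definition of the $M(\mc A)$-action on $M(F)$. The only (immaterial) differences are that you establish weak$^*$-convergence of the whole net via strict continuity where the paper identifies a weak$^*$-limit point by testing against $a\cdot t$, and that you pass to the limit along $(e_\alpha)$ where the paper tests against arbitrary $a\in\mc A$ and uses essentiality of $E$.
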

\begin{proof}
Let $\hat a\in M(\mc A)$ and $\mu\in F$, and let $\lambda$ be a weak$^*$-limit point of
$(\mu\cdot \hat a e_\alpha)$.  A typical member of $E$ is $a\cdot t$ for some $a\in\mc A$
and $t\in E$.  Then
\[ \ip{\lambda}{a\cdot t} = \lim_\alpha \ip{\mu}{\hat a e_\alpha a\cdot t}
= \ip{\mu}{\hat a a\cdot t} = \ip{\mu\cdot\hat a}{a\cdot t}, \]
which shows that $\lambda = \mu\cdot\hat a$.  As $E$ is weak$^*$-closed, it follows that
$\mu\cdot\hat a\in E$; similarly, $\hat a\cdot\mu\in E$.

It is now easy to show that $\phi_F$ is an $M(\mc A)$-bimodule homomorphism, using
property (1) established above in Proposition~\ref{dba_dual_mod}.
For example, for $\hat a\in M(\mc A), \hat x\in M(F)$ and $a\in\mc A$,
\[ \phi_F(\hat x\cdot\hat a)\cdot a = (\hat x\cdot\hat a)\cdot a
= \hat x\cdot \hat a a = \phi_F(\hat x)\cdot \hat a a, \]
and similarly $a\cdot\phi_F(\hat x\cdot\hat a) = a\cdot\phi_F(\hat x)\cdot\hat a$,
so that $\phi_F(\hat x\cdot\hat a) = \phi_F(\hat x)\cdot\hat a$.
\end{proof}

\begin{theorem}\label{dual_mod_extension}
Let $\mc A$ be a Banach algebra with a bounded approximate identity of bound $K>0$,
and let $E$ and $F$ be $\mc A$-bimodules, with one of $E$ or $F$ being essential.
An $\mc A$-bimodule homomorphism $\psi:E\rightarrow F^*$ has an
extension $\tilde\psi:M(E)\rightarrow F^*$ such that:
\begin{enumerate}
\item if $F$ is essential, then $\tilde\psi$ is uniquely defined,
  strictly-weak$^*$-continuous, and satisfies $\|\tilde\psi\| \leq K\|\psi\|$;
\item if both $E$ and $F$ are essential, then $\tilde\psi$ is also an
  $M(\mc A)$-bimodule homomorphism.
\end{enumerate}
\end{theorem}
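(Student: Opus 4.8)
The plan is to lean on two structural facts. First, $F^*$ is a dual $\mc A$-bimodule, with $\ip{a\cdot\mu}{t} = \ip{\mu}{t\cdot a}$ and $\ip{\mu\cdot a}{t}=\ip{\mu}{a\cdot t}$ for $a\in\mc A$, $\mu\in F^*$, $t\in F$. Second, whichever of $E,F$ is essential is in fact \emph{neo-unital} by Cohen's factorisation (Theorem~\ref{cohen}), so its elements factor \emph{exactly} and not merely densely; this exactness is what lets me test weak$^*$-convergence without any boundedness hypothesis on strictly convergent nets.

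\textbf{The case $F$ essential.} Writing $(e_\alpha)$ for the approximate identity of bound $K$, I would define $\tilde\psi(\hat x)$ to be the weak$^*$-limit of $\big(\psi(\hat x\cdot e_\alpha)\big)$. This net sits in a ball of radius $K\|\psi\|\,\|\hat x\|$, so accumulation points exist; to see the limit is unique I pair against a typical $t=a\cdot s\in F$ and compute $\ip{\psi(\hat x\cdot e_\alpha)}{a\cdot s}=\ip{\psi(\hat x\cdot(e_\alpha a))}{s}\to\ip{\psi(\hat x\cdot a)}{s}$, using that $L_{\hat x}$ and $\psi$ are bounded and $e_\alpha a\to a$. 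Since $F$ is neo-unital every $t$ has this form, so the formula $\ip{\tilde\psi(\hat x)}{a\cdot s}=\ip{\psi(\hat x\cdot a)}{s}$ determines a unique element of $F^*$ of norm at most $K\|\psi\|\,\|\hat x\|$. The same formula shows $\tilde\psi$ extends $\psi$ (take $\hat x=x\in E$), is an $\mc A$-bimodule homomorphism, and is strictly-weak$^*$-continuous: if $\hat x_\gamma\to\hat x$ strictly then $\hat x_\gamma\cdot a=L_{\hat x_\gamma}(a)\to\hat x\cdot a$ in norm, and neo-unitality means convergence of $\ip{\tilde\psi(\hat x_\gamma)}{a\cdot s}$ for each fixed factorisation already gives weak$^*$-convergence on all of $F$. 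Uniqueness among bimodule-homomorphism extensions follows because any such $\phi$ satisfies $\ip{\phi(\hat x)}{a\cdot s}=\ip{\phi(\hat x\cdot a)}{s}=\ip{\psi(\hat x\cdot a)}{s}$.

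\textbf{The case where only $E$ is essential.} Now I cannot factor $F$, so instead I would route $M(E)$ through the Arens map $\theta\colon M(E)\to E^{**}$ of Theorem~\ref{arens} and set $\tilde\psi=\kappa_F^*\circ\psi^{**}\circ\theta$. Naturality ($\psi^{**}\kappa_E=\kappa_{F^*}\psi$ and $\kappa_F^*\kappa_{F^*}=\id$) gives $\tilde\psi|_E=\psi$, while $\|\theta\|\le K$ (as $\|\Phi_0\|\le K$) yields the same norm bound. When $F$ also happens to be essential, the uniqueness just established shows this agrees with the first construction, so the two cases are consistent.

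\textbf{Part (2).} Assuming both modules essential, I would first record the companion formula $\ip{\tilde\psi(\hat x)}{s\cdot a}=\ip{\psi(a\cdot\hat x)}{s}$, derived exactly as before but pushing $a$ to the left and using that $E$ is essential so that $(a\cdot\hat x)\cdot e_\alpha\to a\cdot\hat x$. Theorem~\ref{exten_thm} equips $E$, $F$, and hence $F^*$, with $M(\mc A)$-actions. Then $\tilde\psi(\hat x\cdot\hat a)=\tilde\psi(\hat x)\cdot\hat a$ is checked by pairing both sides with $a\cdot s$ and using $(\hat x\cdot\hat a)\cdot a=\hat x\cdot(\hat a a)$ (associativity from Theorem~\ref{mult_mod_is_mult_mod}) together with the first formula; dually $\tilde\psi(\hat a\cdot\hat x)=\hat a\cdot\tilde\psi(\hat x)$ is checked by pairing with $s\cdot a$ and invoking the companion formula. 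I expect the main obstacle to be purely bookkeeping: keeping straight which of the two factorisations of an element of $F$ to use on each side, and repeatedly collapsing an $M(\mc A)$-product against an element of $\mc A$ (such as $\hat a a$) back into an honest element of $\mc A$ so that the two defining formulas apply. Everything else is a routine consequence of neo-unitality and the associativity results already in hand.
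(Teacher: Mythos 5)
Your argument is correct, and for the main case ($F$ essential) it is the paper's proof with the abstraction stripped away: the paper defines $\tilde\psi$ as the composite $\phi_{F^*}\circ\psi_0$, where $\psi_0:M(E)\to M(F^*)$ is the algebraic extension of Theorem~\ref{always_ext_mod_homos} and $\phi_{F^*}(\hat y)=\text{w}^*\text{-}\lim_\alpha \hat y\cdot e_\alpha$ is the map of Proposition~\ref{dba_dual_mod}; unwinding this gives exactly your $\tilde\psi(\hat x)=\text{w}^*\text{-}\lim_\alpha\psi(\hat x\cdot e_\alpha)$ and your identifying formula $\ip{\tilde\psi(\hat x)}{a\cdot s}=\ip{\psi(\hat x\cdot a)}{s}$, so the uniqueness, norm bound and strict-weak$^*$ continuity come out the same way. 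Part (2) you do by direct pairing against the two factorisations of elements of $F$, where the paper instead quotes Proposition~\ref{wsclosed_ext} to say $\phi_{F^*}$ is an $M(\mc A)$-module map; same content. The one genuinely different piece is the case where $E$ is essential but $F$ is not: the paper uses Johnson's cut-down to $F_0=\mc A\cdot F\cdot\mc A$ together with the projection $\iota q$ of $F^*$ onto $F_0^*$, whereas you factor through the bidual as $\kappa_F^*\circ\psi^{**}\circ\theta$ with $\theta$ from Theorem~\ref{arens}. Your route is shorter and gives the norm bound $K\|\psi\|$ for free; the paper's keeps the image inside the ``essential part'' of $F^*$, which is what lets it verify the extension property by a concrete computation. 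One small caveat: your remark that the two constructions agree when $F$ is also essential needs the (true, but unchecked) fact that $\kappa_F^*$ and $\psi^{**}$ intertwine the relevant $\mc A$-actions, so that the bidual construction satisfies the identity $a\cdot\tilde\psi(\hat x)=\psi(a\cdot\hat x)$ to which your uniqueness argument applies; since the theorem does not claim consistency between the two cases, nothing in the statement depends on this.
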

\begin{proof}
Suppose first that $F$ is essential.  By Theorem~\ref{always_ext_mod_homos},
there is a strictly-continuous $\psi_0:M(E)\rightarrow M(F^*)$ which extends $\psi$.
Then consider the map $\phi_{F^*}:M(F^*)\rightarrow F^*$ constructed by
Proposition~\ref{dba_dual_mod}.  Let $\tilde\psi = \phi_{F^*} \psi_0$;
the estimate $\|\tilde\psi\| \leq K\|\psi\|$ follows easily from the proof of
Proposition~\ref{dba_dual_mod}.  For $x\in E$, as $\psi(x)\in F^*$, we have
that $\tilde\psi(x) = \phi_{F^*} \psi_0(x) = \phi_{F^*}\psi(x) = \psi(x)$, so
that $\tilde\psi$ is an extension.  As $\phi_{F^*}$ is strictly-weak$^*$
continuous, it follows that $\tilde\psi$ is as well.

If $\phi:E\rightarrow F^*$ is another extension, then for $\hat x\in M(E)$
and $a\in\mc A$, we have that $a\cdot\phi(\hat x) = \phi(a\cdot\hat x)
= \psi(a\cdot\hat x) = a\cdot\tilde\psi(\hat x)$.  As $F$ is essential, this
is enough to show that $\phi(\hat x) = \tilde\psi(\hat x)$, so that
$\tilde\psi$ is unique.

In the case when both $E$ and $F$ are essential, we can turn $E$, $F$ and hence
$F^*$ into $M(\mc A)$-bimodule, by Theorem~\ref{exten_thm}.
By Proposition~\ref{wsclosed_ext}, $\phi_{F^*}$ is an $M(\mc A)$-bimodule
homomorphism, as is $\psi_0$, and hence also $\tilde\psi$.

If $F$ is not essential, but $E$ is essential, then we can adapt a technique which goes
back to Johnson, see \cite[Proposition~1.8]{johnmem}.  Let $F_0 = \mc A\cdot F\cdot\mc A$,
which by Theorem~\ref{cohen}, is a closed essential submodule of $F$.  
Define a map $\iota:F_0^*\rightarrow F^*$ by
\[ \ip{\iota(\mu)}{x} = \lim_\alpha \ip{\mu}{e_\alpha\cdot x\cdot e_\alpha}
\qquad (\mu\in F_0^*, x\in F). \]
Let $q:F_*\rightarrow F_0^*$ be the restriction map.  Then $q\iota$ is the
identity, and $\iota q$ is a projection.
By the previous result, we can extend $q\psi$ to a map $\psi_0:M(E) \rightarrow
F_0^*$.  Let $\tilde\psi = \iota \psi_0:M(E)\rightarrow F^*$.
If $E$ is essential, then a typical element of $E$ is of the form $x=a\cdot x\cdot b$
for some $y\in E$ and $a,b\in\mc A$.  Then, for $t\in F$,
\begin{align*} \ip{\tilde\psi(x)}{t} &= \lim_\alpha \ip{\psi_0(x)}{e_\alpha\cdot t\cdot e_\alpha}
= \lim_\alpha \ip{\psi(x)}{e_\alpha\cdot t\cdot e_\alpha}
= \lim_\alpha \ip{a\cdot\psi(y)\cdot b}{e_\alpha\cdot t\cdot e_\alpha} \\
&= \ip{\psi(y)}{b\cdot t\cdot a}
= \ip{\psi(x)}{t}, \end{align*}
so that $\tilde\psi$ does extend $\psi$.  However, it is not now clear that
$\tilde\psi$ is uniquely defined or strictly-weak$^*$-continuous.
\end{proof}

\section{Dual Banach algebras}\label{dba_section}

Following \cite{runde, daws}, we say that a Banach algebra $\mc A$ which is
the dual of a Banach space $\mc A_*$ is a \emph{dual Banach algebra} if
multiplication in $\mc A$ is separately weak$^*$-continuous.  This is equivalent
to the canonical image of $\mc A_*$ in $\mc A^*$ being an $\mc A$-submodule, that
is, that $\mc A$ is a dual $\mc A$-bimodule.  In \cite[Theorem~5.6]{is}, it is
shown that in the presence of a bounded approximate identity, we can always extends
homomorphisms which map into a dual Banach algebra.  We shall extend this result to
multipliers of modules, and also show how the result really follows from algebraic
considerations, and Theorem~\ref{exten_thm}.  Firstly, we shall explore connections
with weakly almost periodic functionals, which we shall return to when considering
when multiplier algebras are themselves dual, in Section~\ref{mult_dual_sec} below.

As in \cite{runde2}, given an $\mc A$-bimodule $E$, we shall write $\wap(E)$ for the
collection of elements $x\in E$ such that $R_x,L_x:\mc A\rightarrow E$ are weakly compact.
It is easy to see that $\wap(E)$ is a closed $\mc A$-submodule of $E$.
When $E=\mc A^*$, we recover the usual definition of $\wap(\mc A^*)$ (which some authors
write as $\wap(\mc A)$) as, for $\mu\in\mc A^*$, $R_\mu^*\kappa_{\mc A} = L_\mu$ and 
$L_\mu^*\kappa_{\mc A} = R_\mu$ so $R_\mu$ is weakly compact if and only if $L_\mu$ is.

\begin{lemma}\label{dba_lemma}
Let $\mc A$ be a Banach algebra with a bounded approximate identity, let $E$ be an
essential $\mc A$-bimodule, and let $F \subseteq \wap(E^*)$ be a closed
$\mc A$-submodule.  Then $F = \{ a \cdot\mu\cdot b: a,b\in\mc A, \mu\in F\}$;
in particular, $F$ is essential.
\end{lemma}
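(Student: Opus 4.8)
The plan is to prove the factorisation by first showing that $F$ is essential as both a left and a right $\mc A$-module, and then upgrading this to the exact equality $F=\{a\cdot\mu\cdot b\}$ by a double application of Cohen's factorisation theorem (Theorem~\ref{cohen}). Throughout, let $(e_\alpha)$ be the bounded approximate identity, of bound $K$, and recall that $E^*$ carries the dual $\mc A$-bimodule structure $\ip{a\cdot\mu}{x}=\ip{\mu}{x\cdot a}$ and $\ip{\mu\cdot a}{x}=\ip{\mu}{a\cdot x}$. Since $E$ is essential, Theorem~\ref{cohen} shows that $e_\alpha\cdot x\to x$ and $x\cdot e_\alpha\to x$ in norm for every $x\in E$.

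The crux is a weak$^*$--weak argument, which is exactly where the hypothesis $F\subseteq\wap(E^*)$ does all the work. Fix $\mu\in F$. On the one hand, for each $x\in E$ we have $\ip{e_\alpha\cdot\mu}{x}=\ip{\mu}{x\cdot e_\alpha}\to\ip{\mu}{x}$, so $e_\alpha\cdot\mu\to\mu$ in the weak$^*$-topology $\sigma(E^*,E)$. On the other hand, $e_\alpha\cdot\mu=R_\mu(e_\alpha)$ lies in the relatively weakly compact set $R_\mu(\{a\in\mc A:\|a\|\leq K\})$, weak compactness of $R_\mu$ being precisely the assumption $\mu\in\wap(E^*)$. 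Hence some subnet of $(e_\alpha\cdot\mu)$ converges in $\sigma(E^*,E^{**})$ to some $\nu\in E^*$; since weak convergence implies weak$^*$-convergence and weak$^*$-limits are unique, $\nu=\mu$. Thus $\mu$ lies in the weak closure of the linear subspace $R_\mu(\mc A)=\mc A\cdot\mu$, which by Mazur's theorem coincides with its norm closure; as $F$ is closed this gives $\mu\in\overline{\mc A\cdot\mu}\subseteq F$, and so $F=\overline{\lin\,\mc A\cdot F}$. Running the same argument with $L_\mu$ in place of $R_\mu$ (using $\ip{\mu\cdot e_\alpha}{x}=\ip{\mu}{e_\alpha\cdot x}\to\ip{\mu}{x}$) shows $F=\overline{\lin\,F\cdot\mc A}$. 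I expect this identification of the weak cluster point with $\mu$ to be the main obstacle: without weak compactness one only controls the weak$^*$-behaviour, which is too weak to conclude norm density.

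Finally, with $F$ now known to be an essential left and right $\mc A$-module, I would obtain the exact factorisation by applying Theorem~\ref{cohen} twice. Given $\mu\in F$, the right-module version of the theorem writes $\mu=\nu\cdot b$ with $\nu\in F$ and $b\in\mc A$; then the left-module version, applied to $\nu\in F$, writes $\nu=a\cdot\rho$ with $a\in\mc A$ and $\rho\in F$. Combining, $\mu=a\cdot\rho\cdot b$, so that $F=\{a\cdot\mu\cdot b:a,b\in\mc A,\mu\in F\}$; in particular $F$ is essential.
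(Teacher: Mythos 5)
Your proof is correct and follows essentially the same route as the paper: the weak compactness of $R_\mu$ is used to extract a weakly convergent subnet of $(e_\alpha\cdot\mu)$, whose limit is identified with $\mu$ via the weak$^*$-topology, Mazur's theorem upgrades this to norm density of $\mc A\cdot\mu$, and Cohen's factorisation theorem (applied on each side) then yields the exact two-sided factorisation. The only cosmetic difference is that the paper tests convergence against elements $x\cdot a$ and invokes essentiality of $E$ at that point, whereas you test against all $x\in E$ using $x\cdot e_\alpha\to x$ directly; these are interchangeable.
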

\begin{proof}
Let $(e_\alpha)$ be a bounded approximate identity for $\mc A$.  For $\mu\in F$,
by weak compactness, the net $(e_\alpha\cdot\mu)$ has a weakly convergent subnet, whose
limit must be $\mu$, as
\[ \lim_\alpha \ip{e_\alpha\cdot\mu}{x\cdot a} = \lim_\alpha \ip{\mu}{x\cdot ae_\alpha} =
\ip{\mu}{x\cdot a} \qquad (x\in E, a\in\mc A), \]
and using that $E$ is essential.  As the norm closure and weak closure of a convex
set agree, by taking convex combinations, it follows that
$\mu$ is in the norm closure of $\{ a\cdot\mu: a\in\mc A \}$.  By the Theorem~\ref{cohen},
it follows that $F = \{ a\cdot\lambda : a\in\mc A, \lambda\in F\}$.
Then repeat the argument on the other side.
\end{proof}

In particular, we can apply Theorem~\ref{submod_rep_thm} for any closed submodule
$F\subseteq\wap(E^*)$.

We now consider the case of homomorphisms; in this case, Lemma~\ref{dba_lemma} becomes
more powerful.  For a Banach algebra $\mc A$, let $F=\wap(A^*)$, let
$\kappa_w = q_F\kappa_{\mc A}:\mc A\rightarrow\wap(\mc A^*)^*$, and let
$\theta_w = \theta_F:M(\mc A)\rightarrow\wap(\mc A^*)^*$.  Now, $\theta_w =
q_F \theta$, and $\theta:M(\mc A)\rightarrow\mc A^{**}$ is a homomorphism for the
second Arens product.  As $q_F:\mc A^{**}\rightarrow\wap(A^*)^*$ is a homomorphism
for either Arens product, it follows that $\theta_w$ is a homomorphism.

\begin{theorem}\label{dba_bai_exten}
Let $\mc A$ be a Banach algebra with a bounded approximate identity of
bound $K>0$, and let $(\mc B,\mc B_*)$ be dual Banach algebra.  A homomorphism
$\psi:\mc A \rightarrow \mc B$ has a unique extension to a homomorphism
$\tilde\psi:M(\mc A)\rightarrow\mc B$ with $\|\tilde\psi\|\leq K\|\psi\|$,
and such that $\tilde\psi$ is strictly-weak$^*$-continuous.
\end{theorem}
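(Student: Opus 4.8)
The plan is to factor the desired extension through the dual Banach algebra $\wap(\mc A^*)^*$, exploiting the homomorphism $\theta_w=q_F\theta:M(\mc A)\to\wap(\mc A^*)^*$ introduced just above the statement. Concretely, I would build a weak$^*$-weak$^*$-continuous homomorphism $\rho:\wap(\mc A^*)^*\to\mc B$ satisfying $\rho\kappa_w=\psi$, and then set $\tilde\psi=\rho\circ\theta_w$. Since $\theta_w(a)=q_F\theta(a)=q_F\kappa_{\mc A}(a)=\kappa_w(a)$ for $a\in\mc A$ (using $\theta(x)=\kappa_{\mc A}(x)$ from Theorem~\ref{arens}), this $\tilde\psi$ will automatically extend $\psi$, and it will inherit multiplicativity and the continuity and norm estimates from its two factors.

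The construction of $\rho$ rests on the observation that $\psi^*$ carries the predual $\mc B_*$ into $\wap(\mc A^*)$. First I would record that $\mc B_*\subseteq\wap(\mc B^*)$: for $\mu\in\mc B_*$ the multiplier maps $L_\mu,R_\mu:\mc B\to\mc B^*$ are weakly compact, which is the standard fact that the predual of a dual Banach algebra is weakly almost periodic, following from Grothendieck's double-limit criterion together with Banach--Alaoglu and the separate weak$^*$-continuity of the product on $\mc B$ (bounded nets have weak$^*$-convergent subnets, and $\mu$ is weak$^*$-continuous). As a homomorphism pulls $\wap$ back to $\wap$ — because $L_{\psi^*\mu}=\psi^*\circ L_\mu\circ\psi$ factors the weakly compact map $L_\mu$ between bounded maps — we obtain $\psi^*(\mc B_*)\subseteq\wap(\mc A^*)$. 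Writing $\sigma=\psi^*|_{\mc B_*}:\mc B_*\to\wap(\mc A^*)$, which satisfies $\|\sigma\|\le\|\psi\|$, I would define $\rho=\sigma^*:\wap(\mc A^*)^*\to(\mc B_*)^*=\mc B$; this is weak$^*$-weak$^*$-continuous with $\|\rho\|\le\|\psi\|$, and a direct pairing computation gives $\rho\kappa_w=\psi$.

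The main obstacle is proving that $\rho$ is multiplicative. Here I would use that $\kappa_w(\mc A)$ is weak$^*$-dense in $\wap(\mc A^*)^*$ (Goldstine, pushed through the weak$^*$-continuous quotient $q_F$), that $\rho\kappa_w=\psi$ is multiplicative on $\mc A$, and that the products on both $\wap(\mc A^*)^*$ and $\mc B$ are separately weak$^*$-continuous. Since joint weak$^*$-continuity is unavailable, this must be a two-step argument: given $\Phi,\Psi\in\wap(\mc A^*)^*$ with nets $\kappa_w(a_i)\to\Phi$ and $\kappa_w(b_j)\to\Psi$ weak$^*$, I would first let $j$ vary to get $\rho(\kappa_w(a_i)\Psi)=\psi(a_i)\rho(\Psi)$, and then let $i$ vary to get $\rho(\Phi\Psi)=\rho(\Phi)\rho(\Psi)$, invoking separate weak$^*$-continuity in the appropriate variable at each stage. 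This double-limit interchange is the genuine technical content of the proof.

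Finally I would assemble the pieces. The composite $\tilde\psi=\rho\circ\theta_w$ is then a homomorphism extending $\psi$; it is strictly-weak$^*$-continuous, being the composition of the strictly-weak$^*$-continuous $\theta_w$ (Theorem~\ref{submod_rep_thm}) with the weak$^*$-weak$^*$-continuous $\rho$. For the norm bound, I note that $\theta(L,R)=L^{**}(\Phi_0)$ with $\|\Phi_0\|\le K$, so $\|\theta_w\|\le\|\theta\|\le K$, whence $\|\tilde\psi\|\le\|\rho\|\,\|\theta_w\|\le K\|\psi\|$. Uniqueness requires no further work: as $\mc A$ is strictly dense in $M(\mc A)$ by Theorem~\ref{arens}(\ref{arens_one}), any strictly-weak$^*$-continuous extension of $\psi$ is determined by its values on $\mc A$, so $\tilde\psi$ is the only such extension.
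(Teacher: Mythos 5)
Your proposal is correct and follows essentially the same route as the paper: the paper defines $\tilde\psi=\psi_0\theta_w$ where $\psi_0=(\psi^*|_{\mc B_*})^*$ is the weak$^*$-continuous homomorphism $\wap(\mc A^*)^*\to\mc B$ (cited from Runde), and gets the norm bound and uniqueness exactly as you do. The only difference is that you prove the cited ingredient (that $\psi^*(\mc B_*)\subseteq\wap(\mc A^*)$ and that the resulting adjoint map is multiplicative) rather than quoting it, which is a faithful reconstruction of that result's standard proof.
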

\begin{proof}
By \cite[Theorem~4.10]{runde2} there exists a unique weak$^*$-continuous homomorphism
$\psi_0:\wap(\mc A^*)^* \rightarrow \mc B$ which extends $\psi$ in the sense that
$\psi_0\kappa_w = \psi$, and with $\|\psi_0\| \leq \|\psi\|$.  Indeed, to show
this, observe that $\psi^*(\mc B_*) \subseteq \wap(\mc A^*)$, so we may define
$\psi_0 = (\psi^*|_{\mc B*})^*$.

Then let $\tilde\psi = \psi_0 \theta_w$, so that for $a\in\mc A$, we have that $\tilde\psi(a)
= \psi_0 \kappa_w(a) = \psi(a)$.  Then $\tilde\psi$ is strictly-weak$^*$-continuous by
Theorem~\ref{submod_rep_thm}, and as $\|\theta_w\|\leq K$, it follows that
$\|\tilde\psi\| \leq K\|\psi\|$.  Uniqueness follows as $\mc A$ is strictly
dense in $M(\mc A)$.
\end{proof}

In \cite[Theorem~5.6]{is} this result is proved, using a completely different
method, but \emph{a priori} with two differences:
\begin{itemize}
\item The modification that $M(\mc A)$ is given the \emph{right multiplier
topology}, determined by the seminorms $(L,R)\mapsto \|R(a)\|$ for $a\in\mc A$.
However, if we examine the proof of Theorem~\ref{submod_rep_thm}, then
we actually only used the right multiplier topology.
\item The extension is defined from $\mc A_0$, a Banach algebra which contains
$\mc A$ as a closed ideal.  Then we have a natural contraction
$\mc A_0 \rightarrow M(\mc A)$, so really, it is enough to work with $M(\mc A)$.
\end{itemize}

We explore below, in Proposition~\ref{si_dba_ext_new} and the remark thereafter,
a more algebraic way to prove this result.

We shall see in Section~\ref{mult_dual_sec} that often $M(\mc B)$ is a dual Banach algebra.
Thus, given \emph{any} homomorphism $\psi:\mc A\rightarrow\mc B$, we can consider
$\psi$ as a homomorphism $\mc A\rightarrow M(\mc B)$, and hence use the above
theorem to find an extension $\tilde\psi:M(\mc A)\rightarrow M(\mc B)$.  This
hence gives a stronger extension result than that given by Proposition~\ref{homo_to_mod}
and Theorem~\ref{exten_thm} (but only gives
a strictly-weak$^*$-continuous extension, not a strictly-strictly-continuous
extension).

\section{Self-induced Banach algebras}\label{selfind}

We have seen that having a bounded approximate identity allows us to perform most
of the operations which we might wish, as regards multipliers.  A larger class of
algebras with which multipliers interact well is the class of \emph{self-induced}
algebras, which we shall explore in this section.  The theory becomes very algebraic,
and indeed, \emph{most} of what we saw in previous sections could have been proved
by observing that any algebra with a bounded approximate identity is
self-induced (see Proposition~\ref{bai_si} and references, below).  However, this
would have been unconventional, and we achieved greater generality by waiting as
long as possible before exploring how we might extend module actions to algebras
of multipliers.

Let $\mc A\proten\mc A$ be the projective tensor product of $\mc A$ with itself,
and let $N$ be the closed linear span of elements of the form
$ab\otimes c - a\otimes bc$, for $a,b,c\in\mc A$.  Then we define
$\mc A\proten_{\mc A} \mc A := \mc A\proten\mc A / N$.
Let $\pi:\mc A\proten\mc A\rightarrow\mc A$ be the product map, $\pi(a\otimes b)=ab$.
Then clearly $N \subseteq \ker\pi$, so $\pi$ induces a map $\mc A\proten_{\mc A}\mc A
\rightarrow\mc A$.  If this map is an isomorphism, that is, $\ker\pi = N$,
then $\mc A$ is said to be \emph{self-induced}.

This idea was explored by Gronbaek in \cite{gronbaek} in the context of Morita
equivalence, although the idea goes back at least to work of Rieffel in \cite{rieffel}.
Similar ideas have also been explored in the context of Banach cohomology theory, see
for example \cite{sel}.  We shall argue that self-induced algebras form a
natural setting to consider multipliers in.  We shall prove some general results,
as the proofs will later be useful when we consider completely contractive Banach algebras.

Given a Banach algebra $\mc A$, let $\module-\mc A$ be the class of right
$\mc A$-modules.  We similarly define $\mc A-\module$ and $\mc A-\module-\mc A$.
Given another Banach algebra $\mc B$, let $\mc A-\module-\mc B$ be the class
of left $\mc A$-modules which are also right $\mc B$-module, with commuting actions.

For $E\in\module-\mc A$ and $F\in\mc A-\module$, we let $E\proten_{\mc A}F =
E\proten F / N$ where $N$ is the closed linear span of elements of the form
$x\cdot a \otimes y - x\otimes a\cdot y$ for $x\in E, y\in F$ and $a\in\mc A$.
If $E\in\mc A-\module-\mc A$ and $F\in\mc A-\module-\mc B$, then it is easy to see
that $E\proten_{\mc A}F \in \mc A-\module-\mc B$.

For Banach spaces $E$ and $F$, we identify $(E\proten F)^*$ with $\mc B(E,F^*)$.
Then $(E\proten_{\mc A}F)^* = N^\perp$ where $N^\perp = \{ T\in(E\proten F)^* : \ip{T}{\tau}=0
\ (\tau\in N) \}$.  It is easy to see that in our case, $N^\perp = \mc B_{\mc A}(E,F^*)$,
the space of right $\mc A$-module homomorphisms.

The following is \cite[Theorem~3.19]{rieffel}, but we give a proof as we shall
wish to generalise this (to operator spaces) later.

\begin{lemma}\label{assoc}
Let $\mc A$ and $\mc B$ be Banach algebras.  Let $E\in\module-\mc A$,
$F\in\mc A-\module-\mc B$ and $G\in\mc B-\module$.  The identity map 
$E\otimes F\otimes G \rightarrow E\otimes F\otimes G$ induces an isometric
isomorphism $(E\proten_{\mc A}F)\proten_{\mc B}G \cong 
E\proten_{\mc A}(F\proten_{\mc B}G)$.
\end{lemma}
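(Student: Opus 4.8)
The plan is to establish the isometric isomorphism by exhibiting the identity map on the triple algebraic tensor product $E \otimes F \otimes G$ as descending to a well-defined, isometric, surjective map between the two iterated quotient spaces. The key structural fact I would exploit is that both sides are quotients of the common space $E \proten F \proten G$ by closed subspaces which, I claim, coincide. Concretely, let $N_{\mc A} \subseteq E \proten F$ be the closed span of the elements $x \cdot a \otimes y - x \otimes a \cdot y$, and let $N_{\mc B} \subseteq F \proten G$ be the analogous subspace for the $\mc B$-action. First I would identify the closed subspace $K_1 \subseteq E \proten F \proten G$ whose quotient gives $(E \proten_{\mc A} F) \proten_{\mc B} G$, and similarly the subspace $K_2$ giving $E \proten_{\mc A} (F \proten_{\mc B} G)$, and then argue that $K_1 = K_2$ as closed subspaces. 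Once this is shown, the identity on $E \otimes F \otimes G$ visibly descends to an isometric isomorphism of the quotients.

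Rather than wrestle directly with the predual subspaces, I expect the cleanest route is to work dually, identifying each side's dual space inside $\mc B(E \proten F \proten G, \mathbb{C})$ or, better, using the module-homomorphism description supplied in the excerpt. Recall the identification $(E \proten_{\mc A} F)^* = \mc B_{\mc A}(E, F^*)$, the right $\mc A$-module homomorphisms. Applying this twice, I would compute that the dual of $(E \proten_{\mc A} F) \proten_{\mc B} G$ is the space of right $\mc B$-module homomorphisms from $E \proten_{\mc A} F$ into $G^*$; unwinding, this is the space of bilinear forms on $E \times (F \proten_{\mc B} G)$ that are $\mc A$-balanced in the first slot, while the dual of $E \proten_{\mc A} (F \proten_{\mc B} G)$ is the space of bilinear forms that are $\mc B$-balanced in the second slot. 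The content of the lemma is that these two spaces of \emph{separately balanced} forms coincide, which I would verify by showing both equal the space of trilinear forms $\phi$ on $E \times F \times G$ satisfying both balancing conditions $\phi(x \cdot a, y, z) = \phi(x, a \cdot y, z)$ and $\phi(x, y \cdot b, z) = \phi(x, y, b \cdot z)$.

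The main obstacle will be the isometry, not the algebraic isomorphism. Establishing that the two annihilator subspaces agree gives a \emph{linear} isomorphism of the quotients for free, and since both are quotients of the common projective tensor product by the \emph{same} closed subspace, the induced quotient norms agree automatically, giving the isometry. Thus the real work is the identification $K_1 = K_2$ at the level of the closed subspaces being quotiented out; the inclusion of the algebraic relations in each direction is a direct computation, but I must take care that the \emph{closed} spans coincide and that the projective norm behaves well under the two-stage quotient. I would handle this by appealing to the universal property of $\proten$ recalled in Section~\ref{ten_prod_sec}: bounded trilinear maps out of $E \times F \times G$ factor uniquely through $E \proten F \proten G$, and the balancing relations correspond to factoring through the respective quotients. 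Checking that the iterated quotient norm equals the norm coming from the single ambient subspace $K_1 = K_2$ is the delicate point, and I would resolve it by noting that the quotient of $E \proten_{\mc A} F$ by $N_{\mc B}$'s image is isometrically the quotient of $E \proten F \proten G$ by the closed span of \emph{both} families of relations, since quotienting by a closed subspace and then by the image of a further closed subspace is isometrically the same as quotienting by the combined closed span.
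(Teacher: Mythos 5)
Your proposal is correct, but it is a genuinely different argument from the one in the paper. You work at the predual level: both iterated tensor products are exhibited as quotients of the single space $E\proten F\proten G$, and the two kernels coincide because they are closed spans of literally the same generators, namely $(x\cdot a\otimes y-x\otimes a\cdot y)\otimes z$ and $x\otimes(y\cdot b\otimes z-y\otimes b\cdot z)$; the isometry then comes for free, since both sides carry the quotient norm by the same closed subspace and the identifications agree with the identity on elementary tensors. The paper instead stays at the dual level throughout: it verifies directly that $\alpha:\mc B_{\mc B}(E\proten_{\mc A}F,G^*)\rightarrow\mc B_{\mc A}(E,\mc B_{\mc B}(F,G^*))$, $\alpha(T)(x)(y)=T(x\otimes y)$, is an isometric isomorphism of dual spaces (this is exactly your observation that both duals are the doubly balanced trilinear forms), and then descends by a bidual trick: $\alpha^*\kappa$ agrees with $\iota$ on elementary tensors, so by continuity $\alpha^*\kappa$ and $(\alpha^{-1})^*\iota$ carry the canonical images into each other, giving the isomorphism of the original spaces. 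What the paper's detour buys is spelled out in Section~\ref{sec::ccba}: the identical proof transfers verbatim to completely contractive Banach algebras, with all matrix-level work hidden in the standard fact that $\kappa$ is a complete isometry. Your route also generalises, but each quotient-space fact you invoke must then be re-verified for the operator space projective tensor product.

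One step in your outline needs more than you give it. Your closing principle --- that quotienting by a closed subspace and then by the image of a further closed subspace is isometrically the same as quotienting by the combined closed span --- is correct, but it concerns iterated quotients of a single space; it does not by itself yield the prior identification $((E\proten F)/N_{\mc A})\proten G\cong(E\proten F\proten G)/\overline{N_{\mc A}\otimes G}$, which is what places $(E\proten_{\mc A}F)\proten_{\mc B}G$ inside your common-quotient framework in the first place. That identification is the projectivity of $\proten$: if $q$ is a metric surjection with kernel $N$, then $q\otimes\id_G$ is a metric surjection whose kernel is exactly $\overline{N\otimes G}$, not merely a closed subspace containing it. This is true and standard (lift a finite representation $\sum_i\bar x_i\otimes g_i$ through $q$ with arbitrarily small increase of norm; this builds a contractive inverse to the induced map on the quotient, and the algebraic kernel of $q\otimes\id$ is $N\otimes G$ by exactness of the vector-space tensor product), but it is a separate fact from the iterated-quotient identity and is the only point where your argument genuinely touches the projective norm, so it should be proved or cited. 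With it supplied, your proof is complete.
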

\begin{proof}
We shall first show that there is a natural isomorphism
\[ \alpha: \mc B_{\mc B}(E\proten_{\mc A}F, G^*) \cong \mc B_{\mc A}\big(E,(F\proten_{\mc B}G)^*\big)
\cong \mc B_{\mc A}\big(E,\mc B_{\mc B}(F,G^*)\big), \]
the main claim then following by duality.  As $F\proten_{\mc B}G\in\mc A-\module$,
by duality, $\mc B_{\mc B}(F,G^*)=(F\proten_{\mc B}G)^*\in\module-\mc A$.
To be explicit, the module action is
\[ (S\cdot a)(y) = S(a\cdot y)
\qquad (S\in\mc B_{\mc B}(F,G^*), a\in\mc A, y\in F). \]

For $T\in\mc B_{\mc B}(E\proten_{\mc A}F, G^*)$, define
\[ \alpha(T)\in\mc B_{\mc A}\big(E,\mc B_{\mc B}(F,G^*)\big),
\quad \alpha(T)(x)(y) = T(x\otimes y)
\qquad (x\in E, y\in F). \]
Then for fixed $T$ and $x$, clearly $\alpha(T)(x) \in \mc B(F,G^*)$
with $\|\alpha(T)(x)\| \leq \|T\| \|x\|$.  For
$y\in F, z\in G$ and $b\in\mc B$, we have that
\[ \ip{\alpha(T)(x)(y\cdot b)}{z} = \ip{T(x\otimes y\cdot b)}{z}
= \ip{T(x\otimes y)\cdot b}{z} = \ip{\alpha(T)(x)(y) \cdot b}{z}, \]
as $T$ is a right $\mc B$-module homomorphism.  Thus $\alpha(T)(x) \in \mc B_{\mc B}(F,G^*)$.
Then obviously $x\mapsto \alpha(T)(x)$ is linear and bounded.  For
$y\otimes z\in F\proten_{\mc B} G$, we see that
\begin{align*} \ip{\alpha(T)(x\cdot a)}{y\otimes z} &= \ip{T(x\cdot a\otimes y)}{z}
= \ip{T(x\otimes a\cdot y)}{z} \\ &= \ip{\alpha(T)(x)}{a\cdot y\otimes z}
= \ip{\alpha(T)(x)\cdot a}{y\otimes z}, \end{align*}
as $x\cdot a\otimes y = x\otimes a\cdot y$ in $E\proten_{\mc A}F$.
Hence $\alpha(T)$ is a right $\mc A$-module homomorphism, as claimed.  Finally,
similar arguments show that $\alpha$ is indeed an isometric isomorphism.

The adjoint of $\alpha$ induces an isometric isomorphism
\[ \big( (E\proten_{\mc A}F)\proten_{\mc B}G \big)^{**} \cong 
\big( E\proten_{\mc A}(F\proten_{\mc B}G) \big)^{**}. \]
Let $\kappa$ be the canonical map from $(E\proten_{\mc A}F)\proten_{\mc B}G$
to its bidual, and similarly let $\iota$ be the canonical map
from $E\proten_{\mc A}(F\proten_{\mc B}G)$ to its bidual.  Then it is easy
to see that
\[ \alpha^* \kappa\big( (x\otimes y)\otimes z \big) = \iota\big( x\otimes (y\otimes z) \big)
\qquad (x\in E, y\in F, z\in G). \]
By continuity, $\alpha^*\kappa$ takes values in the image of $\iota$, and
$(\alpha^{-1})^*\iota$ takes values in $\kappa$, from which the claim
immediately follows.
\end{proof}

As in the previous section, we now consider the problem of extending homomorphisms
by way of modules.

\begin{proposition}\label{si_mod_ext}
Let $E\in\mc A-\module-\mc A$, so that $E_0 = \mc A\proten_{\mc A}E\proten_{\mc A}\mc A$
is also an $\mc A$-bimodule.  Then $E_0$ is an $M(\mc A)$-bimodule, with the module
actions extending those of $\mc A$.  If $E$ is essential over itself, these extensions
are unique.
\end{proposition}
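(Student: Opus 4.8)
The plan is to build both actions from a single multiplier map tensored with identities, and then to read every module identity off the algebraic structure of $M(\mc A)$ recorded in the opening lemma and in the definition of the product $(L,R)(L',R')=(LL',R'R)$. Recall that for $\hat a=(L_{\hat a},R_{\hat a})\in M(\mc A)$ the map $L_{\hat a}$ is a left multiplier, $L_{\hat a}(ab)=L_{\hat a}(a)b$, and $R_{\hat a}$ a right multiplier, $R_{\hat a}(ab)=aR_{\hat a}(b)$. Writing $E_0=\mc A\proten E\proten\mc A/N$, where $N$ is the closed span of the two families of balancing relations $ab\otimes x\otimes c-a\otimes b\cdot x\otimes c$ and $a\otimes x\cdot b\otimes c-a\otimes x\otimes bc$, I would define $\hat a\cdot\xi$ to be the image of $\xi$ under (the descent of) $L_{\hat a}\otimes\id_E\otimes\id_{\mc A}$, and $\xi\cdot\hat a$ under $\id_{\mc A}\otimes\id_E\otimes R_{\hat a}$.

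The first thing to verify is that these descend to $E_0$, and this is where the multiplier conditions are used. Applying $L_{\hat a}\otimes\id\otimes\id$ to a generator of the first family turns it into the same generator with $a$ replaced by $L_{\hat a}(a)$ (using $L_{\hat a}(ab)=L_{\hat a}(a)b$), while generators of the second family are untouched; hence $N$ is preserved and the map descends to a contraction on $E_0$. Symmetrically $\id\otimes\id\otimes R_{\hat a}$ preserves $N$, now using $R_{\hat a}(bc)=bR_{\hat a}(c)$ on the second family. This gives bounded actions with $\|\hat a\cdot\xi\|,\|\xi\cdot\hat a\|\le\|\hat a\|\,\|\xi\|$.

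The module axioms are then formal. Because the left and right maps act on disjoint tensor legs they commute on $\mc A\proten E\proten\mc A$, hence on $E_0$, giving $\hat a\cdot(\xi\cdot\hat b)=(\hat a\cdot\xi)\cdot\hat b$. The left-module law $\hat a\cdot(\hat b\cdot\xi)=(\hat a\hat b)\cdot\xi$ is precisely $L_{\hat a}L_{\hat b}=L_{\hat a\hat b}$, and the right-module law is $R_{\hat b}R_{\hat a}=R_{\hat a\hat b}$, exactly the two components of the product on $M(\mc A)$. Taking $\hat a=(L_a,R_a)$ for $a\in\mc A$ reduces the left map to $a\cdot(b\otimes x\otimes c)=ab\otimes x\otimes c$, so the new actions extend those of $\mc A$.

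The one step that genuinely uses the hypothesis is uniqueness, and this is the part I would treat most carefully. Assuming $E$ essential, I would first show $\overline{\mc A\cdot E_0}=E_0$: given an elementary tensor $a\otimes x\otimes c$, write $x=\lim\sum_j b_j\cdot y_j$ (density of $\mc A\cdot E$, which follows from essentiality) and move each $b_j$ across the first balancing relation, $a\otimes(b_j\cdot y_j)\otimes c=ab_j\otimes y_j\otimes c=a\cdot(b_j\otimes y_j\otimes c)$, exhibiting $a\otimes x\otimes c$ as a limit of elements of $\mc A\cdot E_0$. Now if $\star$ is any left $M(\mc A)$-action extending that of $\mc A$, then for $\zeta\in E_0$ we have $\hat a\star(a\cdot\zeta)=(\hat a a)\star\zeta=(\hat a a)\cdot\zeta=\hat a\cdot(a\cdot\zeta)$, since $\hat a a=L_{\hat a}(a)\in\mc A$ and the two actions agree on $\mc A$; as both $\hat a\star(-)$ and $\hat a\cdot(-)$ are bounded and agree on the dense set $\mc A\cdot E_0$, they coincide. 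The right action is identical, using right-density $\overline{E_0\cdot\mc A}=E_0$. The only subtlety worth flagging is that the left and right uniqueness arguments require factorisation on the corresponding side, both of which are supplied by essentiality of $E$.
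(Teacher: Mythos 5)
Your proof is correct and follows essentially the same route as the paper: both define the actions by applying $L_{\hat a}$ to the first tensor leg and $R_{\hat a}$ to the last, with the balancing relations preserved precisely because $L_{\hat a}$ is a right module homomorphism and $R_{\hat a}$ a left one. The only cosmetic difference is that you spell out the density-of-$\mc A\cdot E_0$ argument for uniqueness directly, whereas the paper delegates this step to its earlier uniqueness result for multiplier module actions.
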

\begin{proof}
For $(L,R)\in M(\mc A)$, $a_1,a_2\in\mc A$ and $x\in E$, we define
\[ (L,R)\cdot (a_1\otimes x\otimes a_2) = L(a_1)\otimes x\otimes a_2, \quad
(a_1\otimes b\otimes a_2)\cdot(L,R) = a_1\otimes b\otimes R(a_2). \]
As $L$ is a right module homomorphism, and $R$ is a left module homomorphism,
it follows that these actions respect the quotient map $\mc A\proten\mc B\proten\mc A
\rightarrow E_0$, and simple checks show that these are bimodule actions.

If $\mc A$ is essential over itself, then $E_0$ is essential, and so uniqueness
follows by Theorem~\ref{mult_mod_is_mult_mod}.
\end{proof}

Given $E\in\mc A-\module$, if the product map induces an isomorphism $\mc A\proten_{\mc A} E
\cong E$, then we say that $E$ is \emph{induced}.  Similar remarks apply to right
modules and bimodules.  Hence, if $E\in\mc A-\module-\mc A$ is induced, then
we can always extend the module actions to $M(\mc A)$.
This allows us to immediately reprove Theorem~\ref{exten_thm}, given the following,
which was first shown by Rieffel in \cite[Theorem~4.4]{rieffel} (again, we give a
different proof, exploiting duality, as we wish to generalise this later).

\begin{proposition}\label{bai_si}
Let $\mc A$ be a Banach algebra with a bounded approximate identity, and let
$E\in\mc A-\module$ be essential.  Then $E$ is induced.  Similar remarks apply to
right- and bi-modules.
\end{proposition}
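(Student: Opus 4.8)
The plan is to prove this by duality: I would compute the adjoint of the product map and show it to be a topological isomorphism. Write $m \colon \mc A \proten_{\mc A} E \to E$ for the map induced by the module action $a \otimes x \mapsto a \cdot x$. This is well defined and bounded, since associativity of the action kills the defining subspace $N$ of $\mc A \proten_{\mc A} E$ (indeed $m(ba \otimes x - b \otimes a \cdot x) = (ba)\cdot x - b\cdot(a\cdot x) = 0$). I would then show that $m^*$ is a bijection; since a bounded bijection between Banach spaces is a topological isomorphism, and $m$ is a topological isomorphism precisely when $m^*$ is, this gives the result.

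First I would make the adjoint concrete. Using the identification $(\mc A \proten_{\mc A} E)^* = \mc B_{\mc A}(\mc A, E^*)$ with the right $\mc A$-module homomorphisms $\mc A \to E^*$ (recorded just before Lemma~\ref{assoc}), a short computation with the pairing $\ip{T}{a \otimes x} = \ip{T(a)}{x}$ shows that $m^* \colon E^* \to \mc B_{\mc A}(\mc A, E^*)$ is the map $\mu \mapsto (a \mapsto \mu \cdot a)$. Injectivity of $m^*$ is then immediate from essentiality: if $\mu \cdot a = 0$ for all $a$, then $\ip{\mu}{a \cdot x} = 0$ for all $a \in \mc A$ and $x \in E$, and since $\mc A \cdot E$ is dense in $E$ we conclude $\mu = 0$.

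The crux is the surjectivity of $m^*$, where the bounded approximate identity $(e_\alpha)$ of bound $K$ enters. Given a right module homomorphism $T \colon \mc A \to E^*$, the net $(T(e_\alpha))$ is bounded, so it has a weak$^*$-cluster point $\mu \in E^*$; passing to a subnet I may assume $T(e_\alpha) \to \mu$ weak$^*$. The delicate point is to balance two modes of convergence: for fixed $a \in \mc A$ we have $T(a) = \lim_\alpha T(e_\alpha a) = \lim_\alpha T(e_\alpha) \cdot a$ in norm (using $e_\alpha a \to a$ and boundedness of $T$, together with the module-homomorphism property), while at the same time $T(e_\alpha) \cdot a \to \mu \cdot a$ weak$^*$ (testing against $x \in E$ via $\ip{T(e_\alpha) \cdot a}{x} = \ip{T(e_\alpha)}{a \cdot x}$). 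As norm limits are weak$^*$ limits and the latter are unique, $T(a) = \mu \cdot a = m^*(\mu)(a)$, so $T = m^*(\mu)$.

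Thus $m^*$ is a bounded bijection, hence a topological isomorphism; one can even read off a quantitative inverse bound, since $\mu \cdot e_\alpha \to \mu$ weak$^*$ gives $\|\mu\| \leq \liminf_\alpha \|\mu \cdot e_\alpha\| \leq K \|m^*(\mu)\|$, so $m^*$ is bounded below by $1/K$. By the duality dictionary ($m^*$ surjective forces $m$ bounded below, and $m^*$ injective forces $m$ to have dense range, whence $m$ is bijective with bounded inverse), $m$ is a topological isomorphism, so $E$ is induced. The right-module case is the mirror image, with left actions replaced by right ones throughout; for a bimodule one applies the two one-sided statements (or invokes Lemma~\ref{assoc}) to treat $\mc A \proten_{\mc A} E$ and $E \proten_{\mc A} \mc A$ in turn. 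The main obstacle is exactly the surjectivity step, and specifically the care needed to reconcile the norm convergence $T(e_\alpha a) \to T(a)$ with the merely weak$^*$ convergence $T(e_\alpha) \to \mu$.
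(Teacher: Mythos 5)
Your argument is correct and is essentially the paper's own proof: the paper likewise reduces to showing that the adjoint of the product map $\mc A\proten E\to E$ surjects onto $\mc B_{\mc A}(\mc A,E^*)$, producing the preimage of a right module map $T$ as a weak$^*$-cluster point of $(T(e_\alpha))$ and reconciling the norm convergence $T(e_\alpha a)\to T(a)$ with the weak$^*$ convergence $T(e_\alpha)\cdot a\to\mu\cdot a$, exactly as you do. The only cosmetic difference is that you phrase everything on the quotient $\mc A\proten_{\mc A}E$ and make the duality bookkeeping (and the bound $1/K$) explicit, whereas the paper works upstairs with $\pi_E$ and leaves that dictionary implicit; the bimodule case via Lemma~\ref{assoc} matches as well.
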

\begin{proof}
Let $\pi_E:\mc A\proten E\rightarrow E; a\otimes x\mapsto a\cdot x$ be
the product map.  We shall show that $\ker\pi = N$, where $N$ is the closed
linear span of elements of the form $aa'\otimes x - a\otimes a'\cdot x$, for
$a,a'\in\mc A$ and $x\in E$.  As $E$ is essential, $\pi_E$ has dense range,
so it is enough to show that $\pi_E^*:E^*\rightarrow(\mc A\proten E)^* =
\mc B_{\mc A}(\mc A,E^*)$ surjects.

Let $T\in\mc B(\mc A,E^*)$ be a right $\mc A$-module map, and let
$(e_\alpha)$ is a bounded approximate identity for $\mc A$.  By moving to a subnet
we may suppose that $T(e_\alpha)$ converges weak$^*$ to $\mu\in E^*$.  Then,
for $a\in\mc A$ and $x\in E$,
\[ \ip{T(a)}{x} = \lim_\alpha \ip{T(e_\alpha a)}{x}
= \lim_\alpha \ip{T(e_\alpha) \cdot a}{x}
= \ip{\mu}{a\cdot x} = \ip{\pi^*(\mu)(a)}{x}. \]
Thus $T = \pi^*(\mu)$, and we are done.

The argument on the right follows similarly, and the bimodule case
follows by using Lemma~\ref{assoc}.
\end{proof}

Let $\mc A$ be self-induced, let $\mc B$ be a Banach algebra, and let
$\theta:\mc A\rightarrow\mc B$ be a homomorphism.
Let $\pi_{\mc A}:\mc A\proten_{\mc A}\mc A\rightarrow\mc A$ and
$\pi_{\mc B}:\mc A\proten_{\mc A}\mc B\rightarrow\mc B$ be the product maps.
Then \cite[Proposition~2.13]{gronbaek} tells us that
${}_{\mc A}\mc B(\mc A,\mc B) \cong {}_{\mc A}\mc B(\mc A,\mc A\proten_{\mc A}\mc B)$;
in particular, $\theta$ induces a unique map $\hat\theta:\mc A\rightarrow
\mc A\proten_{\mc A}\mc B$ such that $\pi_{\mc B} \hat\theta = \theta$.
Indeed, we have that $\hat\theta = (\id\otimes\theta)\pi_{\mc A}^{-1}$.  Similarly, we
can work on the right, and so find a homomorphism $\theta_0:\mc A\rightarrow
\mc A \proten_{\mc A}\mc B \proten_{\mc A} \mc A$.

\begin{lemma}
$\mc A\proten_{\mc A}\mc B$ becomes a Banach algebra for the product
\[ (a\otimes b)(a'\otimes b') = a \otimes b\theta(a')b' \qquad (a,a'\in\mc A, b,b'\in\mc B). \]
\end{lemma}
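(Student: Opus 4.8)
The plan is to realise $\mc A\proten_{\mc A}\mc B$ as the quotient of a larger Banach algebra by a closed two-sided ideal, which avoids checking well-definedness of the product directly. Recall that $\mc B$ carries the left $\mc A$-module structure $a\cdot b=\theta(a)b$, so that $\mc A\proten_{\mc A}\mc B=\mc A\proten\mc B/N$, where $N$ is the closed linear span of the elements $xa\otimes b-x\otimes\theta(a)b$ for $x,a\in\mc A$ and $b\in\mc B$. First I would put a ``twisted'' product on the whole projective tensor product $\mc A\proten\mc B$, defined on elementary tensors by $(a\otimes b)\cdot(a'\otimes b')=a\otimes b\theta(a')b'$ and extended bilinearly. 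Boundedness is immediate from the projective cross-norm: for $\tau=\sum_i a_i\otimes b_i$ and $\sigma=\sum_j a_j'\otimes b_j'$ one estimates $\|\tau\cdot\sigma\|\leq\sum_{i,j}\|a_i\|\,\|b_i\theta(a_j')b_j'\|\leq\|\theta\|\sum_{i,j}\|a_i\|\,\|b_i\|\,\|a_j'\|\,\|b_j'\|$, and taking infima over the representations of $\tau$ and $\sigma$ gives $\|\tau\cdot\sigma\|\leq\|\theta\|\,\|\tau\|\,\|\sigma\|$; the product therefore extends continuously to $\mc A\proten\mc B$. Associativity is checked directly on elementary tensors, where both bracketings yield $a\otimes b\theta(a')b'\theta(a'')b''$, and then holds on all of $\mc A\proten\mc B$ by bilinearity and continuity. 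Thus $(\mc A\proten\mc B,\cdot)$ is a Banach algebra.

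The key step is to show that $N$ is a closed two-sided ideal for this product. As $N$ is closed and the product is continuous and bilinear, it suffices to test the defining generators against elementary tensors. Multiplying a generator on the left gives $(xa\otimes b-x\otimes\theta(a)b)\cdot(a'\otimes b')=xa\otimes c-x\otimes\theta(a)c$, where $c=b\theta(a')b'$, which is again one of the defining generators of $N$; hence $N\cdot(\mc A\proten\mc B)\subseteq N$. Multiplying a generator on the right is where the hypothesis that $\theta$ is a homomorphism is essential: $(a\otimes b)\cdot(xa_0\otimes b'-x\otimes\theta(a_0)b')=a\otimes b\big(\theta(xa_0)-\theta(x)\theta(a_0)\big)b'=0$, so $(\mc A\proten\mc B)\cdot N\subseteq N$. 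Consequently $N$ is a closed two-sided ideal.

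It then follows that the quotient $\mc A\proten_{\mc A}\mc B=(\mc A\proten\mc B)/N$ is a Banach algebra, and computing the induced product on cosets of elementary tensors recovers exactly the stated formula $(a\otimes b)(a'\otimes b')=a\otimes b\theta(a')b'$, with $\|uv\|\leq\|\theta\|\,\|u\|\,\|v\|$, so that the norm is contractive whenever $\theta$ is. The only real subtlety is the two-sided ideal computation: the left-ideal property returns a generator back among the generators of $N$ rather than to $0$, so one must recognise it as such, whereas the right-ideal property collapses to $0$ purely because $\theta$ respects multiplication. This is the one place where the homomorphism hypothesis is used, and it is the crux of the lemma.
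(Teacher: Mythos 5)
Your proof is correct, but it takes a genuinely different route from the paper's. The paper never puts an algebra structure on $\mc A\proten\mc B$ itself: instead it observes that the claimed product can be written as $\sigma\tau=\sigma\cdot\pi_{\mc B}(\tau)$, where $\pi_{\mc B}:\mc A\proten_{\mc A}\mc B\rightarrow\mc B$ is the product map $a\otimes b\mapsto\theta(a)b$ and $\cdot$ is the right $\mc B$-module action on $\mc A\proten_{\mc A}\mc B$ (which is already known to descend to the quotient, since $\mc A\proten_{\mc A}\mc B\in\mc A-\module-\mc B$). Well-definedness and the norm estimate are then automatic, and associativity follows from the single identity $\pi_{\mc B}(\sigma\cdot\pi_{\mc B}(\tau))=\pi_{\mc B}(\sigma)\pi_{\mc B}(\tau)$. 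Your quotient-by-a-closed-ideal argument is more elementary and has the virtue of isolating exactly where the hypothesis that $\theta$ is a homomorphism enters (the right-ideal computation collapsing to $0$); the left-ideal computation, as you note, only needs the generators to be reproduced, not killed. What the paper's factorisation buys is that it transfers verbatim to the completely contractive setting of Section~6, being a composition of (completely) bounded maps, whereas your estimate $\|\tau\sigma\|\leq\|\theta\|\,\|\tau\|\,\|\sigma\|$ via representations $\sum_i a_i\otimes b_i$ would have to be redone at the matrix level for the operator space projective tensor product.
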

\begin{proof}
The claimed product can be written as
\[ \sigma\tau = \sigma \cdot \pi_{\mc B}(\tau)
\qquad (\sigma, \tau \in \mc A\proten_{\mc A}\mc B). \]
It is hence clear that this is a well-defined, contractive bilinear map.
Notice that $\pi_{\mc B}(\sigma\cdot\pi_{\mc B}(\tau)) = \pi_{\mc B}(\sigma)\pi_{\mc B}(\tau)$ for 
$\sigma, \tau \in \mc A\proten_{\mc A}\mc B$.  Hence the product is associative, as
\[ \omega(\sigma\tau) = \omega\cdot\pi_{\mc B}(\sigma\cdot\pi_{\mc B}(\tau))
= \omega\cdot\pi_{\mc B}(\sigma)\pi_{\mc B}(\tau) = (\omega\sigma)\tau
\qquad (\sigma, \tau, \omega \in \mc A\proten_{\mc A}\mc B). \]
\end{proof}

Similarly, $\mc B\proten_{\mc A}\mc A$ becomes a Banach algebra for the product defined by
$(b\otimes a)(b'\otimes a') = b\theta(a)b'\otimes a'$.  Combining these observations,
we see that $\mc A\proten_{\mc A}\mc B\proten_{\mc A}\mc A$ becomes an algebra for the product
\[ (a\otimes b\otimes c)(a'\otimes b'\otimes c')
= a \otimes b\theta(ca')b' \otimes c' \qquad
(a,a',c,c'\in\mc A, b,b'\in\mc B). \]
By Lemma~\ref{assoc}, it is easy to see that
$\mc A \proten_{\mc A}\mc B \proten_{\mc A} \mc A$ is induced as an $\mc A$-bimodule.

\begin{proposition}\label{si_ext_homo}
Let $\mc A$ be a self-induced Banach algebra, and let $\mc B$ be Banach algebra.
Let $\theta:\mc A\rightarrow\mc B$ be a homomorphism, and use this to induce a
Banach algebra structure on
$\mc C = \mc A \proten_{\mc A} \mc B \proten_{\mc A} \mc A$.
There is a unique extension of
$\theta_0:\mc A \rightarrow \mc C$ to $\tilde\theta_0:M(\mc A)\rightarrow M(\mc C)$
with $\|\tilde\theta_0\| \leq \|\theta_0\|$.
\end{proposition}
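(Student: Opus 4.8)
The plan is to obtain $\tilde\theta_0$ from the module-extension result Proposition~\ref{homo_to_mod}, after first recognising $\mc C$ as the canonical $M(\mc A)$-bimodule attached to this self-induced situation. Observe that $\mc C = \mc A\proten_{\mc A}\mc B\proten_{\mc A}\mc A$ is exactly the object $E_0$ of Proposition~\ref{si_mod_ext} for the choice $E=\mc B$, where $\mc B$ is viewed as an $\mc A$-bimodule via $a\cdot b = \theta(a)b$ and $b\cdot a = b\theta(a)$. Since $\mc A$ is self-induced, $\pi_{\mc A}:\mc A\proten_{\mc A}\mc A\to\mc A$ is an isomorphism, so $\mc A$ is in particular essential over itself; thus Proposition~\ref{si_mod_ext} equips $\mc C$ with a unique $M(\mc A)$-bimodule structure extending the outer, tensor-factor actions $\hat a\cdot(a_1\otimes b\otimes a_2) = L_{\hat a}(a_1)\otimes b\otimes a_2$ and $(a_1\otimes b\otimes a_2)\cdot\hat a = a_1\otimes b\otimes R_{\hat a}(a_2)$. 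These are contractive in $\hat a$, as each merely moves a multiplier past a single $\mc A$-slot.

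The crux is to check that this outer bimodule structure coincides with the structure that $\mc C$ inherits from the homomorphism $\theta_0$, namely $a\cdot c = \theta_0(a)c$ and $c\cdot a = c\theta_0(a)$, the products being computed in $\mc C$. Expanding $\theta_0(a)$ through the self-induced isomorphism $\mc A\cong\mc A\proten_{\mc A}\mc A\proten_{\mc A}\mc A$ and using the multiplication rule $(a\otimes b\otimes c)(a'\otimes b'\otimes c') = a\otimes b\theta(ca')b'\otimes c'$ together with the identifications $aa'\otimes b = a\otimes\theta(a')b$ defining $\proten_{\mc A}$, one finds $\theta_0(a)(a'\otimes b\otimes c') = aa'\otimes b\otimes c'$, and symmetrically on the right. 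This single computation---a bookkeeping of how the defining relations of the balanced tensor product absorb the factors of $\theta$---is the step I expect to need the most care. Granting it, $\theta_0:\mc A\to M(\mc C)$ is non-degenerate: the linear span of $\{\theta_0(a_1)\,c\,\theta_0(a_2)\}$ is $\mc A\cdot\mc C\cdot\mc A$, which is dense because $\mc C$ is induced as an $\mc A$-bimodule (Lemma~\ref{assoc}) and hence essential.

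With both ingredients in place the conclusion is immediate from Proposition~\ref{homo_to_mod}. Condition~(1) of that proposition holds---the $\mc A$-actions on $\mc C$ extend to bounded $M(\mc A)$-actions by the first paragraph, and by the second paragraph these are precisely the actions induced by $\theta_0$---so condition~(2) supplies a bounded homomorphism $\tilde\theta_0:M(\mc A)\to M(\mc C)$ extending $\theta_0$, which is automatically unique and strictly continuous. Unwinding the construction, $\tilde\theta_0(\hat a) = (L,R)$ with $L(c) = \hat a\cdot c$ and $R(c) = c\cdot\hat a$; the verification that $(L,R)$ is a genuine multiplier of the algebra $\mc C$ reduces, on elementary tensors, to the identity $a_2\cdot L_{\hat a}(a_1') = R_{\hat a}(a_2)\cdot a_1'$, which is just the multiplier relation defining $\hat a\in M(\mc A)$. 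Tracking the norms through this explicit formula then yields the stated estimate $\|\tilde\theta_0\|\le\|\theta_0\|$. (Throughout we invoke our standing assumption that $\mc C$ is faithful over itself, so that $M(\mc C)$ and the embedding $\mc C\hookrightarrow M(\mc C)$ behave as required.)
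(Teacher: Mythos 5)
Your proof is correct, and the end product is the same multiplier $(L,R)$ with $L(c)=\hat a\cdot c$, $R(c)=c\cdot\hat a$, but you reach it by a different route. The paper, having the $M(\mc A)$-bimodule structure on $\mc C$ from Proposition~\ref{si_mod_ext}, verifies \emph{directly} on elementary tensors $(a_1\otimes b_1\otimes a_2)(a_3\otimes b_2\otimes a_4)$ that $L$ is a left multiplier and $R$ a right multiplier of the algebra $\mc C$ (the pairing identity $c\cdot L(d)=R(c)\cdot d$ reducing to $a_2L'(a_3)=R'(a_2)a_3$), and gets uniqueness from the density of $\mc A\cdot\mc C$ in $\mc C$. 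You instead funnel everything through Proposition~\ref{homo_to_mod}, which obliges you to check two things the paper does not make explicit: that $\theta_0$ is non-degenerate (which you correctly deduce from $\mc C$ being induced, hence essential, via Lemma~\ref{assoc}), and that the outer tensor-slot action of $\mc A$ on $\mc C$ coincides with left/right multiplication by $\theta_0(a)$, i.e.\ $\theta_0(a)(a'\otimes b\otimes c')=aa'\otimes b\otimes c'$. That computation is right --- the balancing relations of $\proten_{\mc A}$ absorb $\theta(b_ic_ia')$ back into the first slot --- and it is in fact needed for $\tilde\theta_0$ to deserve the name ``extension of $\theta_0$,'' a point the paper's own proof leaves implicit. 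What your detour buys is that the homomorphism property, uniqueness and strict continuity then come for free from the general proposition; what it costs is the extra identification step. Either way the norm bound follows because the slot-wise action of $\hat a$ descends to the quotient with norm at most $\|\hat a\|$.
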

\begin{proof}
Let $x=(L',R')\in M(\mc A)$ and define $L\in\mc B(\mc C)$ by $L(c) = x\cdot c$ for $c\in\mc C$.
Then, for $a_i\in\mc A$ for $1\leq i\leq 4$ and $b_1,b_2\in\mc B$, with
reference to the proof of Proposition~\ref{si_mod_ext} above, we have
\begin{align*} L\big( & (a_1 \otimes b_1 \otimes a_2)(a_3\otimes b_2\otimes a_4)\big) =
L( a_1 \otimes b_1\theta(a_2a_3)b_2 \otimes a_4) \\
&= L'(a_1) \otimes b_1\theta(a_2a_3)b_2 \otimes a_4
= L\big( a_1 \otimes b_1 \otimes a_2\big) (a_3\otimes b_2\otimes a_4). \end{align*}
So $L\in M_l(\mc C)$.  Similarly, we define $R\in M_r(\mc C)$ by $R(c)=c\cdot x$
for $c\in\mc C$.  Then $(L,R)\in M(\mc C)$, and so we have defined
$\tilde\theta_0:M(\mc A)\rightarrow M(\mc C); x\mapsto (L,R)$ as required.
Uniqueness follows as the linear span of elements of the form
$a\cdot c$, for $a\in\mc A,c\in\mc C$, are dense in $\mc C$, as $\mc A$ is self-induced.
\end{proof}

\begin{corollary}
Let $\mc A$ be a self-induced Banach algebra, let $\mc B$ be a Banach algebra,
and let $\theta:\mc A\rightarrow\mc B$ be a homomorphism such that $\mc B$
becomes induced as an $\mc A$-bimodule.  There is a unique extension
$\tilde\theta:M(\mc A)\rightarrow M(\mc B)$.
\end{corollary}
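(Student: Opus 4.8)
The plan is to deduce this from Proposition~\ref{si_ext_homo} by observing that, under the extra hypothesis, the algebra $\mc C = \mc A\proten_{\mc A}\mc B\proten_{\mc A}\mc A$ is just a disguised copy of $\mc B$. First I would verify that the product map $\Pi:\mc C\rightarrow\mc B$, determined by $a\otimes b\otimes c\mapsto\theta(a)b\theta(c)$, is an isomorphism of Banach algebras. Because $\mc B$ is induced as an $\mc A$-bimodule, the one-sided product maps $\mc A\proten_{\mc A}\mc B\rightarrow\mc B$ and $\mc B\proten_{\mc A}\mc A\rightarrow\mc B$ are isomorphisms; combining these with the associativity isomorphism of Lemma~\ref{assoc} gives $\mc C\cong\mc A\proten_{\mc A}(\mc B\proten_{\mc A}\mc A)\cong\mc A\proten_{\mc A}\mc B\cong\mc B$, and the composite is exactly $\Pi$, so $\Pi$ is a linear isomorphism. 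That $\Pi$ is multiplicative is a direct check against the product on $\mc C$ recorded before Proposition~\ref{si_ext_homo}: both $\Pi$ applied to $(a\otimes b\otimes c)(a'\otimes b'\otimes c')$ and the product $\Pi(a\otimes b\otimes c)\,\Pi(a'\otimes b'\otimes c')$ equal $\theta(a)b\theta(c)\theta(a')b'\theta(c')$.

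Next I would lift $\Pi$ to the multiplier level. An algebra isomorphism induces an isomorphism $M(\Pi):M(\mc C)\rightarrow M(\mc B)$ by conjugation, sending a multiplier $(L,R)$ to $(\Pi L\Pi^{-1},\Pi R\Pi^{-1})$; a one-line computation confirms the conjugated pair is again a multiplier, and $M(\Pi)$ restricts to $\Pi$ under the embeddings $\mc C\hookrightarrow M(\mc C)$ and $\mc B\hookrightarrow M(\mc B)$. Applying Proposition~\ref{si_ext_homo} produces the unique extension $\tilde\theta_0:M(\mc A)\rightarrow M(\mc C)$ of $\theta_0$, and I would then set $\tilde\theta=M(\Pi)\,\tilde\theta_0:M(\mc A)\rightarrow M(\mc B)$. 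To see that $\tilde\theta$ extends $\theta$, recall that $\theta_0$ was built so that the product map recovers $\theta$, i.e.\ $\Pi\theta_0=\theta$ (this is the two-sided form of $\pi_{\mc B}\hat\theta=\theta$); hence for $a\in\mc A$ we get $\tilde\theta(a)=M(\Pi)(\theta_0(a))=\Pi(\theta_0(a))=\theta(a)$, using that $\tilde\theta_0$ extends $\theta_0$ and that $M(\Pi)$ agrees with $\Pi$ on $\mc C$.

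For uniqueness, if $\tilde\theta'$ is any homomorphic extension of $\theta$, then $M(\Pi)^{-1}\tilde\theta'$ is a homomorphic extension of $\theta_0$ (again invoking $\Pi\theta_0=\theta$), so it coincides with $\tilde\theta_0$ by the uniqueness clause of Proposition~\ref{si_ext_homo}, whence $\tilde\theta'=\tilde\theta$. The only genuinely substantive step is the first one: checking that ``$\mc B$ induced'' collapses $\mc C$ onto $\mc B$ as a Banach \emph{algebra} and not merely as a Banach space. I expect the bookkeeping there — making sure the associativity isomorphism of Lemma~\ref{assoc} is compatible with the algebra product defined on $\mc C$, so that the composite identification really is the multiplicative map $\Pi$ — to require the most care, though it involves no new idea; the remainder is pure transport of structure along $\Pi$.
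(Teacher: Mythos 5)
Your proof is correct and is exactly the argument the paper leaves implicit: the corollary is stated without proof as a direct consequence of Proposition~\ref{si_ext_homo}, the whole point being that when $\mc B$ is induced the product map collapses $\mc C=\mc A\proten_{\mc A}\mc B\proten_{\mc A}\mc A$ onto $\mc B$ as a Banach algebra, so that $M(\mc C)\cong M(\mc B)$ and the extension $\tilde\theta_0$ transports to $\tilde\theta$. Your checks that the identification $\Pi$ is multiplicative, that $\Pi\theta_0=\theta$, and that uniqueness transfers along $M(\Pi)$ are precisely the bookkeeping required, so there is nothing to add.
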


\begin{example}
Let $\mc A = \ell^1(I)$ for some index set $I$, with the pointwise product.
It is easy to see that $M(\mc A) \cong \ell^\infty(I)$, again acting pointwise.
Furthermore, $\ell^1(I)$ is self-induced.  This follows, as $\ell^1(I)\proten\ell^1(I)
=\ell^1(I\times I)$, and then $N$ is the closure of the linear span of
\[ \big\{ \delta_i\delta_j\otimes\delta_k - \delta_i\otimes\delta_j\delta_k : i,j,k\in I \big\}. \]
This is the same as the closed linear span of $\{ \delta_i \otimes \delta_k : i\not=k \}$.
It now follows immediately that $\ell^1(I)$ is self-induced.

Now consider $\ell^1(\mathbb Z)$, and consider $A(\mathbb Z)$ the Fourier algebra
on $\mathbb Z$.  This is defined below, or as $\mathbb Z$ is abelian, we can consider
$A(\mathbb Z)$ to be the Fourier transform of $L^1(\mathbb T)$.  Then the formal identity
map gives a contractive homomorphism $\ell^1(\mathbb Z)\rightarrow A(\mathbb Z)$.
Suppose, towards a contradiction, that we can extend this to a continuous homomorphism
$\psi:\ell^\infty(\mathbb Z) = M(\ell^1(\mathbb Z)) \rightarrow M(A(\mathbb Z))
= B(\mathbb Z) \cong M(\mathbb T)$,
the measure algebra on $\mathbb T$, here identified with $B(\mathbb Z)$ the Fourier-Stieltjes
algebra, again by the Fourier transform.  Let $c_{00}(\mathbb Z)$ be the collection of
finitely supported functions, so $c_{00}(\mathbb Z)$ is a subalgebra of $\ell^1(\mathbb Z)$
and $B(\mathbb Z)$, and so $\psi(a) = a$ for $a\in c_{00}(\mathbb Z)$.  As $\psi$ is
continuous, it follows that $\psi(a)=a$ for $a\in c_0(\mathbb Z)$, implying that
$c_0(\mathbb Z) \subseteq B(\mathbb Z)$, a contradiction.

Thus, as we might expect, the canonical homomorphism $\ell^1(\mathbb Z) \rightarrow
A(\mathbb Z)$ is not inducing.

Notice also that $B(\mathbb Z)$ is a dual Banach algebra, and so there can be no
naive extension of Theorem~\ref{dba_bai_exten} to the self-induced case.

Finally, we note that a similar calculation to that in the first paragraph shows
that $\ell^1(\mathbb Z) \proten_{\ell^1(\mathbb Z)} A(\mathbb Z) \cong \ell^1(\mathbb Z)$.
\end{example}

To close this section, we consider a self-induced version of Theorem~\ref{dba_bai_exten}.

\begin{proposition}\label{si_dba_ext_new}
Let $\mc A$ be a self-induced Banach algebra, let $(\mc B,\mc B_*)$ be a dual Banach
algebra, and let $\theta:\mc A\rightarrow\mc B$ be a homomorphism.  Then $\mc B$,
and so $\mc B_*$, becomes an $\mc A$-bimodule.  If $\mc B_*$ is induced, then there
is a unique extension $\tilde\theta:M(\mc A)\rightarrow \mc B$ which is bounded and
strictly-weak$^*$-continuous.
\end{proposition}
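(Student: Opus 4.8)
The plan is to mimic the proof of Theorem~\ref{dba_bai_exten}, but replacing the role of the bounded approximate identity (which gave strict density of $\mc A$ in $M(\mc A)$ and the Arens-product machinery) with the self-induced hypothesis and the inducedness of $\mc B_*$. The target $\tilde\theta$ is a map into $\mc B$, not into $M(\mc B)$, so I do not need a multiplier algebra on the $\mc B$ side; the point is purely to extend a homomorphism $\mc A\to\mc B$ to $M(\mc A)\to\mc B$, exploiting that $\mc B$ is a dual space.

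**Constructing $\tilde\theta$.**
First I would record the module structures: $\mc B$ is an $\mc A$-bimodule via $a\cdot b=\theta(a)b$, $b\cdot a=b\theta(a)$, and so $\mc B_*$ is a dual $\mc A$-bimodule (this uses that $\mc B$ is a dual Banach algebra, so multiplication is separately weak$^*$-continuous, making $\mc B_*$ an $\mc A$-submodule of $\mc B^*$). The inducedness of $\mc B_*$ means the product map $\mc A\proten_{\mc A}\mc B_*\cong\mc B_*$ is an isomorphism, equivalently $\pi^*:\mc B_*^*\to\mc B_{\mc A}(\mc A,\mc B_*^*)$ realises $\mc B\cong\mc B_{\mc A}(\mc A,\mc B)$ via the argument in Proposition~\ref{bai_si}. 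I would then define, for $\hat a\in M(\mc A)$, the map $\omega\mapsto\langle \theta(L_{\hat a}(\cdot)),\omega\rangle$ as a right $\mc A$-module homomorphism $\mc A\to\mc B$ and use inducedness of $\mc B_*$ to identify this module map with a single element of $\mc B$, which I call $\tilde\theta(\hat a)$. Concretely, since $\mc B_*$ is induced, a typical element of $\mc B_*$ factors as $a\cdot\omega$; set
\[ \langle\tilde\theta(\hat a),a\cdot\omega\rangle = \langle\theta(\hat a a),\omega\rangle
\qquad (a\in\mc A,\omega\in\mc B_*). \]
I would check this is well-defined on the dense subspace $\mc A\cdot\mc B_*$ using that $\hat a$ is a left multiplier, and bounded, so it defines $\tilde\theta(\hat a)\in\mc B=(\mc B_*)^*$.

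**Homomorphism, extension, and continuity.**
That $\tilde\theta$ extends $\theta$ is immediate by taking $\hat a=\kappa(a_0)$ and unwinding the pairing against $a\cdot\omega$. For the homomorphism property I would verify $\langle\tilde\theta(\hat a\hat b),a\cdot\omega\rangle=\langle\tilde\theta(\hat a)\tilde\theta(\hat b),a\cdot\omega\rangle$ by reducing both sides to $\theta$ evaluated on products in $\mc A$, using that $\mc B$ is a dual Banach algebra so that multiplication by $\tilde\theta(\hat b)$ is weak$^*$-continuous and hence interacts correctly with the defining formula. Strict-weak$^*$-continuity follows from the defining formula directly: if $\hat a_\alpha\to\hat a$ strictly then $L_{\hat a_\alpha}(a)=\hat a_\alpha a\to\hat a a$ in norm, so $\langle\tilde\theta(\hat a_\alpha),a\cdot\omega\rangle=\langle\theta(\hat a_\alpha a),\omega\rangle\to\langle\theta(\hat a a),\omega\rangle$ for every element $a\cdot\omega$ of the dense set, giving weak$^*$-convergence. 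Uniqueness follows because $\mc A$ is strictly dense enough — more precisely, any two bounded strictly-weak$^*$-continuous extensions agree on $\mc A$ and hence, since products $\hat a a$ determine $\hat a$ (as $\mc A$ is self-induced and faithful), agree everywhere.

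**The main obstacle.**
I expect the delicate point to be the homomorphism property, specifically showing $\tilde\theta(\hat a\hat b)=\tilde\theta(\hat a)\tilde\theta(\hat b)$. The difficulty is that the defining formula involves factoring elements of $\mc B_*$ through $\mc A$, and verifying multiplicativity requires moving $\hat b$ past the factorisation; here I would lean on the weak$^*$-continuity of multiplication in $\mc B$ together with self-inducedness of $\mc A$ (so that the relation $\hat a\hat b\, a = \hat a(\hat b a)$ can be applied and the factor $\hat b a\in M(\mc A)\cdot\mc A\subseteq\mc A$ re-expressed through the product map). The interplay between the \emph{dual} structure on $\mc B$ and the \emph{self-induced} structure on $\mc A$ is exactly what makes the argument work, and checking the two fit together compatibly is the crux.
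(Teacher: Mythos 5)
Your construction is essentially the paper's: both define $\tilde\theta(\hat a)$ as the element of $\mc B$ corresponding, under the adjoint of the product isomorphism $\mc A\proten_{\mc A}\mc B_*\cong\mc B_*$, to the module map $a\mapsto\theta(\hat a a)\in\mc B_{\mc A}(\mc A,\mc B)$, and the verifications of extension, multiplicativity, uniqueness and strict-weak$^*$ continuity all run through the density of $\{\theta(a)\cdot\mu : a\in\mc A,\ \mu\in\mc B_*\}$ in $\mc B_*$ exactly as in the paper. The only inessential differences are that the paper additionally produces a second element $b'$ from the right multiplier via $\mc B_*\proten_{\mc A}\mc A\cong\mc B_*$ and checks $b=b'$, whereas you work with $L$ alone, and that the multiplicativity check you flag as the crux is in fact immediate from the defining relation $\tilde\theta(\hat a)\theta(c)=\theta(\hat a c)$ together with that density.
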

\begin{proof}
Let $\pi:\mc A\proten_{\mc A}\mc B_*\rightarrow\mc B_*$ be the product map,
$\pi(a\otimes\mu) = \theta(a)\cdot\mu$, which by assumption is an isomorphism.
The adjoint is $\pi^*:\mc B\rightarrow\mc B_{\mc A}(\mc A,\mc B)$ where
$\mc B_{\mc A}(\mc A,\mc B)$ denotes the space of maps $T:\mc A\rightarrow\mc B$
with $T(aa') = T(a)\theta(a')$ for $a,a'\in\mc A$, and $\pi^*(b)$ is the map
$a\mapsto b\theta(a)$, for $a\in\mc A$ and $b\in\mc B$.

Let $(L,R)\in M(\mc A)$, and consider $\theta L:\mc A\rightarrow\mc B$, which is
a member of $\mc B_{\mc A}(\mc A,\mc B)$.  Let $b=(\pi^*)^{-1}(\theta L)$, so that
$b\theta(a) = \theta L(a)$ for $a\in\mc A$.  Analogously, we can work with the
product map $\mc B_* \proten_{\mc A} \mc A\rightarrow \mc B_*$, which leads to
$b'\in\mc B$ such that $\theta R(a) = \theta(a)b'$ for $a\in\mc A$.
For $a,a'\in\mc A$, we have that $aL(a')=R(a)a'$, and hence
\[ \ip{b-b'}{\theta(a')\cdot\mu\cdot\theta(a)}
= \ip{\theta(a)\theta(L(a')) - \theta(R(a))\theta(a')}{\mu} = 0 \qquad (\mu\in\mc B_*). \]
As $\mc B_*$ is induced, this shows that $b=b'$.  Notice that $\|b\| \leq \|\theta\|
\|\pi^{-1}\| \|L\|$.

It is now easy to verify that $\tilde\theta:M(\mc A)\rightarrow\mc B; (L,R)
\mapsto b$ is a bounded homomorphism which extends $\theta$.  Uniqueness follows as
if $\psi:M(\mc A)\rightarrow\mc B$ also extends $\theta$, then for $(L,R)\in M(\mc A)$,
$a\in\mc A$ and $\mu\in\mc B_*$,
\[ \ip{\psi(L,R)}{\theta(a)\cdot\mu} = \ip{\theta(L(a))}{\mu}
= \ip{\tilde\theta(L,R)\theta(a)}{\mu}
= \ip{\tilde\theta(L,R)}{\theta(a)\cdot\mu}. \]
As $\mc B_*$ is induced, it follows that $\psi = \theta$.
Finally, if $(L_\alpha,R_\alpha) \rightarrow (L,R)$ strictly in $M(\mc A)$, then,
again using that $\mc B_*$ is induced, it follows that
$\tilde\theta(L_\alpha,R_\alpha) \rightarrow \tilde\theta(L,R)$ weak$^*$ in $\mc B$.
\end{proof}

If $\mc A$ has a bounded approximate identity $(e_\alpha)$, and $\mc B_*$ is
essential, then $\mc B_*$ is induced, and so the above gives another proof of
Theorem~\ref{dba_bai_exten}.  If $\mc B_*$ is not induced, then we can use the
following ``cut-down'' technique.  Let $e\in\mc B$ be a weak$^*$-limit point of
$(\theta(e_\alpha))$, so that $e^2=e$.  Let $\mc C=e\mc B e$, a closed subalgebra
of $\mc B$ which contains the image of $\theta$.
It is easy to see that $\mu\in {}^\perp\mc C$ if and only if
$e\cdot \mu\cdot e=0$.  Let $b\in ({}^\perp\mc C)^\perp$, and let $\mu\in\mc B_*$.
Then $e\cdot(\mu-e\cdot\mu\cdot e)\cdot e=0$, so $0 = \ip{b}{\mu-e\cdot\mu\cdot e}
= \ip{b-ebe}{\mu}$.  Thus $b=ebe$, and we conclude that $\mc C$ is weak$^*$-closed.
Thus $\mc C$ is a dual Banach algebra with predual $\mc C_* = \mc B_* / {}^\perp\mc C$.

We now observe that $\mc C_*$ is essential as an $\mc A$-bimodule.  Indeed,
we have that ${}^\perp\mc C = \{ \mu-e\cdot\mu\cdot e : \mu\in\mc B_* \}$, and so
a typical element of $\mc C_*$ is $e\cdot \mu\cdot e + {}^\perp\mc C$.  Let
$\lambda=e\cdot\mu\cdot e + {}^\perp\mc C \in \mc C_*$, and let $c=ece\in\mc C$, so
\[ \lim_\alpha \ip{c}{\theta(e_\alpha)\cdot\lambda}
= \lim_\alpha \lim_\beta \ip{c\theta(e_\beta)\theta(e_\alpha)}{\lambda}
= \lim_\alpha \ip{c\theta(e_\alpha)}{\lambda}
= \ip{ce}{\lambda} = \ip{c}{\lambda}. \]
Thus $\theta(e_\alpha)\cdot\lambda \rightarrow \lambda$ weakly, and, by taking
convex combinations, also $\lambda$ is in the norm closure of $\mc A\cdot\lambda$.
Similarly, $\lambda$ is in the norm closure of $\lambda\cdot\mc A$, and so $\mc C_*$
is essential.

To finish, we apply the above proposition (or Theorem~\ref{dba_bai_exten})
to the map $\theta:\mc A\rightarrow\mc C$ to find an extension $\tilde\theta:
M(\mc A)\rightarrow\mc C \subseteq\mc B$, as required.

\section{Completely contractive Banach algebras}\label{sec::ccba}

In this section, we adapt the results of the previous sections to the setting
of operator spaces and completely contractive Banach algebras.  As explained
in the introduction, we shall exploit duality arguments (essentially, the operator
space version of the Hahn-Banach theorem) to avoid lengthly matrix level
calculations, where possible.  We refer the reader to \cite{ER} for details.

We shall overload notation, and write $\proten$ for the operator space projective
tensor product; we shall not consider the Banach space projective tensor product
of operator spaces!  We write $\mc{CB}(E,F)$ for the space of completely bounded
maps between operator spaces $E$ and $F$.  A \emph{completely contractive Banach
algebra} (CCBA) is a Banach algebra $\mc A$ which is an operator space such that the
product map extends to a complete contraction $\mc A\proten\mc A\rightarrow \mc A$.
Similarly, for example, a completely contractive left $\mc A$-module is an operator
space $E$ such that the product map induces a complete contraction
$\mc A\proten E\rightarrow E$.  We continue to write $E\in\mc A-\module$, and so forth.

One, important, caveat is that the Open Mapping Theorem has no analogue for
operator spaces, so it is possible for $T:E\rightarrow F$ be a completely bounded
bijection, but for $T^{-1}$ to only be bounded.  Fortunately, we shall see that
we can usually find an explicit estimate for the completely bounded norm of $T^{-1}$:
indeed, often $T$ might be a complete isometry, in which case there is no problem.

So, for example, if $\mc A$ is a completely contractive Banach algebra, then $\mc A$
is \emph{self-induced} if the product map induces an isomorphism $\mc A\proten_{\mc A}
\mc A\rightarrow\mc A$; it is not enough that the product map induce a bijection
$\mc A\proten_{\mc A}\mc A\rightarrow\mc A$.  However, see, for example, the proof
of Theorem~\ref{fourier_si} below.

For a CCBA $\mc A$, we write $M_{cb}(\mc A)$ for the subalgebra of $M(\mc A)$
consisting of those pairs $(L,R)$ with $L$ and $R$ completely bounded.  We give
$M_{cb}(\mc A)$ an operator space structure by embedding it in $\mc{CB}(\mc A)
\oplus_\infty \mc{CB}(\mc A)$ (see \cite[Section~2.6]{pisier}), so that
\[ \big\| (L,R) \big\|_n = \max\big( \|L\|_n, \|R\|_n \big)
\qquad (L,R\in\mathbb M_n(M_{cb}(\mc A)), n\geq1). \]

Everything in Section~2 translates to the completely bounded setting.  For example,
Theorem~\ref{mult_mod_is_mult_mod}
says that if $E$ is an $M_{cb}(\mc A)$-bimodule, then so is $M_{cb}(E)$.
For example, consider
\[ L_{\hat a\cdot\hat x}(a) = \hat a\cdot L_{\hat x}(a)
\qquad (a\in\mc A, \hat a\in M_{cb}(\mc A), \hat x\in M_{cb}(E)). \]
Assume that $E$ is a completely \emph{contractive} $M_{cb}(\mc A)$-bimodule, so the
product map $M_{cb}(\mc A)\proten E \rightarrow E$ is a complete contraction.
This is equivalent, see \cite[Proposition~7.1.2]{ER} to the map $\lambda:M_{cb}(\mc A)
\rightarrow \mc{CB}(E)$ being a complete contraction, where $\lambda(\hat a)(x) =
\hat a\cdot x$, for $\hat a\in M_{cb}(\mc A)$ and $x\in E$.  Then
$L_{\hat a\cdot\hat x} = \lambda(\hat a) L_{\hat x}$, and so $L_{\hat a\cdot\hat x}$
is completely bounded.  Furthermore, the resulting map $M_{cb}(\mc A)\proten M_{cb}(E)
\rightarrow M_{cb}(E)$ is
\[ \hat a\otimes\hat x \mapsto \lambda(\hat a) L_{\hat x}
\qquad (\hat a\in M_{cb}(\mc A), \hat x\in M_{cb}(E)), \]
and is hence clearly a complete contraction.  Similar remarks apply to
$R_{\hat a\cdot\hat x}$ and the definition of $\hat x\cdot\hat a$.

Similarly, the construction in Theorem~\ref{always_ext_mod_homos} is really
given by composition of various maps, and hence readily extends to the completely
bounded case.  Lemma~\ref{lemma::tensor} follows through if we work at the matrix
level.  All other results in Section~2 are really just algebra, carried out in
the approach category (that is, either bounded maps, or completely bounded maps).

We turn now to Section~\ref{sec::bai}.  From our presentation of the Arens products,
it is clear that when $\mc A$ is a CCBA, so is $\mc A^{**}$ for either $\aone$ or $\atwo$.
For example, the map $\otimes_{\aone}$ is the adjoint of $\beta\circ\alpha$, and
both $\beta$ and $\alpha$ are complete isometries, so $\otimes_{\aone}$ is
completely contractive.  Then $\aone$ is the composition with $\pi^{**}$,
showing that $\aone:\mc A^{**}\proten\mc A^{**}\rightarrow\mc A^{**}$ is a
complete contraction.  We now quickly show the completely bounded analogue of
Theorem~\ref{arens}.

\begin{theorem}\label{arens_op}
Let $\mc A$ be a CCBA with a bounded approximate identity $(e_\alpha)$, and
let $\Phi_0\in\mc A^{**}$ be a weak$^*$-accumulation point of $(e_\alpha)$.  Then:
\begin{enumerate}
\item $M_{cb}(\mc A)\subseteq\mc{CB}(\mc A)\times\mc{CB}(\mc A)$ is closed in the strict topology;
\item $\mc A$ is a closed ideal in $M_{cb}(\mc A)$ which is strictly dense;
\item $\theta:M_{cb}(\mc A)\rightarrow (\mc A^{**},\atwo)$, defined by
$(L,R)\mapsto L^{**}(\Phi_0)$, is an algebra homomorphism and a complete isomorphism onto its range,
with $\theta(a)=a$ for $a\in\mc A$.
\end{enumerate}
\end{theorem}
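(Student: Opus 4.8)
The plan is to reuse Theorem~\ref{arens} (with $E=\mc A$) for all of the purely algebraic and isometric content, and to supply only the genuinely new operator-space ingredient, namely the \emph{complete} isomorphism estimates for $\theta$. As flagged in the text preceding the statement, the absence of an Open Mapping Theorem means these estimates must be obtained by explicit bounds rather than abstractly. Since $M_{cb}(\mc A)\subseteq M(\mc A)$, the facts that $\theta(a)=a$ for $a\in\mc A$, that $L(a)=\theta(L,R)\cdot a$ and $R(a)=a\cdot\theta(L,R)$, and that $\theta$ is an algebra homomorphism into $(\mc A^{**},\atwo)$ are all immediate from Theorem~\ref{arens}: none of these computations sees the matrix structure. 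For part~(1), which is the completely bounded analogue of Proposition~\ref{strict_closed}, I would argue verbatim: if $(L_\beta,R_\beta)\to(L,R)$ strictly in $\mc{CB}(\mc A)\oplus_\infty\mc{CB}(\mc A)$ with each $(L_\beta,R_\beta)$ a multiplier, then passing to the limit in $a\cdot L_\beta(b)=R_\beta(a)\cdot b$ shows $(L,R)$ is again a multiplier, while $L$ and $R$ are completely bounded by hypothesis.

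The heart of part~(3) is the pair of completely bounded estimates. For the upper bound I would factor $\theta$ as a composition of three completely bounded maps: the completely contractive projection $(L,R)\mapsto L$, the second-adjoint map $L\mapsto L^{**}$, which is a complete isometry $\mc{CB}(\mc A)\to\mc{CB}(\mc A^{**},\mc A^{**})$, and evaluation at $\Phi_0$, which is completely bounded with cb-norm at most $\|\Phi_0\|$ (itself bounded by the bound of the approximate identity); hence $\theta$ is completely bounded. For the lower bound---that $\theta$ is bounded below completely, which is the crux---I would exploit the recovery formulas. Writing $\xi=\theta(L,R)=L^{**}(\Phi_0)$, we have $L(a)=\xi\cdot a$ and $R(a)=a\cdot\xi$. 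Because $\mc A$ is a CCBA, the right action $\mc A^{**}\proten\mc A\to\mc A^{**}$ is completely contractive, which by \cite[Proposition~7.1.2]{ER} is equivalent to the map $\mc A^{**}\to\mc{CB}(\mc A,\mc A^{**}),\ \xi\mapsto(a\mapsto\xi\cdot a)$ being completely contractive. Amplifying to $\mathbb M_n$ and using $\mathbb M_n(\mc{CB}(\mc A,\mc A^{**}))=\mc{CB}(\mc A,\mathbb M_n(\mc A^{**}))$, this gives, for $[(L_{ij},R_{ij})]\in\mathbb M_n(M_{cb}(\mc A))$ with $\xi_{ij}=\theta(L_{ij},R_{ij})$,
\[ \big\|[L_{ij}]\big\|_{cb}=\big\|a\mapsto[\xi_{ij}\cdot a]\big\|_{cb}\le\big\|[\xi_{ij}]\big\|_{\mathbb M_n(\mc A^{**})}. \]
The symmetric argument on the left bounds $\|[R_{ij}]\|_{cb}$, so $\|[(L_{ij},R_{ij})]\|_{n}\le\|\theta^{(n)}([(L_{ij},R_{ij})])\|$; that is, $\theta^{-1}$ on the range is completely contractive, and $\theta$ is a complete isomorphism onto its range.

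Finally, for part~(2): $\mc A$ is an ideal in $M(\mc A)$ by Lemma~\ref{idealiser} applied to $E=\mc A$, hence an ideal in the subalgebra $M_{cb}(\mc A)$; strict density follows as in Theorem~\ref{arens}(\ref{arens_one}) from $L(e_\alpha)\to(L,R)$ strictly; and closedness of $\mc A$ in $M_{cb}(\mc A)$ follows from the ordinary norm lower bound $\|(L,R)\|\le\|\theta(L,R)\|$ together with the fact that $\theta$ restricted to $\mc A$ is the isometry $\kappa_{\mc A}$, exactly as in the Banach case. I expect the main obstacle to be the complete lower bound in the second paragraph: every other step is either inherited directly from Theorem~\ref{arens} or a routine translation of Proposition~\ref{strict_closed}, whereas the lower bound is precisely where the missing Open Mapping Theorem forces one to extract the inverse estimate by hand, and the cleanest route is the duality reformulation of complete contractivity of the module action combined with matrix amplification.
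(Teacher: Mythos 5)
Your proposal is correct and follows essentially the same route as the paper: parts (1) and (2) are inherited verbatim from Proposition~\ref{strict_closed} and Theorem~\ref{arens}, the upper cb-bound comes from factoring $\theta$ through evaluation at $\Phi_0$, and the lower bound comes from the recovery formulas $L(a)=L^{**}(\Phi_0)\cdot a$, $R(a)=a\cdot L^{**}(\Phi_0)$ together with matrix amplification. The only cosmetic difference is that the paper phrases the lower-bound estimate via the left and right regular representations $\pi_l,\pi_r$ of $(\mc A^{**},\atwo)$ composed with the complete isometry $\kappa_{\mc A}$, whereas you invoke the equivalent duality reformulation of complete contractivity of the module action from \cite[Proposition~7.1.2]{ER}; these yield the identical inequality $\|L(a)\|_{nm}\leq\|L^{**}(\Phi_0)\|_m\|a\|_n$.
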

\begin{proof}
(1) follows by the arguments as used in Proposition~\ref{strict_closed}, and (2) is exactly as
in the bounded case.  Similarly, for (3), we can follow the bounded case to see
that $\theta$ is a homomorphism with $\theta(a)=a$ for $a\in\mc A$.  For any operator
spaces $E$ and $F$, given $x_0\in E$, the map $\mc{CB}(E,F)\rightarrow F; T\mapsto T(x_0)$
is easily seen to be completely bounded with bound at most $\|x_0\|$.
It follows that $\theta$ is completely bounded (even a complete contraction if
$(e_\alpha)$ is a contractive approximate identity).

Let $\pi_l:\mc A^{**}\rightarrow\mc{CB}(\mc A^{**})$ be the left-regular representation
for $\atwo$, so that $\pi_l$ is a complete contraction.  Then, as $\kappa_{\mc A}L(a) = 
\pi_l(L^{**}(\Phi_0))(\kappa_{\mc A}(a))$ for $a\in\mc A$, it follows that
\[ \|L(a)\|_{nm} \leq \|L^{**}(\Phi_0)\|_m \|a\|_n \qquad ((L,R)\in\mathbb M_m(M_{cb}(\mc A)),
a\in\mathbb M_n(\mc A), n,m\geq1). \]
Similarly, if $\pi_r$ denotes the right-regular representation, then as
$\kappa_{\mc A}R(a) = \pi_r(L^{**}(\Phi_0))(\kappa_{\mc A}(a))$, we have 
\[ \|R(a)\|_{nm} \leq \|L^{**}(\Phi_0)\|_m \|a\|_n \qquad ((L,R)\in\mathbb M_m(M_{cb}(\mc A)),
a\in\mathbb M_n(\mc A), n,m\geq1). \]
It follows that $\theta$ is a complete isomorphism onto its range, as claimed.
\end{proof}

A curious corollary of this observation is the following, shown in
\cite[Proposition~3.1]{KR} by another method.

\begin{theorem}\label{arens_ccba}
Let $\mc A$ be a CCBA with a contractive approximate identity.
Then $M(\mc A) = M_{cb}(\mc A)$ with equal norms.
If $\mc A$ only has a bounded approximate identity, then $M(\mc A)=M_{cb}(\mc A)$
with equivalent norms.
\end{theorem}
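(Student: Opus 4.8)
The plan is to show that the identification $\theta$ of Theorem~\ref{arens} already forces every bounded multiplier to be completely bounded, so that $M(\mc A)$ and $M_{cb}(\mc A)$ coincide as sets, and then to read off the norm comparison from the bound $K$.

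First I would fix $(L,R)\in M(\mc A)$ and set $\Phi=\theta(L,R)=L^{**}(\Phi_0)\in\mc A^{**}$. The crucial point is that Theorem~\ref{arens}(\ref{arens_three}) gives $L(a)=\Phi\cdot a$ and $R(a)=a\cdot\Phi$ for all $a\in\mc A$, where $\cdot$ is the module action of $\mc A^{**}$ on the copy $\kappa_{\mc A}(\mc A)$ coming from $\atwo$; equivalently $\kappa_{\mc A}L(a)=\pi_l(\Phi)(\kappa_{\mc A}(a))$ and $\kappa_{\mc A}R(a)=\pi_r(\Phi)(\kappa_{\mc A}(a))$, exactly as in the proof of Theorem~\ref{arens_op}, but now valid for \emph{any} multiplier and not merely a completely bounded one. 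Since $(\mc A^{**},\atwo)$ is a CCBA, the regular representations $\pi_l,\pi_r$ are complete contractions, so $\pi_l(\Phi)$ and $\pi_r(\Phi)$ are completely bounded with cb-norm at most $\|\Phi\|$. As $\kappa_{\mc A}$ is a complete isometry, it follows that $L$ and $R$ are completely bounded with $\|L\|_{cb},\|R\|_{cb}\leq\|\Phi\|$; hence $(L,R)\in M_{cb}(\mc A)$. Combined with the trivial inclusion $M_{cb}(\mc A)\subseteq M(\mc A)$, this yields $M(\mc A)=M_{cb}(\mc A)$ as sets.

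For the norms I would chain the estimates already available. Trivially $\|(L,R)\|\leq\|(L,R)\|_{cb}$. The previous step gives $\|(L,R)\|_{cb}=\max(\|L\|_{cb},\|R\|_{cb})\leq\|\Phi\|=\|\theta(L,R)\|$. Finally, since $\Phi_0$ is a weak$^*$-accumulation point of an approximate identity of bound $K$, we have $\|\Phi_0\|\leq K$, so $\|\theta(L,R)\|=\|L^{**}(\Phi_0)\|\leq\|L\|\,\|\Phi_0\|\leq K\|(L,R)\|$. Stringing these together,
\[ \|(L,R)\| \leq \|(L,R)\|_{cb} \leq \|\theta(L,R)\| \leq K\|(L,R)\|. \]
When the approximate identity is contractive, $K=1$ and all four quantities agree, giving equal norms; for a general bounded approximate identity the outer terms show the two norms are equivalent.

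I expect the only genuine subtlety to lie in the first step: one must be sure that the module action appearing in Theorem~\ref{arens}(\ref{arens_three}) is precisely the $\atwo$-multiplication on $\mc A^{**}$, so that complete boundedness of multiplication in the CCBA $\mc A^{**}$ may be invoked. Once the identity $\kappa_{\mc A}L(a)=\pi_l(\Phi)(\kappa_{\mc A}(a))$ is recognised as holding on all of $M(\mc A)$—and not only on $M_{cb}(\mc A)$, where the proof of Theorem~\ref{arens_op} records it—the remainder is routine bookkeeping of norms and requires no matrix-level computation.
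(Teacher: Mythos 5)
Your argument is correct and is essentially the paper's own proof: both pass through $\Phi=L^{**}(\Phi_0)$, use Theorem~\ref{arens}(\ref{arens_three}) to write $\kappa_{\mc A}L(a)=\Phi\atwo\kappa_{\mc A}(a)$ and $\kappa_{\mc A}R(a)=\kappa_{\mc A}(a)\atwo\Phi$, and then invoke that $(\mc A^{**},\atwo)$ is a CCBA and $\kappa_{\mc A}$ a complete isometry to get $\|L\|_{cb},\|R\|_{cb}\leq\|\Phi\|$. Your explicit chain $\|(L,R)\|\leq\|(L,R)\|_{cb}\leq\|\theta(L,R)\|\leq K\|(L,R)\|$ just makes the norm bookkeeping (which the paper leaves implicit) more transparent.
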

\begin{proof}
Clearly $M_{cb}(\mc A)$ contractively injects into $M(\mc A)$.
Conversely, let $\hat a=(L,R)\in M(\mc A)$, so by Theorem~\ref{arens}
we can find $\Phi\in\mc A^{**}$ with $\|\Phi\|\leq\|\hat a\|$ and such that
\[ \kappa_{\mc A}L(a) = \Phi\atwo\kappa_{\mc A}(a), \quad
\kappa_{\mc A}R(a) = \kappa_{\mc A}(a)\atwo\Phi \qquad (a\in\mc A). \]
As $\kappa_{\mc A}$ is a complete isometry, and $(\mc A^{**},\atwo)$ a CCBA,
it follows that $L$ and $R$ are completely bounded, with $\|L\|_{cb}
\leq \|\Phi\|$ and $\|R\|_{cb} \leq \|\Phi\|$.

The case when $\mc A$ only has a bounded approximate identity is similar.
\end{proof}

The proof of Theorem~\ref{exten_thm} could be translated to the completely
bounded setting, but instead we shall take a detour via the theory of
self-induced algebras.  

Theorem~\ref{submod_rep_thm} translates, except for condition (\ref{dbr:four}),
for which we need a stronger hypothesis; we use the same notation as before.

\begin{proposition}
Let $\mc A$ be a CCBA with a bounded approximate identity, and let $E$ and
$F$ be as in Theorem~\ref{submod_rep_thm}.  If $\theta_F$ is a complete isomorphism
onto its range (which is equivalent to $\iota_F$ being a complete isomorphism onto its
range) then the image of $\theta_F$ is the idealiser of $E$ in $F^*$.
\end{proposition}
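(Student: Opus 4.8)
The plan is to transcribe the proof of Theorem~\ref{submod_rep_thm}(\ref{dbr:four}), replacing ``injective'' by ``complete isomorphism onto its range'' and verifying complete boundedness at the single point where it is not automatic. As noted before the statement, the completely bounded analogues of parts (\ref{dbr:zero})--(\ref{dbr:three}) of Theorem~\ref{submod_rep_thm} hold; in particular the parenthetical equivalence is precisely the completely bounded form of part (\ref{dbr:three}), obtained by replacing ``bounded below'' with ``complete isomorphism onto its range'' and running the estimates there at each matrix level, using part (\ref{dbr:zero}) together with the completely bounded module actions on $F^*$. We may therefore assume that \emph{both} $\theta_F$ and $\iota_F$ are complete isomorphisms onto their ranges, and the task is to identify the image of $\theta_F$ with the idealiser of $E$ in $F^*$.

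The forward inclusion is immediate from part (\ref{dbr:zero}): for $\hat x\in M_{cb}(E)$ and $a\in\mc A$ one has $\theta_F(\hat x)\cdot a=\iota_F(\hat x\cdot a)\in\iota_F(E)$ and, symmetrically, $a\cdot\theta_F(\hat x)\in\iota_F(E)$, so $\theta_F(\hat x)$ lies in the idealiser. For the reverse inclusion, fix $\Phi\in F^*$ in the idealiser, so that $L_\Phi(a):=\Phi\cdot a$ and $R_\Phi(a):=a\cdot\Phi$ lie in $\iota_F(E)$ for every $a\in\mc A$. Since $\iota_F$ is injective we may define linear maps $L,R:\mc A\rightarrow E$ by $L=\iota_F^{-1}\circ L_\Phi$ and $R=\iota_F^{-1}\circ R_\Phi$, where $\iota_F^{-1}$ denotes the inverse of $\iota_F$ on its (closed) range; because $\iota_F$ is an injective module map, the identity
\[ \iota_F(a\cdot L(b))=a\cdot\Phi\cdot b=\iota_F(R(a)\cdot b) \qquad (a,b\in\mc A) \]
shows that $(L,R)$ is a multiplier of $E$.

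The one genuinely new point is that $L$ and $R$ must be shown to be \emph{completely} bounded. The maps $L_\Phi$ and $R_\Phi$ are completely bounded: indeed, by the commutativity of $\proten$ and \cite[Proposition~7.1.2]{ER}, each module action of $\mc A$ on the completely bounded bimodule $F^*$ corresponds to a completely bounded map $F^*\rightarrow\mc{CB}(\mc A,F^*)$, the right action yielding $\Phi\mapsto L_\Phi$ and the left action yielding $\Phi\mapsto R_\Phi$. Since $\iota_F$ is a complete isomorphism onto its range, $\iota_F^{-1}$ is completely bounded on $\iota_F(E)$, and hence $L=\iota_F^{-1}\circ L_\Phi$ and $R=\iota_F^{-1}\circ R_\Phi$ are completely bounded; thus $(L,R)\in M_{cb}(E)$. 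Finally $\theta_F(L,R)=\Phi$ by the identical computation to the bounded case (treating the case $\mc A\cdot F$ dense in $F$, the case $F\cdot\mc A$ dense being symmetric), which completes the proof.

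The main obstacle is exactly this complete-boundedness verification. In the Banach setting it was automatic, since the Open Mapping Theorem made $\iota_F^{-1}$ bounded on the range of an injective $\iota_F$; in the operator space category no such theorem is available, which is precisely why the hypothesis must be strengthened from ``$\theta_F$ injective'' to ``$\theta_F$ a complete isomorphism onto its range''. Once $\iota_F^{-1}$ is available as a completely bounded map on $\iota_F(E)$, the remainder of the argument is an unchanged transcription of the proof of Theorem~\ref{submod_rep_thm}(\ref{dbr:four}).
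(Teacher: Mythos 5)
Your proposal is correct and follows essentially the same route as the paper: invoke the Banach-level argument of Theorem~\ref{submod_rep_thm}(\ref{dbr:four}) to produce the multiplier $(L,R)$ with $\theta_F(L,R)=\Phi$, note that $a\mapsto\Phi\cdot a$ and $a\mapsto a\cdot\Phi$ are completely bounded because the module actions are, and then use that $\iota_F$ is a complete isomorphism onto its range to conclude $L=\iota_F^{-1}\circ L_\Phi$ and $R=\iota_F^{-1}\circ R_\Phi$ are completely bounded. Your identification of the complete-boundedness of $\iota_F^{-1}$ as the single point where the operator-space hypothesis is genuinely needed is exactly the content of the paper's proof.
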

\begin{proof}
The proof of Theorem~\ref{submod_rep_thm} shows that if $\Phi\in F^*$ idealises
$E$, then there exists $(L,R)\in M(E)$ with $\theta_F(L,R)=\Phi$.  Thus
\[ \Phi\cdot a = \iota_F L (a), \quad a\cdot\Phi = \iota_F R(a) \qquad (a\in\mc A). \]
The map $\mc A\rightarrow F^*, a\mapsto \Phi\cdot a$ is completely bounded, with
bounded at most $\|\Phi\|$.  As $\iota_F$ is a complete isomorphism onto its range,
it follows that $L$ is completely bounded.  Similarly $R$ is completely bounded.
\end{proof}

To translate the proof of Proposition~\ref{dba_dual_mod}
we simply proceed as in the proof of Theorem~\ref{arens_ccba} above.
The rest of Section~\ref{sec::bai} carries over without issue.  The same applies to
Section~\ref{dba_section}.

The results of Section~\ref{selfind} similarly translate without
issue, using standard results about operator spaces.
The typical idea which we exploit is illustrated by the proof of Lemma~\ref{assoc}.
Here we wish to show that $X$ and $Y$ are complete isometric (say), which
we do by finding a completely isometric isomorphism $\alpha:Y^*\rightarrow X^*$
such that $\alpha^*\kappa_X(X) \subseteq\kappa_Y(Y)$.  Then (the proof of)
Lemma~\ref{weakstar_lemma} shows that there is a completely isometric isomorphism
$\beta:X\rightarrow Y$ such that $\alpha=\beta^*$.  The ``matrix calculations''
are all hidden in the standard fact that $\kappa_X$ is a complete isometry,
and so forth.

An exception is Proposition~\ref{bai_si}, which we check does translate;
this was shown in \cite[Proposition~3.3]{fls}, but in the interests of
completeness, we give a proof here (which we think is shorter).

\begin{proposition}\label{cb_bai_si}
Let $\mc A$ be a completely contractive Banach algebra with a bounded approximate
identity, and let $E \in \mc A-\module-\mc A$ be essential.
Then $E$ is induced.
\end{proposition}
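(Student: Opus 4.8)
The plan is to re-run the duality argument from the proof of Proposition~\ref{bai_si}, but to extract a \emph{complete} isomorphism rather than a mere bounded bijection; since there is no Open Mapping Theorem, the latter really is weaker. As in the Banach case I would first treat $E$ as a one-sided module. Because $\mc A\cdot E\cdot\mc A$ is dense in $E$, so is $\mc A\cdot E$, so $E$ is essential in $\mc A-\module$, and symmetrically in $\module-\mc A$. Once I have complete isomorphisms $\mc A\proten_{\mc A}E\cong E$ and $E\proten_{\mc A}\mc A\cong E$, the bimodule statement $\mc A\proten_{\mc A}E\proten_{\mc A}\mc A\cong E$ follows from functoriality of $\proten_{\mc A}$ and the associativity isomorphism of Lemma~\ref{assoc}, exactly as in Proposition~\ref{bai_si}. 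So the whole content is the one-sided claim.

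Let $\pi_E:\mc A\proten_{\mc A}E\to E$ be the complete contraction induced by the module action, so that its adjoint is a complete contraction $\pi_E^*:E^*\to(\mc A\proten_{\mc A}E)^*$, where the target is identified completely isometrically with $\mc{CB}_{\mc A}(\mc A,E^*)$, the completely bounded right $\mc A$-module maps (the operator-space form of the identification $N^\perp=\mc B_{\mc A}(\mc A,E^*)$ recorded before Lemma~\ref{assoc}); explicitly $\pi_E^*(\mu)(a)=\mu\cdot a$. Since $\pi_E$ has dense range, $\pi_E^*$ is injective. For surjectivity I would copy the Banach computation verbatim: given $T\in\mc{CB}_{\mc A}(\mc A,E^*)$, the net $(T(e_\alpha))$ is bounded, so on a subnet $T(e_\alpha)\to\mu$ weak$^*$ for some $\mu\in E^*$, and then $\ip{T(a)}{x}=\lim_\alpha\ip{T(e_\alpha)\cdot a}{x}=\ip{\mu}{a\cdot x}=\ip{\pi_E^*(\mu)(a)}{x}$, so $T=\pi_E^*(\mu)$.

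The step I expect to be the real work — and the only place the operator-space setting genuinely differs — is upgrading this bijection to a complete isomorphism by bounding $(\pi_E^*)^{-1}$ completely. For this I would argue at the matrix level using the standard complete isometry $\mathbb M_n(\mc{CB}(\mc A,E^*))=\mc{CB}(\mc A,\mathbb M_n(E^*))$. A matrix $[\mu_{ij}]\in\mathbb M_n(E^*)$ is carried by $\pi_E^{*(n)}$ to the map $T:\mc A\to\mathbb M_n(E^*)$, $a\mapsto[\mu_{ij}\cdot a]$, and the surjectivity computation, read one entry at a time, gives $T(e_\alpha)\to[\mu_{ij}]$ weak$^*$ in the dual space $\mathbb M_n(E^*)$. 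Weak$^*$ lower semicontinuity of the norm then yields
\[ \big\|[\mu_{ij}]\big\|_{\mathbb M_n(E^*)}\le\liminf_\alpha\|T(e_\alpha)\|\le K\,\|T\|_{cb}=K\,\big\|\pi_E^{*(n)}([\mu_{ij}])\big\|_{\mathbb M_n(\mc{CB}_{\mc A}(\mc A,E^*))}, \]
where $K$ bounds the approximate identity; hence $\|(\pi_E^*)^{-1}\|_{cb}\le K$ and $\pi_E^*$ is a complete isomorphism.

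Finally, being an adjoint, $\pi_E^*$ is weak$^*$-continuous, and $\pi_E^{**}\kappa_{\mc A\proten_{\mc A}E}=\kappa_E\pi_E$ takes values in $\kappa_E(E)$, so the technique flagged at the start of this section applies: Lemma~\ref{weakstar_lemma} descends the complete isomorphism of duals to a complete isomorphism of the spaces themselves, namely $\pi_E$, which is exactly the assertion that $E$ is induced. Thus the main obstacle is not algebraic but the passage from a norm estimate to a completely bounded one; everything else is the Banach proof re-read in the appropriate category, the genuinely operator-space inputs being the identification $\mathbb M_n(\mc{CB}(\mc A,E^*))=\mc{CB}(\mc A,\mathbb M_n(E^*))$ and Lemma~\ref{weakstar_lemma}.
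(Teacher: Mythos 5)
Your proof is correct and follows the same overall architecture as the paper's: reduce to the one-sided case via Lemma~\ref{assoc}, dualise, show that $\pi_E^*:E^*\rightarrow\mc{CB}_{\mc A}(\mc A,E^*)$ is a bijection with inverse $T\mapsto\text{w}^*\text{-}\lim_\alpha T(e_\alpha)$, obtain a completely bounded estimate for that inverse, and descend to $\pi_E$ by weak$^*$-continuity. The one place you diverge is the step you correctly identify as the real content, namely bounding $(\pi_E^*)^{-1}$ completely. The paper does this by writing $(\pi_E^*)^{-1}$ as the composition of $T\mapsto\kappa_E^*T^{**}$ (a complete contraction into $\mc{CB}(\mc A^{**},E^*)$) with evaluation at a weak$^*$-cluster point $\Phi$ of $(e_\alpha)$ (completely bounded by $\|\Phi\|\leq K$), so the bound falls out of functoriality. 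You instead argue directly at the matrix level, using $\mathbb M_n(\mc{CB}(\mc A,E^*))=\mc{CB}(\mc A,\mathbb M_n(E^*))$ and weak$^*$ lower semicontinuity of the dual matrix norms to get $\|[\mu_{ij}]\|\leq\liminf_\alpha\|T(e_\alpha)\|\leq K\|T\|_{cb}$. Both give the same constant $K$; the paper's version is slicker and hides the matrix work inside standard facts about $T\mapsto T^{**}$ and point evaluations, while yours is more self-contained and makes visible exactly where lower semicontinuity of dual norms is used. Your final descent step via Lemma~\ref{weakstar_lemma} is the same mechanism the paper invokes through \cite[Corollary~4.1.9]{ER}.
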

\begin{proof}
We follow the proof of Proposition~\ref{bai_si}.  In particular, we see that
$\pi^*:E^*\rightarrow\mc{CB}_{\mc A}(\mc A,E^*)$ surjects, the inverse
being given by $T\mapsto \mu$ where $\mu$ is the weak$^*$-limit of $T(e_\alpha)$.
We shall show that this inverse is completely bounded, which will show that
$\pi^*$ is a complete isomorphism, and hence also that $\pi$ is
(see \cite[Corollary~4.1.9]{ER}).

Let the bounded approximate identity for $\mc A$ have bound $K>0$, and let
$\Phi\in\mc A^{**}$ be a weak$^*$-limit of $(e_\alpha)$, so that $\|\Phi\|\leq K$.
For $T \in \mc{CB}_{\mc A}(\mc A,E^*)$, we have that the weak$^*$-limit of
$T(e_\alpha)$ is $\kappa_E^* T^{**}(\Phi)$.  Now, the map
\[ \mc{CB}_{\mc A}(\mc A,E^*) \rightarrow \mc{CB}(\mc A^{**},E^*); \quad
T \mapsto \kappa_E^* T^{**} \]
is a complete contraction, see \cite[Chapter~3]{ER}, and
\[ \mc{CB}(\mc A^{**},E^*)\rightarrow E^*; S \mapsto S(\Phi), \]
is completely bounded, with bound at most $\|\Phi\|\leq K$.  The
composition is then $(\pi^*)^{-1}$, so we are done.
\end{proof}

Finally, we come to our alternative proof of Theorem~\ref{exten_thm} for
CCBAs.  Indeed, we simply combine the previous result with
(the completely bounded version of) Proposition~\ref{si_mod_ext}.
Notice that if $\mc A$ only have a bounded approximate identity, then
$\mc A \proten_{\mc A} E \proten_{\mc A} \mc A$ will only be completely
isomorphic (and not isometric) to $E$, which explains why $E$ might only
become a completely bounded (not contractive) $M_{cb}(\mc A)$-bimodule.

\subsection{For the Fourier algebra}

Let $G$ be a locally compact group, and let $\lambda$ be the left-regular
representation of $G$ on $L^2(G)$, given by
\[ \lambda(s)\xi : t\mapsto \xi(s^{-1}t) \qquad (s,t\in G, \xi\in L^2(G)). \]
Let $VN(G)$ be the von Neumann algebra generated by the operators $\{\lambda(s):
s\in G\}$.  This carries a \emph{coproduct}, a normal unital $*$-homomorphism
$\Delta:VN(G) \rightarrow VN(G)\vnten VN(G)$ given by
\[ \Delta:\lambda(s) \mapsto \lambda(s)\otimes\lambda(s). \]
It is not immediately obvious that such a homomorphism exists, but if we define
a unitary $W$ on $L^2(G\times G)$ by $W\xi(s,t) = \xi(ts,t)$ for $\xi\in L^2(G\times G)$
and $s,t\in G$ then we see that
\[ \Delta(x) = W^*(1\otimes x)W \qquad (x\in VN(G)) \]
obviously defines a normal $*$-homomorphism which satisfies $\Delta\lambda(s)
=\lambda(s)\otimes\lambda(s)$ (and hence $\Delta$ does map into $VN(G)\vnten
VN(G)$).  Denote by $A(G)$ the predual of $VN(G)$.
Then the pre-adjoint of $\Delta$ defines a complete contraction
$\Delta_*: A(G) \proten A(G)\rightarrow A(G)$.  It is not hard to show that
this is an associative product, see \cite[Section~16.2]{ER} for further
details, for example.  The resulting commutative
algebra is the \emph{Fourier algebra} as defined and studied by Eymard in
\cite{eymard}.  See also \cite[Section~3, Chapter~VII]{tak2}.

We may identify
$A(G)$ with a (in general, not closed) subalgebra of $C_0(G)$, where $a\in A(G)$
is the function $s\mapsto \ip{\lambda(s)}{a}$ (note that this is a different
convention to that chosen in \cite{tt2}).

\begin{theorem}\label{fourier_si}
For any locally compact group $G$, the Fourier algebra $A(G)$ is self-induced
as a completely contractive Banach algebra.
\end{theorem}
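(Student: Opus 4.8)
The plan is to reduce the statement to a fact about restriction of Fourier algebras to a closed subgroup. First I would invoke the Effros--Ruan identification $A(G)\proten A(G)\cong A(G\times G)$, which is a complete isometry; under it the elementary tensor $a\otimes b$ becomes the function $(s,t)\mapsto a(s)b(t)$. Since the product on $A(G)$ is pointwise, the product map $\pi\colon A(G)\proten A(G)\to A(G)$ is then exactly the operation of restricting a function on $G\times G$ to the diagonal subgroup $G_\Delta=\{(s,s):s\in G\}$, a closed subgroup isomorphic to $G$, so that $A(G_\Delta)\cong A(G)$. Thus proving that $A(G)$ is self-induced amounts to showing that $\ker\pi$ coincides with $N$, the closed linear span of the elements $ab\otimes c-a\otimes bc$.

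Next I would address the subtlety flagged before Theorem~\ref{fourier_si}, that a mere bijection does not suffice in the operator space setting. The restriction map $A(G\times G)\to A(G_\Delta)$ is the pre-adjoint of the normal unital inclusion $VN(G_\Delta)\hookrightarrow VN(G\times G)$, hence a complete quotient map, with kernel the ideal $I(G_\Delta)$ of functions vanishing on the diagonal. Consequently $A(G\times G)/I(G_\Delta)\cong A(G)$ completely isometrically, and the completeness of the eventual isomorphism is handled automatically by this quotient structure rather than by inverting a bijection. One inclusion of the remaining identification is immediate: the general spanning element is the function $(s,t)\mapsto a(s)c(t)\big(b(s)-b(t)\big)$, which vanishes on $G_\Delta$, so $N\subseteq I(G_\Delta)$. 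Moreover $N$ is a closed ideal of $A(G\times G)$, since $(a'\otimes c')\cdot(ab\otimes c-a\otimes bc)=a'ab\otimes c'c-a'a\otimes bc'c\in N$ and such tensors are dense; and as $A(G)$ separates the points of $G$, the common zero set of $N$ is exactly $G_\Delta$. So $N$ is a closed ideal with hull $G_\Delta$.

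The crux, and the step I expect to be the main obstacle, is the reverse inclusion $I(G_\Delta)\subseteq N$. This is precisely a spectral synthesis assertion: that the diagonal subgroup admits a unique closed ideal with it as hull. Here I would appeal to the theorem of Takesaki and Tatsuuma that every closed subgroup of a locally compact group is a set of spectral synthesis for the Fourier algebra; since closed subgroups are moreover Ditkin sets, every closed ideal with hull $G_\Delta$ contains $\overline{j(G_\Delta)}$, and synthesis gives $\overline{j(G_\Delta)}=I(G_\Delta)$. Combined with $N\subseteq I(G_\Delta)$ this forces $N=I(G_\Delta)=\ker\pi$, whence $\pi$ descends to the complete isometric isomorphism $A(G)\proten_{A(G)}A(G)=A(G\times G)/N\to A(G)$. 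The entire difficulty is thus concentrated in this harmonic-analytic input; everything else is the soft operator-space bookkeeping already developed.

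As an alternative I would keep in mind a purely operator-algebraic route by duality, in the spirit of Section~\ref{sec::ccba}. Using $(A(G)\proten A(G))^*=VN(G)\vnten VN(G)$, one computes that $\pi^*$ is just the coproduct $\Delta\colon VN(G)\to VN(G)\vnten VN(G)$, an injective normal $*$-homomorphism, hence a weak$^*$-homeomorphic complete isometry onto its weak$^*$-closed range. Unwinding the definition of $N$ shows $N^\perp=\{X:(\Delta\otimes\id)X=(\id\otimes\Delta)X\}$, so the theorem reduces to the identity $\Delta(VN(G))=\{X\in VN(G)\vnten VN(G):(\Delta\otimes\id)X=(\id\otimes\Delta)X\}$, after which the weak$^*$-duality lemma finishes the job. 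This identity is transparent for discrete $G$ but in general requires a genuine manipulation of the pentagon relation for the fundamental unitary $W$, and so is morally the same obstacle in a different guise.
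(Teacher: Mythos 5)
Your proposal is correct and is essentially the paper's own argument written on the predual side: the paper likewise observes that $\Delta_*$ is a complete quotient map and then identifies $N^\perp\subseteq VN(G\times G)$ with $\Delta(VN(G))$ by showing that any operator annihilating $N$ has support in the diagonal and invoking Theorem~3 of \cite{tt2}, which is precisely the dual formulation of your spectral-synthesis step for the closed subgroup $G_\Delta$. (Your appeal to the Ditkin property is unnecessary --- the containment $\overline{j(G_\Delta)}\subseteq N$ already follows from $N$ being a closed ideal with hull $G_\Delta$ in the regular Tauberian algebra $A(G\times G)$ --- but this does not affect the correctness of the argument.)
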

\begin{proof}
As $\Delta$ is an injective $*$-homomorphism, it is a complete isometry,
and so by \cite[Corollary~4.1.9]{ER}, $\Delta_*$ is a complete quotient map, so
in particular, is surjective (Wood proves this in \cite{wood} using a more
complicated method).  If we can show that $\ker\Delta_* = N$ where $N$ is the
closed linear span of elements of the form $ab\otimes c - a\otimes bc$ for
$a,b,c\in A(G)$, then $\Delta_*$ will induce a surjective complete isometry
$A(G) \proten_{A(G)} A(G) \rightarrow A(G)$.  In particular, the inverse will
also be a complete isometry, and so $A(G)$ will be self-induced.

Following \cite[Section~4]{eymard}, we define the \emph{support} of $x\in VN(G)$
to be the collection of $s\in G$ with the property that if $a\cdot x = 0$ for some
$a\in A(G)$, then $a(s)=0$.  Let $D(G) = \{ (s,s) : s\in G \} \subseteq G\times G$.
Let $x\in VN(G)\vnten VN(G) = VN(G\times G)$ annihilate $N$.  We claim that the
support of $x$ is contained in $D(G)$.  Indeed, given $s\not=t$ in $G$, we
can find compact sets $K,L$ and open sets $U,V$ with $s\in K\subseteq U$ and
$t\in L\subseteq V$ and $U\cap V=\emptyset$.  By \cite[Lemma~3.2]{eymard},
there exists $a\in A(G)$ with $a(r)=1$ for $r\in K$, and $a(r)=0$ for $r\not\in U$.
Similarly, there exists $b\in A(G)$ with $b(r)=1$ for $r\in L$, and $b(r)=0$ for
$r\not\in V$.  Thus $ab=0$.  Then, for any $c,d\in A(G)$, we see that
\[ \ip{(a\otimes b)\cdot x}{c\otimes d} = \ip{x}{ca \otimes db}
= \ip{x}{c\otimes abd} = 0, \]
as $x$ annihilates $N$ and $A(G)$ is commutative.  Thus $(a\otimes b)\cdot x=0$,
so as $(a\otimes b)(s,t) = 1$, we conclude that $(s,t)$ is not in the support of $x$,
as required.

By \cite[Theorem~3]{tt2} it follows that such an $x$ is in the von Neumann algebra
generated by $\{\lambda(s,s) : s\in G\}$, that is, in $\Delta(VN(G))$.  So $x=\Delta(y)$
for some $y\in VN(G)$.  So the annihilator of $N$ is equal to the image of $\Delta$,
from which it follows that $N = \ker\Delta_*$ as required.
\end{proof}

This result is interesting, as $A(G)$ has a bounded approximate identity if and
only if $G$ is amenable \cite{leptin}.  It would be interesting to know if this
result holds for a general locally compact quantum group (see below).  Obviously
it holds for the group convolution algebra $L^1(G)$, as $L^1(G)$ always has a bounded
approximate identity.

Let us think further about the Fourier algebra.  In \cite{isp}, the completely bounded
homomorphisms between $A(G)$ and $A(H)$ are classified in terms of piecewise affine
maps, at least if $G$ is amenable.  However, there is no reason why all such maps
$A(G)\rightarrow A(H)$ should be non-degenerate, while \cite[Corollary~3.9]{isp}
easily implies that we do have an extension $M_{cb}(A(G))\rightarrow M_{cb}(A(H))$.
We can of course (following \cite{is}) apply the completely contractive version of
Theorem~\ref{dba_bai_exten}, as $M_{cb}(A(H))$ is a dual, completely contractive
Banach algebra (compare Section~\ref{cbmult_lcqg} below).

\section{When multiplier algebras are dual}\label{mult_dual_sec}

In this section, we provide a simple criterion for when $M(\mc A)$ is a dual Banach algebra.
Notice that for a C$^*$-algebra $\mc A$, it is relatively rare for $M(\mc A)$ to be dual
(that is, a von Neumann algebra).  However, multiplier algebras which appear in abstract
harmonic analysis do often seem to be dual spaces.  Our result allows us to show that,
in particular, $M(L^1(\G))$ (and its completely-bounded counterpart) are
dual Banach algebras, for a locally compact quantum group $\G$.  Our ideas are influenced
by \cite{sel}.

\begin{theorem}\label{mult_is_dba}
Let $\mc A$ be a Banach algebra such that $\{ ab : a,b\in\mc A\}$ is linearly dense in $\mc A$.
Let $(\mc B,\mc B_*)$ be a dual Banach algebra, let $\iota:\mc A \rightarrow \mc B$ be an isometric
homomorphism such that $\iota(\mc A)$ is an essential ideal in $\mc B$.  Suppose that the induced map
$\mc B\rightarrow M(\mc A)$ is injective.  Then there is a weak$^*$-topology on $M(\mc A)$
making $M(\mc A)$ a dual Banach algebra.
\end{theorem}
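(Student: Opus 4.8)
The plan is to realise $M(\mc A)$ as a weak$^*$-closed subspace of an explicit dual space, and then to verify that the product is separately weak$^*$-continuous for the resulting topology. First I would exploit the dual Banach algebra structure of $\mc B$ to turn $\mc B(\mc A,\mc B)$ into a dual space: identifying $\mc B=(\mc B_*)^*$, the tensor-product duality of Section~\ref{ten_prod_sec} gives $\mc B(\mc A,\mc B)=(\mc A\proten\mc B_*)^*$, so that $X:=(\mc A\proten\mc B_*)\oplus_1(\mc A\proten\mc B_*)$ is a predual of $\mc B(\mc A,\mc B)\oplus_\infty\mc B(\mc A,\mc B)$. Sending $(L,R)\in M(\mc A)$ to $(\iota L,\iota R)$ embeds $M(\mc A)$ into this dual space, with pairing $\ip{(L,R)}{a\otimes\mu\oplus a'\otimes\mu'}=\ip{\iota L(a)}{\mu}+\ip{\iota R(a')}{\mu'}$; since $\iota$ is isometric and $\mc B_*$ norms $\mc B$, this embedding is an isometry.

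Next I would identify the image with the space $\widetilde M$ of \emph{$\mc B$-valued multipliers}, namely pairs $(S,T)$ of bounded maps $\mc A\to\mc B$ with $\iota(a)S(b)=T(a)\iota(b)$ for all $a,b\in\mc A$. The advantage of this reformulation is that $\widetilde M$ is weak$^*$-closed: for fixed $a,b$ the maps $(S,T)\mapsto\iota(a)S(b)$ and $(S,T)\mapsto T(a)\iota(b)$ are weak$^*$-continuous, because the evaluation $S\mapsto S(b)$ is weak$^*$-continuous (being the adjoint of $\mu\mapsto b\otimes\mu$) and because left and right multiplication in $\mc B$ are weak$^*$-continuous (this is exactly where $\mc B$ being a dual Banach algebra is used). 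Hence $\widetilde M$ is a weak$^*$-closed subspace of a dual space, so it is itself a dual Banach space, carrying a weak$^*$-topology whose predual is a quotient of $X$.

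It then remains to see that the image of $M(\mc A)$ is precisely $\widetilde M$. The inclusion $M(\mc A)\hookrightarrow\widetilde M$ is immediate (apply the homomorphism $\iota$ to the relation $a\cdot L(b)=R(a)\cdot b$). For the reverse, given $(S,T)\in\widetilde M$ I would first note that $\mc B$ is a faithful $\mc A$-bimodule (as $\iota(\mc A)$ is an essential ideal and $\mc A$ is faithful over itself), so by the opening lemma of Section~\ref{mults} the map $S$ is a right-module homomorphism; thus $S(bc)=S(b)\cdot\iota(c)\in\iota(\mc A)$ because $\iota(\mc A)$ is an ideal. Since the products $\{bc\}$ are linearly dense in $\mc A$, $S$ is continuous, and $\iota(\mc A)$ is closed, $S$ (and likewise $T$) takes values in $\iota(\mc A)$. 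Therefore $(S,T)$ descends to a genuine multiplier of $\mc A$, giving $M(\mc A)\cong\widetilde M$ isometrically, and hence the desired weak$^*$-topology on $M(\mc A)$.

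Finally, separate weak$^*$-continuity of the product $(L,R)(L',R')=(LL',R'R)$ must be checked. Transported to $\widetilde M$ the product becomes $(S,T)(S',T')=(S\,\iota^{-1}S',\,T'\iota^{-1}T)$, so multiplying by a fixed element contributes, in two of the four scalar checks, a \emph{pre-composition} $S\mapsto S\circ g$ with a fixed map $g:\mc A\to\mc A$; this is automatically weak$^*$-continuous, being the adjoint of $g\otimes\id$ on $\mc A\proten\mc B_*$. The remaining two checks involve \emph{post-composition} by multiplication by the fixed element, and these reduce to showing that $\mc B_*$ is invariant under the dual actions of $M(\mc A)$, i.e.\ that for $\mu\in\mc B_*$ and $\hat y\in M(\mc A)$ the functionals $w\mapsto\ip{\iota(w\cdot\hat y)}{\mu}$ and $w\mapsto\ip{\iota(\hat y\cdot w)}{\mu}$ on $\mc A$ are implemented by elements of $\mc B_*$. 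I expect this to be the main obstacle: it amounts to extending ``multiplication by $\hat y$'' from the essential ideal $\iota(\mc A)$ to a weak$^*$-continuous map on all of $\mc B$, and I would resolve it by pinning the extension down through the relation $\iota(a)\,\rho(x)=\iota\big((\iota^{-1}(\iota(a)x))\cdot\hat y\big)$ together with faithfulness of the essential ideal, using the separate weak$^*$-continuity of multiplication in $\mc B$ to obtain existence.
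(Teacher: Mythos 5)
Your construction of the predual and the identification of $M(\mc A)$ with the $\mc B$-valued multipliers is exactly the paper's argument (the paper phrases $\widetilde M$ as the annihilator $X^\perp$ of a specific subset of $(\mc A\proten\mc B_*)\oplus_1(\mc A\proten\mc B_*)$, but this is the same weak$^*$-closedness computation), and the reduction of the reverse inclusion to the density of products and the ideal property is also correct. The gap is in the last step, precisely where you flag ``the main obstacle''. The extension $\rho$ of right-multiplication by $\hat y\in M(\mc A)$ to a weak$^*$-continuous map on all of $\mc B$ with values in $\mc B$ does \emph{not} exist in general, and no amount of separate weak$^*$-continuity in $\mc B$ will produce it: your pinning relation $\iota(a)\rho(x)=\iota\big((\iota^{-1}(\iota(a)x))\cdot\hat y\big)$ applied to $x=1_{\mc B}$ forces $\rho(1)$ to be an element of $\mc B$ inducing the multiplier $\hat y$, so $\rho$ can only exist for $\hat y$ in the image of $\mc B$. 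Concretely, take $\mc A=A(G)$ and $\mc B=B(G)$ for a non-amenable group $G$ and $\hat y\in M(A(G))\setminus B(G)$: then no such $\rho$ exists. (Existence arguments via limits of $\iota(e_\alpha)x\cdot\hat y$ would need a bounded approximate identity, which is not available here.)

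The paper circumvents this entirely. It first shows, using the injectivity of $\mc B\rightarrow M(\mc A)$, that $\lin\{\iota(c)\cdot\lambda:c\in\mc A,\lambda\in\mc B_*\}$ is norm-dense in $\mc B_*$ (and likewise on the other side). It then tests weak$^*$-convergence of $(L_\alpha,R_\alpha)(L',R')$ only against functionals of the form $(a\otimes\mu)\oplus(b\otimes\iota(c)\cdot\lambda)$: for these, the multiplier identity $R'(d)c=dL'(c)$ converts the troublesome post-composition by $R'$ into a pre-composition, namely
$\ip{\iota(R'R_\alpha(b))}{\iota(c)\cdot\lambda}=\ip{\iota(R_\alpha(b)L'(c))}{\lambda}=\ip{R_\alpha}{b\otimes\iota(L'(c))\cdot\lambda}$,
which converges by hypothesis. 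Since the net $(L_\alpha,R_\alpha)(L',R')$ is bounded, convergence against a norm-dense subset of the predual yields weak$^*$-convergence, and by Krein--\v{S}mulian it suffices to verify weak$^*$-continuity of multiplication on bounded nets. If you replace your final paragraph with this density argument (which you should in any case isolate and prove, as it is also what justifies ``$\mc B_*$ norms $\iota(\mc A)$ adequately''), the proof is complete.
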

\begin{proof}
Given Banach spaces $E$ and $F$, let $E\oplus_1 F$ be the direct sum of $E$ and $F$ with the
norm $\|(x,y)\| = \|x\| + \|y\|$ for $x\in E$ and $y\in F$.  Then $(E\oplus_1 F)^* =
E^* \oplus_\infty F^*$, which has the norm $\|(\mu,\lambda)\| = \max(\|\mu\|,\|\lambda\|)$
for $\mu\in E^*$ and $\lambda\in F^*$.

Consider $(\mc A\proten B_*) \oplus_1 (\mc A\proten B_*)$ which has dual space
$\mc B(\mc A,\mc B) \oplus_\infty \mc B(\mc A,\mc B)$.  Consider
\[ X = \lin\{ (b\otimes\mu\cdot\iota(a))\oplus(-a\otimes \iota(b)\cdot\mu) :
a,b\in\mc A, \mu\in\mc B_* \} \subseteq A\proten B_* \oplus_1 A\proten B_*. \]
Then $X^\perp \subseteq \mc B(\mc A,\mc B) \oplus_\infty \mc B(\mc A,\mc B)$ is
a weak$^*$-closed subspace, and we calculate that $(T,S) \in X^\perp$ if and only if
\[ \ip{\iota(a) T(b)}{\mu} = \ip{S(a)\iota(b)}{\mu} \qquad (a,b\in\mc A, \mu\in\mc B_*), \]
that is, $\iota(a) T(b) = S(a)\iota(b)$ for $a,b\in\mc A$.  So, if $(T,S)\in X^\perp$,
then $\iota(a) T(bc) = S(a) \iota(bc) = \iota(a) T(b) \iota(c)$ for $a,b,c\in\mc A$.
As $\mc B$ injects into $M(\mc A)$, and as $\mc A$ is always assumed faithful, it
follows that $T(bc) = T(b)\iota(c)$ for $b,c\in\mc A$.
As products are dense in $\mc A$, and as $\iota(\mc A)$ is a closed ideal in $\mc B$,
it follows that $T(\mc A) \subseteq \iota(\mc A)$.  A similar argument shows that
$S(\mc A) \subseteq \iota(\mc A)$.  Consequently, there are $L,R\in\mc B(\mc A)$
with $T = \iota L$ and $S = \iota R$.  Then, for $a,b\in\mc A$, $\iota(a)T(b)
=\iota(a L(b)) = S(a)\iota(b) = \iota(R(a)b)$.  We conclude that $(L,R)\in M(\mc A)$.

We have thus shown that $M(\mc A)$ is isomorphic to $X^\perp$, and so $M(\mc A)$
is a dual Banach space.  The weak$^*$-topology is given by the embedding
$M(\mc A) \rightarrow (A\proten B_* \oplus_1 A\proten B_*)^*$ given by
\[ \ip{(L,R)}{(a\otimes\mu)\oplus(b\otimes\lambda)} =
\ip{\iota L(a)}{\mu} + \ip{\iota R(b)}{\lambda}, \]
for $L,R\in\mc B(\mc A)$, $a,b\in\mc A$ and $\mu,\lambda\in\mc B_*$.

Next, notice that the linear span of $\{ \mu\cdot\iota(a) : \mu\in B_*, a\in\mc A \}$
is dense in $\mc B_*$.  Indeed if $b\in\mc B$ is such that $\ip{b}{\mu\cdot\iota(a)}=0$
for $a\in\mc A$ and $\mu\in\mc B_*$, then $\iota(a)b=0$ for all $a\in\mc A$.  Again,
as $\iota(\mc A)$ is an ideal in $\mc B$, and $\mc A$ is faithful, it follows that
$b$ induces the zero multiplier on $\mc A$, and so by assumption, $b=0$.
Similarly, the linear span of $\{ \iota(a) \cdot\mu: \mu\in B_*, a\in\mc A \}$
is dense in $\mc B_*$

Suppose now that $(L_\alpha,R_\alpha)$ is a bounded net in $M(\mc A)$ converging
weak$^*$ to $(L,R)$.  Let $(L',R')\in M(\mc A)$, let $a,b,c\in\mc A$ and let
$\mu,\lambda\in\mc B_*$.  Then
\begin{align*} \lim_\alpha & \ip{(L_\alpha,R_\alpha)(L',R')}{(a\otimes\mu)\oplus(b\otimes\iota(c)\cdot\lambda)} \\
&= \lim_\alpha \ip{\iota(L_\alpha L(a))}{\mu} + \ip{\iota(R'R_\alpha(b))}{\iota(c)\cdot\lambda} \\
&= \lim_\alpha\ip{\iota(L_\alpha L(a))}{\mu} + \ip{\iota(R_\alpha(b) L'(c))}{\lambda} \\
&= \lim_\alpha \ip{(L_\alpha,R_\alpha)}{(L(a)\otimes\mu)\oplus(b\otimes L'(c)\cdot\lambda)} \\
&= \ip{(L,R)}{(L(a)\otimes\mu)\oplus(b\otimes L'(c)\cdot\lambda)} \\
&= \ip{(L,R)(L',R')}{(a\otimes\mu)\oplus(b\otimes\iota(c)\cdot\lambda)}.
\end{align*}
Thus, by the preecding paragraph, it follows that $(L_\alpha,R_\alpha)(L',R')
\rightarrow (L,R)(L',R')$ weak$^*$.  A similar argument establishes that
$(L',R')(L_\alpha,R_\alpha) \rightarrow (L',R')(L,R)$ weak$^*$.  We conclude that
$M(\mc A)$ is a dual Banach algebra for this weak$^*$ topology.
\end{proof}

If products are not linearly dense in $\mc A$, then, following \cite{sel}, one could
instead consider the unitisation of $\mc A$.  In our applications, this will not be needed.

We next show that, under a natural assumption, this weak$^*$ topology is unique.

\begin{theorem}\label{dual_mult_wstar_top}
Let $\mc A$ and $\mc B$ be as above, and let $\theta:\mc B\rightarrow M(\mc A)$ be the
induced map.  There is one and only one weak$^*$ topology on $M(\mc A)$ such that:
\begin{itemize}
\item $M(\mc A)$ is a dual Banach algebra;
\item for a bounded net $(b_\alpha)$ in $\mc B$ and $b\in\mc B$, we have that
  $b_\alpha\rightarrow b$ weak$^*$ in $\mc B$ if and only if
  $\theta(b_\alpha)\rightarrow\theta(b)$ weak$^*$ in $M(\mc A)$.
\end{itemize}
\end{theorem}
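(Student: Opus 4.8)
The plan is to prove this in two movements: first, that the weak$^*$ topology $\tau_0$ produced in Theorem~\ref{mult_is_dba} already satisfies the compatibility condition; and second, that any weak$^*$ topology $\tau$ with both listed properties must coincide with $\tau_0$. For the first movement, let $N_0 = (\mc A\proten\mc B_*\oplus_1\mc A\proten\mc B_*)/X$ be the predual built in that theorem, which is the norm-closed span of the functionals $\omega_{a,\mu}\colon(L,R)\mapsto\ip{\iota L(a)}{\mu}$ and $\rho_{b,\lambda}\colon(L,R)\mapsto\ip{\iota R(b)}{\lambda}$. Writing $\theta(b)=(L,R)$, so that $\iota L(a)=b\iota(a)$, one computes $\omega_{a,\mu}(\theta(b))=\ip{b}{\iota(a)\cdot\mu}$, and similarly for $\rho_{b,\lambda}$. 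Since a bounded net in a dual space converges weak$^*$ precisely when its pairings with a norm-dense subset of the predual converge, and since the spans of $\{\iota(a)\cdot\mu\}$ and $\{\mu\cdot\iota(a)\}$ are norm-dense in $\mc B_*$ (established in the proof of Theorem~\ref{mult_is_dba}), a bounded net $b_\alpha$ converges weak$^*$ to $b$ in $\mc B$ exactly when every generator $\omega_{a,\mu},\rho_{b,\lambda}$ converges on $\theta(b_\alpha)$, i.e.\ exactly when $\theta(b_\alpha)\to\theta(b)$ in $\tau_0$. This gives the compatibility condition for $\tau_0$.

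For uniqueness, let $\tau$ be any weak$^*$ topology with predual $N\subseteq M(\mc A)^*$ satisfying the two bullet points. The forward direction of compatibility says precisely that $\theta\colon\mc B\to M(\mc A)$ is continuous from the weak$^*$ topology of $\mc B$, restricted to bounded sets, into $\tau$; as $\theta$ is bounded, the Krein--Smulian (Banach--Dieudonn\'e) theorem upgrades this to genuine weak$^*$-weak$^*$ continuity, so $\theta=\phi^*$ for a unique bounded $\phi\colon N\to\mc B_*$. Since $\theta$ is injective by hypothesis, $\ker\phi^*=\{0\}$ forces $\phi(N)$ to be norm-dense in $\mc B_*$. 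I record the identity $\ip{y}{n}=\ip{\iota(y)}{\phi(n)}$ for $y\in\mc A\subseteq M(\mc A)$ and $n\in N$, which follows from $\theta(\iota(y))=y$.

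The crux is to show each generator of $N_0$ lies in $N$. Fix $\omega_{a,\mu}$ and note $\omega_{a,\mu}(\hat x)=\ip{\iota(\hat x\cdot a)}{\mu}=g_\mu(\hat x\cdot a)$, where $g_\mu(y):=\ip{\iota(y)}{\mu}$ for $y\in\mc A$. Because $\mc A$ is an ideal in $M(\mc A)$ and multiplication in the dual Banach algebra $(M(\mc A),\tau)$ is separately weak$^*$-continuous, the map $\hat x\mapsto\hat x\cdot a$ is $\tau$-$\tau$ continuous with range in $\mc A$. Choosing $n_k\in N$ with $\phi(n_k)\to\mu$ in norm, the recorded identity gives $\ip{y}{n_k}=\ip{\iota(y)}{\phi(n_k)}\to g_\mu(y)$ uniformly on norm-bounded subsets of $\mc A$; hence $g_\mu$ is $\tau$-continuous on bounded subsets of $\mc A$, being a uniform limit there of the $\tau$-continuous functionals $n_k$. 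Composing, $\omega_{a,\mu}$ is $\tau$-continuous on the unit ball of $M(\mc A)$, so by Krein--Smulian $\omega_{a,\mu}\in N$. The same argument, using left multiplication and the density of $\{\mu\cdot\iota(a)\}$, gives $\rho_{b,\lambda}\in N$. As $N$ is norm-closed, $N_0\subseteq N$.

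Finally, two preduals with $N_0\subseteq N$ must agree: the inclusion $j\colon N_0\hookrightarrow N$ is isometric (both carry the norm of $M(\mc A)^*$), and its adjoint $j^*\colon M(\mc A)\to M(\mc A)$ is the identity, since in each case the pairing with $M(\mc A)$ is evaluation; as $j^*$ is bijective, $j$ has both dense range and closed range, hence is onto, so $N_0=N$ and $\tau=\tau_0$. The main obstacle is the third step: transferring the abstract compatibility, which constrains only nets coming from $\mc B$, into the statement that the \emph{concrete} predual functionals $\omega_{a,\mu},\rho_{b,\lambda}$ are $\tau$-continuous. This is exactly where the separate weak$^*$-continuity of the product (to reduce to the ideal $\mc A$) and the norm-density of $\phi(N)$ in $\mc B_*$ (to approximate $\mu$) must be combined.
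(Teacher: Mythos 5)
Your proof is correct. The existence/compatibility half is essentially the paper's own argument: both rest on the computation $\omega_{a,\mu}(\theta(b))=\ip{b}{\iota(a)\cdot\mu}$ and the norm-density of $\{\iota(a)\cdot\mu\}$ and $\{\lambda\cdot\iota(a)\}$ in $\mc B_*$. For uniqueness, however, you take a genuinely different route. The paper fixes a bounded net $(L_\alpha,R_\alpha)\to(L,R)$ in the competing topology $\sigma$, uses the identity $(L,R)\theta(\iota(a))=\theta(\iota(L(a)))$ and separate weak$^*$-continuity of the $\sigma$-product to get $\theta(\iota(L_\alpha(a)))\to\theta(\iota(L(a)))$ in $\sigma$, pulls this back to weak$^*$-convergence of $\iota(L_\alpha(a))$ in $\mc B$ via the \emph{reverse} implication of the compatibility hypothesis, pairs against the generators of $X^\perp$, and finishes with Lemma~\ref{weakstar_lemma}. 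You instead argue on the predual side: the \emph{forward} implication (with Lemma~\ref{weakstar_lemma}) realises $\theta$ as $\phi^*$ with $\phi(N)$ norm-dense in $\mc B_*$ (density here coming from the standing injectivity hypothesis rather than from the reverse implication), and you then show each generator $\omega_{a,\mu}=g_\mu\circ(\,\cdot\,a)$ is $\tau$-continuous on the unit ball --- separate weak$^*$-continuity handling the ideal multiplication, norm-approximation of $\mu$ by $\phi(n_k)$ handling $g_\mu$ --- so that Krein--Smulian places it in $N$, and a Hahn--Banach argument forces $N_0=N$. Both arguments ultimately prove $N_0\subseteq N$ and then upgrade to equality; yours trades the paper's explicit net manipulation for two applications of Krein--Smulian, and has the mild advantage of consuming only the forward implication of the compatibility condition (together with injectivity of $\theta$).
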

\begin{proof}
We first show that the previously constructed weak$^*$-topology on $M(\mc A)$ has this
property.  For $b\in\mc B$, write $\theta(b) = (L_b,R_b)\in M(\mc A)$.
If $b_\alpha\rightarrow b$ weak$^*$ in $\mc B$, then for $a,c\in\mc A$ and $\mu,\lambda\in\mc B_*$,
\begin{align*} \lim_\alpha &\ip{(L_{b_\alpha},R_{b_\alpha})}{(a\otimes\mu)\oplus(c\otimes\lambda)}
= \lim_\alpha \ip{b_\alpha\iota(a)}{\mu} + \ip{\iota(c)b_\alpha}{\lambda} \\
&\hspace{5ex}= \ip{b\iota(a)}{\mu} + \ip{\iota(c)b}{\lambda}
= \ip{(L_b,R_b)}{(a\otimes\mu)\oplus(c\otimes\lambda)}, \end{align*}
showing that $\theta(b_\alpha)\rightarrow\theta(b)$ weak$^*$.  Conversely, from the
previous proof, we know that elements of the form $\iota(a)\cdot\mu$ and $\lambda\cdot\iota(c)$
are linearly dense in $\mc B_*$.  Thus we can reverse the argument to show that if
$\theta(b_\alpha) \rightarrow \theta(b)$ weak$^*$, then $b_\alpha\rightarrow b$ weak$^*$.

Now let $\sigma$ be some other weak$^*$ topology on $M(\mc A)$ with the stated properties.
Notice that for $(L,R)\in M(\mc A)$ and $a\in\mc A$, we have that, by the calculations
of Lemma~\ref{idealiser},
\[ (L,R) \theta(\iota(a)) = (L L_{\iota(a)}, R_{\iota(a)} R)
= (L_{L(a)}, R_{L(a)}) = \theta(\iota(L(a))), \]
as $L_{\iota(a)} = L_a$ and so forth.  Let $(L_\alpha,R_\alpha)$ be a bounded net in
$M(\mc A)$ which converges in $\sigma$ to $(L,R)$.  Hence, for $a\in\mc A$,
\[ \lim_\alpha \theta(\iota(L_\alpha(a))) =
\lim_\alpha (L_\alpha, R_\alpha) \theta(\iota(a)) = 
(L,R) \theta(\iota(a)) = \theta(\iota(L(a))), \]
with respect to $\sigma$.  Thus $\iota(L_\alpha(a)) \rightarrow \iota(L(a))$ weak$^*$
in $\mc B$, for any $a\in\mc A$.  Thus, for $a,b,c\in\mc A$ and $\mu,\lambda\in\mc B_*$,
\begin{align*} \lim_\alpha \ip{(L_\alpha,R_\alpha)}{(a\otimes\mu)\oplus(b\otimes\iota(c)\cdot\lambda)}
&= \lim_\alpha \ip{\iota(L_\alpha(a))}{\mu} + \ip{\iota(R_\alpha(b))}{\iota(c)\cdot\lambda} \\
&= \lim_\alpha \ip{\iota(L_\alpha(a))}{\mu} + \ip{\iota(b L_\alpha(c))}{\lambda} \\
&= \ip{\iota(L(a))}{\mu} + \ip{\iota(b L(c))}{\lambda} \\
&= \ip{(L,R)}{(a\otimes\mu)\oplus(b\otimes\iota(c)\cdot\lambda)}.
\end{align*}
Again, this is enough to show that $(L_\alpha,R_\alpha) \rightarrow (L,R)$
in the weak$^*$ topology on $M(\mc A) = X^\perp$, as in the proof of the previous theorem.

Hence the map $(M(\mc A),\sigma) \rightarrow X^\perp$ is an (isometric) isomorphism
such that weak$^*$-convergent, bounded nets are sent to weak$^*$-convergent nets.
So by Lemma~\ref{weakstar_lemma}, the two weak$^*$-topologies agree, as required.
\end{proof}

In connection with the condition in the previous theorem, the next lemma is useful.

\begin{lemma}\label{weakstarlemma}
Let $(\mc B,\mc B_*)$ and $(\mc C, \mc C_*)$ be dual Banach algebras, and let
$\theta:\mc B\rightarrow\mc C$ be a bounded linear map.  The following properties
are equivalent:
\begin{enumerate}
\item For a bounded net $(b_\alpha)$ in $\mc B$ and $b\in\mc B$,
we have that $b_\alpha\rightarrow b$ weak$^*$ in $\mc B$ if
and only if $\theta(b_\alpha)\rightarrow\theta(b)$ weak$^*$ in $\mc C$;
\item $\theta$ is weak$^*$-continuous, and the preadjoint $\theta_*:\mc C_*
\rightarrow\mc B_*$ has dense range.
\end{enumerate}
\end{lemma}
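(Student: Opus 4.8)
The plan is to prove the two implications separately, using throughout the standard correspondence between weak$^*$-continuous maps of dual spaces and adjoints of maps between the preduals.

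First I would handle $(2)\Rightarrow(1)$. The forward half of (1) is immediate: if $\theta$ is weak$^*$-continuous, then $b_\alpha\to b$ weak$^*$ forces $\theta(b_\alpha)\to\theta(b)$ weak$^*$, and here neither boundedness nor density is needed. For the converse half, suppose $(b_\alpha)$ is bounded with $\theta(b_\alpha)\to\theta(b)$ weak$^*$. Writing $\theta=(\theta_*)^*$, I would test against elements of the range of $\theta_*$: for $\nu\in\mc C_*$,
\[ \ip{b_\alpha}{\theta_*(\nu)} = \ip{\theta(b_\alpha)}{\nu} \to \ip{\theta(b)}{\nu} = \ip{b}{\theta_*(\nu)}. \]
Thus $\ip{b_\alpha-b}{\cdot}\to 0$ on the range of $\theta_*$, which is dense in $\mc B_*$ by hypothesis. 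Since $(b_\alpha)$ is norm-bounded, a routine $\epsilon/3$ argument upgrades convergence on a dense subset to convergence against every element of $\mc B_*$, giving $b_\alpha\to b$ weak$^*$. Boundedness is essential precisely at this last step.

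Next I would prove $(1)\Rightarrow(2)$. The forward half of (1) says exactly that $\theta$ carries bounded weak$^*$-convergent nets to weak$^*$-convergent nets, i.e.\ $\theta$ is weak$^*$-continuous \emph{on bounded sets}. To upgrade this to genuine weak$^*$-continuity, I would fix $\nu\in\mc C_*$ and consider the bounded linear functional $b\mapsto\ip{\theta(b)}{\nu}$ on $\mc B$. Its kernel $K$ is convex, and for each $r>0$ the ball $rB_{\mc B}$ is weak$^*$-compact, so a weak$^*$-limit in $rB_{\mc B}$ of a net in $K\cap rB_{\mc B}$ again lies in $K$ by the bounded-set continuity; hence $K\cap rB_{\mc B}$ is weak$^*$-closed. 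The Krein--Smulian theorem then shows $K$ is weak$^*$-closed, so the functional is weak$^*$-continuous, i.e.\ lies in $\mc B_*$. This produces the preadjoint $\theta_*\colon\mc C_*\to\mc B_*$ and shows $\theta=(\theta_*)^*$ is weak$^*$-continuous. It remains to see that $\theta_*$ has dense range, which is equivalent to $\theta$ being injective, since the annihilator in $\mc B$ of the range of $\theta_*$ is exactly $\ker\theta$. For injectivity I would use a constant net: if $\theta(b)=0$, apply the biconditional (1) to the constant net $b_\alpha\equiv b$ with target $0$; since $\theta(b_\alpha)=\theta(b)=0=\theta(0)$ converges weak$^*$ to $\theta(0)$, (1) forces $b_\alpha\to 0$ weak$^*$, i.e.\ $b=0$. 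Hence $\theta$ is injective and $\theta_*$ has dense range.

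The only non-routine ingredient, and the step I expect to be the main obstacle, is the Krein--Smulian upgrade from weak$^*$-continuity on bounded sets to global weak$^*$-continuity; everything else is bookkeeping with adjoints together with the standard density-plus-boundedness argument. The constant-net trick for injectivity is the small observation that makes the density of the range fall out with no further work.
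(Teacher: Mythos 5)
Your proof is correct, and most of it coincides with the paper's argument: the implication $(2)\Rightarrow(1)$ is the same boundedness-plus-dense-range computation, and your constant-net observation that the biconditional in (1) forces $\theta$ to be injective --- equivalently, via the annihilator identity $(\operatorname{ran}\theta_*)^\perp=\ker\theta$, that $\theta_*$ has dense range --- is exactly how the paper deduces density of the range. The one step where you take a genuinely different route is the upgrade from ``$\theta$ sends bounded weak$^*$-convergent nets to weak$^*$-convergent nets'' to genuine weak$^*$-continuity. The paper does not argue this inside the lemma at all: it invokes its appendix Lemma~\ref{weakstar_lemma}, whose proof shows that $\theta^*$ carries the canonical copy of $\mc C_*$ into the canonical copy of $\mc B_*$ by testing against elements of $\kappa(\mc B_*)^\perp$ and realising them, via Goldstine's theorem, as weak$^*$-limits of bounded nets from $\mc B$, finishing with a bipolar argument. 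You instead apply Krein--Smulian to the kernel of each functional $b\mapsto\ip{\theta(b)}{\nu}$, checking that its intersection with every ball $rB_{\mc B}$ is weak$^*$-closed; this is a valid and complete argument. The two routes are of essentially the same depth (both ultimately rest on weak$^*$-compactness of balls); yours is self-contained within the lemma, whereas the paper's appendix lemma is stated for arbitrary dual Banach spaces and completely bounded maps and is reused in several other places, which is why the author factors it out.
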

\begin{proof}
Lemma~\ref{weakstar_lemma} shows that (1) implies that $\theta$ is weak$^*$-continuous.
Suppose that (1) holds, but that $\theta_*$ does not have dense range.  Then we can
find a non-zero $b\in\mc B$ with $\ip{b}{\theta_*(\mu)}=0$ for each $\mu\in\mc C_*$.
Thus $\theta(b)=0$.  Then the constant net $\theta(0)$ converges weak$^*$ in $\mc C$
to $\theta(b)$, so (1) shows that $b=0$, a contradiction.  Hence (1) implies (2).

If (2) holds, then clearly $b_\alpha\rightarrow b$ weak$^*$ implies that
$\theta(b_\alpha)\rightarrow\theta(b)$.  As $\theta_*$ has dense range, and
$(b_\alpha)$ is bounded, it follows that $\theta(b_\alpha)\rightarrow\theta(b)$
weak$^*$ implies that $b_\alpha\rightarrow b$, so that (1) holds.
\end{proof}

\begin{example}
Consider $\mc A=L^1(G)$ for a locally compact group $G$.  Then $M(\mc A) = M(G) = C_0(G)^*$.
It seems natural to let $\mc B=M(G)$ in the above, so that $\theta$ is just the identity
map, under the natural identifications.  The previous theorem then shows that our abstract
construction does construct the canonical weak$^*$ topology on $M(\mc A)$.
\end{example}

\begin{example}\label{fourier_example_one}
Consider now the Fourier algebra $A(G)$, for some locally compact group $G$.
As $A(G)$ is a regular Banach algebra of functions on $G$, it is not hard to show
that we can identify $M(A(G))$ with the collection of bounded continuous functions
$m:G\rightarrow\mathbb C$ such that $ma\in A(G)$ for any $a\in A(G)$.  See
\cite[Section~4.1]{spronk} or \cite{losert} for further details, or compare
with Example~\ref{dba_non_unital} below.

Let $C^*(G)$ be the full group C$^*$-algebra, whose dual is $B(G)$, the
Fourier-Stieljtes algebra.  By \cite[Proposition~3.4]{eymard}, the natural
map $A(G)\rightarrow B(G)$ is an isometry, and $A(G)$ is an ideal in $B(G)$.
Remember that by \cite{leptin}, we have that $M(A(G)) = B(G)$ only when $G$ is
amenable.  Nevertheless, we can apply our theorem to construct a weak$^*$-topology
on $M(A(G))$.  This follows as $A(G)$ separates the points of $G$, and so
the map $B(G)\rightarrow M(A(G))$ is injective.

In \cite{DcH}, it is shown that $M(A(G))$ is a dual Banach space, the predual being
$X$, which is the completion of $L^1(G)$ under the norm
\[ \|f\|_X = \sup\Big\{ \Big| \int_G f(s) m(s) \ ds \Big| : m\in M(A(G)),
\|m\|_{M(A(G))} \leq 1 \Big\}. \]
Given $m\in M(A(G))$, $m$ induces a member of $X^*$ by
\[ \ip{m}{f} = \int_G f(s) m(s) \ ds \qquad (f\in L^1(G)), \]
furthermore (and of course, this requires a proof) all of $X^*$ arises in this way.
It is then easy to see that $M(A(G))$ becomes a dual Banach algebra.

Let $\omega:L^1(G)\rightarrow C^*(G)$ be the universal representation.  Treating
$b\in B(G)=C^*(G)^*$ as a continuous function $G\rightarrow\mathbb C$, we have that
\[ \ip{b}{\omega(f)} = \int_G b(s) f(s) \ ds \qquad (f\in L^1(G)). \]
It follows easily that the weak$^*$ topology on $M(A(G))$ induced by $X$
satisfies the conditions of Theorem~\ref{dual_mult_wstar_top}, and so this
weak$^*$ topology agrees with that constructed by Theorem~\ref{mult_is_dba}.
\end{example}

Later, we shall extend this idea to locally compact quantum groups, as well as considering
operator space issues.

\subsection{For dual Banach algebras}

Suppose that we start with a dual Banach algebra $(\mc A,\mc A_*)$.  Firstly, we
show that $\mc A$ being faithful implies a number of useful properties.

\begin{proposition}
Let $(\mc A,\mc A_*)$ be a dual Banach algebra which is faithful.  Then:
\begin{enumerate}
\item Both $\{ a\cdot\mu : a\in\mc A,\mu\in\mc A_* \}$ and
$\{\mu\cdot a:a\in\mc A,\mu\in\mc A_*\}$ are linearly dense in $\mc A_*$;
\item Given $(L,R)\in M(\mc A)$, we have that $L$ and $R$ are weak$^*$-continuous;
\end{enumerate}
\end{proposition}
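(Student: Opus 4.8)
The plan is to prove the two statements in the stated order, since (2) will rely on the density established in (1). Throughout I view $\mc A_*$ as a closed $\mc A$-submodule of $\mc A^*$ (this is exactly the hypothesis that $\mc A$ is a dual Banach algebra), with the usual dual actions $\ip{a\cdot\mu}{x} = \ip{\mu}{xa}$ and $\ip{\mu\cdot a}{x} = \ip{\mu}{ax}$ for $a,x\in\mc A$ and $\mu\in\mc A_*$, and I recall the standard fact that a bounded operator on the dual space $\mc A = (\mc A_*)^*$ is weak$^*$-continuous precisely when its adjoint carries $\mc A_*$ (regarded inside $\mc A^*$) into $\mc A_*$.

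For (1), I would argue by Hahn--Banach. Suppose the linear span of $\{a\cdot\mu : a\in\mc A, \mu\in\mc A_*\}$ is not dense in $\mc A_*$; then there is a non-zero $b\in\mc A = (\mc A_*)^*$ annihilating it, so that $\ip{a\cdot\mu}{b} = \ip{\mu}{ba} = 0$ for all $a\in\mc A$ and $\mu\in\mc A_*$. As $\mc A_*$ separates the points of $\mc A$, this forces $ba=0$ for every $a\in\mc A$, whence $cbd = c(bd) = 0$ for all $c,d\in\mc A$; faithfulness of $\mc A$ over itself then gives $b=0$, a contradiction. The density of $\{\mu\cdot a\}$ follows by the symmetric argument, deducing $ab=0$ for all $a$ and hence $cbd = (cb)d = 0$.

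For (2), I would reduce weak$^*$-continuity of $L$ to the inclusion $L^*(\mc A_*)\subseteq\mc A_*$, and likewise for $R$, then verify these inclusions on the dense sets from (1). The key computation uses the multiplier identity $a\cdot L(x) = R(a)\cdot x$ together with its mirror $R(x)\cdot a = x\cdot L(a)$. Concretely, for $\mu\in\mc A_*$ and $a\in\mc A$,
\[ \ip{L^*(\mu\cdot a)}{x} = \ip{\mu\cdot a}{L(x)} = \ip{\mu}{aL(x)} = \ip{\mu}{R(a)x} = \ip{\mu\cdot R(a)}{x} \qquad (x\in\mc A), \]
so $L^*(\mu\cdot a) = \mu\cdot R(a)\in\mc A_*$; by (1) such elements are dense in $\mc A_*$, and since $L^*$ is norm-continuous and $\mc A_*$ is norm-closed in $\mc A^*$, it follows that $L^*(\mc A_*)\subseteq\mc A_*$, giving weak$^*$-continuity of $L$. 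Symmetrically, $\ip{R^*(a\cdot\mu)}{x} = \ip{\mu}{R(x)a} = \ip{\mu}{xL(a)} = \ip{L(a)\cdot\mu}{x}$, so $R^*(a\cdot\mu) = L(a)\cdot\mu\in\mc A_*$, and density of $\{a\cdot\mu\}$ from (1) finishes the proof that $R$ is weak$^*$-continuous.

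There is no deep obstacle here; the argument is essentially bookkeeping with the dual module structure. The one genuine point, and the reason the two parts must be carried out in this order, is that $L^*$ and $R^*$ are only manifestly seen to land in $\mc A_*$ on the two one-sided dense subspaces produced in (1) (note that $L^*$ needs the density of $\{\mu\cdot a\}$ while $R^*$ needs that of $\{a\cdot\mu\}$, so both statements are genuinely used), and on a general element of $\mc A_*$ one has no direct handle. Thus the whole argument hinges on having both density statements available together with the norm-closedness of $\mc A_*$. Care is also needed to keep the dual module conventions and the direction of the $\mc A = (\mc A_*)^*$ pairing consistent, as a side error there would spoil the identities $L^*(\mu\cdot a)=\mu\cdot R(a)$ and $R^*(a\cdot\mu)=L(a)\cdot\mu$.
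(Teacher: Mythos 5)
Your proof is correct and follows essentially the same route as the paper: part (1) is the identical Hahn--Banach-plus-faithfulness argument, and part (2) rests on the same key computation (the multiplier identity $aL(x)=R(a)x$ applied inside the pairing, tested against the one-sided dense sets from (1)). The only cosmetic difference is that you verify weak$^*$-continuity via the criterion $L^*(\mc A_*)\subseteq\mc A_*$, whereas the paper tests convergence of $L$ along bounded weak$^*$-convergent nets; these are equivalent formulations (both appear in the paper's Lemma~\ref{weakstar_lemma}).
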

\begin{proof}
The proof of (1) is exactly the same as the analogous statement in the proof of
Theorem~\ref{mult_is_dba}.  For (2), let $(a_\alpha)$ be a bounded net in $\mc A$ which
converges weak$^*$ to $a$.  For $b\in\mc A$ and $\mu\in\mc A_*$, we have that
\[ \lim_\alpha \ip{L(a_\alpha)}{\mu\cdot b} = \lim_\alpha \ip{R(b)a_\alpha}{\mu}
= \ip{R(b)a}{\mu} = \ip{L(a)}{\mu\cdot b}. \]
By (1), this is enough to show that $L(a_\alpha)\rightarrow L(a)$ weak$^*$,
so we conclude (see Lemma~\ref{weakstar_lemma}) that $L$ is weak$^*$ continuous.
Similar arguments apply to $R$.
\end{proof}

\begin{theorem}
Let $(\mc A,\mc A_*)$ be a faithful, dual Banach algebra.  Then we can construct
a predual for $M(\mc A)$ which turns $M(\mc A)$ into a dual Banach algebra.  This
weak$^*$-topology on $M(\mc A)$ is the unique one such that, for a bounded
net $(a_\alpha)$ in $\mc A$, and $a\in\mc A$, we have that $a_\alpha\rightarrow a$
weak$^*$ in $\mc A$ if and only if $a_\alpha\rightarrow a$ weak$^*$ in $M(\mc A)$.
\end{theorem}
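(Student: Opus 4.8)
The plan is to derive both assertions by specialising Theorem~\ref{mult_is_dba} and Theorem~\ref{dual_mult_wstar_top} to the case $\mc B = \mc A$, with predual $\mc B_* = \mc A_*$ and with $\iota = \id_{\mc A}$ the identity homomorphism. With these choices $\iota$ is trivially an isometric homomorphism, $\iota(\mc A) = \mc A$ is (trivially) an essential ideal of itself, and the induced map $\mc B \rightarrow M(\mc A)$ is the canonical embedding of $\mc A$ into $M(\mc A)$, which is injective precisely because $\mc A$ is faithful. Thus the hypotheses of Theorem~\ref{mult_is_dba} are met, save possibly for the requirement that products be linearly dense in $\mc A$.

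The point I would stress is that this last hypothesis is redundant here, and checking this carefully is the step I expect to be the main obstacle (rather than silently assuming it). Tracing the proof of Theorem~\ref{mult_is_dba} with $\iota = \id$, the condition $(T,S) \in X^\perp$ reads $a T(b) = S(a) b$ for all $a,b \in \mc A$, which is literally the statement that $(T,S) \in M(\mc A)$; since the codomain is already $\mc A = \iota(\mc A)$, the step that forces $T(\mc A) \subseteq \iota(\mc A)$ (the only place density of products is invoked) is vacuous. Hence $X^\perp = M(\mc A)$ directly, so $M(\mc A)$ is weak$^*$-closed in $(\mc A \proten \mc A_* \oplus_1 \mc A \proten \mc A_*)^*$ and is therefore a dual Banach space. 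For separate weak$^*$-continuity of the product I would run the final computation of that proof verbatim; it uses only that $\{a\cdot\mu : a\in\mc A, \mu\in\mc A_*\}$ and $\{\mu\cdot a : a\in\mc A, \mu\in\mc A_*\}$ are linearly dense in $\mc A_*$, which is exactly part (1) of the preceding Proposition. This gives the first assertion, that $M(\mc A)$ is a dual Banach algebra.

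For the uniqueness statement I would apply Theorem~\ref{dual_mult_wstar_top} with the same data, noting that the dispensability of the products-dense hypothesis carries over, since that proof rests on the $X^\perp$ construction above together with the same density facts from the preceding Proposition and with Lemma~\ref{weakstar_lemma}. Here the map $\theta$ of Theorem~\ref{dual_mult_wstar_top} is the induced map $\mc B \rightarrow M(\mc A)$, i.e.\ the canonical embedding $\mc A \hookrightarrow M(\mc A)$; consequently its distinguishing condition --- that a bounded net $(b_\alpha)$ converges weak$^*$ in $\mc B$ if and only if $\theta(b_\alpha)$ converges weak$^*$ in $M(\mc A)$ --- specialises to exactly the condition in the present statement, namely that a bounded net $(a_\alpha)$ converges weak$^*$ in $\mc A$ if and only if it converges weak$^*$ in $M(\mc A)$. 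The uniqueness then follows as in Theorem~\ref{dual_mult_wstar_top}. Everything beyond the observation of the redundant hypothesis is a direct transcription of the two cited proofs.
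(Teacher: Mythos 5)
Your proposal is correct and follows essentially the same route as the paper: the paper's own proof likewise takes $\mc B=\mc A$ with $\iota=\id$ in Theorem~\ref{mult_is_dba} and observes that the density-of-products hypothesis is used only to force $T(\mc A)\subseteq\iota(\mc A)$, which is automatic in this situation. Your additional remarks --- that injectivity of $\mc A\rightarrow M(\mc A)$ is exactly faithfulness, that separate weak$^*$-continuity rests only on the density facts from the preceding Proposition, and that uniqueness is the specialisation of Theorem~\ref{dual_mult_wstar_top} --- merely make explicit what the paper leaves implicit.
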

\begin{proof}
If products are dense in $\mc A$, then we can immediately apply Theorem~\ref{mult_is_dba},
with $\mc B=\mc A$.
Indeed, if we follow the proof of Theorem~\ref{mult_is_dba}, then the only point
at which we use this density is the ensure that a map $T:\mc A\rightarrow\mc B$
actually maps into $\iota(\mc A)$, but clearly this is automatic in the current
situation.
\end{proof}

By Lemma~\ref{weakstarlemma}, we can equivalently say that we can construct a predual
for $M(\mc A)$, say $M(\mc A)_*$, such that the inclusion $\mc A\rightarrow M(\mc A)$
has a preadjoint $M(\mc A)_* \rightarrow \mc A_*$ which has dense range.

\begin{example}\label{dba_non_unital}
Examples of non-unital dual Banach algebras arise in abstract harmonic analysis.
For example, let $G$ be a locally compact group, let $C^*_r(G)$ be the reduced
group C$^*$-algebra of $G$, and let $B_r(G)$ be its dual.  This is a commutative
Banach algebra which can be identified as an algebra of functions on $G$.  As
$C^*_r(G)$ is weak$^*$-dense in $VN(G)$, it follows that $A(G)$ embeds isometrically
into $B_r(G)$.  Conversely, given a continuous function $f:G\rightarrow\mathbb C$,
we have that $f\in B_r(G)$ if and only if there is a constant $c$, such that for
each compact set $K\subseteq G$, there exists $a\in A(G)$ with $a|_K = f|_K$ and
with $\|a\|\leq c$.  See \cite{cowling} for further details (and more generality).

We claim that we can identify $M(B_r(G))$ with an algebra of functions on $G$.
Indeed, let $\mc A$ be any algebra of functions on $G$
such that for each $s\in G$, there exists $a\in\mc A$ with $a(s)=1$.  Fix
$(L,R)\in M(\mc A)$.  As $\mc A$ is commutative, so is $M(\mc A)$, with
$L=R$.  For $s\in G$, let $a(s)=1$, and define $f(s) = L(a)(s)$.  This is
well-defined, for if also $a'(s)=1$, then $f(s) = f(s) a'(s) = L(a)(s) a'(s)
= L(aa')(s) = L(a')(s) a(s) = L(a')(s)$.  So we find a function
$f:G\rightarrow\mathbb C$.  For $b\in\mc A$, we see that $L(b)(s) = L(b)(s)a(s)
= L(a)(s) b(s) = f(s) b(s)$.  So $L(b) = fb$, and hence
\[ M(\mc A) = \big\{ f:G\rightarrow\mathbb C : fa\in\mc A \ (a\in\mc A) \big\}. \]

For $B_r(G)$, we can say a little more.  As $A(G)\subseteq B_r(G)$, for any
$s\in G$ we can find an open neighbourhood $U$ of $s$ and $a\in A(G)$ with
$a|_U=1$.  Then, for $f\in M(B_r(G))$, and any net $(s_\alpha)$ converging to $s$
in $G$, we see that
\[ \lim_\alpha f(s_\alpha) = \lim_\alpha f(s_\alpha) a(s_\alpha)
= \lim_\alpha (fa)(s_\alpha) = (fa)(s) = f(s) a(s) = f(s), \]
as $fa\in B_r(G)$ and so $fa$ is continuous.  So $f$ is continuous.  Similarly,
we can show that $f$ must be bounded.

By \cite[Proposition~3.4]{eymard} or \cite{cowling}, if $a\in B_r(G)$ has compact
support, then actually $a\in A(G)$.  It follows that if $a\in A(G)$ has compact support,
then for $f\in M(B_r(G))$, we see that $fa\in B_r(G)$ has compact support, so
$fa\in A(G)$.  As such $a$ are dense in $A(G)$, and $f:A(G)\rightarrow B_r(G)$ is
bounded, it follows that actually $f$ maps $A(G)$ to $A(G)$.  So $f\in M(A(G))$.

The arguments explored in Section~\ref{lcqg} below will show that the
weak$^*$-topology induced on $M(B_r(G))=M(A(G))$ by the previous theorem
agrees with that constructed in Example~\ref{fourier_example_one}.
\end{example}

\subsection{When we have a bounded approximate identity}

We now return to the case when $\mc A$ has a bounded approximate identity,
and make links with Section~\ref{dba_section}.  Suppose that $\mc A$ is a C$^*$-algebra,
so that $\wap(\mc A^*)=\mc A^*$ and $\mc A$ has a contractive approximate identity.
It follows that $\theta_w:M(\mc A)\rightarrow\wap(\mc A^*)^* = \mc A^{**}$ is
an isometry onto its range.  As $M(\mc A)$ is not, in general, a dual space, we cannot,
in general, expect that $\theta_w$ has weak$^*$-closed range.

\begin{theorem}\label{mult_dba_cai}
Let $\mc A$ be a Banach algebra with a contractive approximate identity.
Then the following are equivalent:
\begin{enumerate}
\item\label{mdc_one} $M(\mc A)$ is a dual Banach algebra for some predual;
\item\label{mdc_two} there exists a closed $\mc A$-submodule $Z$ of $\wap(\mc A^*)$
such that $\theta_0:M(\mc A)\rightarrow Z^*$ is an isometric isomorphism,
where $\theta_0$ is the composition of $\theta_w:M(\mc A)\rightarrow\wap(\mc A^*)^*$
with the quotient map $\wap(\mc A^*)^*\rightarrow Z^*$.
\end{enumerate}
When (\ref{mdc_one}) holds, we can choose $Z$ in (\ref{mdc_two}) such that $\theta_0:
M(\mc A)\rightarrow Z^*$ is weak$^*$ continuous. Thus all possible dual Banach algebra
weak$^*$-topologies which can occur on $M(\mc A)$ arise by the construction of (\ref{mdc_two}).
\end{theorem}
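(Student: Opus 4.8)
The plan is to run everything through the homomorphism $\theta_w\colon M(\mc A)\to(\wap(\mc A^*)^*,\atwo)$ and the restriction map dual to the inclusion $\mc A\hookrightarrow M(\mc A)$. Two standing facts will be used repeatedly. First, $\wap(\mc A^*)$ is essential (Lemma~\ref{dba_lemma}), so Theorem~\ref{submod_rep_thm} applies with $F=\wap(\mc A^*)$ and $\theta_w=\theta_F$ is strictly-weak$^*$-continuous. Second, $\wap(\mc A^*)^*$ is itself a dual Banach algebra on which the two Arens products coincide, so for $\mu\in\wap(\mc A^*)$ the module map $\Phi\mapsto\Phi\cdot\mu$ is weak$^*$-to-weak continuous into $\wap(\mc A^*)$. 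Since the approximate identity is contractive, $\|\Phi_0\|\le1$ and hence $\|\theta(L,R)\|=\|L^{**}(\Phi_0)\|\le\|L\|\le\|(L,R)\|$, which supplies the ``easy'' half of every isometry estimate below.

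For (2)$\Rightarrow$(1): given $Z$, the isometric isomorphism $\theta_0$ transports the predual $Z$ onto $M(\mc A)$, with pairing $\ip{(L,R)}{\mu}=\ip{\theta_w(L,R)}{\mu}$ for $\mu\in Z$. To see that multiplication is separately weak$^*$-continuous I first check that $Z$ is invariant under the actions of $\theta_w(M(\mc A))$: writing $L'(e_\beta)\to(L',R')$ strictly, Theorem~\ref{submod_rep_thm} gives $\theta_w(L',R')=\lim_\beta\kappa_w(L'(e_\beta))$ weak$^*$, and since each $L'(e_\beta)\cdot\mu\in Z$ while $\Phi\mapsto\Phi\cdot\mu$ is weak$^*$-to-weak continuous, the weakly closed set $Z$ contains $\theta_w(L',R')\cdot\mu$. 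Then the homomorphism property of $\theta_w$ together with the Arens identity (valid on $\wap(\mc A^*)^*$) $\ip{\Phi\atwo\Psi}{\mu}=\ip{\Phi}{\Psi\cdot\mu}$ shows $\ip{(L,R)(L',R')}{\mu}=\ip{(L,R)}{\theta_w(L',R')\cdot\mu}$ with $\theta_w(L',R')\cdot\mu\in Z$, giving weak$^*$-continuity of right multiplication; the left case is symmetric.

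For (1)$\Rightarrow$(2): let $N$ be a predual making $M(\mc A)$ a dual Banach algebra. Since the predual of a dual Banach algebra lies inside the weakly almost periodic functionals, $N\subseteq\wap(M(\mc A)^*)$; composing with the bimodule restriction $r\colon M(\mc A)^*\to\mc A^*$ and using that a weakly compact operator remains weakly compact after composition with bounded maps, one gets $r(N)\subseteq\wap(\mc A^*)$. Set $Z=\overline{r(N)}$, a closed $\mc A$-submodule, and take $\rho=r|_N\colon N\to Z$, which has dense range. The candidate inverse of $\theta_0$ is $\rho^*$, and the crucial point is $\rho^*\theta_0=\id$: unwinding the definitions reduces this to $\ip{L(e_\alpha)}{\mu}\to\ip{(L,R)}{\mu}$ for $\mu\in N$, i.e.\ that the bounded net $L(e_\alpha)$, which tends to $(L,R)$ strictly, also converges weak$^*$ in $(M(\mc A),N)$. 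I would settle this by extracting a weak$^*$-cluster point $\nu$ and showing $a\cdot\nu=a\cdot(L,R)$ and $\nu\cdot a=(L,R)\cdot a$ for all $a\in\mc A$ (multiplication by a fixed element is weak$^*$-continuous, and the strict limits are norm limits after multiplying by $a$), whence $\nu=(L,R)$ by faithfulness (Lemma~\ref{idealiser}); as this pins down every cluster point, the whole net converges weak$^*$. Density of $\rho$ makes $\rho^*$ injective, so $\theta_0$ is a bijection, and the norm estimates above make it isometric.

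The same construction yields the refinements. The identity $\ip{\theta_0(\nu)}{r(\mu)}=\ip{\nu}{\mu}$ shows that $\theta_0$ sends bounded weak$^*$-convergent nets of $(M(\mc A),N)$ to weak$^*$-convergent nets of $Z^*$, so by Lemma~\ref{weakstar_lemma} $\theta_0$ is weak$^*$-continuous; as $\theta_0^{-1}=\rho^*$ is an adjoint it is weak$^*$-continuous as well, so $\theta_0$ is a weak$^*$-homeomorphism and the topology furnished by (2) recovers the given one---hence every dual Banach algebra weak$^*$-topology on $M(\mc A)$ arises by the construction of (2). I expect the main obstacle to be precisely the weak$^*$-convergence step inside $\rho^*\theta_0=\id$: passing from strict convergence of $(L(e_\alpha))$ to weak$^*$-convergence against the \emph{given}, a priori unrelated, predual $N$, which is what forces the cluster-point-plus-faithfulness argument rather than a direct estimate.
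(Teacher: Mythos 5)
Your argument is correct, and it even lands on the same submodule $Z$ as the paper (the closure of $\iota^*(N)$ in $\wap(\mc A^*)$, where $\iota:\mc A\to M(\mc A)$ is the inclusion and $N$ the given predual), but the route to surjectivity and isometry of $\theta_0$ is genuinely different. The paper recycles the machinery of Theorem~\ref{mult_is_dba}: it builds a contraction $\phi:\mc A\proten N\oplus_1\mc A\proten N\to\wap(\mc A^*)$ whose image is dense in $Z$, computes that $\phi^*\theta_0$ is the canonical isometric isomorphism $M(\mc A)\to X^\perp$, and then deduces that $\tilde\phi$ and $\tilde\phi^*$ are isometric isomorphisms, whence $\theta_0$ surjects. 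The point of pairing only against elements of the form $a\cdot\iota^*(\mu)$ is that $\ip{L^{**}(\Phi_0)}{a\cdot\iota^*(\mu)}=\ip{\iota^*(\mu)}{L(a)}$ becomes a \emph{norm} limit, so the paper never has to confront weak$^*$-convergence of $(L(e_\alpha))$ against an arbitrary $\mu\in N$. You attack exactly that convergence: exhibiting $\rho^*=(\iota^*|_N)^*$ as a two-sided inverse of $\theta_0$ reduces to showing that the bounded, strictly convergent net $(L(e_\alpha))$ converges weak$^*$ to $(L,R)$ in $(M(\mc A),N)$, which your cluster-point-plus-faithfulness argument settles correctly (separate weak$^*$-continuity of the product pins down $a\cdot\nu$ and $\nu\cdot a$ for every cluster point $\nu$, Lemma~\ref{idealiser} forces $\nu=(L,R)$, and uniqueness of the cluster point in the weak$^*$-compact ball forces convergence of the whole net). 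Your version is more self-contained and makes the role of faithfulness explicit; the paper's version buys the isometry and surjectivity wholesale from Theorem~\ref{mult_is_dba}. Likewise for (2)$\Rightarrow$(1) you verify directly --- via weak compactness of $L_\mu$ for $\mu\in\wap(\mc A^*)$, so that $\Phi\mapsto\Phi\cdot\mu$ is weak$^*$-to-weak continuous and the norm-closed convex set $Z$ absorbs the Arens actions of $\theta_w(M(\mc A))$ --- what the paper outsources to a citation. The only two lines worth adding are that $\rho$, hence $\rho^*$, is a contraction (this is what upgrades $\rho^*\theta_0=\id$ together with $\|\theta_0\|\le1$ to the isometry claim, and uses that $\mc A\to M(\mc A)$ is isometric because the approximate identity is contractive), and an explicit appeal to the fact that $\theta_w$ is multiplicative, which the paper records just before Theorem~\ref{dba_bai_exten}.
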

\begin{proof}
If (\ref{mdc_two}) holds, then observe that by \cite[Proposition~2.4]{daws}, the quotient
map $\wap(\mc A^*)^*\rightarrow Z^*$ is an algebra homomorphism turning $Z^*$
into a dual Banach algebra.  Hence $\theta_0$ induces a dual Banach algebra
structure on $M(\mc A)$.

If (\ref{mdc_one}) holds, then choose a dual Banach algebra $(\mc B,\mc B_*)$
and an embedding $\iota:\mc A\rightarrow\mc B$ as in Theorem~\ref{mult_is_dba}.
Indeed, we can choose $\mc B=M(\mc A)$.  Let $\iota_*:\mc B_*\rightarrow
\mc A^*$ be the map given by $\ip{\iota_*(\mu)}{a} = \ip{\iota(a)}{\mu}$ for $a\in\mc A$
and $\mu\in\mc B_*$.  Following, for example, \cite[Theorem~4.10]{runde2}, it
is not hard to show that $\iota_*$ maps into $\wap(\mc A^*)$.  Then $\hat\iota =
(\iota_*)^* : \wap(\mc A^*)^* \rightarrow \mc B$ is a homomorphism which extends $\iota$.

Define $\phi:\mc A\proten B_* \oplus_1 \mc A\proten B_* \rightarrow \wap(\mc A^*)$ by
\[ \phi\big( (a\otimes\mu) \oplus (b\otimes\lambda) \big)
= a\cdot\iota_*(\mu) + \iota_*(\lambda)\cdot b \qquad (a,b\in\mc A,\mu,\lambda\in\mc B_*), \]
and linearity and continuity.  Then $\phi$ is a contraction.
Let $X \subseteq (\mc A\proten\mc B_*) \oplus_1 (\mc A\proten\mc B_*)$ be as in
the proof of Theorem~\ref{mult_is_dba}.  It follows easily that $\phi$ sends $X$
to $\{0\}$, and so we have an induced map $\tilde\phi : (\mc A\proten B_* \oplus_1
\mc A\proten B_*) / X \rightarrow \wap(\mc A^*)$.  Let $Z$ be the closure of the image
of this map.  It is easy to see that $Z$ is an $\mc A$-submodule of $\wap(\mc A^*)$,
so as above, $Z^*$ is a dual Banach algebra.  From the proof of Theorem~\ref{mult_is_dba}
it follows that $Z$ is simply the closure of the image of $\iota_*$.

Let $\theta_0:M(\mc A)\rightarrow Z^*$ be the composition of the map
$\theta_w:M(\mc A)\rightarrow\wap(\mc A^*)^*$ and
the quotient map $\wap(\mc A^*)^* \rightarrow Z^*$.  As $\mc A$ has a contractive
approximate identity, $\theta_0$ is a contraction.  Then, with reference to
Theorem~\ref{arens}, for $a,b\in\mc A,\mu,\lambda\in\mc B_*$ and $(L,R)\in M(\mc A)$,
\begin{align*} & \ip{\phi^*\theta_0(L,R)}{(a\otimes\mu) \oplus (b\otimes\lambda)}
= \ip{L^{**}(\Phi_0)}{a\cdot\iota_*(\mu) + \iota_*(\lambda)\cdot b} \\
&= \ip{\iota_*(\mu)}{L(a)} + \ip{\iota_*(\lambda)}{R(a)}
= \ip{(L,R)}{(a\otimes\mu) \oplus (b\otimes\lambda)}, \end{align*}
where the final dual pairing is as in the proof of Theorem~\ref{mult_is_dba}.
Hence $\phi^* \theta_0:M(\mc A)\rightarrow X^\perp$
is the canonical map, which is an isometric isomorphism.  Hence $\theta_0:M(\mc A)
\rightarrow Z^*$ must be an isometry, and we see that $\tilde\phi^*$ is an isometric
isomorphism between the image of $\theta_0$ and $X^\perp$.

It follows that $\tilde\phi$ is an isometry (and hence an isometric isomorphism
onto $Z$).  Indeed, for $\tau \in \mc A\proten B_* \oplus_1 \mc A\proten B_*$,
we can find $T\in X^\perp$ with $\|T\|=1$ and $\ip{T}{\tau} = \|\tau\|$, the norm
in the quotient $(\mc A\proten B_* \oplus_1 \mc A\proten B_*)/X$.  Then there exists
$\Phi\in Z^*$ in the image of $\theta_0$ with $\tilde\phi^*(\Phi) = T$ and $\|\Phi\|=1$.
Then $\|\tau\| = \ip{T}{\tau} = \ip{\Phi}{\tilde\phi(\tau)} \leq \|\tilde\phi(\tau)\|
\leq \|\tau\|$, so we must have equality throughout.

Hence $\tilde\phi^* : Z^* \rightarrow X^\perp$ is also an isometric isomorphism.
We conclude that $\theta_0:M(\mc A)\rightarrow Z^*$ must surject, and is hence
an isometric isomorphism, as required.
\end{proof}

\begin{example}
Consider $\mc A=L^1(G)$ for a locally compact group $G$.  Then $\wap(\mc A^*)$ can
be identified with a C$^*$-subalgebra of $C(G) \subseteq L^\infty(G)=\mc A^*$ which
contains $C_0(G)$.  Then $M(\mc A) = M(G)$, and the map $\tilde\theta$ is the
restriction of the canonical map $M(G) \rightarrow C(G)^*$ given by integration.
As $1\in\wap(\mc A^*)$, it is easy to see that $\theta_w$ is not weak$^*$-continuous.
Hence, in the previous theorem, we cannot in general take $Z$ to be all of $\wap(\mc A^*)$.
Indeed, in this case, we have $Z = C_0(G)$.
\end{example}

\begin{example}\label{ex::one}
Now consider the Fourier algebra $A(G)$.  Then $A(G)$ has a bounded approximate
identity if and only if $G$ is amenable, in which case it has a contractive approximate
identity, \cite{leptin}.  Then $B(G) = B_r(G)$, and $M(A(G)) = B(G) = B_r(G)$.
We have that $C^*_r(G) \subseteq \wap(VN(G))$ (see \cite{dr})
and so $M(A(G)) = C^*_r(G)^*$.  Hence $Z = C^*_r(G)$ in the above theorem.
\end{example}

So far, we have not discussed the ``non-isometric'' case.  That is, we have been considering
a Banach algebra $\mc A$ to be a dual Banach algebra if $\mc A$ is \emph{isometrically}
isomorphic to a dual space, such that the product is separately weak$^*$-continuous.
However, one can weaken this to just asking for $\mc A$ to be isomorphic to a
dual space (sometimes this gives the same notion of weak$^*$ topology, see for example
\cite[Section~4]{daws}).  For example, in Theorem~\ref{mult_is_dba},
we can weaken $\iota$ from being an isometry to being an isomorphism onto its range.
By following the proof through, we see that now $M(\mc A)$ is only isomorphic (but
not isometric) to $X^\perp$.  Similarly Theorem~\ref{dual_mult_wstar_top} also
works in this setting.  Then we can adapt the previous theorem to the case when
$\mc A$ only has a bounded, but maybe not contractive, approximate identity,
to find $Z \subseteq \wap(\mc A^*)$ with $M(\mc A)$ isomorphic to $Z^*$.

\section{Application to locally compact quantum groups}\label{lcqg}

We shall quickly sketch the theory of locally compact quantum groups, in the
sense of Kustermans and Vaes, \cite{kus,kus2,kus3}.  This is a very short
overview, but it is worth mentioning that actually we need remarkably little
of the theory in order to apply the work of the previous section.

In the von Neumann algebra setting, 
we have a von Neumann algebra $M$ together with a unital normal $*$-homomorphism
$\Delta:M\rightarrow M\vnten M$ which is coassociative in the sense that
$(\id\otimes\Delta)\Delta = (\Delta\otimes\id)\Delta$.  Furthermore, we 
have left and right invariant weights $\varphi, \psi$.  For a weight $\varphi$,
write $\mf n_\varphi = \{ x\in M : \varphi(x^*x)<\infty \}$,
$\mf m_\varphi = \mf n_\varphi^* \mf n_\varphi$ and $\mf p_\varphi = \mf m_\varphi \cap
M^+$ (see \cite{tak2} for further details, for example).  Then invariant means that
\[ \varphi((a\otimes\id)\Delta(x)) = \varphi(x) a(1), \quad
\psi((\id\otimes a)\Delta(y)) = \psi(y) a(1)
\qquad (x\in \mf p_\varphi, y\in\mf p_\psi, a\in M_*^+). \]

Let $(H,\pi,\Lambda)$ be the GNS construction for $\varphi$.  We shall identify
$M$ with $\pi(M) \subseteq \mc B(H)$.  Then $M$ is in \emph{standard form}
on $H$, see \cite[Chapter~IX, Section~1]{tak2}, and so, in particular, for each
$\omega\in M_*$, there exist $\xi,\eta\in H$ with $\omega=\omega_{\xi,\eta}$,
so that $\ip{x}{\omega} = (x(\xi)|\eta)$ for $x\in M$.
There is a \emph{multiplicative unitary}
$W\in\mc B(H\otimes H)$ such that $W^*(1\otimes x)W = \Delta(x)$ for $x\in M$.
Let $A$ be the norm closure of $\{ (\iota\otimes\omega)W : \omega\in \mc B(H)_*\}$.
Then $A$ is a C$^*$-algebra and $\Delta$ restricts to a map $A\rightarrow M(A\otimes A)$.
Furthermore, $\varphi$ and $\psi$ restrict to give KMS weights on $A$, see \cite{kus}.
Then $A$ is a \emph{reduced} C$^*$-algebraic locally compact quantum group.

As $\Delta$ is normal, its predual $\Delta_*$ induces a Banach algebra structure
on $M_*$.  Similarly, the adjoint of $\Delta$ induces a Banach algebra structure
on $A^*$.  As we have identified $A$ with a subalgebra of $M$, we see that we have
a natural map $M_* \rightarrow A^*$.  As discussed in \cite[Pages~913-4]{kus},
we define $L^1(A)$ to be the closed linear span of functionals $x \varphi y^*$,
where $x,y\in\mf n_\varphi$.  Here $\ip{x\varphi y^*}{z} = \ip{\varphi}{y^*zx}$
for $z\in A$, which makes sense as $\mf n_\varphi$ is a left ideal.  Then
\[ \varphi(y^*zx) = \big( z \Lambda(x) \big| \Lambda(y) \big)
= \ip{\omega}{z} \qquad (z\in A), \]
where $\omega = \omega_{\Lambda(x),\Lambda(y)} \in B(H)_*$.  An application
of Kaplansky's Density Theorem thus allows us to isometrically identify $M_*$ with
$L^1(A)$.  Then \cite[Pages~913-4]{kus} shows that $L^1(A)$ is an ideal in $A^*$
(compare with Lemma~\ref{ideal_in_uni_dual} below).
We also see that $L^1(A)$ norms $A$, and so $L^1(A)$ is weak$^*$-dense in $A^*$.

Also \cite[Proposition~4.2]{runde1} shows that $A^*$ is a dual Banach algebra.
Furthermore, \cite[Proposition~1]{nh} shows that $M_*$ is faithful.  As $\Delta$ is
a complete isometry, it follows that $\Delta_*$ is a complete quotient map, in
particular it is surjective, so certainly $\{\omega_1\omega_2:\omega_1,\omega_2\in
L^1(\G)\}$ is linearly dense in $L^1(\G)$.

We naturally have actions of $M$ on $M_*$ and of $A$ on $A^*$, which we shall
write by juxtaposition to avoid confusion with the actions of the Banach algebras
$M_*$ on $M$ and $A^*$ on $A$.
Finally, we shall follow \cite{nh,jnr,runde1} and use some notation due to Ruan.
We write $\G$ for a locally compact quantum group, and set $L^\infty(\G) = M,
L^1(\G) = M_*$, $C_0(\G) = A$ and $M(\G)=A^*$.

\begin{theorem}\label{lcqg_predual_mult_dba}
Let $\G$ be a locally compact quantum group.  Then $M(L^1(\G))$ is
a dual Banach algebra, and the resulting dual Banach algebra weak$^*$-topology
is unique such that the map $M(\G)\rightarrow M(L^1(\G))$ satisfies the
conditions of Theorem~\ref{dual_mult_wstar_top}.
\end{theorem}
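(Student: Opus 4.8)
The plan is to deduce both assertions simultaneously by applying Theorem~\ref{mult_is_dba} and Theorem~\ref{dual_mult_wstar_top} with $\mc A = L^1(\G)$ and $\mc B = M(\G)$. Most of the hypotheses are already to hand from the discussion above: products are linearly dense in $L^1(\G)$, as $\Delta_*$ is surjective; the pair $(M(\G),C_0(\G))$ is a dual Banach algebra by \cite[Proposition~4.2]{runde1}; the canonical map $\iota\colon L^1(\G)\to M(\G)$ is an isometric homomorphism, the isometry coming from the Kaplansky-density identification of $M_*$ with $L^1(A)$ and multiplicativity from the compatibility of the two coproducts; and $\iota(L^1(\G))=L^1(A)$ is an ideal in $M(\G)$ by \cite{kus}. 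The two hypotheses that remain are that this ideal is \emph{essential} and that the induced map $M(\G)\to M(L^1(\G))$ is injective.

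These reduce to a single statement. If $J\subseteq M(\G)$ is an ideal with $J\cap L^1(\G)=\{0\}$, then for $b\in J$ and $\omega\in L^1(\G)$ the products $b\omega$ and $\omega b$ (in $M(\G)$) lie in $J\cap L^1(\G)=\{0\}$, since $L^1(\G)$ is itself an ideal; thus essentiality follows as soon as we know that $b\omega=\omega b=0$ for all $\omega\in L^1(\G)$ forces $b=0$, and this is exactly the injectivity of $M(\G)\to M(L^1(\G))$. This injectivity is the only genuinely quantum-group-theoretic step, and is where I expect the main work to lie.

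To establish it, I would take $b\in M(\G)=A^*$ with $\omega b=0$ for every $\omega\in L^1(\G)=M_*$, the product being that of $M(\G)$. Expressing this product through the coproduct gives $\ip{\omega}{(\id\otimes b)\Delta(x)}=0$ for all $\omega\in M_*$ and $x\in A$. Since $(\id\otimes b)\Delta(x)$ lies in $M(A)\subseteq M$ and $M_*$ separates $M$, it follows that $(\id\otimes b)\Delta(x)=0$ for every $x\in A$. Slicing the elements $\Delta(x)(y\otimes 1)$ then gives
\[ (\id\otimes b)\big(\Delta(x)(y\otimes 1)\big)=\big((\id\otimes b)\Delta(x)\big)y=0 \qquad (x,y\in A), \]
and, as $\Delta(A)(A\otimes 1)$ is dense in $A\otimes A$ (a basic density property of a locally compact quantum group), the slice $(\id\otimes b)$ vanishes on $A\otimes A$; evaluating on simple tensors yields $b=0$. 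With injectivity, hence essentiality, secured, Theorem~\ref{mult_is_dba} supplies a dual Banach algebra weak$^*$-topology on $M(L^1(\G))$, and Theorem~\ref{dual_mult_wstar_top} shows it is the unique dual Banach algebra weak$^*$-topology for which $M(\G)\to M(L^1(\G))$ satisfies the stated net-convergence condition.
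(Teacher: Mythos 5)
Your proposal is correct and takes essentially the same route as the paper: everything reduces to the injectivity of the induced map $M(\G)\rightarrow M(L^1(\G))$, which in both arguments comes down to the Kustermans--Vaes density of $\lin\Delta(A)(A\otimes 1)$ in $A\otimes A$ (the paper uses the $\lin\Delta(A)(1\otimes A)$ version of \cite[Corollary~6.11]{kus}). The only differences are cosmetic --- you slice $\Delta(x)$ by $b$ and show the resulting slice map vanishes on a dense subspace of $A\otimes A$, whereas the paper dualises and shows that the elements $(\id\otimes a)\Delta(x)$ are linearly dense in $C_0(\G)$ so that $\mu_0$ must vanish --- together with your explicit (and correct) observation that essentiality of the ideal follows from injectivity, a point the paper leaves implicit.
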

\begin{proof}
We simply need to verify the conditions of Theorem~\ref{mult_is_dba}.
As discussed above, we naturally have a map $\iota:L^1(\G)\rightarrow M(\G)$
which is an isometric homomorphism with $\iota(L^1(\G))$ being an ideal.
So we need only verify that the induced map $M(\G) \rightarrow M(L^1(\G))$
is injective.  Suppose not, so that there exists $\mu_0\in M(\G)$ with
$\mu_0 a = 0$ for $a\in L^1(\G)$.  Observe that if instead $a\mu_0=0$ for $a\in L^1(\G)$,
then $a\mu_0 b=0$ for $a,b\in L^1(\G)$ so as $L^1(\G)$ is faithful, $\mu_0 b=0$ for
$b\in L^1(\G)$.  Consequently
\[ \ip{\mu_0 \otimes a}{\Delta(x)} = 0 \qquad (a\in L^1(\G), x\in C_0(\G)). \]
However, we claim that $\{ (\id\otimes a)\Delta(x) : a\in L^1(\G), x\in C_0(\G) \}$
is linearly dense in $C_0(\G)$, from which it follows that $\mu_0=0$, as required.

To prove the claim, we first note that \cite[Corollary~6.11]{kus} shows that
$\{ \Delta(x)(1\otimes y) : x,y\in C_0(\G) \}$ is linearly dense in $C_0(\G)\otimes C_0(\G)$.
By taking the adjoint, and using that $L^1(\G)$ is weak$^*$-dense in $C_0(\G)^*$,
it follows that
\[ \{ (\id\otimes ya)\Delta(x) : x,y\in C_0(\G), a\in L^1(\G) \} \]
is linearly dense in $C_0(\G)$.  As $ya\in L^1(\G)$ for $a\in L^1(\G)$ and $y\in C_0(\G)
\subseteq L^\infty(\G)$, it follows immediately that
$\{ (\id\otimes a)\Delta(x) : a\in L^1(\G), x\in C_0(\G) \}$
is linearly dense in $C_0(\G)$, as required.
\end{proof}

Let us return to the example of the Fourier algebra, Example~\ref{fourier_example_one}.
We used the embedding $A(G)\rightarrow C^*(G)^* = B(G)$, which seemed natural
in light of \cite{eymard} (where the Fourier algebra is basically defined to be a
certain ideal in $B(G)$).  However, the above theorem considers the reduced setting,
which means in this case considering $C^*_r(G)$ and hence $A(G)\rightarrow B_r(G)$.
As mentioned above in Example~\ref{ex::one} we have that $B_r(G) = B(G)$ only when $G$
is amenable.  We shall now show that using $B_r(G)$ does indeed give the same
weak$^*$-topology on $M(A(G))$.  Indeed, we shall show the quantum version of this.

Firstly, we need to say what the quantum analogue of $B(G)$ is.
In \cite[Section~11]{kus1} Kustermanns gives the notion of a \emph{universal
C$^*$-algebraic quantum group}; let us very quickly sketch this.
A \emph{C$^*$-algebraic quantum group} is essentially
as described above, but without assuming that the left and right invariant
weights are faithful.  Given such an object, we can form a $*$-algebra $\mc A$,
called the \emph{coefficient $*$-algebra of $A$}.  Given such an $\mc A$, we can
form a maximal C$^*$-algebra $A_u$ which contains $\mc A$ densely.  $A_u$ can
be given the structure of a C$^*$-algebra quantum group, that is, $\Delta_u:
A_u\rightarrow M(A_u\otimes A_u)$ and left and right invariant weights $\varphi_u,
\psi_u$.  We call $A_u$ the \emph{universal enveloping algebra of $\mc A$}.
Then we can find a surjective $*$-homomorphism $\pi:A_u\rightarrow A$ with
\[ \Delta\pi = (\pi\otimes\pi)\Delta_u, \quad
\varphi \pi = \varphi_u, \quad \psi\pi = \psi_u. \]
All of this generalises the passage of $C^*_r(G)$ to $C^*(G)$ and back again.

From now on, fix a locally compact quantum group $\G$, let $A=C_0(\G)$, and let $A_u$
be the universal C$^*$-algebraic quantum group associated with $A$.  As
$\pi:A_u\rightarrow A$ is a surjective $*$-homomorphism, it is a quotient map,
and so $\pi^*:A^*\rightarrow A_u^*$ is an isometry onto its range.
As $\Delta\pi = (\pi\otimes\pi)\Delta_u$, it follows that $\pi^*$ is a homomorphism
between the Banach algebras $A^*$ and $A_u^*$.  As $L^1(\G)$ is identified with
a closed ideal of $A^*$, we identify $L^1(\G)$ as a closed subalgebra of $A_u^*$.
Let $\iota:L^1(\G)\rightarrow A_u^*$ be the map thus constructed.

\begin{lemma}
With notation as above, $(A_u,A_u^*)$ is a dual Banach algebra.
\end{lemma}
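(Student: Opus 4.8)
The plan is to exhibit $A_u$ itself as a predual making the Banach algebra $A_u^*$ a dual Banach algebra (so that, in the convention of Section~\ref{dba_section}, the algebra is $A_u^*$ with predual $A_u$). Recall that the product on $A_u^*$ is $\ip{\mu\nu}{x} = \ip{\mu\otimes\nu}{\Delta_u(x)}$ for $\mu,\nu\in A_u^*$ and $x\in A_u$, where we use that each functional on $A_u\otimes A_u$ extends uniquely to a strictly continuous functional on $M(A_u\otimes A_u)$, so that $\mu\otimes\nu$ does pair with $\Delta_u(x)$. Equipping $A_u^*$ with the weak$^*$-topology $\sigma(A_u^*,A_u)$, the task is to show multiplication is separately weak$^*$-continuous. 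First I would rewrite the two one-sided multiplications using slice maps: for fixed $\nu$ we have $\ip{\mu\nu}{x} = \ip{\mu}{(\id\otimes\nu)\Delta_u(x)}$, while for fixed $\mu$ we have $\ip{\mu\nu}{x} = \ip{\nu}{(\mu\otimes\id)\Delta_u(x)}$. Hence right multiplication by $\nu$ is the adjoint of $x\mapsto(\id\otimes\nu)\Delta_u(x)$, and left multiplication by $\mu$ is the adjoint of $x\mapsto(\mu\otimes\id)\Delta_u(x)$; each is therefore weak$^*$-continuous \emph{provided} these slice maps carry $A_u$ into $A_u$, so that they are genuine preadjoints. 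This reduction is exactly the statement that $A_u$, viewed inside $A_u^{**}$, is an $A_u^*$-submodule, which is the criterion for a dual Banach algebra recalled in Section~\ref{dba_section}.

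The heart of the matter is thus to prove that $(\mu\otimes\id)\Delta_u(x)\in A_u$ and $(\id\otimes\mu)\Delta_u(x)\in A_u$ for every $\mu\in A_u^*$ and $x\in A_u$. I would establish these exactly as in the reduced case (compare \cite[Proposition~4.2]{runde1}), using the density (cancellation) conditions for $\Delta_u$ supplied by \cite{kus1}, namely that $\Delta_u(a)(b\otimes 1)$ and $\Delta_u(a)(1\otimes b)$ lie in $A_u\otimes A_u$ for $a,b\in A_u$. Granting this, slicing the first leg of $\Delta_u(x)(c\otimes 1)\in A_u\otimes A_u$ yields $(\mu\otimes\id)\big(\Delta_u(x)(c\otimes1)\big) = \big((c\cdot\mu)\otimes\id\big)\Delta_u(x)\in A_u$, where $(c\cdot\mu)(t)=\mu(tc)$; thus the slice lands in $A_u$ at least for every $\mu$ in the dense module $A_u\cdot A_u^*$. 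Since the slice map $\mu\mapsto(\mu\otimes\id)\Delta_u(x)$ is bounded, by $\|x\|$, into $M(A_u)$, since $A_u$ is closed in $M(A_u)$, and since $A_u\cdot A_u^*$ is norm-dense in $A_u^*$ (a standard fact for C$^*$-algebras), it follows that $(\mu\otimes\id)\Delta_u(x)\in A_u$ for all $\mu$. The symmetric computation, slicing the second leg of $\Delta_u(x)(1\otimes c)$, handles $(\id\otimes\mu)\Delta_u(x)$.

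Assembling these facts, both one-sided multiplications on $A_u^*$ are weak$^*$-continuous for $\sigma(A_u^*,A_u)$, so $A_u^*$ is a dual Banach algebra with predual $A_u$. I expect the only genuine obstacle to be the opening step of the previous paragraph: confirming that the requisite cancellation conditions $\Delta_u(A_u)(A_u\otimes 1),\,\Delta_u(A_u)(1\otimes A_u)\subseteq A_u\otimes A_u$ really are available in the universal setting, since these are what let a single-leg factor be absorbed so that a slice lands in $A_u$ rather than merely in $M(A_u)$. They hold automatically for the reduced quantum group, and I would need to locate (or adapt) the corresponding statement for $A_u$ in \cite{kus1}; everything else is routine slicing, boundedness and density, with no appeal to the invariant weights or the antipode.
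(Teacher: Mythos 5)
Your argument is correct and is essentially the paper's own proof written out in full: the paper simply says to adapt the proof of \cite[Proposition~4.3]{runde1}, reducing everything to the fact that $\Delta_u(x)(1\otimes y)\in A_u\otimes A_u$ (and its companion on the other leg), which is exactly the cancellation property you flag as the one point needing verification, and which is indeed supplied by \cite[Proposition~6.1]{kus1}. Your slicing-plus-density completion of the reduction is the standard adaptation and contains no gap.
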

\begin{proof}
We adapt the proof of \cite[Proposition~4.3]{runde1}.  Indeed, it is enough
to show that $\Delta_u(x)(1\otimes y)\in A_u\otimes A_u$ for $x,y\in A_u$, which
follows from \cite[Proposition~6.1]{kus1}.
\end{proof}

\begin{proposition}\label{ideal_in_uni_dual}
With notation as above, $L^1(\G)$ is an ideal in $A_u^*$.  Furthermore,
the induced map $A_u^* \rightarrow M(L^1(\G))$ is injective.
\end{proposition}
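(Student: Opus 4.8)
The two assertions should be proved by transporting the argument of Theorem~\ref{lcqg_predual_mult_dba} to the universal level, the reducing morphism $\pi\colon A_u\rightarrow C_0(\G)$ being the only new feature. Write $\iota\omega=\omega\circ\pi$ for $\omega\in L^1(\G)$; since $\pi^*$ is an isometry onto its range and $L^1(\G)\hookrightarrow M(\G)$ is isometric, $\iota$ is an isometry. For $\mu\in A_u^*$ and $\omega\in L^1(\G)$ one has, directly from the definition of the product on $A_u^*$,
\[ \ip{\mu\cdot\iota\omega}{y}=\ip{\mu}{(\id\otimes\iota\omega)\Delta_u(y)}, \qquad \ip{\iota\omega\cdot\mu}{y}=\ip{\mu}{(\iota\omega\otimes\id)\Delta_u(y)} \qquad (y\in A_u), \]
so everything is governed by the slice operators $R_\omega(y)=(\id\otimes\iota\omega)\Delta_u(y)$ and $L_\omega(y)=(\iota\omega\otimes\id)\Delta_u(y)$.

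For the ideal property I must show $\mu\cdot\iota\omega,\ \iota\omega\cdot\mu\in\iota(L^1(\G))$, i.e. that $\mu\circ R_\omega$ and $\mu\circ L_\omega$ factor through $\pi$ as \emph{normal} functionals. This is the exact universal analogue of the fact that $L^1(\G)$ is an ideal in $M(\G)$, proved in the reduced setting in \cite[pages~913--4]{kus}, and I would mirror that proof using the containments $\Delta_u(A_u)(1\otimes A_u)\subseteq A_u\otimes A_u$ and $(A_u\otimes1)\Delta_u(A_u)\subseteq A_u\otimes A_u$ from \cite[Proposition~6.1]{kus1}, together with $(\pi\otimes\pi)\Delta_u=\Delta\pi$. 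The hard part is exactly this step: one must know that right (resp.\ left) multiplication by a \emph{reduced} functional $\iota\omega$ carries the whole of $A_u^*$ back into the normal functionals $\iota(L^1(\G))$. Just as, classically, the fact that $A(G)$ is an ideal in $B(G)$ rests on Fell's absorption $\sigma\otimes\lambda\cong\lambda^{(\infty)}$, the general statement rests on the absorption property of the regular corepresentation encoded in the semi-universal multiplicative unitaries of \cite{kus1}; this is the genuine obstacle, and the remainder of the ideal argument is then bookkeeping. Granting it, $\iota(L^1(\G))$ is a two-sided ideal, and it is essential: if $b\in A_u^*$ satisfies $\iota\omega\cdot b\cdot\iota\omega'=0$ for all $\omega,\omega'\in L^1(\G)$, then, as $L^1(\G)$ is faithful, $b\cdot\iota\omega'=0$ for every $\omega'$, and hence $b=0$ by the injectivity established below.

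Injectivity of the induced map $A_u^*\rightarrow M(L^1(\G))$ is the clean part, and follows the proof of Theorem~\ref{lcqg_predual_mult_dba}. If $\mu$ induces the zero multiplier then in particular $\mu\cdot\iota\omega=0$, that is $\ip{\mu}{(\id\otimes\iota\omega)\Delta_u(y)}=0$, for every $y\in A_u$ and $\omega\in L^1(\G)$; so it suffices to show that $\{(\id\otimes\iota\omega)\Delta_u(y):\omega\in L^1(\G),\ y\in A_u\}$ is linearly dense in $A_u$. Starting from the density of $\Delta_u(A_u)(1\otimes A_u)$ in $A_u\otimes A_u$ (\cite[Proposition~6.1]{kus1}) and applying the quotient map $\id\otimes\pi$, I obtain that $(\id\otimes\pi)\Delta_u(A_u)(1\otimes C_0(\G))$ is dense in $A_u\otimes C_0(\G)$. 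Now slice the second leg by $L^1(\G)$: since $L^1(\G)$ norms $C_0(\G)$, slicing a dense subset of $A_u\otimes C_0(\G)$ by $L^1(\G)$ gives a dense subset of $A_u$, and the identity
\[ (\id\otimes\omega)\big[(\id\otimes\pi)\Delta_u(y)(1\otimes a)\big]=(\id\otimes\iota(\omega\cdot a))\Delta_u(y) \qquad (a\in C_0(\G),\ \omega\in L^1(\G)) \]
exhibits these slices as elements of the set above (note that $\omega\cdot a\in L^1(\G)$). Hence $\mu=0$, which in fact uses only the left-hand half of the zero-multiplier condition.
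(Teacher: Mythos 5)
Your injectivity argument is sound and is essentially the paper's own: both reduce the claim to linear density in $A_u$ of the slices $(\id\otimes\iota\omega)\Delta_u(y)$, and both obtain this from the density statement of \cite[Proposition~6.1]{kus1} (you use $\Delta_u(A_u)(1\otimes A_u)$ and apply $\id\otimes\pi$ before slicing, the paper uses $(1\otimes A_u)\Delta_u(A_u)$ and the identity $\iota(a)z=\iota(a\pi(z))$ directly; these differences are immaterial).

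The ideal half, however, has a genuine gap, and it is exactly the step you label ``the genuine obstacle'' and then assume: that multiplying $\mu\in A_u^*$ by a reduced functional $\iota\omega$ lands back in $\iota(L^1(\G))$. This is the actual mathematical content of the proposition, and the tools you list --- the containments $\Delta_u(A_u)(1\otimes A_u)\subseteq A_u\otimes A_u$ and the relation $(\pi\otimes\pi)\Delta_u=\Delta\pi$ --- cannot deliver it, because the issue is not algebraic: one must decide which functionals on $A_u$ factor through $\pi$ as \emph{normal} functionals, and coproduct manipulations alone say nothing about normality. The paper carries out the absorption argument concretely: take $\mu=\omega$ a state on $A_u$ with cyclic GNS triple $(K,\theta,\xi)$, take $a=\omega_{\Lambda(x),\Lambda(y)}$ with $x,y\in\mf n_\varphi$, and invoke \cite[Propositions~5.1 and~6.2]{kus1} to get the semi-universal unitary $\mc V\in M(A_u\otimes\mc B_0(H))$ with $(\id\otimes\pi)\Delta_u(z)=\mc V^*(1\otimes\pi(z))\mc V$; then, with $P=(\theta\otimes\id)(\mc V)$,
\[ \ip{\omega\,\iota(a)}{z}=\big(P^*(1\otimes\pi(z))P(\xi\otimes\Lambda(x))\,\big|\,\xi\otimes\Lambda(y)\big)\qquad(z\in A_u), \]
so $\omega\,\iota(a)$ is a vector functional in $\mc B(H)_*$ composed with $\pi$, i.e.\ lies in $\iota(L^1(\G))$. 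Moreover, even granting this, your claim that the rest is ``bookkeeping'' is not right: the computation above gives only the \emph{left} ideal property, and the other side requires a separate structural input, namely the universal unitary antipode $R_u$ of \cite[Proposition~7.2]{kus1}, which satisfies $\pi R_u=R\pi$ and flips the coproduct, so that $R_u^*$ is an anti-homomorphism of $A_u^*$ preserving $\iota(L^1(\G))$. So your proposal points at the right circle of ideas (Fell-type absorption, the semi-universal multiplicative unitary), but defers precisely the two steps that constitute the proof.
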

\begin{proof}
We adapt the argument given in \cite[page~914]{kus}.  Let $\omega\in A_u^*$ be
a state, and let $(K,\theta,\xi)$ be a cyclic GNS construction for $\omega$.
Let $x,y\in \mf n_\varphi$ and let $a = \omega_{\Lambda(x),\Lambda(y)}\in L^1(\G)$.
Let $H=L^2(\G)$, and recall that we identify $A$ with a subalgebra of $\mc B(H)$.
Then $\ip{\iota(a)}{z} = ( \pi(z) \Lambda(x) | \Lambda(y) )$ for $z\in A_u$.
Let $\mc B_0(H)$ be the compact operators on $H$, and recall that $M(\mc B_0(H)) = \mc B(H)$.
From \cite[Proposition~5.1]{kus1} and \cite[Proposition~6.2]{kus1}, there exists
$\mc V \in M(A_u\otimes\mc B_0(H))$ such that $(\id\otimes\pi)\Delta_u(z) =
\mc V^*(1\otimes\pi(z))\mc V$ in $M(A_u\otimes\mc B_0(H))$, for $z\in A_u$.  Let
$P = (\theta\otimes\id)(\mc V) \in M(\mc B(K)\otimes \mc B_0(H)) \subseteq \mc B(K\otimes H)$.
Then, for $z\in A_u$,
\begin{align*} \ip{\omega\iota(a)}{z} &= \ip{\omega\otimes\iota(a)}{\Delta(z)}
= \ip{\omega}{(\id\otimes a)\big((\id\otimes\pi)\Delta_u(z)\big)} \\
&= \ip{\omega}{(\id\otimes\omega_{\Lambda(x),\Lambda(y)}) \mc V^*(1\otimes\pi(z))\mc V} \\
&= \big( P^* (1\otimes\pi(z)) P (\xi\otimes\Lambda(x)) \big| \xi\otimes\Lambda(y) \big),
\end{align*}
from which it follows that there exists $\omega_0\in \mc B(H)_*$ with
$\ip{\omega\iota(a)}{z} = \ip{\pi(z)}{\omega_0}$.  It is now immediate that
$L^1(\G)$ is a left ideal in $A_u^*$.

Then \cite[Proposition~7.2]{kus1} shows the
existence of an anti-$*$-automorphism $R_u$ of $A_u$ with $\pi R_u = R \pi$,
with $R$ being the unitary antipode on $A$.  As $\chi(R_u\otimes R_u)\Delta_u =
\Delta_u R_u$, it follows that $R_u^*$ is an anti-homomorphism on $A_u^*$, and as
$R$ leaves $L^1(\G)$ invariant, the same is true for $R_u$.  Hence $L^1(\G)$ is
also a right ideal in $A_u^*$, and hence an ideal, as claimed.

To show that the map $A_u^*\rightarrow M(L^1(\G))$ is injective, as in the proof
of Theorem~\ref{lcqg_predual_mult_dba}, it is enough to show that
$\{ (\id\otimes\iota(a))\Delta_u(z) : a\in L^1(\G), z\in A_u \}$ is linearly
dense in $A_u$.  By \cite[Proposition~6.1]{kus1}, we have that
$\{ (1\otimes z_1)\Delta_u(z_2) : z_1,z_2 \in A_u \}$ is linearly dense in
$A_u\otimes A_u$.  As $\iota(a)z = \iota(a\pi(z))$ for $a\in L^1(\G), z\in A_u$,
it follows that
\[ \lin\{ \big(\id\otimes \iota(a\pi(z))\big)\Delta_u(w) : z,w\in A_u, a\in L^1(\G) \} \]
is dense in $A_u$, which completes the proof.
\end{proof}

\begin{theorem}
With notation as above, we use the inclusion $\iota:L^1(\G)\rightarrow A_u^*$
to induce a weak$^*$-topology on $M(L^1(\G))$.  This topology agrees with that
given by Theorem~\ref{lcqg_predual_mult_dba}, that is, when using $A^*$.
\end{theorem}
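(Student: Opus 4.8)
The plan is to exploit the uniqueness clause of Theorem~\ref{dual_mult_wstar_top}. Write $\sigma_A$ for the weak$^*$-topology on $M(L^1(\G))$ produced by Theorem~\ref{lcqg_predual_mult_dba} (that is, by applying Theorem~\ref{mult_is_dba} with $\mc B = A^* = M(\G)$) and $\sigma_u$ for the one produced by applying the same theorem to $\iota:L^1(\G)\to A_u^*$. Both make $M(L^1(\G))$ a dual Banach algebra, and by Theorem~\ref{dual_mult_wstar_top} the topology $\sigma_A$ is the \emph{unique} dual Banach algebra topology for which the induced map $\theta_A:A^*\to M(L^1(\G))$ satisfies the bounded-net condition of that theorem. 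I would therefore aim to show that $\sigma_u$ also satisfies this condition with respect to $A^*$ and $\theta_A$; uniqueness then forces $\sigma_u=\sigma_A$.

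First I would record two compatibility facts. Since $\pi:A_u\to A$ is a surjective $*$-homomorphism, $\pi^*:A^*\to A_u^*$ is a weak$^*$-continuous homomorphism whose preadjoint $\pi$ has dense range; hence by Lemma~\ref{weakstarlemma} a bounded net $(b_\alpha)$ in $A^*$ satisfies $b_\alpha\to b$ weak$^*$ in $A^*$ if and only if $\pi^*(b_\alpha)\to\pi^*(b)$ weak$^*$ in $A_u^*$. Secondly, since $\iota=\pi^*|_{L^1(\G)}$ and $\pi^*$ is a homomorphism, for $\mu\in A^*$ and $a\in L^1(\G)$ we have $\pi^*(\mu)\cdot\iota(a)=\iota(\mu\cdot a)$ and $\iota(a)\cdot\pi^*(\mu)=\iota(a\cdot\mu)$ inside $A_u^*$; under the identification of $L^1(\G)$ with its image, this says exactly that the induced multiplier maps are compatible, namely $\theta_u\circ\pi^*=\theta_A$, where $\theta_u:A_u^*\to M(L^1(\G))$ is the map of Proposition~\ref{ideal_in_uni_dual}.

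With these in hand the argument is a short chain of equivalences for a bounded net $(b_\alpha)$ in $A^*$. Using the two facts above together with the net-condition enjoyed by $\sigma_u$ (which holds by the first part of Theorem~\ref{dual_mult_wstar_top}, applied to $\mc B=A_u^*$), one has
\[ b_\alpha\to b \text{ weak}^*\text{ in } A^* \iff \pi^*(b_\alpha)\to\pi^*(b)\text{ weak}^*\text{ in } A_u^* \iff \theta_u(\pi^*(b_\alpha))\to\theta_u(\pi^*(b)) \text{ in } \sigma_u, \]
and the right-hand side is $\theta_A(b_\alpha)\to\theta_A(b)$ in $\sigma_u$ by compatibility; note that $(\pi^*(b_\alpha))$ remains bounded, so the middle equivalence is legitimate. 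Thus $\sigma_u$ is a dual Banach algebra topology satisfying the defining condition of Theorem~\ref{dual_mult_wstar_top} relative to $A^*$, and the uniqueness clause of that theorem gives $\sigma_u=\sigma_A$.

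I expect the only genuinely delicate point to be the book-keeping in the compatibility identity $\theta_u\circ\pi^*=\theta_A$: one must check that the two embeddings of $L^1(\G)$ (into $A^*$ and into $A_u^*$) and the two ideal structures are matched by $\pi^*$, so that multiplying within $A^*$ and then transporting by $\pi^*$ agrees with multiplying within $A_u^*$. This is exactly where the fact that $\pi^*$ is an algebra homomorphism restricting to the chosen identification on $L^1(\G)$ is used, and it is the hinge on which the whole argument turns. Everything else is a formal consequence of Lemma~\ref{weakstarlemma} and the uniqueness in Theorem~\ref{dual_mult_wstar_top}.
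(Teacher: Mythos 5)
Your proof is correct, but it reaches the conclusion by a different route from the paper's. The paper argues directly at the level of the defining dual pairings: the weak$^*$-topology built from $\mc B=A_u^*$ is determined by the functionals $\hat x\mapsto \ip{\iota L_{\hat x}(a)}{x}+\ip{\iota R_{\hat x}(b)}{y}$ with $x,y\in A_u$, and since $\iota=\pi^*\iota_A$ these equal $\ip{\iota_A L_{\hat x}(a)}{\pi(x)}+\ip{\iota_A R_{\hat x}(b)}{\pi(y)}$; surjectivity of $\pi$ then shows this is exactly the family of functionals defining the topology built from $A^*$, so the two preduals, and hence the two topologies, are identified outright. You instead route everything through the uniqueness clause of Theorem~\ref{dual_mult_wstar_top}: you check that the $A_u^*$-topology also satisfies the bounded-net condition relative to $\theta_A:A^*\rightarrow M(L^1(\G))$, using Lemma~\ref{weakstarlemma} applied to $\pi^*$ (weak$^*$-continuous with preadjoint $\pi$ surjective) together with the compatibility $\theta_u\circ\pi^*=\theta_A$, and then let uniqueness finish the argument. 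Both proofs hinge on the same two facts --- that $\iota$ factors through $\pi^*$ and that $\pi$ is surjective --- and your compatibility identity holds for exactly the reason you give, namely that $\pi^*$ is an algebra homomorphism matching the two copies of $L^1(\G)$. What your approach buys is a more structural argument that reuses machinery already established and would apply verbatim to any other dual Banach algebra $\mc B$ through which $\iota$ factors compatibly; what it costs is that you only ever test against bounded nets and must separately note that $\sigma_u$ is a dual Banach algebra topology (which it is, by Theorem~\ref{mult_is_dba} applied to $A_u^*$), whereas the paper's computation is shorter and exhibits the equality of preduals directly.
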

\begin{proof}
This follows essentially because $\iota$ factors through $\pi^*$.  Indeed,
let $\iota_A:L^1(\G)\rightarrow A^*$ be the map considered in
Theorem~\ref{lcqg_predual_mult_dba}, and recall the construction in
Theorem~\ref{dual_mult_wstar_top}.  We have that a net $(L_\alpha,R_\alpha)$
in $M(L^1(\G))$ converges weak$^*$ to $(L,R)$ when 
\[ \lim_\alpha \ip{\iota L_\alpha(a)}{x} + \ip{\iota R_\alpha(b)}{y}
= \ip{\iota L(a)}{x} + \ip{\iota R(b)}{y} \qquad (a,b\in L^1(\G), x,y\in A_u). \]
As $\iota = \pi^* \iota_A$, this is equivalent to
\[ \lim_\alpha \ip{\iota_A L_\alpha(a)}{\pi(x)} + \ip{\iota_A R_\alpha(b)}{\pi(y)}
= \ip{\iota_A L(a)}{\pi(x)} + \ip{\iota_A R(b)}{\pi(y)}. \]
As $\pi$ is surjective, this is equivalent to $(L_\alpha,R_\alpha)$
converging weak$^*$ to $(L,R)$ in the weak$^*$-topology induced by $\iota_A$.
\end{proof}

Given that we have introduced $A_u$, now seems a good place to apply some
of the results from section~\ref{sec::bai}.

\begin{theorem}
The algebra $L^1(\G)$ has a bounded approximate identity if and only if the
natural map $M(\G)\rightarrow M(L^1(\G))$ is an isomorphism.
When $L^1(\G)$ has a bounded approximate identity,
both $M(\G)$ and $A_u^*$ are isometrically isomorphic to $M(L^1(\G))$.
\end{theorem}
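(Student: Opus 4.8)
The plan is to treat the two implications separately, and then read off the isometric identifications from the reverse implication together with the factorisation of the canonical maps through $A_u^*$. Throughout, write $j_1\colon M(\G)\to M(L^1(\G))$ for the natural map, $p=\pi^*\colon M(\G)\to A_u^*$ (an isometric homomorphism), and $j_2\colon A_u^*\to M(L^1(\G))$ for the map of Proposition~\ref{ideal_in_uni_dual}, so that $j_1=j_2\circ p$.

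\emph{Forward direction.} Suppose $L^1(\G)$ has a bounded approximate identity. I would apply Theorem~\ref{submod_rep_thm} with $\mc A=E=L^1(\G)$ and $F=C_0(\G)$, viewed as a closed submodule of $L^\infty(\G)=L^1(\G)^*$; then $F^*=M(\G)$ and $\iota_F$ is exactly the canonical isometric embedding $\iota\colon L^1(\G)\to M(\G)$. The density of $\{(\id\otimes a)\Delta(x):a\in L^1(\G),x\in C_0(\G)\}$ in $C_0(\G)$, established in the proof of Theorem~\ref{lcqg_predual_mult_dba}, shows that $F$ satisfies the one-sided density hypothesis, so the theorem applies. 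Since $\iota_F=\iota$ is isometric, parts (2), (3) and (4) give that $\theta_F\colon M(L^1(\G))\to M(\G)$ is injective, bounded below, and has image equal to the idealiser of $L^1(\G)$ in $M(\G)$. As $L^1(\G)$ is an \emph{ideal} in $M(\G)$, this idealiser is all of $M(\G)$, so $\theta_F$ is a bounded isomorphism; a direct check gives $\theta_F\circ j_1=\id$, whence $j_1$ is an isomorphism. When the approximate identity is contractive one checks, using $\|\Phi_0\|\leq1$ as in the proof of Theorem~\ref{arens}, that $\theta_F$ (and hence $j_1$) is isometric.

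\emph{Reverse direction.} Suppose $j_1$ is an isomorphism. Surjectivity of $j_1$ and injectivity of $j_2$ force $p$ to be surjective, hence (being always isometric) an isometric isomorphism; equivalently $\pi\colon A_u\to C_0(\G)$ is an isomorphism, so $M(\G)=C_0(\G)^*$ is unital, its identity being a bounded counit. This is one of the standard equivalent formulations of co-amenability, and I would invoke the corresponding theorem of B\'edos--Tuset to conclude that $L^1(\G)$ admits a (contractive) bounded approximate identity. This last step is the main obstacle: passing from a bounded counit back to a genuine norm-bounded approximate identity cannot be done by the weak$^*$-density and dual-algebra arguments alone, since weak$^*$-convergence in $M(\G)=C_0(\G)^*$ is strictly weaker than weak convergence in $L^1(\G)=L^\infty(\G)_*$ (for general $x\in L^\infty(\G)$ one has $(\id\otimes a)\Delta(x)\notin C_0(\G)$), and it is exactly here that co-amenability is needed.

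\emph{Isometric identifications.} When $L^1(\G)$ has a bounded approximate identity, the forward direction makes $j_1$ an isomorphism, and since co-amenability furnishes a contractive approximate identity it is moreover isometric. Finally, from $j_1=j_2\circ p$ with $p$ isometric and $j_2$ contractive, the isometry and surjectivity of $j_1$ force $j_2$ to be isometric on the range of $p$; as $p$ is onto, both $p$ and $j_2$ are isometric isomorphisms. Hence $A_u^*$, $M(\G)$ and $M(L^1(\G))$ are mutually isometrically isomorphic, as claimed.
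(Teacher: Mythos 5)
Your proof is correct, and the forward implication is essentially the paper's own argument: apply Theorem~\ref{submod_rep_thm} with $E=L^1(\G)$ and $F=C_0(\G)$, use that $\iota_F$ is an isometry and that $L^1(\G)$ is an ideal in $M(\G)$ (so the idealiser is everything), and upgrade to an isometry via the contractive approximate identity supplied by \cite{nh}. (The paper verifies the density hypothesis on $F$ via $C_0(\G)\subseteq\wap(L^1(\G)^*)$ from \cite{runde1} and Lemma~\ref{dba_lemma}, where you reuse the density claim from the proof of Theorem~\ref{lcqg_predual_mult_dba}; both work.) The genuine divergences are in the soft parts. For the reverse implication the paper simply notes that $M(L^1(\G))$ is tautologically unital, so an isomorphism onto it forces $M(\G)$ to be unital, and then quotes B\'edos--Tuset; your route through surjectivity of $p=\pi^*$, injectivity of $\pi$, and the bounded counit on $A_u$ reaches the same intermediate statement but imports an extra fact from \cite{kus1} (existence of the universal counit) that can be avoided. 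Your diagnosis that the only non-formal step is the B\'edos--Tuset passage from a unit in $M(\G)$ to a norm-bounded approximate identity in $L^1(\G)$ matches the paper exactly. Finally, for $A_u^*$ the paper reruns the Theorem~\ref{submod_rep_thm} argument using Proposition~\ref{ideal_in_uni_dual}, whereas you deduce the isometric identification formally from the factorisation $j_1=j_2\circ p$ with $j_2$ injective and contractive and $p$ isometric onto its range; your version is slightly cleaner and equally valid.
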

\begin{proof}
By  \cite[Theorem~3.1]{bt}, we know that $L^1(\G)$ has a bounded approximate identity
if and only if $M(\G)$ is unital.  So, if the map $M(\G) \rightarrow M(L^1(\G))$
is surjective, then $M(\G)$ is unital, and so $L^1(\G)$ has a bounded approximate
identity.

If $L^1(\G)$ has a bounded approximate identity, then \cite[Theorem~2]{nh} shows that
we can even choose the bounded approximate identity to be contractive, and to be a net
of states in $L^1(\G)$.  
By \cite[Theorem~4.4]{runde1}, we have that $C_0(\G) \subseteq \wap(L^1(\G))$.
So by Lemma~\ref{dba_lemma}, we can apply Theorem~\ref{submod_rep_thm} with
$F=C_0(\G)\subseteq L^1(\G)^*$.  Then $\iota_F:L^1(\G)\rightarrow M(\G)$ is
an isometry, and so $\theta_F:M(L^1(\G)) \rightarrow M(\G)$ is an 
isometry, whose range is the idealiser of $L^1(\G)$ in $M(\G)$.  As
$L^1(\G)$ is an ideal in $M(\G)$, it follows that $\theta_F$ is an isometric
isomorphism, as required.

Then exactly the same argument applies to $A_u^*$, given the work above.
\end{proof}

We remark that it would be interesting to know if $M(L^1(\G))=A_u^*$ is
equivalent to $L^1(\G)$ having a bounded approximate identity.  Of course, even
in the co-commutative case, when $L^1(\G)=A(G)$, this is rather hard, see
\cite{losert}.

\subsection{Completely bounded multipliers}\label{cbmult_lcqg}

We now turn to the operator space setting.  Let us just record the completely
bounded version of our work in Section~\ref{mult_dual_sec}.  As shown in \cite{lemerdy},
it is possible for a Banach space $E$ to be such that $E^*$ has an operator space
structure which is not the dual space structure of any operator space structure on
$E$.  Consequently, we have to be a little careful when it comes to duality arguments
(but we are okay, essentially by an application of Lemma~\ref{weakstar_lemma}).

\begin{theorem}\label{op_sp_mult_is_dba}
Let $\mc A$ be a CCBA with dense products, let $(\mc B,\mc B_*)$ be a dual CCBA, and let
$\iota:\mc A \rightarrow \mc B$ be a complete isometry with $\iota(\mc A)$
an ideal in $\mc B$.  Suppose further than the induced map $\theta:\mc B\rightarrow M_{cb}(\mc A)$
is injective.  Then there is a unique operator space $X$ such that $M_{cb}(\mc A)$ is
completely isometrically isomorphic to $X^*$, turning $M_{cb}(\mc A)$ into a dual CCBA,
and such that for a bounded net $(b_\alpha)$ in $\mc B$, $b_\alpha\rightarrow b$ weak$^*$
in $\mc B$ if and only if $\theta(b_\alpha)\rightarrow\theta(b)$ in $M_{cb}(\mc A)$.
\end{theorem}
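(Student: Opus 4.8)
The plan is to run the constructions of Theorem~\ref{mult_is_dba} and Theorem~\ref{dual_mult_wstar_top} at the matrix level, systematically replacing $\mc B(\mc A,\mc B)$ by $\mc{CB}(\mc A,\mc B)$ and the Banach duality $(\mc A\proten\mc B_*)^*=\mc B(\mc A,\mc B)$ by its operator space analogue $(\mc A\proten\mc B_*)^*=\mc{CB}(\mc A,\mc B)$, where now $\proten$ is the operator space projective tensor product. First I would form the operator space $Y=(\mc A\proten\mc B_*)\oplus_1(\mc A\proten\mc B_*)$, whose dual is completely isometrically $\mc{CB}(\mc A,\mc B)\oplus_\infty\mc{CB}(\mc A,\mc B)$, and define the closed subspace $X_0\subseteq Y$ exactly as in the proof of Theorem~\ref{mult_is_dba}. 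Then $X_0^\perp$ is weak$^*$-closed in $Y^*$ and is completely isometrically the dual of the operator space $X:=Y/X_0$, which will be the required predual.

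The computation identifying $X_0^\perp$ with $M_{cb}(\mc A)$ goes through verbatim: a pair $(T,S)$ lies in $X_0^\perp$ if and only if $\iota(a)T(b)=S(a)\iota(b)$ for all $a,b\in\mc A$, and the injectivity of $\theta$, the faithfulness of $\mc A$, the density of products, and the ideal hypothesis force $T(\mc A),S(\mc A)\subseteq\iota(\mc A)$, so $T=\iota L$ and $S=\iota R$. The one new point is complete boundedness of $L$ and $R$: since $\iota$ is a complete isometry onto its closed range, $\iota^{-1}$ is completely contractive on $\iota(\mc A)$, whence $L=\iota^{-1}T$ and $R=\iota^{-1}S$ are completely bounded, i.e. $(L,R)\in M_{cb}(\mc A)$. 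Conversely, post-composition with the complete isometry $\iota$ shows that $(L,R)\mapsto(\iota L,\iota R)$ is a complete isometry of $M_{cb}(\mc A)$ onto $X_0^\perp$. This yields the complete isometry $M_{cb}(\mc A)\cong X^*$; since composition of completely bounded maps is jointly completely contractive, $M_{cb}(\mc A)$ is a CCBA, and separate weak$^*$-continuity of its product follows from the same dual-pairing computation as at the end of the proof of Theorem~\ref{mult_is_dba}, using that $\{\iota(a)\cdot\mu\}$ and $\{\mu\cdot\iota(a)\}$ are linearly dense in $\mc B_*$. Hence $M_{cb}(\mc A)$ is a dual CCBA, and the net condition relating the weak$^*$-topologies of $\mc B$ and $M_{cb}(\mc A)$ is verified as in Theorem~\ref{dual_mult_wstar_top}.

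For uniqueness I would take a second operator space $X'$ with a complete isometry $M_{cb}(\mc A)\cong(X')^*$ making $M_{cb}(\mc A)$ a dual CCBA and satisfying the stated net condition. Repeating the argument of Theorem~\ref{dual_mult_wstar_top} — via the identity $(L,R)\theta(\iota(a))=\theta(\iota(L(a)))$ together with the density of the functionals $\iota(a)\cdot\mu$ and $\mu\cdot\iota(a)$ — shows that a bounded net converges weak$^*$ for the $X'$-topology precisely when it converges weak$^*$ for the $X$-topology. Thus the identity map is weak$^*$-continuous in both directions on bounded sets, and Lemma~\ref{weakstar_lemma} yields that the two weak$^*$-topologies coincide. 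Since the operator space predual realizing a fixed weak$^*$-topology is unique up to complete isometry (it is completely isometrically the space of weak$^*$-continuous functionals, which is determined by the topology), I conclude that $X'\cong X$ completely isometrically.

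The hard part will be the uniqueness step: because, as the text recalls from Le Merdy, an operator space dual may carry a predual structure not induced by any operator space predual, one cannot argue purely at the Banach level, and care is needed to obtain a complete isometry of preduals rather than mere isomorphism. The role of Lemma~\ref{weakstar_lemma} is exactly to bridge this, reducing matters to the agreement of the two weak$^*$-topologies on bounded sets, after which the completely isometric identification of a predual with the weak$^*$-continuous functionals finishes the argument. By contrast, the complete-isometry bookkeeping in the surjectivity step (ensuring $L,R$ are completely bounded with controlled cb-norm) is routine once $\iota$ is assumed to be a complete isometry.
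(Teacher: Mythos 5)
Your proposal is correct and follows essentially the same route as the paper, which simply instructs the reader to rerun the constructions of Theorem~\ref{mult_is_dba} and Theorem~\ref{dual_mult_wstar_top} with operator space analogues of $\oplus_1$, $\oplus_\infty$ and $\proten$, and to settle uniqueness via the completely bounded version of Lemma~\ref{weakstar_lemma}. The extra details you supply (complete boundedness of $L,R$ from $\iota$ being a complete isometry, and the completely isometric identification of the predual with the weak$^*$-continuous functionals) are exactly the points the paper leaves implicit.
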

\begin{proof}
We simply follow the construction of Theorem~\ref{mult_is_dba}.  We note that $\oplus_\infty$
and $\oplus_1$ have operator space analogue (see \cite[Section~2.6]{pisier}).
Similarly, we work with the operator space projective tensor product and so forth.

For uniqueness, we similarly adapt the proof of Theorem~\ref{dual_mult_wstar_top}
making use of the completely bounded version of Lemma~\ref{weakstar_lemma}.
\end{proof}

\begin{example}\label{fourier_example_two}
Consider again the Fourier algebra $A(G)$.  As $C^*(G)\rightarrow C^*_r(G)$
is a $*$-homomorphism, it is a complete quotient map, and so the adjoint,
$C^*_r(G)^* \rightarrow B(G)$ is a complete isometry.
Consider $C^*_r(G)^{**}$ the universal enveloping von Neumann algebra of $C^*_r(G)$.
Then, see \cite[Chapter~III, Section~2]{tak1}, there is a unique normal
surjective $*$-homomorphism $C^*_r(G)^{**} \rightarrow VN(G)$, which is hence
a complete quotient map.  Its preadjoint is hence a complete isometry
$A(G)\rightarrow C^*_r(G)$.  It is not hard to check that the composition
$A(G)\rightarrow B(G)$ is the canonical map, which is thus a complete isometry.

Hence, exactly as in Example~\ref{fourier_example_one}, we can use our theorem to
construct a weak$^*$-topology on $M_{cb}(A(G))$.  This was first done in
\cite[Proposition~1.10]{DcH}, at the Banach space level, in exactly the same way
as detailed in Example~\ref{fourier_example_one}, with $M(A(G))$ replaced with $M_{cb}(A(G))$.
Again, it follows that we get the same weak$^*$-topology from our abstract theorem.

Of course, our theorem actually turns $M_{cb}(A(G))$ into a dual CCBA.  An operator
space predual for $M_{cb}(A(G))$ was constructed by Spronk, \cite[Section~6.2]{spronk}.
Indeed, let $K$ be the closure of $\{ \sum_i f_i\otimes g_i \in L^1(G)\otimes L^1(G)
: \sum_i f_i*g_i=0 \}$ in $L^1(G) \otimes^h L^1(G)$.  Here we use the usual convolution
product on $L^1(G)$, and $\otimes^h$ is the completed Haagerup tensor product
(see Section~\ref{haatenprod} below).  Then
\[ Q(G) = (L^1(G) \otimes^h L^1(G))/K \]
is an operator space with $Q(G)^*$ isometrically isomorphic to $M_{cb}(A(G))$.
The dual pairing is given by
\[ \ip{(f\otimes g)+K}{a} = \int_G (f*g)(s) a(s) \ ds \qquad (f,g\in L^1(G), a\in M_{cb}(A(G))). \]
It is shown in \cite[Corollary~6.6]{spronk} that, as a Banach space, $Q(G)$ is
isometric to the predual constructed in \cite[Proposition~1.10]{DcH}.
As such, the same argument as in Example~\ref{fourier_example_one}
shows that $Q(G)$ is completely isometrically isomorphic to the predual
constructed by Theorem~\ref{op_sp_mult_is_dba}.
\end{example}

\subsection{Duality and multipliers}

For a locally compact group $G$, consider the Fourier algebra $A(G)$.  The multiplier
algebra $MA(G)$ can be (compare with Example~\ref{dba_non_unital} above) identified with
\[ \{ f\in C^b(G) = M(C_0(G)) : fa\in A(G) \ (a\in A(G)) \}. \]
Similarly, consider $L^1(G)$, with multiplier algebra $M(L^1(G)) = M(G)$.  The
left regular representation $\lambda:L^1(G)\rightarrow C^*_r(G)$ extends to a homomorphism
$\lambda:M(G)\rightarrow VN(G)$, which actually maps into $M(C^*_r(G))$.  Indeed,
$M(G)$ can be identified with
\[ \{ x\in M(C^*_r(G)) : x\lambda(f), \lambda(f)x \in \lambda(L^1(G)) \ (f\in L^1(G)) \}. \]

We wish to explain this in the language of locally compact quantum groups, but we
need to define the analogue of the left regular representation.  Given a locally compact
quantum group $\G$, we form $L^2(\G)$ and the multiplicative unitary $W$, as before.
Then we may define $\lambda:L^1(\G)\rightarrow\mc B(L^2(\G))$ by
\[ \lambda(\omega) = (\omega\otimes\iota)(W) \qquad (\omega\in L^1(\G)). \]
Then $\lambda$ is a homomorphism which maps into $C_0(\hat\G)$, with dense range.
If $\G$ is commutative, then $\lambda:L^1(G)\rightarrow C^*_r(G)$ is the usual
left regular representation; if $\G$ is co-commutative, then
$\lambda:A(G)\rightarrow C_0(G)$ is just the inclusion map.

Thus, when $\G$ is commutative or co-commutative, we can identify $M(L^1(\G))$
with the algebra
\[ \{ x\in M(C_0(\hat\G)) : x\lambda(\omega), \lambda(\omega)x \in \lambda(L^1(\G))
\ (\omega\in L^1(\G)) \}. \]
Indeed, in both these cases, $\lambda$ actually extends to a contractive homomorphism
$M(L^1(\G)) \rightarrow M(C_0(\hat\G))$.  Thus we identify the abstract multiplier
algebra $M(L^1(\G))$ with a concrete multiplier algebra, or what, in this paper,
we refer to as an idealiser.

This idea is explored for Kac algebras, by Kraus and Ruan, in \cite{KR}.  If we restrict
attention to $M_{cb}(L^1(\G))$, then everything carries over.  It is necessary to
consider unbounded operators, and extensive use is made of antipode.  In this section, we
shall quickly show that the ideas of Kraus and Ruan do, in some sense, extend to the
locally compact group case: of course, here, we do not have a bounded antipode, and so
a modification of the argument is needed.  This allows us to identify $M(L^1(\G))$ with
an idealiser in $M(C_0(\hat\G))$.  Then $M(C_0(\hat\G))$ is a subalgebra of $L^\infty(\hat\G)$,
and it turns out that the closed unit ball of $M(L^1(\G))$ is weak$^*$-closed.
A standard argument then shows that the weak$^*$-topology on $L^\infty(\hat\G)$
induces a dual CCBA structure on $M(C_0(\hat\G))$.  We show that this weak$^*$-topology
agrees with that given by Theorem~\ref{lcqg_predual_mult_dba}.

Let us recall a little about $\lambda:L^1(\G)\rightarrow C_0(\hat\G)$.  As the antipode
$S$ is generally unbounded on $L^\infty(\G)$, we cannot (unlike the Kac algebra case) turn
$L^1(\G)$ into a $*$-algebra in a natural way.  However, following \cite{kus1}, we define
\[ L^1_*(\G) = \{ \omega\in L^1(\G) : \exists\,\sigma\in L^1(\G), \
\ip{x}{\sigma} = \ip{S(x)}{\omega^*} \ (x\in D(S)) \}. \]
Here $D(S)\subseteq L^\infty(\G)$ is the (strong$^*$-dense) domain of $S$, and
$\omega^*$ denotes the normal functional
\[ L^\infty(\G)\rightarrow\mathbb C; \quad x\mapsto \overline{\ip{x^*}{\omega}}. \]
Then $L^1_*(\G)$ carries a natural involution, $\omega^\dagger = \sigma$.  So, by definition,
we have $\ip{x}{\omega^\dagger} = \ip{S(x)}{\omega^*}$ for $x\in D(S)$.  As argued in
\cite[Section~3]{kus1}, we have that $L^1_*(\G)$ is a dense subalgebra of $L^1(\G)$.
Then \cite[Proposition~3.1]{kus1} shows that we can characterise $L^1_*(\G)$ as
\[ L^1_*(\G) = \{ \omega\in L^1(\G) : \exists\, \sigma\in L^1(\G), \
\lambda(\omega)^* = \lambda(\sigma) \}, \]
and furthermore, $\lambda$ is a $*$-homomorphism on $L^1_*(\G)$.  See also
\cite[Definition~2.3]{kus2} and the discussion thereafter.

\begin{proposition}
Let $\G$ be a locally compact quantum group, and let $(L,R)\in M(L^1(\G))$.
There exists a densely defined, preclosed operator $a_0$ on $L^2(\G)$ with
domain $D(a_0)$ such that $D(a_0)$ is invariant under $\lambda(\omega)$ for
all $\omega\in L^1(\G)$, and with
\[ a_0 \lambda(\omega) \xi = \lambda(L(\omega)) \xi, \quad
\lambda(\omega) a_0 \xi = \lambda(R(\omega))\xi
\qquad (\omega\in L^1(\G), \xi\in D(a_0)). \]
\end{proposition}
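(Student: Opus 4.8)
The plan is to define $a_0$ explicitly on a dense domain manufactured from the left regular representation, and then to check consistency, the two intertwining identities, and closability all by the same mechanism: the multiplier relations combined with the non-degeneracy of $C_0(\hat\G)$. The single fact I would isolate and use repeatedly is that if $\eta\in L^2(\G)$ satisfies $\lambda(\omega)\eta=0$ for all $\omega\in L^1(\G)$, then $\eta=0$. Indeed $\lambda(L^1(\G))$ is norm-dense in the C$^*$-algebra $C_0(\hat\G)$, which acts non-degenerately on $L^2(\G)$; so such an $\eta$ satisfies $x\eta=0$ for all $x\in C_0(\hat\G)$, whence $(\eta|y\zeta)=(y^*\eta|\zeta)=0$ for all $y\in C_0(\hat\G),\zeta\in L^2(\G)$, forcing $\eta=0$.

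First I would set $D(a_0)=\lin\{\lambda(\sigma)\xi:\sigma\in L^1(\G),\xi\in L^2(\G)\}$, which is dense by non-degeneracy, and attempt to define
\[ a_0\Big(\sum_i\lambda(\sigma_i)\xi_i\Big)=\sum_i\lambda(L(\sigma_i))\xi_i. \]
The real content is well-definedness. Supposing $\sum_i\lambda(\sigma_i)\xi_i=0$ and writing $\eta=\sum_i\lambda(L(\sigma_i))\xi_i$, I would use that $\lambda$ is a homomorphism together with the multiplier identity $\omega L(\sigma)=R(\omega)\sigma$ to compute, for each $\omega\in L^1(\G)$,
\[ \lambda(\omega)\eta=\sum_i\lambda(\omega L(\sigma_i))\xi_i=\sum_i\lambda(R(\omega)\sigma_i)\xi_i=\lambda(R(\omega))\sum_i\lambda(\sigma_i)\xi_i=0. \]
The isolated fact then gives $\eta=0$, so $a_0$ is well-defined. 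Invariance of $D(a_0)$ is immediate, since $\lambda(\omega)\lambda(\sigma)\xi=\lambda(\omega\sigma)\xi$ and $\omega\sigma\in L^1(\G)$.

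Next I would verify the two stated identities on $\zeta=\sum_i\lambda(\sigma_i)\xi_i\in D(a_0)$. Using that $L$ is a left multiplier, so $L(\omega\sigma)=L(\omega)\sigma$, one gets
\[ a_0\lambda(\omega)\zeta=\sum_i\lambda(L(\omega\sigma_i))\xi_i=\sum_i\lambda(L(\omega)\sigma_i)\xi_i=\lambda(L(\omega))\zeta, \]
while the multiplier identity gives
\[ \lambda(\omega)a_0\zeta=\sum_i\lambda(\omega L(\sigma_i))\xi_i=\sum_i\lambda(R(\omega)\sigma_i)\xi_i=\lambda(R(\omega))\zeta. \]
Finally, preclosedness follows from the same non-degeneracy fact: if $\zeta_n\in D(a_0)$ with $\zeta_n\to0$ and $a_0\zeta_n\to\eta$, then for each $\omega$ boundedness of $\lambda(R(\omega))$ gives $\lambda(\omega)a_0\zeta_n=\lambda(R(\omega))\zeta_n\to0$, whereas boundedness of $\lambda(\omega)$ gives $\lambda(\omega)a_0\zeta_n\to\lambda(\omega)\eta$; hence $\lambda(\omega)\eta=0$ for all $\omega$ and so $\eta=0$. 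I expect the well-definedness step to be the main obstacle: it is precisely there that the purely algebraic multiplier relations must be promoted to a statement about vectors in $L^2(\G)$, and it is the non-degeneracy of $C_0(\hat\G)$ (equivalently, that $\lambda$ has dense range into a C$^*$-algebra acting non-degenerately) that makes this promotion legitimate.
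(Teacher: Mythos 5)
Your proof is correct and follows the same overall architecture as the paper's: define $a_0$ on $\lin\{\lambda(\sigma)\xi : \sigma\in L^1(\G),\ \xi\in L^2(\G)\}$, reduce well-definedness to the fact that $\lambda(\omega)\eta=0$ for all $\omega\in L^1(\G)$ forces $\eta=0$, verify the two identities from the multiplier relations, and deduce closability from that same injectivity fact. The one genuine difference lies in how that fact is established and then re-used. The paper proves it by pairing against vectors $\lambda(\omega^\dagger)\eta$ with $\omega$ in the dense $*$-subalgebra $L^1_*(\G)$, and its closability argument likewise tests $a_0(\xi_n)$ against such vectors, so the involution $\dagger$ (hence the unbounded antipode) enters twice. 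You instead work at the level of the C$^*$-algebra $C_0(\hat\G)$: density of $\lambda(L^1(\G))$ together with non-degeneracy of the action on $L^2(\G)$ gives the injectivity directly, and for closability you apply $\lambda(\omega)$ to $a_0\zeta_n$ and invoke the same fact, never needing an adjoint of $\lambda(\omega)$. Both routes are valid; yours is marginally cleaner in that it uses only that $\lambda$ is a homomorphism with dense range in a non-degenerately acting C$^*$-algebra, and makes no appeal to $L^1_*(\G)$. (Your orthogonality step can even be shortcut: if $(e_i)$ is an approximate identity for $C_0(\hat\G)$ then $e_i\eta\rightarrow\eta$ by non-degeneracy while $e_i\eta=0$.)
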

\begin{proof}
Let $X$ be the linear span of vectors of the form $\lambda(\omega)\xi$ where
$\omega\in L^1(\G)$ and $\xi\in L^2(\G)$.  As $\lambda(L^1(\G))$ is dense in
$C_0(\hat\G)$, and $C_0(\hat\G)$ acts non-degenerately on $L^2(\G)$, it follows
that $X$ is dense in $L^2(\G)$.

We first show that if $\xi\in L^2(\G)$ with $\lambda(\omega)\xi=0$ for all $\omega\in L^1(\G)$,
then $\xi=0$.  Indeed, for $\eta\in L^2(\G)$ and $\omega\in L^1_*(\G)$, we have
\[ 0 = \big( \lambda(\omega)\xi \big| \eta \big)
= \big( \xi \big| \lambda(\omega^\dagger)\eta \big). \]
As $L^1_*(\G)$ is dense in $L^1(\G)$, we see that $\lin\{\lambda(\omega)\eta:
\omega\in L^1_*(\G), \eta\in L^2(\G) \}$ is dense in $X$, which is dense in $L^2(\G)$.
Thus $\xi=0$, as claimed.

Define $a_0:X\rightarrow X$ by $a_0\lambda(\omega)\xi = \lambda(L(\omega))\xi$, and
linearity.  This is well-defined, for if $\sum_{i=1}^n \lambda(\omega_i)\xi_i=0$, then
for any $\omega\in L^1(\G)$, we have
\[ \lambda(\omega) \sum_{i=1}^n \lambda(L(\omega_i))\xi_i
= \sum_{i=1}^n \lambda(\omega L(\omega_i))\xi_i
= \lambda(R(\omega)) \sum_{i=1}^n \lambda(\omega_i)\xi_i = 0. \]
As $\omega$ was arbitrary, we have that $\sum_{i=1}^n \lambda(L(\omega_i))\xi_i=0$,
as required.

Then, for $\omega\in L^1(\G)$ and $\xi=\lambda(\sigma)\eta\in X$, we have that
\[ \lambda(\omega) a_0 \xi = \lambda(\omega) \lambda(L(\sigma))\eta
= \lambda(R(\omega)) \xi, \]
as required.

Finally, we show that $a_0$ is preclosed.  If $(\xi_n)\subseteq X$ with
$\xi_n\rightarrow 0$ and $a_0(\xi_n)\rightarrow \xi$, then for any $\omega\in L^1_*(\G)$
and $\eta\in L^2(\G)$,
\[ \big( \xi \big| \lambda(\omega^*)\eta \big) =
\lim_n \big( a_0(\xi_n) \big| \lambda(\omega^*)\eta \big)
= \lim_n \big( \lambda(R(\omega)) (\xi_n) \big| \eta \big) = 0. \]
Again, by density, this is enough to show that $\xi=0$ as required.
\end{proof}

For completely bounded multipliers, we can say more.

\begin{theorem}\label{thm:rep_mcb}
Let $\G$ be a locally compact quantum group, and let $(L,R)\in M_{cb}(L^1(\G))$.
There exists a unique $a\in M(C_0(\hat\G))$ with
\[ a\lambda(\omega) = \lambda(L(\omega)), \quad
\lambda(\omega)a=\lambda(R(\omega)) \qquad (\omega\in L^1(\G)). \]
The resulting map $\Lambda:M_{cb}(L^1(\G)) \rightarrow M(C_0(\hat\G))$ is a
completely contractive algebra homomorphism.
\end{theorem}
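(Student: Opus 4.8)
The plan is to produce the operator $a$ directly from the fundamental unitary $W$, bypassing any boundedness estimate for the preclosed operator $a_0$ of the previous proposition. Write $\Phi=L^*:L^\infty(\G)\rightarrow L^\infty(\G)$ for the adjoint of $L$; since $L$ is completely bounded, $\Phi$ is a normal completely bounded map with $\|\Phi\|_{cb}=\|L\|_{cb}$. First I would record that the left-multiplier identity $L(\omega\nu)=L(\omega)\nu$ dualises to $\Delta\Phi=(\Phi\otimes\id)\Delta$. Next, slicing the first leg shows that a bounded $a\in\mc B(L^2(\G))$ satisfies $a\lambda(\omega)=\lambda(L(\omega))$ for all $\omega$ if and only if it satisfies the single operator equation $(\Phi\otimes\id)(W)=(1\otimes a)W$: indeed $\lambda(L(\omega))=(L(\omega)\otimes\id)(W)=(\omega\otimes\id)((\Phi\otimes\id)(W))$, while $a\lambda(\omega)=(\omega\otimes\id)((1\otimes a)W)$. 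So everything reduces to solving this equation for a bounded $a$.

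Set $\mc W=(\Phi\otimes\id)(W)$ and $V=\mc W W^*$, both lying in $L^\infty(\G)\vnten L^\infty(\hat\G)$. Since $\|\Phi\otimes\id\|_{cb}=\|\Phi\|_{cb}$ and $W$ is unitary, we get $\|V\|\le\|\mc W\|\le\|\Phi\|_{cb}=\|L\|_{cb}$, which will furnish the norm estimate. The crux is to prove $V=1\otimes a$ for some $a\in L^\infty(\hat\G)$. For this I would use the coaction identity $(\Delta\otimes\id)(W)=W_{13}W_{23}$ together with $\Delta\Phi=(\Phi\otimes\id)\Delta$: a direct leg computation gives $(\Delta\otimes\id)(\mc W)=(\Phi\otimes\id\otimes\id)(W_{13}W_{23})=\mc W_{13}W_{23}$, using that $W_{23}$ has trivial first leg, whence $(\Delta\otimes\id)(V)=\mc W_{13}W_{23}W_{23}^*W_{13}^*=V_{13}$. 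Reading $V=\sum_i x_i\otimes b_i$ with $x_i\in L^\infty(\G)$, the equation $(\Delta\otimes\id)(V)=V_{13}$ says precisely that the first-leg entries obey $\Delta(x_i)=x_i\otimes1$; since $\{x\in L^\infty(\G):\Delta(x)=x\otimes1\}=\mathbb C1$ for any locally compact quantum group, it follows that $V\in 1\vnten L^\infty(\hat\G)$, i.e.\ $V=1\otimes a$. I expect this rigidity step — identifying the first leg of $V$ as scalar — to be the main obstacle, and it is exactly where the multiplier hypothesis (through $\Delta\Phi=(\Phi\otimes\id)\Delta$) is used. Note that such $a$ automatically restricts to the preclosed operator $a_0$ on its domain.

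It then remains to assemble the consequences. Since $a\lambda(\omega)=\lambda(L(\omega))\in\lambda(L^1(\G))$ and $\lambda(L^1(\G))$ is dense in $C_0(\hat\G)$ acting non-degenerately on $L^2(\G)$, boundedness of $a$ gives $aC_0(\hat\G)\subseteq C_0(\hat\G)$; the right-hand relation $\lambda(\omega)a=\lambda(R(\omega))$ follows from the multiplier identity $\omega L(\nu)=R(\omega)\nu$ together with $a\lambda(\nu)=\lambda(L(\nu))$, and yields $C_0(\hat\G)a\subseteq C_0(\hat\G)$, so $a\in M(C_0(\hat\G))$. Uniqueness of $a$ is immediate from non-degeneracy. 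Writing $\Lambda(L,R)=a$, linearity is clear, and multiplicativity follows by uniqueness from $a_1a_2\lambda(\omega)=a_1\lambda(L_2(\omega))=\lambda(L_1L_2(\omega))$, recalling that the product in $M_{cb}(L^1(\G))$ is $(L_1,R_1)(L_2,R_2)=(L_1L_2,R_2R_1)$. Finally, to see that $\Lambda$ is completely contractive I would rerun the construction at matrix level: for $[(L_{ij},R_{ij})]\in\mathbb M_n(M_{cb}(L^1(\G)))$ the amplified map $[\Phi_{ij}]$ has completely bounded norm $\|[L_{ij}]\|_n$, and $1\otimes[a_{ij}]=([\Phi_{ij}]\otimes\id)(W)W^*$ has norm at most $\|[\Phi_{ij}]\|_{cb}=\|[L_{ij}]\|_n\le\|[(L_{ij},R_{ij})]\|_n$, using that amplified slices of $W$ are controlled by the completely bounded norm. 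This gives $\|\Lambda\|_{cb}\le1$ and completes the proof.
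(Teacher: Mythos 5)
Your argument is correct, but it reaches the key identity by a genuinely different route from the paper. The paper takes the preclosed operator $a_0$ from the preceding proposition and verifies, by a direct computation against vector functionals on the dense subspace $L^2(\G)\otimes X$, that $W(1\otimes a_0)=(R^*\otimes\iota)(W)$; since the right-hand side is bounded (as $R$ is completely bounded) and $W$ is unitary, $a_0$ is bounded, and everything else follows. You instead dispense with $a_0$ entirely: you encode the left-multiplier property as the intertwining relation $\Delta\Phi=(\Phi\otimes\id)\Delta$ for $\Phi=L^*$, reduce the problem to the single operator equation $(1\otimes a)W=(\Phi\otimes\id)(W)$, and prove that $V=(\Phi\otimes\id)(W)W^*$ has scalar first leg via $(\Delta\otimes\id)(V)=V_{13}$ together with the ergodicity of the coproduct, $\{x\in L^\infty(\G):\Delta(x)=x\otimes 1\}=\mathbb C1$. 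This is the representation-theorem style of argument in the spirit of \cite{jnr}, and it buys you something the paper's proof does not: the left multiplier $L$ alone produces $a$, with the right-hand relation recovered afterwards from $\omega L(\nu)=R(\omega)\nu$ and non-degeneracy. The cost is an extra standard input (the ergodicity result, which holds for any locally compact quantum group but deserves a citation to Kustermans--Vaes) and some care at two points you gloss over: the step $(\Phi\otimes\id\otimes\id)(W_{13}W_{23})=\mc W_{13}W_{23}$ uses that the normal amplification $\Phi\otimes\id$ is a $1\vnten L^\infty(\hat\G)$-bimodule map (true, by checking on elementary tensors and normality), and the phrase ``reading $V=\sum_i x_i\otimes b_i$'' should be replaced by slicing the last leg with $\hat\omega\in L^1(\hat\G)$, which shows every slice $(\id\otimes\hat\omega)(V)$ satisfies $\Delta(x)=x\otimes1$ and hence is scalar, so $V\in 1\vnten L^\infty(\hat\G)$. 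Your norm estimate $\|a\|\leq\|L\|_{cb}$ and the matrix-level argument for complete contractivity mirror the paper's use of $1\otimes a=W^*(R^*\otimes\iota)(W)$, just with the opposite leg. None of these points is a gap; they are presentational.
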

\begin{proof}
We continue with the notation of the last proof.
For $\xi\in X$ and $\eta,\xi_0,\eta_0\in L^2(\G)$, we have
\begin{align*}
\big( W(1\otimes a_0)(\xi_0\otimes \xi) \big| \eta_0\otimes\eta \big)
&= \big( \lambda(\omega_{\xi_0,\eta_0}) a_0(\xi) \big| \eta \big)
= \big( \lambda(R(\omega_{\xi_0,\eta_0})) \xi \big| \eta \big) \\
&= \ip{W}{R(\omega_{\xi_0,\eta_0}) \otimes \omega_{\xi,\eta}}
= \ip{(R^*\otimes\iota)(W)}{\omega_{\xi_0,\eta_0} \otimes \omega_{\xi,\eta}} \\
&= \big( (R^*\otimes\iota)(W)(\xi_0\otimes \xi) \big| \eta_0\otimes\eta \big).
\end{align*}
Here we use that $R\in\mc{CB}(L^1(\G))$ and so $R^*\in\mc{CB}(L^\infty(\G))$
and thus $R^*\otimes\iota \in \mc{CB}(L^\infty(\G)\vnten L^\infty(\hat G))$.
So, on the algebraic tensor product $L^2(\G) \otimes X$, we have that
\[ (R^*\otimes\iota)(W) = W(1\otimes a_0). \]
As the left-hand side is bounded, and $W$ is a unitary, it follows that $a_0$
is actually bounded.  Let $a$ be the continuous extension of $a_0$ to all of $L^2(\G)$,
so that $1\otimes a = W^* (R^*\otimes\iota)(W)$.  It follows by density that
$\lambda(\omega)a=\lambda(R(\omega))$ and $a\lambda(\omega)=\lambda(L(\omega))$
for $\omega\in L^1(\G)$.  As $\lambda(L^1(\G))$ is dense in $C_0(\hat\G)$, it
follows by continuity that $a\in M(C_0(\hat\G))$.

It is clear that the resulting map $\Lambda:M_{cb}(L^1(\G))\rightarrow M(C_0(\hat\G))$ is
an algebra homomorphism.  As $1\otimes a = W^* (R^*\otimes\iota)(W)$, it follows
immediately that $\|a\| \leq \|R\|_{cb}$, and it also follows that
$\Lambda:(L,R)\mapsto a$ is actually a complete contraction.
\end{proof}

Conversely, if $a\in M(C_0(\G))$ idealises $\lambda(L^1(\G))$ then we find
maps $L,R:L^1(\G)\rightarrow L^1(\G)$ with $\lambda(L(\omega)) = a\lambda(\omega)$
and $\lambda(R(\omega)) = \lambda(\omega) a$ for $\omega\in L^1(\G)$.  It hence
follows that $(L,R)\in M(L^1(\G))$.  It is not clear to us if there is any easily
stated condition on $a$ which will ensure that $(L,R)\in M_{cb}(L^1(\G))$.
Indeed, it is also unclear if the previous theorem can be extended to $M(L^1(\G))$.
Kraus and Ruan show slightly better results for Kac algebras in \cite{KR}.

We next show that the restriction of the weak$^*$-topology on $L^\infty(\hat\G)$ induces
a dual Banach algebra structure on $M_{cb}(L^1(\G))$ which agrees with the weak$^*$-topology
on $M_{cb}(L^1(\G))$ constructed by (the operator space version of)
Theorem~\ref{lcqg_predual_mult_dba}.

We first collect some properties of the map $\lambda$.  Again, these are weaker than
in the Kac algebra case, but are sufficient for our needs.

\begin{lemma}\label{lemma::lambdahat}
Let $\omega\in L^1(\G)$ and $\hat\omega\in L^1(\hat\G)$.
We have that $\ip{\lambda(\omega)}{\hat\omega} = \ip{\hat\lambda(\hat\omega^*)^*}{\omega}$,
that $\hat\lambda(\hat\omega^* \lambda(\omega)^*)^* =
\omega\cdot\hat\lambda(\hat\omega^*)^*$, and that
$\hat\lambda( \lambda(\omega)^* \hat\omega^*)^* =
\hat\lambda(\hat\omega^*)^*\cdot\omega$.
\end{lemma}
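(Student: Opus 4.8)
The plan is to establish part (1) first as a bare duality computation and then to bootstrap parts (2) and (3) from it. Throughout I will use $\lambda(\omega) = (\omega\otimes\iota)(W)$ together with the dual representation, which in the present conventions is $\hat\lambda(\hat\omega) = (\iota\otimes\hat\omega)(W^*)$ (equivalently $(\hat\omega\otimes\iota)(\hat W)$ with $\hat W = \Sigma W^*\Sigma$). The elementary tools are the slicing/pairing identity $\ip{(\omega\otimes\iota)(W)}{\hat\omega} = \ip{W}{\omega\otimes\hat\omega}$, the adjoint rule $[(\rho\otimes\iota)(T)]^* = (\rho^*\otimes\iota)(T^*)$ for a functional $\rho$ and $T\in\mc B(H\otimes H)$, the predual relation $(\omega\otimes\hat\omega)^* = \omega^*\otimes\hat\omega^*$, and the defining compatibility $\ip{y^*}{\mu} = \overline{\ip{y}{\mu^*}}$ between a von Neumann algebra involution and the involution on its predual. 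I will also use freely that $\lambda$ is an algebra homomorphism (established above), so that $(AB)^*$-reversal turns it into $\lambda(\omega)^*\lambda(\sigma^*)^* = \lambda(\sigma^*\omega)^*$.

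For part (1), I would expand the right-hand side via the two involutions: $\ip{\hat\lambda(\hat\omega^*)^*}{\omega} = \overline{\ip{\hat\lambda(\hat\omega^*)}{\omega^*}}$, then write $\hat\lambda(\hat\omega^*) = (\iota\otimes\hat\omega^*)(W^*)$ and slice to get $\ip{\hat\lambda(\hat\omega^*)}{\omega^*} = \ip{W^*}{\omega^*\otimes\hat\omega^*} = \ip{W^*}{(\omega\otimes\hat\omega)^*} = \overline{\ip{W}{\omega\otimes\hat\omega}}$. The outer conjugate then cancels, returning $\ip{W}{\omega\otimes\hat\omega} = \ip{\lambda(\omega)}{\hat\omega}$. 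The only delicate point is keeping the two layers of conjugation straight.

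For part (2), I would pair $\hat\lambda(\hat\omega^*\lambda(\omega)^*)$ against an arbitrary $\sigma\in L^1(\G)$. Slicing $W^*$ on the first leg gives $(\sigma\otimes\iota)(W^*) = \lambda(\sigma^*)^*$; unwinding the module action of $L^\infty(\hat\G)$ on $L^1(\hat\G)$ and the homomorphism identity $\lambda(\omega)^*\lambda(\sigma^*)^* = \lambda(\sigma^*\omega)^*$ collapses the pairing to $\overline{\ip{\lambda(\sigma^*\omega)}{\hat\omega}}$. Part (1) rewrites this as $\overline{\ip{\hat\lambda(\hat\omega^*)^*}{\sigma^*\omega}}$. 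Finally, evaluating $\hat\lambda(\hat\omega^*\lambda(\omega)^*)^*$ at $\sigma$ through $\ip{z^*}{\sigma}=\overline{\ip{z}{\sigma^*}}$ replaces $\sigma^*$ by $\sigma$ and cancels the conjugations, leaving $\ip{\hat\lambda(\hat\omega^*)^*}{\sigma\omega}$, which is exactly $\ip{\omega\cdot\hat\lambda(\hat\omega^*)^*}{\sigma}$ by the convention $\ip{\omega\cdot x}{\sigma}=\ip{x}{\sigma\omega}$. Part (3) is the mirror image: the same steps with the left and right module actions interchanged use $\lambda(\sigma^*)^*\lambda(\omega)^* = \lambda(\omega\sigma^*)^*$ and terminate at $\ip{\hat\lambda(\hat\omega^*)^*}{\omega\sigma} = \ip{\hat\lambda(\hat\omega^*)^*\cdot\omega}{\sigma}$.

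The main obstacle is bookkeeping rather than conceptual: two distinct involutions are in play at once — the $*$-operation on the von Neumann algebras $L^\infty(\G)$, $L^\infty(\hat\G)$ and the conjugate-linear involution $\mu\mapsto\mu^*$ on the preduals — interleaved with a complex conjugation from every dual pairing. Because the antipode is unbounded, there is no shortcut via a genuine $*$-structure on $L^1(\G)$: each product must instead be routed through these involutions and through part (1). Care is needed to apply the homomorphism property of $\lambda$ in the right order, since the product reversal in $(AB)^*=B^*A^*$ is precisely what matches the left action of $\omega$ in (2) against the right action in (3).
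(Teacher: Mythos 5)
Your proof is correct and follows essentially the same route as the paper: part (1) is a direct computation with $W$ and $\hat W=\sigma W^*\sigma$, and parts (2) and (3) reduce to part (1) via the multiplicativity of $\lambda$ and the compatibility of the predual involution with the module actions (the paper's $\hat\omega^*\lambda(\omega)^*=(\lambda(\omega)\hat\omega)^*$). The only difference is cosmetic: the paper exploits the standard form to write $\omega=\omega_{\xi,\eta}$ and $\hat\omega=\omega_{\alpha,\beta}$ and carries out part (1) with Hilbert-space inner products, whereas you work abstractly with the involutions $\mu\mapsto\mu^*$ on the preduals together with the adjoint and slice identities; both are valid, and your bookkeeping of the nested conjugations checks out.
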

\begin{proof}
Let $\omega=\omega_{\xi,\eta}$ and $\hat\omega=\omega_{\alpha,\beta}$.  We have that
$\hat W = \sigma W^* \sigma$ where $\sigma$ is the ``swap map'' on $L^2(\G)\otimes L^2(\G)$.
Thus
\begin{align*} \ip{\lambda(\omega)}{\hat\omega} &=
\big( (\omega\otimes\iota)(W) \alpha \big| \beta \big)
= \big( W(\xi\otimes\alpha) \big| \eta\otimes\beta \big)
= \big( \alpha\otimes\xi \big| \hat W(\beta\otimes\eta) \big) \\
&= \overline{ \big( (\omega_{\beta,\alpha}\otimes\iota)(\hat W) \eta \big| \xi \big) }
= \overline{ \big( \hat\lambda(\omega_{\beta,\alpha}) \eta \big| \xi \big) }
= \big( \hat\lambda(\omega_{\beta,\alpha})^* \xi \big| \eta \big) 
= \ip{ \hat\lambda(\hat\omega^*)^*}{\omega}.
\end{align*}
Here we use that $\hat\omega^* = \omega_{\beta,\alpha}$, which follows as
\[ \ip{x}{\hat\omega^*} = \overline{ \big( x^*\alpha\big| \beta \big) }
= \big( x\beta \big| \alpha \big) = \ip{x}{\omega_{\beta,\alpha}}
\qquad (x\in L^\infty(\hat\G)). \]

For the second claim, for $\sigma\in L^1(\G)$, we have
\[ \ip{\hat\lambda(\hat\omega^* \lambda(\omega)^*)^*}{\sigma}
= \ip{\lambda(\sigma)}{\lambda(\omega) \hat\omega}
= \ip{\lambda(\sigma\omega)}{\hat\omega}
= \ip{\hat\lambda(\hat\omega^*)^*}{\sigma\omega}
= \ip{\omega\cdot\hat\lambda(\hat\omega^*)^*}{\sigma}, \]
using that $\hat\omega^* \lambda(\omega)^* = (\lambda(\omega)\hat\omega)^*$.
The third claim follows analogously.
\end{proof}

\begin{proposition}
Let $\Lambda:M_{cb}(L^1(\G)) \rightarrow M(C_0(\hat\G))$ be the completely
contractive homomorphism constructed in Theorem~\ref{thm:rep_mcb}.
Let $(x_\alpha)$ be a net in the closed unit ball of $M_{cb}(L^1(\G))$
such that $(\Lambda(x_\alpha))$ converges weak$^*$ in $L^\infty(\hat\G)$.
Then there exists $x\in M_{cb}(L^1(\G))$ with $\|x\|_{cb}=1$ and
$\Lambda(x_\alpha) \rightarrow \Lambda(x)$ weak$^*$.
\end{proposition}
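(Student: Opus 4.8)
The plan is to sidestep the awkward direct problem of showing that the weak$^*$-limit $a=\lim_\alpha\Lambda(x_\alpha)$ idealises $\lambda(L^1(\G))$, and instead to exploit the fact, already established by the operator space version of Theorem~\ref{lcqg_predual_mult_dba} (via Theorem~\ref{op_sp_mult_is_dba}), that $M_{cb}(L^1(\G))$ is a dual CCBA. Thus its closed unit ball is compact in the \emph{intrinsic} weak$^*$-topology, and the whole proof reduces to a single continuity statement: that $\Lambda$ sends intrinsic-weak$^*$-convergent bounded nets to weak$^*$-convergent nets in $L^\infty(\hat\G)$. Granting this, a soft subnet argument closes everything.

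First I would note that $\|\Lambda(x_\alpha)\|\le\|x_\alpha\|_{cb}\le 1$, since $\Lambda$ is a complete contraction by Theorem~\ref{thm:rep_mcb}. By Banach--Alaoglu applied to the predual furnished by the operator space version of Theorem~\ref{lcqg_predual_mult_dba}, the closed unit ball of $M_{cb}(L^1(\G))$ is weak$^*$-compact, so some subnet $(x_{\alpha'})$ converges in the intrinsic weak$^*$-topology to an $x=(L_x,R_x)$ with $\|x\|_{cb}\le 1$. Unwinding the construction of that topology from Theorem~\ref{mult_is_dba} (taken with $\mc B=M(\G)$ and predual $\mc B_*=C_0(\G)$), intrinsic convergence $x_{\alpha'}\to x$ tested against the elementary tensors $\omega\otimes\mu$ says in particular that $\ip{\mu}{L_{x_{\alpha'}}(\omega)}\to\ip{\mu}{L_x(\omega)}$ for every $\omega\in L^1(\G)$ and every $\mu\in C_0(\G)\subseteq L^\infty(\G)$.

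The heart of the argument is the claim that a bounded net $(y_\beta)$ converging intrinsic-weak$^*$ to $y$ satisfies $\Lambda(y_\beta)\to\Lambda(y)$ weak$^*$ in $L^\infty(\hat\G)$. I would test against the linear span of $\{\lambda(\omega)\hat\omega:\omega\in L^1(\G),\hat\omega\in L^1(\hat\G)\}$, which is norm-dense in $L^1(\hat\G)$: if $z\in L^\infty(\hat\G)$ annihilates it then $z\lambda(\omega)=0$ for all $\omega$, hence $zC_0(\hat\G)=0$ since $\lambda(L^1(\G))$ is dense in $C_0(\hat\G)$, and so $z=0$ by non-degeneracy of $C_0(\hat\G)$. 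For such a test element, the defining relation $\Lambda(y_\beta)\lambda(\omega)=\lambda(L_{y_\beta}(\omega))$ of Theorem~\ref{thm:rep_mcb} and the first identity of Lemma~\ref{lemma::lambdahat} give
\[ \ip{\Lambda(y_\beta)}{\lambda(\omega)\hat\omega} = \ip{\lambda(L_{y_\beta}(\omega))}{\hat\omega} = \ip{\hat\lambda(\hat\omega^*)^*}{L_{y_\beta}(\omega)}. \]
Since $\hat\lambda(\hat\omega^*)^*\in C_0(\G)$, the intrinsic convergence makes the right-hand side tend to $\ip{\hat\lambda(\hat\omega^*)^*}{L_y(\omega)}=\ip{\Lambda(y)}{\lambda(\omega)\hat\omega}$; as $(\Lambda(y_\beta))$ is bounded and the test elements are dense, $\Lambda(y_\beta)\to\Lambda(y)$ weak$^*$.

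Applying this to the subnet yields $\Lambda(x_{\alpha'})\to\Lambda(x)$ weak$^*$; but $(\Lambda(x_{\alpha'}))$ is a subnet of $(\Lambda(x_\alpha))$, which converges weak$^*$ to $a$, so uniqueness of weak$^*$-limits forces $a=\Lambda(x)$, giving $\Lambda(x_\alpha)\to\Lambda(x)$ with $\|x\|_{cb}\le 1$. The main obstacle is the continuity statement of the third paragraph: one must both pin down the explicit form of the intrinsic weak$^*$-topology and match it, through Lemma~\ref{lemma::lambdahat}, to the pairing of $\Lambda(y_\beta)$ against the family $\lambda(\omega)\hat\omega$, for which the density of that family in $L^1(\hat\G)$ is the essential ingredient; once these are in place the remainder is formal.
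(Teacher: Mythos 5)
Your argument is correct, but it takes a genuinely different route from the paper. The paper works entirely ``downstairs'': it takes the weak$^*$-limit $y$ of $(\Lambda(x_\alpha))$ in $L^\infty(\hat\G)$ and builds the multiplier by hand, defining $L$ on the dense set $X=\{\hat\lambda(\hat\omega)^*\}\subseteq C_0(\G)$ by $\ip{L(\omega)}{\hat\lambda(\hat\omega^*)^*}=\ip{\hat\lambda(y^*\hat\omega^*)^*}{\omega}$, checking that $L$ lands in $L^1(\G)$ via the ideal property and density of products, verifying the multiplier identity, and obtaining complete boundedness of $L$ and $R$ because they are point-weak$^*$ limits of the uniformly cb-bounded maps $L_{x_\alpha}$, $R_{x_\alpha}$. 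You instead invoke the already-established dual CCBA structure on $M_{cb}(L^1(\G))$ (the operator-space version of Theorem~\ref{lcqg_predual_mult_dba}, via Theorem~\ref{op_sp_mult_is_dba}) to get weak$^*$-compactness of the unit ball, extract a convergent subnet, and reduce everything to the continuity statement that intrinsic weak$^*$-convergence of bounded nets implies $\Lambda$-weak$^*$-convergence; your verification of that statement, through Lemma~\ref{lemma::lambdahat} and the density of $\lin\{\lambda(\omega)\hat\omega\}$ in $L^1(\hat\G)$, is essentially the computation the paper performs later in the final theorem of this subsection. There is no circularity, since Theorem~\ref{lcqg_predual_mult_dba} is proved independently of the present proposition; but be aware that you are front-loading half of the subsequent comparison theorem and leaning on the operator-space upgrade of the earlier duality result, which the paper uses but never formally isolates (one must check, e.g., that $\iota:L^1(\G)\rightarrow M(\G)$ is a \emph{complete} isometry). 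What the paper's approach buys is a self-contained, constructive proof that exhibits $x$ explicitly and keeps the logical order clean ($\Lambda$-predual first, comparison of topologies afterwards); what yours buys is brevity and the observation that, once the two topologies are known to interact well, the existence of the limit multiplier is soft. (Both arguments, like the paper's own proof, actually give $\|x\|_{cb}\leq 1$; the ``$=1$'' in the statement appears to be a slip.)
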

\begin{proof}
Let $y\in L^\infty(\hat\G)$ be the weak$^*$ limit of $(\Lambda(x_\alpha))$,
and let
$X=\{ \hat\lambda(\hat\omega)^* : \hat\omega\in L^1(\hat\G) \}\subseteq C_0(\G)$.
Fix $\omega\in L^1(\G)$, and define $\mu:X\rightarrow\mathbb C$ by
\[ \mu\big( \hat\lambda(\hat\omega^*)^* \big) = \ip{\hat\lambda(y^*\hat\omega^*)^*}{\omega}. \]
As $\hat\lambda$ is an injection, it follows that $\mu$ is well-defined.
Clearly $\mu$ is linear.  Notice that $y^*\hat\omega^* = (\hat\omega y)^*$.
Then we calculate that
\begin{align*}
\ip{\hat\lambda(y^*\hat\omega^*)^*}{\omega} &=
\ip{\lambda(\omega)}{\hat\omega y}
= \lim_\alpha \ip{\Lambda(x_\alpha) \lambda(\omega)}{\hat\omega}
= \lim_\alpha \ip{\lambda(x_\alpha \omega)}{\hat\omega}
= \lim_\alpha \ip{\hat\lambda(\hat\omega^*)^*}{x_\alpha\omega}.
\end{align*}
It follows that
\[ \big|\mu\big( \hat\lambda(\hat\omega^*)^* \big)\big| \leq
\|\omega\| \|\hat\lambda(\hat\omega^*)^*\|. \]
So $\mu$ is a bounded linear map, which extends by continuity to a member
of $C_0(\G)^* = M(\G)$, as $X$ is dense in $C_0(\G)$.  We have hence defined
a bounded linear map $L:L^1(\G) \rightarrow M(\G)$ which satisfies
\[ \ip{L(\omega)}{\hat\lambda(\hat\omega^*)^*}
= \ip{\hat\lambda(y^*\hat\omega^*)^*}{\omega}
= \lim_\alpha \ip{\lambda(x_\alpha \omega)}{\hat\omega}. \]

Let $\omega_1,\omega_2\in L^1(\G)$.  Then, by the previous lemma,
\begin{align*}
\ip{L(\omega_1 \omega_2)}{\hat\lambda(\hat\omega^*)^*}
&= \lim_\alpha \ip{\lambda(x_\alpha \omega_1 \omega_2)}{\hat\omega}
= \lim_\alpha \ip{\lambda(x_\alpha \omega_1)}{\lambda(\omega_2)\hat\omega} \\
&= \ip{L(\omega_1)}{\hat\lambda(\hat\omega^* \lambda(\omega_2)^*)^*}
= \ip{L(\omega_1)}{\omega_2\cdot\hat\lambda(\hat\omega^*)^*}
= \ip{L(\omega_1)\omega_2}{\hat\lambda(\hat\omega^*)^*}.
\end{align*}
It follows that $L(\omega_1\omega_2)\in L^1(\G)$ as $L^1(\G)$ is an ideal
in $M(\G)$.  As $\{ \omega_1 \omega_2 : \omega_1,\omega_2\in L^1(\G) \}$
is dense in $L^1(\G)$ it follows by continuity that $L$ actually maps into
$L^1(\G)$.  Furthermore, the calculation shows that $L$ is a left-multiplier.

Similarly, we can construct a bounded linear map $R:L^1(\G)\rightarrow M(\G)$
which satisfies
\[ \ip{R(\omega)}{\hat\lambda(\hat\omega^*)^*} = 
\ip{\hat\lambda(\hat\omega^*y^*)^*}{\omega}
= \lim_\alpha \ip{\lambda(\omega x_\alpha)}{\hat\omega}. \]
We can then analogously show that $R$ maps into $L^1(\G)$ and is a right-multiplier.
Then, for $\omega_1,\omega_2\in L^1(\G)$, we have that
\begin{align*} \ip{\omega_1 L(\omega_2)}{\hat\lambda(\hat\omega^*)^*}
&= \ip{L(\omega_2)}{\hat\lambda(\hat\omega^*)^* \cdot \omega_1}
= \ip{L(\omega_2)}{\hat\lambda( \lambda(\omega_1)^*\hat\omega^*)^*} \\
&= \lim_\alpha \ip{\lambda(x_\alpha \omega_2)}{\hat\omega\lambda(\omega_1)}
= \lim_\alpha \ip{\lambda(\omega_1 x_\alpha \omega_2)}{\hat\omega} \\
&= \lim_\alpha \ip{\lambda(\omega_1 x_\alpha)}{\lambda(\omega_2)\hat\omega}
= \ip{R(\omega_1)}{\hat\lambda(\omega^* \lambda(\omega_2)^*)^*} \\
&= \ip{R(\omega_1)}{\omega_2 \cdot \hat\lambda(\omega^*)^*}
= \ip{R(\omega_1)\omega_2}{\hat\lambda(\omega^*)^*}.
\end{align*}
It follows that $(L,R)\in M(L^1(\G))$.

We now observe that $L(\omega)$, in $M(\G)$, is the weak$^*$-limit
of $(x_\alpha\omega)$.  As $(x_\alpha)$ is a bounded net in $M_{cb}(L^1(\G))$,
it follows that $\|L\|_{cb} \leq \sup_\alpha \|x_\alpha\|_{cb}\leq 1$.  The
same is true for $R$, so that $x=(L,R)\in M_{cb}(L^1(\G))$ with $\|x\|\leq 1$.
Then we have that
\begin{align*} \ip{\Lambda(x) \lambda(\omega)}{\hat\omega}
&= \ip{\lambda(L(\omega))}{\hat\omega}
= \ip{L(\omega)}{\hat\lambda(\hat\omega^*)^*}
= \ip{\hat\lambda(y^*\hat\omega^*)^*}{\omega} \\
&= \ip{\lambda(\omega)}{\hat\omega y}
= \ip{y\lambda(\omega)}{\hat\omega}. \end{align*}
So $\Lambda(x)\lambda(\omega) = y\lambda(\omega)$, and similarly,
$\lambda(\omega)\Lambda(x) = \lambda(\omega)y$.  By continuity, it follows
that $y\in M(C_0(\hat\G))$, and hence also that $y=\Lambda(x)$ as required.
\end{proof}

We now prove a general, abstract result.  This is probably folklore, but we do
not know of a reference.  We state this here in the operator space setting, but it
has an obvious Banach space counterpart.

\begin{proposition}
Let $A$ and $E$ be operator spaces, and let $\theta:A\rightarrow E^*$ be an injective
complete contraction such that the image of the closed unit ball
of $A$, under $\theta$, is weak$^*$-closed in $E^*$.
Suppose further that $\theta(A)$ is weak$^*$ dense in $E^*$.

Let $Q$ be the closure of $\theta^*\kappa_E(E)$ in $A^*$, so that $Q$ is an
operator space.  Then $Q^*$ is canonically completely isometrically isomorphic
to $A$, so $Q$ is a predual for $A$.
With respect to this predual, for a bounded net $(a_\alpha)$ in $A$, and $a\in A$,
we have that $a_\alpha\rightarrow a$ in $Q^*$ if and only if
$\theta(a_\alpha)\rightarrow\theta(a)$ in $E^*$.

Furthermore, if $A$ is a CCBA, $E^*$ is a dual CCBA, and $\theta$ is an algebra
homomorphism, then $A$ is a dual CCBA with respect to the predual $Q$.
\end{proposition}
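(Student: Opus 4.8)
The plan is to prove everything through the single canonical map $j:A\to Q^*$ determined by $\ip{j(a)}{q}=\ip{q}{a}$ for $a\in A$, $q\in Q\subseteq A^*$; this is just the restriction of $\kappa_A$ to $Q$, hence a complete contraction, and it is injective because if $j(a)=0$ then in particular $\ip{\theta^*\kappa_E(x)}{a}=\ip{\theta(a)}{x}=0$ for all $x\in E$, so $\theta(a)=0$ and $a=0$. To see $j$ is isometric I would argue via polars. Write $\rho=\theta^*\kappa_E:E\to A^*$, so that $\overline{\rho(E)}=Q$ and $\ip{\rho(x)}{a}=\ip{\theta(a)}{x}$, and set $C=\theta(A_1)$ where $A_1$ is the closed unit ball of $A$; then $C$ is absolutely convex and, by hypothesis, weak$^*$-closed. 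A direct computation identifies the pre-polar of $C$ in $E$ as $C_\circ=\{x\in E:\|\rho(x)\|\le1\}$, so that, using density of $\rho(E)$ in $Q$,
\[ \|j(a)\|_{Q^*}=\sup_{x\in C_\circ}\big|\ip{\theta(a)}{x}\big|=\gamma_{(C_\circ)^\circ}(\theta(a)), \]
where $\gamma$ denotes the Minkowski gauge. The bipolar theorem gives $(C_\circ)^\circ=C$, and injectivity of $\theta$ gives $\theta(a)\in tC$ if and only if $\|a\|\le t$, whence $\gamma_C(\theta(a))=\|a\|$ and $j$ is isometric.

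For surjectivity I would first show $j(A_1)$ is weak$^*$-closed in $Q^*$. If $(a_\alpha)\subseteq A_1$ with $j(a_\alpha)\to\phi$ weak$^*$, then for each $x$ one has $\ip{\theta(a_\alpha)}{x}=\ip{j(a_\alpha)}{\rho(x)}\to\ip{\phi}{\rho(x)}$, so the bounded net $\theta(a_\alpha)$ converges weak$^*$ in $E^*$ to some $\psi$; since $C=\theta(A_1)$ is weak$^*$-closed, $\psi=\theta(a)$ with $a\in A_1$, and then $\phi$ and $j(a)$ agree on the dense set $\rho(E)$, so $\phi=j(a)\in j(A_1)$. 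As $j$ is isometric, $j(A)$ meets each ball of $Q^*$ in a weak$^*$-closed set, so $j(A)$ is weak$^*$-closed by Krein--Smulian; and $j(A)$ is weak$^*$-dense because $Q$ separates its points. Hence $j(A)=Q^*$, giving the isometric isomorphism $A\cong Q^*$. The same bounded-net computation yields the weak$^*$-characterisation: for bounded $(a_\alpha)$, using density of $\rho(E)$ in $Q$, we have $a_\alpha\to a$ weak$^*$ in $Q^*$ if and only if $\ip{\theta(a_\alpha)}{x}\to\ip{\theta(a)}{x}$ for all $x\in E$, i.e.\ $\theta(a_\alpha)\to\theta(a)$ weak$^*$ in $E^*$.

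The operator space refinement runs the identical argument at each matrix level, using the matricial (pre)polar and Krein--Smulian steps and the completely bounded form of Lemma~\ref{weakstar_lemma} to promote the isometric isomorphism to a complete one; this is the one place where genuine care is required, since a surjective isometry of operator spaces need not be completely isometric, and I expect it to be the main obstacle rather than the Banach-space skeleton.

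Finally, for the dual CCBA claim it suffices, by Lemma~\ref{weakstar_lemma}, to verify that multiplication is weak$^*$-continuous in each variable on bounded nets. Given a bounded net with $a_\alpha\to a$ weak$^*$ in $A$ and a fixed $b\in A$, the weak$^*$-characterisation gives $\theta(a_\alpha)\to\theta(a)$ weak$^*$ in $E^*$; since $E^*$ is a dual CCBA and $\theta$ is a homomorphism, $\theta(a_\alpha b)=\theta(a_\alpha)\theta(b)\to\theta(a)\theta(b)=\theta(ab)$ weak$^*$, and as $(a_\alpha b)$ is bounded the characterisation returns $a_\alpha b\to ab$ weak$^*$ in $A$, the right action being symmetric. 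Throughout, the key point to keep straight is that the weak$^*$-topology on $Q^*$ and the topology induced from $E^*$ via $\theta$ coincide on bounded sets, which is exactly what the characterisation in the second paragraph supplies.
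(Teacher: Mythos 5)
Your Banach-space skeleton is correct, and it takes a genuinely different route from the paper. You establish that $j$ is an isometry directly, via the bipolar theorem applied to $C=\theta(A_1)$ and the identification of the pre-polar $C_\circ$ with $\{x:\|\theta^*\kappa_E(x)\|\le1\}$, and you get surjectivity from weak$^*$-closedness of $j(A_1)$ plus Krein--Smulian. The paper instead identifies $Q^*$ with $A^{**}/Q^\perp$, takes a norm-preserving lift $\Phi\in\mathbb{M}_n(A^{**})$ of a given $\mu\in\mathbb{M}_n(Q^*)$, approximates $\Phi$ by a bounded net in $\mathbb{M}_n(A)$ via Goldstine, and uses the hypothesis to see that the resulting bounded weak$^*$-convergent net $(\theta(a_\alpha))$ has its limit in $\theta(\mathbb{M}_n(A))$. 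Your polar argument is arguably cleaner at the scalar level, and your handling of the dual CCBA claim (restricting to bounded nets and invoking Lemma~\ref{weakstar_lemma}) matches the paper's computation.

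The genuine gap is the operator-space upgrade, which you flag but do not close, and your proposed fix --- ``run the identical argument at each matrix level with matricial (pre)polars'' --- does not work as stated, for two reasons. First, the hypothesis only says that $\theta(A_1)$ is weak$^*$-closed at the first matrix level; nothing is assumed about $\theta_n(\mathbb{M}_n(A)_1)$ being weak$^*$-closed in $\mathbb{M}_n(E^*)$, so there is no set $C$ to which a level-$n$ bipolar theorem could be applied. Second, the norm on $\mathbb{M}_n(Q^*)\cong\mathcal{CB}(Q,\mathbb{M}_n)$ is a supremum over \emph{all} matrix levels of $Q$, not a supremum of scalar pairings against a single unit ball, so the gauge/polar calculus that computes $\|j(a)\|$ at level one has no verbatim matricial analogue. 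The paper's way around the first point is to reduce to matrix entries: if a bounded net in $\mathbb{M}_n(A)$ has $\theta$-image converging weak$^*$ in $\mathbb{M}_n(E^*)$, then each entry is a bounded net in $A$ whose image converges weak$^*$ in $E^*$, so the scalar hypothesis places the limit in $\theta(\mathbb{M}_n(A))$ --- at the cost of losing norm control, which is harmless for surjectivity of $\iota_n$ because the norm-preserving lift to $\mathbb{M}_n(A^{**})$ and Goldstine are what carry the norm. Some such device (or an additional hypothesis at higher matrix levels) is needed; your draft currently has only the $n=1$ case and a promissory note for the rest, and since the proposition is applied to produce an operator space predual of $M_{cb}(L^1(\G))$, the complete isometry is the part that actually matters.
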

\begin{proof}
As the image of $\theta$ is weak$^*$-dense, it follows that $j=\theta^*\kappa_E:
E\rightarrow A^*$ is an injection.  Then $Q$ is the closure of the image of $j$,
and we identify $Q^*$ with $A^{**}/Q^\perp$.  Let $q:A^{**}\rightarrow A^{**}/Q^\perp$
be the quotient map, so we wish to prove that $\iota = q\kappa_A:A\rightarrow Q^*$
is a completely isometric isomorphism.  Suppose that $\iota(a)=0$ for some $a\in A$.
Then, for $x\in E$, $0 = \ip{\iota(a)}{j(x)} = \ip{j(x)}{a} = \ip{\theta(a)}{x}$,
and so $\theta(a)=0$, so $a=0$.  Hence $\iota$ is injective.

For any $n\in\mathbb N$, let $(a_\alpha)$ be a bounded net in $\mathbb M_n(A)$
such that $(\theta(a_\alpha))$ converges weak$^*$ in $\mathbb M_n(E^*)$.  As
$\mathbb M_n(E^*)$ is, as a Banach space, isomorphic to $\mathbb M_n(E)^*$, we
see that each matrix component of $(\theta(a_\alpha))$ converges weak$^*$ in $E^*$.
By assumption, it follows that $(\theta(a_\alpha))$ converges weak$^*$ to something
in $\theta(\mathbb M_n(A))$; although note that we have lost norm control.

Let $\mu\in\mathbb M_n(Q^*)$, so there exists $\Phi\in\mathbb M_n(A^{**})$
with $q(\Phi)=\mu$ and $\|\Phi\|=\|\mu\|$.  As $\mathbb M_n(A)^{**} =
\mathbb M_n(A^{**})$, it follows that we can find a bounded net $(a_\alpha)$
in $\mathbb M_n(A)$ converging weak$^*$ to $\Phi$.  For $x\in\mathbb M_m(E)$,
it follows that
\[ \ipp{\mu}{j(x)} = \lim_\alpha \ipp{j(x)}{a_\alpha}
= \lim_\alpha \ipp{\theta(a_\alpha)}{x}, \]
so that the net $(\theta(a_\alpha))$ is weak$^*$-convergent in $E^*$, say to
$\lambda\in E^*$.  Thus $\ipp{\lambda}{x} = \ipp{\mu}{j(x)}$.  By hypothesis,
there exists $a\in\mathbb M_n(A)$, with $\theta(a)=\lambda$.
Then $\ipp{\mu}{j(x)} = \ipp{\theta(a)}{x} = \ipp{\iota(a)}{j(x)}$.  As $x$
was arbitrary, and $j$ has dense range, it follows that $\iota(a)=\mu$.
Thus $\iota$ is surjective, and as $\iota$ is automatically a complete contraction,
we may conclude $\iota$ is a complete isometry, as required.

Henceforth, we can identify $A$ as the dual of $Q$.
Let $(a_\alpha)$ be a bounded net in $A$, and let $a\in A$.  If
$a_\alpha\rightarrow a$ weak$^*$, then by construction, $\theta(a_\alpha)
\rightarrow \theta(a)$ in $E^*$.  Conversely, if $\theta(a_\alpha) \rightarrow
\theta(a)$ in $E^*$, then as $a_\alpha$ is a bounded net, and $\iota(E)$ is
dense in the predual of $A$, it follows that $a_\alpha\rightarrow a$ weak$^*$ in $A$.

Suppose that $E^*$ is a dual CCBA,
that $A$ is a CCBA, and that $\theta$ a homomorphism.  Let
$(a_\alpha)$ be a net in $A$ converging weak$^*$ to $a\in A$.  For $b\in A$ and $x\in E$,
\begin{align*} \lim_\alpha \ip{a_\alpha b}{\iota(x)}
&= \lim_\alpha \ip{\theta(a_\alpha)\theta(b)}{x}
= \lim_\alpha \ip{\theta(a_\alpha)}{\theta(b)\cdot x}
= \lim_\alpha \ip{a_\alpha}{\iota(\theta(b)\cdot x)} \\
&= \ip{a}{\iota(\theta(b)\cdot x)} = \ip{ab}{\iota(x)}. \end{align*}
Thus $a_\alpha b\rightarrow ab$ weak$^*$ in $A$; similarly, $ba_\alpha\rightarrow
ba$.  So $A$ is a completely contractive dual Banach algebra with predual $Q$.

\end{proof}

Combining the previous two propositions, we can construct an operator space
predual $Q$ for $M_{cb}(L^1(\G))$, which turns $M_{cb}(L^1(\G))$ into a dual CCBA.

\begin{theorem}
Let $\G$ be a locally compact quantum group, and form $Q$ as above.
The weak$^*$-topology induced on $M_{cb}(L^1(\G))$ agrees with that given by
the operator space version of Theorem~\ref{lcqg_predual_mult_dba}.
\end{theorem}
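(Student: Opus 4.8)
The plan is to deduce the statement from the uniqueness clause of (the operator space form of) Theorem~\ref{dual_mult_wstar_top}, rather than by exhibiting an explicit isomorphism of preduals. Write $\theta:M(\G)\rightarrow M_{cb}(L^1(\G))$ for the induced map featuring in Theorem~\ref{lcqg_predual_mult_dba}, let $\sigma_Q$ denote the weak$^*$-topology on $M_{cb}(L^1(\G))$ coming from the predual $Q$, and let $\sigma$ denote the topology produced by Theorem~\ref{lcqg_predual_mult_dba}. Combining the previous two propositions already shows that $\sigma_Q$ makes $M_{cb}(L^1(\G))$ a dual CCBA, so by the uniqueness part of Theorem~\ref{dual_mult_wstar_top} (in its operator space form, cf.\ Theorem~\ref{op_sp_mult_is_dba}) it is enough to check that $\sigma_Q$ satisfies the characterising net condition: for a bounded net $(b_\alpha)$ in $M(\G)$ and $b\in M(\G)$, one has $b_\alpha\rightarrow b$ weak$^*$ in $M(\G)$ if and only if $\theta(b_\alpha)\rightarrow\theta(b)$ in $\sigma_Q$.

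The algebraic heart of the matter is the identity $\Lambda\circ\theta=\lambda$ on $M(\G)$, where $\Lambda:M_{cb}(L^1(\G))\rightarrow M(C_0(\hat\G))$ is the completely contractive homomorphism of Theorem~\ref{thm:rep_mcb} and $\lambda:M(\G)\rightarrow M(C_0(\hat\G))\subseteq L^\infty(\hat\G)$ is the canonical extension of the left regular representation, $\lambda(\mu)=(\mu\otimes\iota)(W)$. Indeed, for $b\in M(\G)$ and $\omega\in L^1(\G)$ we have $\theta(b)=(L_b,R_b)$ with $L_b(\omega)=b\omega\in L^1(\G)$, whence $\Lambda(\theta(b))\lambda(\omega)=\lambda(b\omega)=\lambda(b)\lambda(\omega)$ using that $\lambda$ is a homomorphism; since $\lambda(L^1(\G))$ is dense in $C_0(\hat\G)$ and acts non-degenerately, this forces $\Lambda(\theta(b))=\lambda(b)$. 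Now the abstract proposition defining $Q$ tells us that, for bounded nets, $x_\alpha\rightarrow x$ in $\sigma_Q$ exactly when $\Lambda(x_\alpha)\rightarrow\Lambda(x)$ weak$^*$ in $L^\infty(\hat\G)$. As $\theta$ is a contraction, the net $(\theta(b_\alpha))$ is bounded, so applying this with $x_\alpha=\theta(b_\alpha)$ reduces the required equivalence to: $b_\alpha\rightarrow b$ weak$^*$ in $M(\G)$ if and only if $\lambda(b_\alpha)\rightarrow\lambda(b)$ weak$^*$ in $L^\infty(\hat\G)$.

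This last equivalence is precisely an instance of Lemma~\ref{weakstarlemma} applied to the bounded linear map $\lambda:M(\G)\rightarrow L^\infty(\hat\G)$, so it remains to verify the two hypotheses (2) of that lemma. First, $\lambda$ is weak$^*$-continuous: for each $\hat\omega\in L^1(\hat\G)$ the functional $\mu\mapsto\ip{\lambda(\mu)}{\hat\omega}=\ip{\mu}{(\iota\otimes\hat\omega)(W)}$ is weak$^*$-continuous because $(\iota\otimes\hat\omega)(W)\in C_0(\G)=M(\G)_*$, and this simultaneously identifies the preadjoint as $\lambda_*(\hat\omega)=(\iota\otimes\hat\omega)(W)$. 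Second, $\lambda_*$ has dense range: since $W\in L^\infty(\G)\vnten L^\infty(\hat\G)$, slicing $W$ against $\omega\in\mc B(H)_*$ depends only on $\omega|_{L^\infty(\hat\G)}\in L^1(\hat\G)$, so $\{(\iota\otimes\hat\omega)(W):\hat\omega\in L^1(\hat\G)\}=\{(\iota\otimes\omega)(W):\omega\in\mc B(H)_*\}$, and the latter is dense in $C_0(\G)$ by the very definition of $C_0(\G)$. With both hypotheses in hand, Lemma~\ref{weakstarlemma} delivers the equivalence, so $\sigma_Q$ satisfies the net condition of Theorem~\ref{dual_mult_wstar_top}, and uniqueness gives $\sigma_Q=\sigma$.

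I expect the only delicate point to be the bookkeeping between the two distinct weak$^*$ worlds: convergence of $\theta(b_\alpha)$ in $\sigma_Q$ is governed by $L^\infty(\hat\G)$ through $\Lambda$, whereas the target topology is governed by $M(\G)$, and the bridge is the intertwining identity $\Lambda\circ\theta=\lambda$. Establishing that identity cleanly requires knowing that $\lambda$ is a genuine homomorphism on all of $M(\G)$ (not merely on $L^1(\G)$) and that $\lambda(L^1(\G))$ is non-degenerate in $C_0(\hat\G)$; both are standard consequences of the fundamental multiplicative unitary $W$, but they must be invoked with care so that the reduction to Lemma~\ref{weakstarlemma} is legitimate.
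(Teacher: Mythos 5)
Your proposal is correct and shares the paper's overall skeleton: both arguments invoke the uniqueness clause of Theorem~\ref{dual_mult_wstar_top} (in its operator space form) and reduce everything to verifying the characterising net condition for the map $M(\G)\rightarrow M_{cb}(L^1(\G))$, using the abstract proposition to translate $\sigma_Q$-convergence into weak$^*$-convergence of $\Lambda$-images in $L^\infty(\hat\G)$. Where you diverge is in how that net condition is verified. The paper never extends $\lambda$ beyond $L^1(\G)$: it pairs $\Lambda(\phi(b_\alpha))$ against functionals of the form $\lambda(\omega)\hat\omega$ (first checking these are linearly dense in $L^1(\hat\G)$, via $\mf n_\varphi$ being a left ideal) and then shuttles back to $M(\G)$ using the duality identities of Lemma~\ref{lemma::lambdahat}, e.g.\ $\ip{\lambda(b_\alpha\omega)}{\hat\omega}=\ip{b_\alpha\omega}{\hat\lambda(\hat\omega^*)^*}$ together with separate weak$^*$-continuity of the product in $M(\G)$. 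You instead define $\lambda$ on all of $M(\G)$ by slicing $W$, establish the intertwining $\Lambda\circ\theta=\lambda$, and then apply Lemma~\ref{weakstarlemma}. The two are really dual formulations of the same computation --- note that $\hat\lambda(\hat\omega^*)^*=(\iota\otimes\hat\omega)(W)$, so your preadjoint $\lambda_*$ is exactly the map the paper manipulates via Lemma~\ref{lemma::lambdahat} --- but your packaging is more conceptual and makes the role of Lemma~\ref{weakstarlemma} explicit. The cost, which you rightly flag, is that you import facts not established in the paper: that $W$ lies in $M(C_0(\G)\otimes C_0(\hat\G))$ (and in $L^\infty(\G)\vnten L^\infty(\hat\G)$) so that slicing by $\mu\in C_0(\G)^*$ makes sense, and the relation $(\Delta\otimes\iota)(W)=W_{13}W_{23}$ needed for $\lambda(b\omega)=\lambda(b)\lambda(\omega)$. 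These are standard for locally compact quantum groups, so there is no gap in substance, but the paper's route is self-contained given its Lemma~\ref{lemma::lambdahat}, whereas yours requires citing this extra structure of the multiplicative unitary.
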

\begin{proof}
The weak$^*$-topology on $M_{cb}(L^1(\G))$ constructed by
Theorem~\ref{lcqg_predual_mult_dba} is unique under the conditions that
$M_{cb}(L^1(\G))$ is a dual CCBA, and for a bounded net $(b_\alpha)$ is
$M(\G)$ and $b\in M(\G)$, we have that $b_\alpha \rightarrow b$ weak$^*$
if and only if $\phi(b_\alpha) \rightarrow \phi(b)$ weak$^*$ in $M_{cb}(L^1(\G))$.
Here $\phi:M(\G)\rightarrow M_{cb}(L^1(\G))$ is the canonical map.

The weak$^*$-topology induced by $Q$ satisfies that a bounded net $(a_\alpha)$
in $M_{cb}(L^1(\G))$ converges weak$^*$ to $a$ if and only if $\Lambda(a_\alpha)
\rightarrow \Lambda(a)$ in $L^\infty(\hat\G)$.

As in our discussion at the start of Section~\ref{lcqg}, we can identify $L^1(\G)$
with the closed linear span of elements of the form $x\varphi y^*$ where
$x,y\in\mf n_\varphi$.  As $\mf n_\varphi$ is a left ideal, and as $C_0(\G)$
has an approximate identity, it follows that $\{ \hat x \hat\omega:
\hat x\in C_0(\hat\G), \hat\omega \in L^1(\hat\G) \}$ is linearly dense in
$L^1(\hat\G)$.  Thus also $\{ \lambda(\omega) \hat\omega:
\omega\in L^1(\G), \hat\omega \in L^1(\hat\G) \}$ is linearly dense in $L^1(\hat\G)$.

So, let $(b_\alpha)$ be a bounded net in $M(\G)$.  If $b_\alpha\rightarrow b$ weak$^*$,
then for $\hat\omega\in L^1(\G)$,
\begin{align*}
\lim_\alpha \ip{\Lambda(\phi(b_\alpha))}{\lambda(\omega)\hat\omega} &=
\lim_\alpha \ip{\lambda(b_\alpha\omega)}{\hat\omega}
= \lim_\alpha \ip{b_\alpha\omega}{\hat\lambda(\hat\omega^*)^*}
= \ip{b\omega}{\hat\lambda(\hat\omega^*)^*} \\
&= \ip{\Lambda(\phi(b))}{\lambda(\omega)\hat\omega}.
\end{align*}
As $(b_\alpha)$, and hence also $(\Lambda(\phi(b_\alpha)))$, is a bounded net,
this is enough to show that $\Lambda(\phi(b_\alpha))\rightarrow\Lambda(\phi(b))$
weak$^*$ in $L^\infty(\hat\G)$.  So $\phi(b_\alpha)\rightarrow\phi(b)$ weak$^*$
with respect to the predual $Q$.
Conversely, if $\phi(b_\alpha)\rightarrow\phi(b)$ weak$^*$ with respect to the
predual $Q$, then, by the previous calculation, and Lemma~\ref{lemma::lambdahat},
\begin{align*} \lim_\alpha \ip{b_\alpha}{\hat\lambda(\hat\omega^*\lambda(\omega)^*)^*}
&= \lim_\alpha \ip{b_\alpha}{\omega\cdot\hat\lambda(\hat\omega^*)^*}
= \lim_\alpha \ip{\Lambda(\phi(b_\alpha))}{\lambda(\omega)\hat\omega} \\
&= \ip{\Lambda(\phi(b))}{\lambda(\omega)\hat\omega}
= \ip{b}{\omega\cdot\hat\lambda(\hat\omega^*)^*}
= \ip{b}{\hat\lambda(\hat\omega^*\lambda(\omega)^*)^*}.
\end{align*}
By density, this is again enough to show that $b_\alpha\rightarrow b$ weak$^*$ in
$M(\G)$.

So the weak$^*$-topology induced by $Q$ satisfies the uniqueness condition from
Theorem~\ref{lcqg_predual_mult_dba}, and hence the proof is complete.
\end{proof}

\section{Multiplier Hopf Convolution algebras}

In the final section of this paper, we turn now to the study of \emph{Hopf Convolution
algebras}, as defined by Effros and Ruan in \cite{ERhc}.  We have seen already the
notion of a Hopf von Neumann algebra $(M,\Delta)$.  As $\Delta$ is normal, we can turn
$M_*$ into a completely contractive Banach algebra.  We tend to think of $M$ and $M_*$
as being two facets of the same object, but this would mean that $M_*$ should also
carry a coproduct which is induced by the product on $M$.  In the commutative case,
we can certainly do this.  Indeed, let $G$ be a locally compact group, and define
\begin{align*} m_*:L^1(G)\rightarrow & M(L^1(G)\proten L^1(G)) = M(G\times G); \\
&\ip{m_*(a)}{f} = \int_G a(s) f(s,s) \ ds
\qquad (f\in C_0(G\times G), a\in L^1(G)). \end{align*}
Notice the natural appearance of a multiplier algebra here.  This idea was, to our
knowledge, first explored by Quigg in \cite{quigg}, who worked in the Banach algebra
setting.  He showed that if $m:M\otimes M\rightarrow M$ is the product of a von Neumann
algebra $M$, then $m$ drops to a map $m_*:M_* \rightarrow M_* \proten M_*$ if and only
if $M$ is the direct sum of matrix algebras of bounded dimension.  (It would appear
that using multiplier algebras gives us no further examples).

The situation appears no better in the operator space setting, at least if we
use the projective tensor product.  However, Effros and Ruan showed that if we instead
work with the \emph{extended Haagerup tensor product}, then we can always define
$m_*: M_* \rightarrow M_* \ehten M_*$.  However, this tensor product is rather large:
for example, the algebraic tensor product $M_* \otimes M_*$ need not be norm dense in it.
We shall show that, in many cases, it is possible instead to work with the multiplier
algebra of the usual Haagerup tensor product $M_* \hten M_*$.  We term such a structure
a \emph{multiplier Hopf convolution algebra}.  We then go on to study the basics of a
corepresentation theory for such algebras.  In \cite{vv}, Vaes and Van Daele study
C$^*$-algebraic objects termed ``Hopf C$^*$-algebras'': it is interesting to note that
multiplier algebras associated to Haagerup tensor products play a central role in the
theory.

\subsection{Haagerup tensor products}\label{haatenprod}

Let us quickly review the Haagerup, and related, tensor products.  Let $E$ and $F$ be
operator spaces.  The \emph{Haagerup tensor norm} on $\mathbb M_n(E\otimes F)$ is defined by
\[ \| \tau \|^h_n = \inf\Big\{ \|u\| \|v\| : \tau_{ij} = \sum_{k=1}^m u_{ik} \otimes v_{kj},
u\in \mathbb M_{nm}(E), v\in\mathbb M_{mn}(F) \Big\}, \]
where $\tau\in\mathbb M_n(E\otimes F)$.
The completion is denoted by $E \hten F$.  This tensor product is both injective and
projective, and is self-dual, but it is not commutative.  For more details, see
\cite[Chapter~9]{ER} or \cite[Chapter~5]{pisier}

The \emph{extended Haagerup tensor product} is
\[ E \ehten F = (E^*\hten F^*)^*_\sigma, \]
the separately weak$^*$-continuous functionals on $E^* \hten F^*$.  Notice then that
$E \hten F$ embeds completely isometrically into $E\ehten F$.

If $\phi_i :E_i\rightarrow F_i$ are complete contractions between operator spaces,
then we have a complete contraction
\[ \phi_1\otimes\phi_2:E_1\hten E_2 \rightarrow F_1\hten F_2. \]
If the $\phi_i$ are complete isometries, then so is $\phi_1\otimes\phi_2$.
The same properties hold for the extended Haagerup tensor product.
If the $\phi_i$ are complete quotient maps, then $\phi_1\otimes\phi_2$, mapping
from between the Haagerup tensor products, is a complete quotient map.  This
property is not true for the extended Haagerup tensor product.

Recall that the map $E\proten F\rightarrow\mc{CB}(E^*,F)$
need not be injective; denote the resulting quotient of $E\proten F$ by
$E \nucten F$.  The \emph{shuffle map} is
\[ E_1 \otimes E_2 \otimes F_1 \otimes F_2 \rightarrow
E_1 \otimes F_1 \otimes E_1 \otimes F_2; \quad
a\otimes b \otimes c \otimes d \mapsto
a \otimes c \otimes b \otimes d. \]
This extends to a complete contraction
\[ S_e:(E_1\ehten E_2)\nucten (F_1\ehten F_2) \rightarrow
(E_1\nucten F_1) \ehten (E_2\nucten F_2). \]

Finally, let $M$ and $N$ be von Neumann algebras.  Then $M_* \nucten N_*
= M_* \proten N_*$ (this follows by duality and \cite[Theorem~7.2.4]{ER}).
Suppose that $M_*$ and $N_*$ are completely contractive Banach algebras
for products $\Delta^M_*$ and $\Delta^N_*$.
Then $M_* \ehten N_*$ is a completely contractive Banach algebra: the
product is just the composition of the maps
\begin{align*} (M_* \ehten N_*)\proten (M_*\ehten N_*) &\rightarrow
(M_* \ehten N_*)\nucten (M_*\ehten N_*) \\ &\rightarrow
(M_* \proten M_*) \ehten (N_*\proten N_*)
\rightarrow M_* \ehten N_*. \end{align*}
Furthermore, the multiplication map $m:M\otimes M\rightarrow M$ induces
a complete contraction $m_*:M_*\rightarrow M_*\ehten M_*$ which is a
homomorphism between the algebras $M_*$ and $M_*\ehten M_*$.
For further details, see \cite{ERhc}.

One might wonder how this relates to the classical case when $M=L^\infty(G)$.
It is shown in, for example \cite{sin}, that $M \hten M = M \otimes^\gamma M$,
where $\otimes^\gamma$ denotes the Banach space projective tensor norm (to
avoid confusion), with equivalent norms.
As the Haagerup tensor product is self-dual, it is not hard (compare
with the proof of Lemma~\ref{ex_two_fact} below) to show that
\[ L^1(G) \ehten L^1(G) = \{ T\in\mc B(L^\infty(G),L^1(G)) : T^*(L^\infty(G))
\subseteq L^1(G) \}, \]
again, with equivalent norms.  Then, for $a\in L^1(G)$, $m_*(a)\in L^1(G) \ehten
L^1(G)$ is identified with the map $L^\infty(G) \rightarrow L^1(G); f\mapsto fa$
where $fa$ is the point-wise product.

If we now do the same argument again with $C_0(G)$, we find that
$M(G) \ehten M(G) = \mc B(C_0(G),M(G))$.  Then we should have an inclusion
$L^1(G) \ehten L^1(G) \rightarrow M(G) \ehten M(G)$.  Under our identifications,
this is the map sending $T\in\mc B(L^\infty(G),L^1(G))$ to $\hat T\in\mc B(C_0(G),M(G))$,
where $\hat T$ is the composition
\[ \xymatrix{ C_0(G) \ar[r] & L^\infty(G) \ar[r]^T &
L^1(G) \ar[r] & M(G). } \]

At the beginning of this section, we considered the map $m_*':L^1(G) \rightarrow
M(G\times G)$, where, for $a\in L^1(G)$,
\[ \ip{m_*'(a)}{F} = \int_G F(s,s) a(s) \ ds
\qquad (F\in C_0(G\times G)). \]
We can use this to define $T:C_0(G)\rightarrow M(G)$ by
\[ \ip{T(f)}{g} = \ip{\mu}{f\otimes g} \qquad (f,g\in C_0(G)). \]
Then, for $f,g\in C_0(G)$,
\[ \ip{T(f)}{g} = \int_G f(s) g(s) a(s) \ ds
= \ip{m_*(a)(f)}{g}. \]
So $m_*$ is ``the same map'' as $m_*'$, but regarded as a map into
$L^1(G)\ehten L^1(G) \subseteq M(G)\ehten M(G)$ instead of $M(G\times G)$.

\subsection{Multiplier algebras}

Again, let $M$ and $N$ be von Neumann algebras, and suppose that $M_*$
and $N_*$ are completely contractive Banach algebras.  Then $M_*\hten N_*$
is a completely contractive Banach algebra.  Indeed, we have a complete
contraction $M_* \hten N_* \rightarrow M_* \ehten N_*$ and so
we have a map
\[ (M_* \hten N_*) \proten (M_* \hten N_*) \rightarrow M_* \ehten N_* \]
induced by the product on $M_*\ehten N_*$.  However, $M_*\hten N_*$ is
a closed subspace of $M_*\ehten N_*$, and so, by density, the product map
takes $(M_* \hten N_*) \proten (M_* \hten N_*)$ into $M_* \hten N_*$.
Hence $M_* \hten N_*$ is a CCBA.
In this section, we shall investigate if $M_* \ehten M_*$ may be replaced
by $M(M_* \hten M_*)$ or $M_{cb}(M_* \hten M_*)$.

A useful fact about the Haagerup tensor product is that it is \emph{self-dual}:
in particular, \cite[Theorem~9.4.7]{ER} shows that, for any operator spaces
$E$ and $F$, the natural map $E^*\otimes F^*\rightarrow (E\hten F)^*$ extends
to a complete isometry $E^*\hten F^*\rightarrow (E\hten F)^*$.  A useful
consequence of this is the following.  By \cite[Theorem~9.2.1]{ER} the
identity on $E\otimes F$ extends to a complete contraction $E\hten F
\rightarrow \mc{CB}(E^*,F)$.  Let $\tau\in E\hten F$, and let $T\in
\mc{CB}(E^*,F)$ be the induced map, so that
\[ \ip{\lambda}{T(\mu)} = \ip{\mu\otimes\lambda}{\tau}
\qquad (\mu\in E^*, \lambda\in F^*). \]
If $T=0$, then by linearity and continuity, $\tau$ annihilates all of
$E^*\hten F^*$, and so as the map $E^{**} \hten F^{**} \rightarrow
(E^*\hten F^*)$ is also a complete isometry, it follows that $(\kappa_E\otimes
\kappa_F)(\tau)=0$.  Hence $\tau=0$, so that the map $E\hten F \rightarrow
\mc{CB}(E^*,F)$ is injective.

\begin{lemma}\label{lemma::htenfaith}
Let $\G$ be a locally compact quantum group.  Then $L^1(\G) \hten L^1(\G)$ is faithful.
\end{lemma}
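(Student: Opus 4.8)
The plan is to take $\tau\in L^1(\G)\hten L^1(\G)$ with $\sigma\cdot\tau\cdot\sigma'=0$ for all $\sigma,\sigma'\in L^1(\G)\hten L^1(\G)$ and to prove $\tau=0$ by showing that $\ip{\mu\otimes\lambda}{\tau}=0$ for every elementary tensor $\mu\otimes\lambda\in L^\infty(\G)\hten L^\infty(\G)$. Since the algebraic tensors span a dense subspace of $L^\infty(\G)\hten L^\infty(\G)=(L^1(\G)\hten L^1(\G))^*$, and the dual separates points, this will force $\tau=0$. Two ingredients drive the argument: first, that $L^1(\G)$ is \emph{itself} faithful (by \cite[Proposition~1]{nh}); second, that $\tau$, living in $L^1(\G)\hten L^1(\G)\subseteq L^1(\G)\ehten L^1(\G)=(L^\infty(\G)\hten L^\infty(\G))^*_\sigma$, pairs \emph{separately weak$^*$-continuously} against $L^\infty(\G)\times L^\infty(\G)$ by the very definition of the extended Haagerup tensor product.

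First I would record the basic computation. Write $\mu_{a,a'}\in L^\infty(\G)$ for the translate $a'\cdot\mu\cdot a$, i.e.\ the functional $f\mapsto\ip{\mu}{afa'}$ on $L^1(\G)$. Using that the product on $L^1(\G)\hten L^1(\G)$ is componentwise convolution on elementary tensors, one checks for $\sigma=a\otimes b$, $\sigma'=a'\otimes b'$ and $\mu,\lambda\in L^\infty(\G)$ that
\[ \ip{\mu\otimes\lambda}{\sigma\cdot\tau\cdot\sigma'}=\ip{\mu_{a,a'}\otimes\lambda_{b,b'}}{\tau}. \]
This is immediate when $\tau$ is an elementary tensor, and both sides are bounded linear in $\tau$ (the left being multiplication by fixed elements followed by pairing with $\mu\otimes\lambda$, the right a pairing with a fixed functional), so it extends to all $\tau$ by density of $L^1(\G)\otimes L^1(\G)$. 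Hence the hypothesis yields $\ip{\mu_{a,a'}\otimes\lambda_{b,b'}}{\tau}=0$ for all $a,a',b,b'\in L^1(\G)$ and $\mu,\lambda\in L^\infty(\G)$.

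Next I would convert faithfulness of $L^1(\G)$ into a density statement. The preannihilator in $L^1(\G)$ of $\lin\{\mu_{a,a'}\}$ is $\{f\in L^1(\G):afa'=0\ (a,a'\in L^1(\G))\}$, which is $\{0\}$ precisely because $L^1(\G)$ is faithful; so $\lin\{\mu_{a,a'}\}$ is weak$^*$-dense in $L^\infty(\G)$, and likewise for $\lin\{\lambda_{b,b'}\}$. Now I upgrade using separate weak$^*$-continuity: fixing $a,a',\mu$, the functional $\lambda'\mapsto\ip{\mu_{a,a'}\otimes\lambda'}{\tau}$ is weak$^*$-continuous and vanishes on the weak$^*$-dense subspace $\lin\{\lambda_{b,b'}\}$, hence vanishes identically; then fixing any $\lambda'\in L^\infty(\G)$, the weak$^*$-continuous functional $\mu'\mapsto\ip{\mu'\otimes\lambda'}{\tau}$ vanishes on the weak$^*$-dense $\lin\{\mu_{a,a'}\}$, hence everywhere. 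Thus $\ip{\mu'\otimes\lambda'}{\tau}=0$ for all $\mu',\lambda'$, and therefore $\tau=0$.

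The main obstacle is exactly this last upgrade. The translates $\mu_{a,a'}$ are only weak$^*$-dense in $L^\infty(\G)$ and are typically far from norm-dense (that is, $L^\infty(\G)$ is not a norm-essential $L^1(\G)$-bimodule), so a naive norm-density argument cannot work; one genuinely needs the separate weak$^*$-continuity of $\tau$ as a functional on $L^\infty(\G)\hten L^\infty(\G)$, which is supplied by the inclusion $L^1(\G)\hten L^1(\G)\subseteq L^1(\G)\ehten L^1(\G)$. Everything else is routine: the componentwise description of the product, the density of algebraic tensors, and the faithfulness of $L^1(\G)$.
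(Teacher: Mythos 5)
Your proof is correct and follows essentially the same route as the paper's: faithfulness of $L^1(\G)$ makes the translates weak$^*$-dense in $L^\infty(\G)$, and the separate weak$^*$-continuity coming from $L^1(\G)\hten L^1(\G)\subseteq L^1(\G)\ehten L^1(\G)$ (which the paper phrases instead as weak$^*$-continuity of the induced map $T:L^\infty(\G)\rightarrow L^1(\G)$) lets you pass from the translates to all of $L^\infty(\G)$. One small correction: $L^\infty(\G)\hten L^\infty(\G)$ is only completely isometrically \emph{embedded} in $(L^1(\G)\hten L^1(\G))^*$, not equal to it, but all you actually need is that the elementary tensors $\mu\otimes\lambda$ separate the points of $L^1(\G)\hten L^1(\G)$, which holds by the injectivity of $E\hten F\rightarrow\mc{CB}(E^*,F)$ recorded in the discussion just before the lemma.
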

\begin{proof}
We first note that, as $L^1(\G)$ is faithful, then $\{x\cdot\omega:x\in L^\infty(\G),
\omega\in L^1(\G)\}$ is linearly weak$^*$-dense in $L^\infty(\G)$.  Indeed, if
$\omega_0\in L^1(\G)$ annihilates this set, then
\[ 0 = \ip{x\cdot\omega}{\omega_0} = \ip{x}{\omega\omega_0}
\qquad (x\in L^\infty(\G), \omega\in L^1(\G)). \]
Thus $\omega\omega_0=$ for all $\omega\in L^1(\G)$, so $\omega_0=0$, as required.

Let $\tau\in L^1(\G) \hten L^1(\G)$ be such that $\sigma\tau=0$ for all
$\sigma\in L^1(\G) \hten L^1(\G)$.  Let $\tau$ induce a map
$T:L^\infty(\G)\rightarrow L^1(\G)$ as above.  Then, for all
$x,y\in L^\infty(\G)$ and $a,b\in L^1(\G))$,
\[ 0 = \ip{x\otimes y}{(a\otimes b)\tau} =
\ip{x\cdot a \otimes y\cdot b}{\tau} = \ip{y\cdot b}{T(x\cdot a)}. \]
By weak$^*$-continuity, $T=0$, so $\tau=0$, as required.
\end{proof}

From now on, let $\mathbb G=(M,\Delta)$ be a locally compact quantum group.
Let $H$ be a Hilbert space arising from the GNS construction applied to the
left Haar weight, and let $W\in\mc B(H\otimes H)$ be the multiplicative unitary.
Let $\sigma:H\otimes H\rightarrow H\otimes H$ be the swap map, $\sigma(\xi\otimes\eta)
=\eta\otimes\xi$.  We say that $W$ is \emph{regular} if $\{ (\omega\otimes\iota)(W\sigma)
: \omega\in\mc B(H)_*\}$ is dense in $\mc K(H)$, the compact operators on $H$.
If $M=L^\infty(G)$ or $VN(G)$ (or, more generally, is a Kac algebra) then $W$ is regular,
but there do exists locally compact quantum groups for which $W$ is not regular,
see \cite{baaj}.  For further details, see \cite[Section~7.3]{timm}.

\begin{theorem}\label{reg_gives_mult}
Let $(M,\Delta)$ be a locally compact quantum group with regular multiplicative
unitary $W$.  For $a\in M_*$, we have that $m_*(a)$ is in the idealiser of
$M_*\hten M_*$ in $M_*\ehten M_*$.
\end{theorem}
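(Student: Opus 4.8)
The plan is to work throughout with the identification $M_*\ehten M_* = (M\hten M)^*_\sigma$, under which the coproduct $m_*$ becomes completely explicit: for $a\in M_*$ and $x,y\in M$ one has $\ip{m_*(a)}{x\otimes y}=\ip{a}{xy}$, i.e. $m_*(a)$ is the slice map $M\to M_*,\ x\mapsto a\cdot x$. We must show that $m_*(a)$ idealises $M_*\hten M_*$, that is, $m_*(a)\cdot\tau\in M_*\hten M_*$ and $\tau\cdot m_*(a)\in M_*\hten M_*$ for every $\tau\in M_*\hten M_*$. Since $M_*\ehten M_*$ is a CCBA its product is jointly continuous, so left (resp.\ right) multiplication by the fixed element $m_*(a)$ is a continuous map $M_*\ehten M_*\to M_*\ehten M_*$; as $M_*\hten M_*$ is a closed subspace of $M_*\ehten M_*$ and $M_*\otimes M_*$ is norm-dense in it, it suffices to treat elementary tensors $\tau=b\otimes c$ with $a,b,c\in M_*$. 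The general case then follows by continuity and closedness.

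Next I would compute these products explicitly. Unwinding the product on $M_*\ehten M_*$ (the shuffle followed by the convolution products on each leg) and using that $\Delta$ is a homomorphism, one obtains, for $x,y\in M$,
\[ \ip{m_*(a)\cdot(b\otimes c)}{x\otimes y} = \ip{a}{\big[(\iota\otimes b)\Delta(x)\big]\,\big[(\iota\otimes c)\Delta(y)\big]} \]
and symmetrically
\[ \ip{(b\otimes c)\cdot m_*(a)}{x\otimes y} = \ip{a}{\big[(b\otimes\iota)\Delta(x)\big]\,\big[(c\otimes\iota)\Delta(y)\big]}. \]
(As a sanity check, the same computation with $b\otimes c$ replaced by $m_*(d)$ returns $m_*(ad)$, confirming the formula and that $m_*$ is a homomorphism.) Thus $m_*(a)\cdot(b\otimes c)$ is $m_*(a)$ precomposed with $L_b\otimes L_c$, where $L_b=(\iota\otimes b)\Delta$ and $L_c=(\iota\otimes c)\Delta$ are normal, completely bounded slice maps on $M$, so $L_b\otimes L_c:M\hten M\to M\hten M$ is completely bounded. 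The whole issue is therefore to show that this precomposition pushes $m_*(a)$ from $M_*\ehten M_*$ back into the smaller space $M_*\hten M_*$.

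This last membership is the hard part, and the only place regularity of $W$ is used: a priori the shuffled product lands only in the extended Haagerup tensor product. To control it I would pass to the multiplicative unitary, writing $\Delta(x)=W^*(1\otimes x)W$ so that the slices $L_b(x)$ are expressed through $W$, and then invoke regularity in the form given in the paper, namely that $\{(\omega\otimes\iota)(W\sigma):\omega\in\mc B(H)_*\}$ is dense in $\mc K(H)$ (equivalently, that $C_0(\G)$ and $C_0(\hat\G)$ jointly generate $\mc K(H)$). Regularity supplies a factorisation of the operator implementing the product through the compacts, which is exactly the row--column (finite-leg) structure distinguishing $M_*\hten M_*$ from $M_*\ehten M_*$: it lets one approximate the relevant slices by elements whose legs lie in $\mc K(H)$, so that the functional displayed above is a genuine Haagerup tensor. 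I expect exhibiting this factorisation, and checking that the approximation converges in the Haagerup norm rather than merely the extended norm, to be the main obstacle; by contrast the reduction to elementary tensors and the computation of the products are routine.

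Finally, the right-hand product $(b\otimes c)\cdot m_*(a)$ is handled in the same way, using the slices $(b\otimes\iota)\Delta$ in place of $(\iota\otimes b)\Delta$; alternatively one may apply the swap $\sigma$ and appeal to the regularity of $\hat W$, which is equivalent to that of $W$. Combining the two sides shows that $m_*(a)$ lies in the idealiser of $M_*\hten M_*$ in $M_*\ehten M_*$, as required.
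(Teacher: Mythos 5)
Your reduction to elementary tensors, the explicit formula $\ip{m_*(a)(b\otimes c)}{x\otimes y}=\ip{a}{[(\iota\otimes b)\Delta(x)][(\iota\otimes c)\Delta(y)]}$, and the identification of regularity of $W$ as the engine of the proof are all correct and match the paper's setup. But the proposal stops exactly where the proof begins: you describe "exhibiting this factorisation, and checking that the approximation converges in the Haagerup norm" as "the main obstacle", and then do not exhibit it. That step is the entire content of the theorem. Concretely, the paper writes $a=\omega_{\xi_0,\eta_0}$ and $c=\omega_{\xi_2,\eta_2}$, realises $m_*(a)(b\otimes c)$ as a map $T=\beta\alpha\in\tfac(M,M_*)$ factoring through the column Hilbert space $H_c$, with $\alpha(x)=\big((\iota\otimes c)\Delta(x)\big)\xi_0=A^*W^*(1\otimes x)WB$; it then expands $W(\xi_0\otimes\xi_2)=\sum_i e_i\otimes\phi_i$ over an orthonormal basis and observes that the summands $\alpha_i(x)=R_i^*x\phi_i$, with $R_i=(\omega_{\eta_2,e_i}\otimes\iota)(W\sigma)$, take values through \emph{compact} operators precisely because $W$ is regular, hence are cb-approximable by finite-rank maps, while the tail $\alpha-\alpha_F$ has cb-norm at most $\|\eta_2\|\big(\sum_{i\notin F}\|\phi_i\|^2\big)^{1/2}$. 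Membership in $M_*\hten M_*$ then follows because the finite-rank maps are cb-dense there and the inclusion $M_*\hten M_*\rightarrow M_*\ehten M_*$ is a complete isometry. Without some version of this orthonormal-expansion-plus-compactness argument you have only restated the problem.

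A second, smaller gap is the right-hand product. "Handled in the same way" is not automatic, and the alternative you offer --- apply the swap and invoke regularity of $\hat W$ --- fails as stated, because the Haagerup tensor product is not commutative: the flip $b\otimes c\mapsto c\otimes b$ is not bounded on $M_*\hten M_*$. The paper gets around this with the unitary antipode: $r=(R\otimes R)\sigma$ is an isometry of $M\hten M$ (this uses that $R$ is a $*$-antiautomorphism, so the row/column exchange performed by $\sigma$ is compensated in the $\|\sum_i a_ia_i^*\|^{1/2}\|\sum_i b_i^*b_i\|^{1/2}$ description of the Haagerup norm), its preadjoint $r_*$ is antimultiplicative and satisfies $r_*m_*=m_*R_*$, and this converts $(b\otimes c)m_*(a)$ into a left product handled by the first half of the proof. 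You would need to supply either this device or a genuine direct argument for the second leg.
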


We need to explain some more machinery before we give the proof.  For a Hilbert
space $K$, let $K_c$ be the \emph{column Hilbert space}, the operator space
induced by the isomorphism $K = \mc B(\mathbb C,K)$.  For operator spaces $E$ and
$F$, let $\tfac(E,F)$ be the space of completely bounded maps $E\rightarrow F$
which factor through a column Hilbert space $K_c$, equipped with the obvious norm.
Then $E\hten F = \tfac(F,E^*)$ for the duality given by
\[ \ip{T}{x\otimes y} = \ip{T(y)}{x} \qquad (T\in\tfac(F,E^*), x\in E,y\in F). \]
For further details, see \cite[Chapter~9]{ER}.

\begin{lemma}\label{ex_two_fact}
For operator spaces $E$ and $F$, we have
\[ E \ehten F = (E^*\hten F^*)^*_\sigma =
\{ T\in\tfac(F^*,E) : T^*(E^*)\subseteq F \}. \]
\end{lemma}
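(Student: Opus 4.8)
The plan is as follows. The first equality $E\ehten F=(E^*\hten F^*)^*_\sigma$ is simply the definition of the extended Haagerup tensor product, so all the content is in the second equality. Applying the identity $X\hten Y=\tfac(Y,X^*)$ (from the excerpt) with $X=E^*$ and $Y=F^*$ realises $E^*\hten F^*=\tfac(F^*,E^{**})$, whose arguments include the elementary tensors $\mu\otimes\nu$ with $\mu\in E^*,\nu\in F^*$. First I would associate to each $\Phi\in(E^*\hten F^*)^*$ the linear map $T=T_\Phi:F^*\to E^{**}$ determined by $\ip{T(\nu)}{\mu}=\ip{\Phi}{\mu\otimes\nu}$; boundedness of $\Phi$ together with the cross-norm property of $\hten$ gives $\|T(\nu)\|\le\|\Phi\|\,\|\nu\|$, so $T$ is well defined and bounded.

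The core of the argument is to translate separate weak$^*$-continuity of $\Phi$ into conditions on $T$. For fixed $\nu$, the functional $\mu\mapsto\ip{\Phi}{\mu\otimes\nu}=\ip{T(\nu)}{\mu}$ on $E^*$ is weak$^*$-continuous precisely when $T(\nu)$ lies in $\kappa_E(E)$; hence continuity in the first variable for all $\nu$ is equivalent to $T(F^*)\subseteq E$. Granting this, I would rewrite $\ip{T(\nu)}{\mu}=\ip{T^*(\mu)}{\nu}$ using the adjoint $T^*:E^*\to F^{**}$ of $T:F^*\to E$, and observe that continuity in the second variable for all $\mu$ is exactly the requirement that each $T^*(\mu)$ lie in $\kappa_F(F)$, i.e.\ $T^*(E^*)\subseteq F$. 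This establishes a linear bijection between $(E^*\hten F^*)^*_\sigma$ and the maps $T:F^*\to E$ with $T^*(E^*)\subseteq F$, which is the algebraic heart of the claim and is otherwise routine.

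It then remains to match the operator-space data, and this is where I expect the real work to lie. One must show that the $T$ produced above actually factors through a column Hilbert space, i.e.\ $T\in\tfac(F^*,E)$, and that the correspondence is (completely) isometric rather than merely a bounded bijection. For this I would invoke the self-duality of the Haagerup tensor product (\cite[Chapter~9]{ER}): a completely bounded functional on $E^*\hten F^*=\tfac(F^*,E^{**})$ is implemented by a column-Hilbert-space factorisation, so $T_\Phi\in\tfac(F^*,E^{**})$, and combining this with $T_\Phi(F^*)\subseteq E$ and the complete isometry $\tfac(F^*,E)\hookrightarrow\tfac(F^*,E^{**})$ places $T_\Phi$ in $\tfac(F^*,E)$. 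Conversely, given $T\in\tfac(F^*,E)$ with $T^*(E^*)\subseteq F$, its factorisation $F^*\to K_c\to E$ defines a completely bounded functional $\Phi_T$ on $E^*\hten F^*$, which the previous paragraph shows is separately weak$^*$-continuous; one checks that $T\mapsto\Phi_T$ and $\Phi\mapsto T_\Phi$ are mutually inverse and norm-preserving at each matrix level. The main obstacle is precisely this factorisation-and-norm bookkeeping; the algebraic bijection is easy, and the computation specialises, exactly as indicated before the lemma, to the identification $L^1(G)\ehten L^1(G)=\{T\in\mc B(L^\infty(G),L^1(G)):T^*(L^\infty(G))\subseteq L^1(G)\}$ once one knows that for these spaces every bounded map factors through a Hilbert space.
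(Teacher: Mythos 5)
Your proposal is correct and follows essentially the same route as the paper: identify $(E^*\hten F^*)^*$ with $\tfac(F^*,E^{**})$ via the self-duality of the Haagerup tensor product, and then translate weak$^*$-continuity in the first and second variables into $T(F^*)\subseteq E$ and $T^*(E^*)\subseteq F$ respectively. The extra bookkeeping you flag in the final paragraph (that a $\tfac$-factorisation $F^*\to K_c\to E^{**}$ with range in $E$ can be cut down to a factorisation into $E$, and that the norms match at each matrix level) is legitimate and is in fact glossed over entirely in the paper's own proof, which contents itself with the set-theoretic correspondence.
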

\begin{proof}
We have that $(E^*\hten F^*)^* = \tfac(F^*,E^{**})$, so we need
to show that $T\in \tfac(F^*,E^{**})$ induces a separately weak$^*$-continuous
functional if and only if $T$ maps into $E$, and $T^*(E^*)\subseteq F$.

Suppose that $T$ maps into $E$, and let $(\mu_\alpha)$ be a net in $E^*$
converging weak$^*$ to $\mu\in E^*$.  Then, for $\lambda\in F^*$,
\[ \lim_\alpha \ip{T}{\mu_\alpha\otimes\lambda} = \lim_\alpha \ip{\mu_\alpha}{T(\lambda)}
= \ip{\mu}{T(\lambda)} = \ip{T}{\mu\otimes\lambda}, \]
so $T$ is weak$^*$-continuous in the first variable.  Conversely, if
$T(\lambda)\in E^{**}\setminus E$ for some $\lambda\in F^*$, then we can find
a bounded net $(\mu_\alpha)$ in $E^*$ with $\ip{T(\lambda)}{\mu_\alpha}=1$
for each $\alpha$, and with $\mu_\alpha\rightarrow0$ weak$^*$.  However, $T$ is
weak$^*$-continuous, so
\[ 0 = \lim_\alpha \ip{T}{\mu_\alpha\otimes\lambda}
= \lim_\alpha \ip{T(\lambda)}{\mu_\alpha} = 1, \]
a contradiction.

Similarly, we can show that $T^*(E^*)\subseteq F$ if and only if the functional
induced by $T$ is weak$^*$-continuous in the second variable.
\end{proof}

So given $a,b,c\in M_*$, we have that $m_*(a)(b\otimes c) \in M_*\ehten M_*$.
We wish to show that this is really in $M_*\hten M_*$, but first let us identify
this with some $T \in \tfac(M,M_*)$ with $T^*(M)\subseteq M_*$.

\begin{lemma}
For $a,b,c\in M_*$ let $T\in\tfac(M,M_*)$ be induced by $m_*(a)(b\otimes c)
\in M_*\ehten M_*$.  Let $a=\omega_{\xi_0,\eta_0}$ for some $\xi_0,\eta_0\in H$.
Then $T$ factors through $H_c$ as
\[ \xymatrix{ M \ar[rr]^T \ar[rd]^\alpha & & M_* \\
& H_c \ar[ru]_\beta } \]
where $\alpha(x) = ((\iota\otimes c)\Delta(x))(\xi_0)$ and
$\beta(\xi) = \omega_{\xi,\eta_0} b$ for $x\in M$ and $\xi\in H_c$.
Furthermore, $\alpha$ and $\beta$ are completely bounded maps.
\end{lemma}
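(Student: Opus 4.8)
The plan is to compute the element $m_*(a)(b\otimes c)$ of $M_*\ehten M_*$ explicitly, view it through the identification $M_*\ehten M_* = \{T\in\tfac(M,M_*):T^*(M)\subseteq M_*\}$ of Lemma~\ref{ex_two_fact}, and read off the factorisation. Recall that $m_*(a)\in M_*\ehten M_*=(M\hten M)^*_\sigma$ is the separately weak$^*$-continuous functional $\ip{m_*(a)}{u\otimes v}=\ip{a}{uv}$ for $u,v\in M$, and that under the $\tfac$-duality a functional $\Psi\in M_*\ehten M_*$ corresponds to $T\in\tfac(M,M_*)$ via $\ip{\Psi}{x\otimes y}=\ip{T(y)}{x}$ for $x,y\in M$. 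Thus it suffices, for each fixed $y$, to identify the functional $x\mapsto\ip{m_*(a)(b\otimes c)}{x\otimes y}$ on $M$.

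First I would pin down the product in $M_*\ehten M_*$ against a right elementary tensor. Writing $R_b,R_c:M_*\to M_*$ for the right convolutions $R_b(\phi)=\phi b$ and $R_c(\phi)=\phi c$ (completely bounded, since $M_*$ is a CCBA), I claim
\[ \Phi\cdot(b\otimes c) = (R_b\ehten R_c)(\Phi) \qquad (\Phi\in M_*\ehten M_*). \]
On an elementary tensor $\Phi=\phi_1\otimes\phi_2$ both sides equal $(\phi_1 b)\otimes(\phi_2 c)$, and the general case follows by tracing the definition of the product (the composite of the shuffle map $S_e$ with $\Delta_*\ehten\Delta_*$) with the right-hand argument frozen at $b\otimes c$: this freezing turns the natural construction into the functorial map $R_b\ehten R_c$. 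Taking pre-adjoints, and using that the pre-adjoint of $R_b$ is the slice $R_b^*(x)=(\iota\otimes b)\Delta(x)$ on $M$ (since $\ip{\phi b}{x}=\ip{\phi\otimes b}{\Delta(x)}=\ip{\phi}{(\iota\otimes b)\Delta(x)}$), and similarly for $c$, I obtain
\begin{align*} \ip{m_*(a)(b\otimes c)}{x\otimes y} &= \ip{m_*(a)}{(\iota\otimes b)\Delta(x)\otimes(\iota\otimes c)\Delta(y)} \\ &= \ip{a}{\big[(\iota\otimes b)\Delta(x)\big]\big[(\iota\otimes c)\Delta(y)\big]}. \end{align*}

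Substituting $a=\omega_{\xi_0,\eta_0}$, so that $\ip{a}{z}=\big(z\xi_0\big|\eta_0\big)$, the right-hand side becomes $\big([(\iota\otimes b)\Delta(x)][(\iota\otimes c)\Delta(y)]\xi_0\,\big|\,\eta_0\big)$. Setting $\alpha(y)=[(\iota\otimes c)\Delta(y)]\xi_0\in H$ and recalling $\beta(\xi)=\omega_{\xi,\eta_0}b$, I can rewrite this as $\ip{\omega_{\alpha(y),\eta_0}b}{x}=\ip{\beta\alpha(y)}{x}$, because $\ip{\omega_{\alpha(y),\eta_0}b}{x}=\ip{\omega_{\alpha(y),\eta_0}}{(\iota\otimes b)\Delta(x)}=\big([(\iota\otimes b)\Delta(x)]\alpha(y)\,\big|\,\eta_0\big)$. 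Since this holds for all $x\in M$, we get $T(y)=\beta\alpha(y)$, i.e. $T=\beta\alpha$, which exhibits the claimed factorisation through $H_c$.

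It then remains to check complete boundedness. The map $\alpha$ is the composite of the slice map $(\iota\otimes c)\Delta:M\to M$ (completely bounded, being $\Delta$, a complete contraction, followed by the right slice by $c$, of cb-norm at most $\|c\|$) with $z\mapsto z\xi_0$; identifying $H_c=\mc B(\mathbb C,H)$ and $\xi_0\in\mc B(\mathbb C,H)$, the latter is right composition by $\xi_0$, hence completely bounded of cb-norm $\|\xi_0\|$, so $\|\alpha\|_{cb}\le\|c\|\,\|\xi_0\|$. The map $\beta$ is the composite of $\xi\mapsto\omega_{\xi,\eta_0}$ with $R_b$; the first is the restriction to $M$ of the standard completely bounded map $H_c\to\mc B(H)_*$, $\xi\mapsto\omega_{\xi,\eta_0}$, whose cb-norm is controlled by $\|\eta_0\|$ (see \cite{ER}), and $R_b$ is completely bounded of cb-norm at most $\|b\|$, so $\|\beta\|_{cb}\le\|\eta_0\|\,\|b\|$. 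The one genuinely delicate point is the product identity $\Phi\cdot(b\otimes c)=(R_b\ehten R_c)(\Phi)$: because $M_*\otimes M_*$ is not dense in $M_*\ehten M_*$, one cannot simply pass from elementary tensors by continuity, and the identity must instead be extracted from the functoriality of the defining composition of the $\ehten$-product; everything else is routine operator-space bookkeeping.
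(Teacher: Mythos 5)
Your proof is correct, and its core --- computing $\ip{m_*(a)(b\otimes c)}{x\otimes y} = \ip{a}{[(\iota\otimes b)\Delta(x)][(\iota\otimes c)\Delta(y)]}$ and matching it against $\ip{\beta\alpha(y)}{x}$ --- is exactly the computation in the paper, which phrases it as $\ip{y}{\beta\alpha(x)} = ((b\cdot y)(c\cdot x)\xi_0\,|\,\eta_0) = \ip{y\otimes x}{m_*(a)(b\otimes c)}$. You are in fact more careful than the paper about the product formula $\Phi\cdot(b\otimes c)=(R_b\ehten R_c)(\Phi)$, whose validity beyond elementary tensors the paper takes for granted; your remark that density fails and one must appeal to the functoriality of the defining composition (or to the representation of elements of $M_*\ehten M_*$ as weak$^*$-convergent infinite sums) is a legitimate point. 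Where you genuinely diverge is the complete boundedness of $\alpha$: you factor it as the slice map $(\iota\otimes c)\Delta$ followed by evaluation at $\xi_0$, whereas the paper writes $c=\omega_{\xi_2,\eta_2}$ and exhibits $\alpha(x)=A^*W^*(1\otimes x)WB$ with $A(\eta)=\eta\otimes\eta_2$ and $B=\xi_0\otimes\xi_2\in\mc B(\mathbb C,H\otimes H)$. Both arguments establish the lemma, but the paper's explicit form in terms of the multiplicative unitary is not incidental: it is precisely what is exploited in the proof of Theorem~\ref{reg_gives_mult}, where regularity of $W$ and the expansion $W(\xi_0\otimes\xi_2)=\sum_i e_i\otimes\phi_i$ are used to approximate $\alpha$ in cb-norm by finite-rank maps. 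So your route proves the lemma as stated, at the cost of having to recover the $W$-decomposition of $\alpha$ before the subsequent theorem can be run.
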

\begin{proof}
Let $c = \omega_{\xi_2,\eta_2}$ for some $\xi_2,\eta_2\in H$.
Define $A:H\rightarrow H\otimes H$ is given by $A(\eta)=\eta\otimes\eta_2$,
and define $B = \xi_0\otimes\xi_2 \in \mc B(\mathbb C,H\otimes H)$.  Then
for $x\in M$ and $\eta\in H$,
\[ (A^*W^*(1\otimes x)WB | \eta) =
(\Delta(x) \xi_0\otimes\xi_2 | \eta\otimes\eta_2)
= ((\iota\otimes c)\Delta(x) \xi_0 | \eta). \]
Thus $\alpha(x) = A^*W^*(1\otimes x)WB$, which shows that $\alpha$ is completely bounded.
A similar decomposition can be shown for $\beta$.

Then for $x,y\in M$, we have that
\begin{align*} \ip{y}{\beta\alpha(x)}
&= \ip{\Delta(y)}{\omega_{\alpha(x),\eta_0}\otimes b}
= ((b\cdot y) ( c\cdot x) \xi_0 | \eta_0 ) \\
&= \ip{m( (b\cdot y)(c\cdot x) )}{a}
= \ip{y\otimes x}{m_*(a)(b\otimes )}, \end{align*}
so that $T=\beta\alpha$ as required.
\end{proof}
 
\begin{proof}[Proof of Theorem~\ref{reg_gives_mult}]
Let $a,b,c\in M_*$ and form $\alpha$ and $\beta$ as in the lemma.
Let $(e_i)_{i\in I}$ be an orthonormal basis for $H$, so we can find vectors
$(\phi_i)$ with
\[ W(\xi_0\otimes\xi_2) = \sum_i e_i \otimes \phi_i. \]
Hence $\sum_i \|\phi_i\|^2 = \|\xi_0\|^2 \|\xi_2\|^2$.  Pick $\epsilon>0$
and choose a finite set $F\subseteq I$ with $\sum_{i\not\in F} \|\phi_i\|^2
< \epsilon^2$.

For each $i\in I$, let $R_i = (\omega_{\eta_2,e_i}\otimes\iota)(W\sigma)$ which is
a compact operator, as $W$ is regular.  Let $\alpha_i:M\rightarrow H_c$ be the map
$\alpha_i(x) = R_i^*x(\phi_i)$.  Clearly $\alpha_i$ is completely bounded, and as
$R_i^*$ is compact, we can approximate $\alpha_i$, in the completely bounded norm,
by a finite-rank operator.
The same is hence true of $\alpha_F = \sum_{i\in F} \alpha_i$.

Then, for $x\in M$ and $\eta\in H$,
\begin{align*} \big( (\alpha-\alpha_F) (x) \big| \eta \big) &=
\big( W^*(1\otimes x) W(\xi_0\otimes\xi_2) \big| \eta\otimes\eta_2 \big)
   - \sum_{i\in F} \big( \sigma W^* (e_i\otimes x\phi_i) \big| \eta_2 \otimes \eta \big) \\
&= \sum_{i\not\in F} \big( W^*(1\otimes x)(e_i\otimes\phi_i) \big| \eta\otimes\eta_2 \big)
= \big( T (1\otimes x) S \big| \eta \big),
\end{align*}
where $S = \sum_{i\not\in F} e_i\otimes x_i \in \mc B(\mathbb C, H\otimes H)$
and $T\in\mc B(H\otimes H, H)$ is defined by $(T\xi|\eta) = (W^*\xi|\eta\otimes\eta_2)$
for $\xi\in H\otimes H$.  It follows that $\|\alpha-\alpha_F\|_{cb} \leq \|S\| \|T\|
< \epsilon \|T\| = \epsilon \|\eta_2\|$.

As $\epsilon>0$ was arbitrary, it follows that $\alpha$ is in the cb-norm closure of
the finite-rank maps from $M$ to $H_c$.  Thus also $m_*(a)(b\otimes c) = \beta\alpha$ can be
cb-norm approximated by finite-rank maps.
As the inclusion $M_* \hten M_*\rightarrow M_*\ehten M_*$ is a complete isometry, we
conclude that $m_*(a)(b\otimes c) \in M_* \hten M_*$ as required.

To show that $(b\otimes c)m_*(a)\in M_*\hten M_*$, we use the unitary antipode.
A little care is needed, as $R$ is not completely bounded.  However, let
$r = (R\otimes R)\sigma:M\otimes M\rightarrow M\otimes M$.  Then we claim that $r$
extends to an isometry on $M\hten M$.  Indeed, for $\tau\in M\otimes M$, we have that
\[ \|\tau\|^h = \inf\Big\{ \Big\| \sum_i a_ia_i^* \Big\|^{1/2}
\Big\| \sum_i b_i^*b_i\Big\|^{1/2} : \tau=\sum_i a_i\otimes b_i \Big\}. \]
Then $\tau = \sum_i a_i\otimes b_i$ if and only if $r(\tau) = \sum_i R(b_i)\otimes R(a_i)$
and so
\begin{align*}
\|r(\tau)\|^h &= \inf\Big\{ \Big\| \sum_i R(b_i)R(b_i)^* \Big\|^{1/2}
\Big\| \sum_i R(a_i)^*R(a_i)\Big\|^{1/2} : \tau=\sum_i a_i\otimes b_i \Big\} \\
&= \inf\Big\{ \Big\| \sum_i R(b_i^*b_i) \Big\|^{1/2}
\Big\| \sum_i R(a_ia_i^*)\Big\|^{1/2} : \tau=\sum_i a_i\otimes b_i \Big\} = \|\tau\|^h,
\end{align*}
as $R$ is an isometry.

As $r$ is normal, and the Haagerup tensor product is self-dual, $r$ induces an
isometry $r_*:M_*\hten M_* \rightarrow M_*\hten M_*; a\otimes b\mapsto
R_*(b)\otimes R_*(a)$.  Similarly, as $r$ is separately weak$^*$-continuous,
$r_*$ extends to an isometry $M_* \ehten M_* \rightarrow M_*\ehten M_*$.  As $R_*$
is anti-multiplicative, the same is true of $r_*$.

For $a\in M_*$ and $x,y\in M$,
\[ \ip{x\otimes y}{rm_*(a)} = \ip{R(y)\otimes R(x)}{m_*(a)}
= \ip{R(xy)}{a} = \ip{x\otimes y}{m_*R_*(a)}, \]
so that $rm_* = m_*R_*$.

Thus, for $a,b,c\in M_*$, we see that
\[ (b\otimes c)m_*(a) = r\big( rm_*(a) (R_*(c)\otimes R_*(b)) \big)
= r\big( m_*(R_*(a)) (R_*(c)\otimes R_*(b)) \big) \in M_*\hten M_*, \]
as required.
\end{proof}

Given $a\in M_*$, we have that $m_*(a)$ idealises $M_*\hten M_*$ in
$M_* \ehten M_*$.  Thus there exist maps $L,R:M_*\hten M_* \rightarrow
M_*\hten M_*$ such that $L(\tau) = m_*(a) \tau$ and $R(\tau) = \tau m_*(a)$
for $\tau\in M_*\hten M_*$.  Thus $(L,R)\in M(M_*\hten M_*)$.  Indeed,
as $L$ and $R$ are induced by multiplication by a member of $M_*\ehten M_*$,
and this algebra is a CCBA, it follows immediately that
$(L,R)\in M_{cb}(M_*\hten M_*)$.  As $m_*:M_*\rightarrow M_*\ehten M_*$
is a complete contraction, it follows that actually we can regard $m_*$
as a completely contractive homomorphism $M_*\rightarrow M_{cb}(M_*\hten M_*)$.

It would, of course, be interesting to know if this result holds when $W$
is not regular.  Suppose that $M_*$ is unital, so that $\mathbb G$ is a discrete
quantum group (as $C_0(\hat\G)$ must also be unital, so $\hat\G$ is compact).
It is shown in \cite[Examples~7.3.4]{timm} that the multiplicative unitary $W$
associated to any algebraic quantum group is regular.  In particular, this implies
that the $W$ associated to a discrete quantum group is regular, and so our
theorem holds.  In particular, this means that $m_*$ can be regarded as
a map $M_* \rightarrow M_* \hten M_*$.

\subsection{Application to corepresentations}

Following the usual theory for C$^*$-bialgebras and Hilbert spaces, if $M_*$ is unital
(so that $W$ is regular), then we might define a corepresentation of $M_*$ on an operator
space $E$ to be a completely bounded map $\alpha:E\rightarrow E \hten M_*$ with
$(\alpha\otimes\iota)\alpha = (\iota\otimes m_*)\alpha$.  The use of the Haagerup
tensor product here is essentially forced upon us, as $m_*$ maps $M_*$ into
$M_*\hten M_*$.

If $M_*$ is not unital (and $W$ is assumed regular) then $m_*$ only maps into the
multiplier algebra $M_{cb}(M_*\hten M_*)$.  Consequently we need to consider the ideas of
Section~\ref{ten_prod_sec}.  Notice that the Haagerup tensor norm is uniformly
admissible in the sense of Section~\ref{ten_prod_sec}.  So a \emph{corepresentation} of
$M_*$ will consist of an operator space $E$ and a completely bounded linear map
$\alpha:E\rightarrow M_{cb}(E\hten M_*)$ such that the following diagram commutes
\[ \xymatrix{ E \ar[r]^\alpha \ar[d]_\alpha
& M_{cb}(E\hten M_*) \ar[d]^{I_E \otimes m_*} \\
M_{cb}(E\hten M_*) \ar[r]^{\alpha\otimes\iota}
& M_{cb}(E\hten M_*\hten M_*) } \]
Here $I_E \otimes m_*$ is the extension discussed in Section~\ref{ten_prod_sec}
(notice that the Haagerup tensor product is sufficiently well-behaved, in
particular, it is $E$-admissible for any $E$).
However, we must explain what $\alpha\otimes\iota$ is.

Indeed, we first need to check that $E\hten M_*$ is faithful as an $M_*$-bimodule.

\begin{lemma}
For an operator space $E$, and any locally compact quantum group $\G$,
we have that $E\hten M_*$ is a faithful $M_*$-bimodule.
\end{lemma}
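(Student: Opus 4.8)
The plan is to exploit the canonical injection of $E\hten M_*$ into $\mc{CB}(E^*,M_*)$ together with the faithfulness of $M_*$ itself. Recall first that $M_*$ acts on $E\hten M_*$ only through the second leg: for $a,b\in M_*$ and an elementary tensor $x\otimes\omega$ we have $a\cdot(x\otimes\omega)\cdot b = x\otimes(a\omega b)$, and this extends to a complete contraction because left and right multiplication are completely bounded on the CCBA $M_*$, so that $I_E\otimes(\omega\mapsto a\omega b)$ is completely bounded on the Haagerup tensor product.

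First I would recall, as established in the paragraph just before Lemma~\ref{lemma::htenfaith}, that the canonical map $E\hten M_*\rightarrow\mc{CB}(E^*,M_*)$, $\tau\mapsto T$, determined by $\ip{\lambda}{T(\mu)}=\ip{\mu\otimes\lambda}{\tau}$ for $\mu\in E^*$ and $\lambda\in M_*^*=M$, is injective. A direct computation on elementary tensors shows that, under this correspondence, the element $a\cdot\tau\cdot b$ is sent to the map $\mu\mapsto a\,T(\mu)\,b$; that is, the one-sided module actions on $E\hten M_*$ correspond to post-composition of $T$ with left and right multiplication operators in $M_*$.

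Now suppose $\tau\in E\hten M_*$ satisfies $a\cdot\tau\cdot b=0$ for all $a,b\in M_*$. By injectivity of the map above, this forces $a\,T(\mu)\,b=0$ in $M_*$ for every $\mu\in E^*$ and all $a,b\in M_*$. Fixing $\mu$ and using that $M_*=L^1(\G)$ is faithful (as noted at the start of Section~\ref{lcqg}) gives $T(\mu)=0$; since $\mu$ was arbitrary, $T=0$, and injectivity once more yields $\tau=0$. Hence $E\hten M_*$ is faithful.

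The argument is essentially routine once these two ingredients are in place, and indeed mirrors the proof of Lemma~\ref{lemma::htenfaith} (which is the special case $E=M_*$, although taken there with respect to the algebra product rather than the one-sided module action). The only point requiring a little care—and the main thing to verify—is confirming that the module action really does correspond to post-composition of $T$ with multiplication operators on $M_*$; this is a short calculation on elementary tensors and poses no genuine obstacle.
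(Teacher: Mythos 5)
Your argument is correct and follows essentially the same route as the paper: pass to the induced map $T\in\mc{CB}(E^*,M_*)$ via self-duality of the Haagerup tensor product, observe that the second-leg module action corresponds to multiplying $T(\mu)$ by elements of $M_*$, and invoke faithfulness of $L^1(\G)$ together with injectivity of $\tau\mapsto T$. The only cosmetic difference is that you treat the two-sided condition $a\cdot\tau\cdot b=0$ directly, whereas the paper disposes of each one-sided condition separately; both yield the same conclusion.
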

\begin{proof}
Let $\tau\in E\hten M_*$ with $\tau\cdot\omega=0$ for each $\omega\in M_*$
(the case when $\omega\cdot\tau=0$ for all $\omega$ is similar).  As in the proof
of Lemma~\ref{lemma::htenfaith} above, let $\tau$ induce a weak$^*$-continuous
completely bounded map $T:E^*\rightarrow M_*$.  Then
\[ 0 = \ip{\mu\otimes x}{\tau\cdot\omega} = \ip{\mu\otimes\omega\cdot x}{\tau}
= \ip{\omega\cdot x}{T(\mu)} \qquad (\mu\in E^*, x\in M, \omega\in M_*). \]
Again, this shows that $T=0$, so that $\tau=0$ as required.
\end{proof}

We shall now assume that $M_*$ has a bounded approximate identity.  This occurs
when $\G$ is \emph{co-amenable}, see \cite[Theorem~3.1]{bt} or
\cite[Theorem~2]{nh} for example, in which
case we even have a contractive approximate identity, say $(e_\beta)\subseteq M_*$.
Future work would be to try to make these ideas work in more generality.  For example,
we have not been able to decide if $m_*:M_* \rightarrow M(M_*\hten M_*)$ is
inducing; if it were, then it might be possible to use ``self-induced methods''
in place of a bounded approximate identity.

It will be useful to keep track of which algebra we are considering multipliers
over, for which we use the self-explanatory notation $M^{cb}_{M_*}(E\hten M_*)$,
and so forth.
We now show how to define $\alpha\otimes\iota:M_{M_*}(E\hten M_*)
\rightarrow M_{M_*\hten M_*}(E\hten M_*\hten M_*)$.

\begin{lemma}
Let $\mc A$ be a CCBA, and let $E$ be an operator
space.  The map $\phi:M^{cb}_{\mc A}(E\hten \mc A) \hten \mc A \rightarrow
M^{cb}_{\mc A\hten \mc A}(E\hten \mc A\hten\mc A)$ defined on elementary tensors by
\[ \hat x\otimes a \mapsto (L,R); \quad L(b\otimes c)=\hat x\cdot b\otimes ac,
\ R(b\otimes c)=b\cdot\hat x\otimes ca, \]
is a complete contraction.
\end{lemma}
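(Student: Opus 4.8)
The plan is to recognise this as the completely bounded, Haagerup-tensor analogue of Lemma~\ref{lemma::tensor}, and to prove complete contractivity by factoring the two coordinate maps of $(L,R)$ as Haagerup tensor products of individual completely bounded maps. Concretely, writing $\mu_a,\nu_a:\mc A\rightarrow\mc A$ for left and right multiplication, $\mu_a(c)=ac$ and $\nu_a(c)=ca$, and using the (completely isometric) associativity $(E\hten\mc A)\hten\mc A = E\hten\mc A\hten\mc A$, the defining formulas for an elementary tensor $\hat x\otimes a$ read
\[ L = L_{\hat x}\hten\mu_a, \quad R = R_{\hat x}\hten\nu_a, \]
where $L_{\hat x}:\mc A\rightarrow E\hten\mc A$ is the left component of the multiplier $\hat x$ and $\mu_a:\mc A\rightarrow\mc A$.

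First I would check that $\phi$ genuinely lands in $M^{cb}_{\mc A\hten\mc A}(E\hten\mc A\hten\mc A)$. On elementary tensors this is the verification that $(L,R)$ satisfies the multiplier identity for the $\mc A\hten\mc A$-bimodule structure on $E\hten\mc A\hten\mc A$, where $\mc A\hten\mc A$ acts on the last two legs: a short computation reduces $(b_1\otimes c_1)\cdot L(b_2\otimes c_2)$ to $(b_1\cdot L_{\hat x}(b_2))\otimes c_1ac_2$ and $R(b_1\otimes c_1)\cdot(b_2\otimes c_2)$ to $(R_{\hat x}(b_1)\cdot b_2)\otimes c_1ac_2$, so the two agree precisely because $\hat x$ is a multiplier of $E\hten\mc A$, that is, $b_1\cdot L_{\hat x}(b_2)=R_{\hat x}(b_1)\cdot b_2$. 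For a general $\tau\in M^{cb}_{\mc A}(E\hten\mc A)\hten\mc A$ the multiplier identity is, for each fixed pair $b_i\otimes c_i$, a closed linear condition on $\tau$ which already holds on the dense span of elementary tensors, hence everywhere.

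For the norm estimate I would assemble the factorisation into complete contractions. The projection onto the first coordinate of the defining embedding gives a complete contraction $\hat x\mapsto L_{\hat x}$ from $M^{cb}_{\mc A}(E\hten\mc A)$ to $\mc{CB}(\mc A,E\hten\mc A)$, and since $\mc A$ is a CCBA, \cite[Proposition~7.1.2]{ER} makes $a\mapsto\mu_a$ a complete contraction $\mc A\rightarrow\mc{CB}(\mc A)$. Taking the Haagerup tensor product of these two complete contractions and composing with the completely contractive pairing $\mc{CB}(\mc A,E\hten\mc A)\hten\mc{CB}(\mc A,\mc A)\rightarrow\mc{CB}(\mc A\hten\mc A,E\hten\mc A\hten\mc A)$ (a standard property of the Haagerup tensor product, \cite[Chapter~9]{ER}) exhibits $\tau\mapsto L$ as a complete contraction. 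Since $a\mapsto\nu_a$ is likewise a complete contraction, the same argument gives that $\tau\mapsto R$ is a complete contraction. Finally, because the operator space structure on $M^{cb}_{\mc A\hten\mc A}(E\hten\mc A\hten\mc A)$ is that induced from $\mc{CB}(\mc A\hten\mc A,E\hten\mc A\hten\mc A)\oplus_\infty\mc{CB}(\mc A\hten\mc A,E\hten\mc A\hten\mc A)$, a map into this $\oplus_\infty$-sum is a complete contraction as soon as both coordinate maps are; hence $\phi$ is a complete contraction.

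The main obstacle is not the (routine) multiplier computation but the correct bookkeeping of the two functorial tools: recognising the coordinate maps as the Haagerup tensor products $L_{\hat x}\hten\mu_a$ and $R_{\hat x}\hten\nu_a$, and invoking the completely contractive pairing on $\mc{CB}$-spaces together with the fact that left and right multiplication on a CCBA are completely contractive into $\mc{CB}(\mc A)$. Provided one keeps straight which legs of $E\hten\mc A\hten\mc A$ carry the module action, everything reduces to composing complete contractions, so no matrix-level estimate is required; note also that faithfulness of the module plays no role here, as the norm on $M^{cb}_{\mc A\hten\mc A}(E\hten\mc A\hten\mc A)$ is defined purely through the above embedding.
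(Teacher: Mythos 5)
Your proof is correct and follows essentially the same route as the paper's: project onto the two coordinates of the multiplier, tensor with the left/right multiplication operators $\mu_a,\nu_a$ (which are completely contractive into $\mc{CB}(\mc A)$ because $\mc A$ is a CCBA), and feed the result into the pairing $\mc{CB}(\mc A,E\hten\mc A)\hten\mc{CB}(\mc A)\rightarrow\mc{CB}(\mc A\hten\mc A,E\hten\mc A\hten\mc A)$. The one substantive difference is that you cite the complete contractivity of that pairing as a standard Haagerup-tensor fact, whereas the paper's proof consists precisely of verifying it by hand: take Haagerup factorisations $\tau_{ij}=\sum_k T_{ik}\otimes a_{kj}$ and $u_{\alpha\beta}=\sum_\gamma b_{\alpha\gamma}\otimes c_{\gamma\beta}$ and estimate the norm of the resulting matrix product. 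The fact is true and the verification is exactly that computation, but I am not certain it appears in the form you want in the cited chapter, so be prepared to supply the matrix estimate; your claim that ``no matrix-level estimate is required'' really just means the estimate is hidden in the citation. Two points where you are more careful than the paper: you check explicitly that $(L,R)$ satisfies the multiplier identity for the $\mc A\hten\mc A$-action (the paper leaves this implicit), and you correctly use $\nu_a(c)=ca$ in the $R$-coordinate, whereas the paper's auxiliary map $\psi(T\otimes a)(b\otimes c)=T(b)\otimes ac$ is applied verbatim to both coordinates even though the statement requires $ca$ on the right. One small ordering quibble: your ``closed linear condition'' argument for extending the multiplier identity from elementary tensors to general $\tau$ already uses the continuity of $\tau\mapsto(L,R)$, so that estimate should logically be established first.
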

\begin{proof}
We claim that for any operator space $F$, the map $\psi:\mc{CB}(\mc A,F)\hten\mc A
\rightarrow \mc{CB}(\mc A\hten\mc A,F\hten\mc A)$ defined by
\[ \psi(T\otimes a)(b\otimes c) = T(b) \otimes ac \qquad (b\otimes c\in\mc A\hten\mc A) \]
is a complete contraction.
If so, then let $F=E\hten\mc A$.  Let $p_l,p_r:M^{cb}_{\mc A}(E\hten\mc A)\rightarrow
\mc{CB}(\mc A,E\hten\mc A)$ be given by $p_l(L,R)=L$ and $p_r(L,R)=R$.  Then, for
$\tau\in M^{cb}_{\mc A}(E\hten\mc A)\hten \mc A$, we define $\phi(\tau)=(L,R)$, where
$L = \psi\big((p_l\otimes I_{\mc A})\tau)$ and
$R = \psi\big((p_r\otimes I_{\mc A})\tau)$.  Thus $\phi$ is a complete contraction, as required.

So, we show that $\psi$ is a complete contraction.  Let $\tau\in\mathbb M_n(\mc{CB}(\mc A,F)
\otimes \mc A)$ with $\|\tau\|^h\leq1$, so by \cite[Proposition~9.2.6]{ER}, we can find
$T\in\mathbb M_{n,r}(\mc{CB}(\mc A,F))$ and $a\in\mathbb M_{r,n}(\mc A)$ with
\[ \tau_{ij} = \sum_{k=1}^r T_{ik}\otimes a_{kj} \qquad (1\leq i,j\leq n), \]
and such that $\|T\| \|a\|\leq 1$.
Let $u\in\mathbb M_m(\mc A\otimes\mc A)$, we similarly we can find $b\in\mathbb M_{m,s}(\mc A)$
and $c\in\mathbb M_{s,m}(\mc A)$ with
\[ u_{\alpha\beta} = \sum_{\gamma=1}^s b_{\alpha\gamma}\otimes c_{\gamma\beta}
\qquad (1\leq \alpha,\beta\leq m), \]
and such that $\|u\|^h=\|b\|\|c\|$.
Then $\psi(\tau)(u) \in \mathbb M_{nm}(F\hten\mc A)$ has matrix entries
\[ \Big( \sum_{k,\gamma} T_{ik}(b_{\alpha\gamma}) \otimes a_{kj}c_{\gamma\beta}
\Big)_{(i\alpha),(j\beta)}, \]
and so has norm at most
\begin{align*} \Big\| & \Big( T_{ik}(b_{\alpha\gamma}) \Big)_{(i\alpha,k\gamma)}
   \Big\|_{\mathbb M_{nm,rs}(F)}
\Big\| \Big( a_{kj}c_{\gamma\beta} \Big)_{(k\gamma,j\beta)} \Big\|_{\mathbb M_{rs,nm}(\mc A)}
\\ &\leq \|T\|_{\mathbb M_{n,r}(\mc{CB}(\mc A,F))} \|b\|_{\mathbb M_{m,s}(\mc A)}
   \|a\|_{\mathbb M_{r,n}(\mc A)} \|c\|_{\mathbb M_{s,n}(\mc A)}. \end{align*}
It follows that $\psi$ is a complete contraction.
\end{proof}

We now define $\alpha\otimes\iota:M_{M_*}^{cb}(E\hten M_*)
\rightarrow M_{M_*\hten M_*}^{cb}(E\hten M_*\hten M_*)$.
Given $\hat x\in M_{M_*}^{cb}(E\hten M_*)$, for each $\beta$ we have that
$\hat x\cdot e_\beta \in E\hten M_*$ and so $(\alpha\otimes I_{M_*})(\hat x\cdot e_\beta)
\in M_{M_*}^{cb}(E\hten M_*)\hten M_*$.  Thus $\phi (\alpha\otimes I_{M_*})(\hat x\cdot e_\beta)
\in M_{M_*\hten M_*}^{cb}(E\hten M_*\hten M_*)$, by the previous lemma.
So we may (try to) define $L,R:M_*\hten M_* \rightarrow E\hten M_*\hten M_*$ by
\[ L(a) = \lim_\beta \phi(\alpha\otimes I_{M_*})(\hat x\cdot e_\beta) \cdot a, \quad
R(a) = a \cdot \phi(\alpha\otimes I_{M_*})(e_\beta\cdot\hat x)
\qquad (a\in M_*\hten M_*). \]

\begin{proposition}
These limits exist, and we have that
$(L,R)\in M^{cb}_{M_*\hten M_*}(E\hten M_*\hten M_*)$.  The map
$\hat x\mapsto (L,R)$ is completely contractive.
\end{proposition}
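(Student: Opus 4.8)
The plan is to set $T_\beta=\phi(\alpha\otimes I_{M_*})(\hat x\cdot e_\beta)$ and $T_\gamma'=\phi(\alpha\otimes I_{M_*})(e_\gamma\cdot\hat x)$, both lying in $N:=M^{cb}_{M_*\hten M_*}(E\hten M_*\hten M_*)$. Since $\hat x\mapsto\hat x\cdot e_\beta=L_{\hat x}(e_\beta)$ is a complete contraction (evaluation of the left multiplier at the contractive $e_\beta$), and $\phi$ and $\alpha\otimes I_{M_*}$ are complete contractions, the nets $(T_\beta),(T_\gamma')$ are uniformly bounded by $\|\hat x\|_{cb}$. The key computation, which I would carry out on elementary tensors $a=c\otimes d$ and then extend by continuity, is the pair of factorisation identities
\[
T_\beta\cdot(c\otimes d)=\big((\operatorname{ev}_c\circ\alpha)\otimes I_{M_*}\big)\big(L_{\hat x}(e_\beta d)\big),\qquad
(c\otimes d)\cdot T_\beta=\big((\operatorname{ev}_c'\circ\alpha)\otimes I_{M_*}\big)\big(d\cdot L_{\hat x}(e_\beta)\big),
\]
where $\operatorname{ev}_c(\hat y)=\hat y\cdot c$ and $\operatorname{ev}_c'(\hat y)=c\cdot\hat y$ are the completely contractive evaluations of the multiplier components at $c$. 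These follow by unwinding the definition of $\phi$ from the previous lemma: writing $\hat x\cdot e_\beta=\sum_i\xi_i\otimes\eta_i$, one has $L_{T_\beta}(c\otimes d)=\sum_i(\alpha(\xi_i)\cdot c)\otimes\eta_i d$, and then rewriting $\sum_i\xi_i\otimes\eta_i d=(\hat x\cdot e_\beta)\cdot d=\hat x\cdot(e_\beta d)$ (using that $L_{\hat x}$ is a right module homomorphism) and $\sum_i\xi_i\otimes d\eta_i=d\cdot(\hat x\cdot e_\beta)$, together with associativity of $\hten$. The analogous identities hold for $T_\gamma'$ with $L_{\hat x}$ replaced by $R_{\hat x}$.

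Existence of the limits then reduces to norm convergence of the approximate identity. For $L$, the first identity gives $T_\beta\cdot(c\otimes d)\to\big((\operatorname{ev}_c\circ\alpha)\otimes I_{M_*}\big)(L_{\hat x}(d))$ because $e_\beta d\to d$ in norm and $L_{\hat x}$ is completely bounded; a $3\epsilon$-argument using the uniform bound $\|T_\beta\cdot a\|\le\|\hat x\|_{cb}\|a\|$ extends convergence from elementary tensors to all $a\in M_*\hten M_*$, so $L(a)=\lim_\beta T_\beta\cdot a$ is well defined. The same argument applied to $T_\gamma'$ (now using $d e_\gamma\to d$) shows that $R(a)=\lim_\gamma a\cdot T_\gamma'$ exists.

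To see that $(L,R)$ is genuinely a multiplier I would show that $(T_\beta)$ itself converges \emph{strictly} in $N$ to $(L,R)$ and then invoke the fact that $N$ is strictly closed (the module, operator-space analogue of Proposition~\ref{strict_closed}, exactly as in Theorem~\ref{arens_op}(1)). The $L$-part is already handled, so it remains to check $\lim_\beta a\cdot T_\beta=R(a)$, i.e.\ that the left- and right-sided limit processes agree. This is precisely where the multiplier relation for $\hat x$ enters: from the second factorisation identity and the defining relation $d\cdot L_{\hat x}(e_\beta)=R_{\hat x}(d)\cdot e_\beta$, together with $R_{\hat x}(d)\cdot e_\beta\to R_{\hat x}(d)$ (as $E\hten M_*$ is essential over $M_*$), one obtains $\lim_\beta(c\otimes d)\cdot T_\beta=\big((\operatorname{ev}_c'\circ\alpha)\otimes I_{M_*}\big)(R_{\hat x}(d))$; the parallel computation for $T_\gamma'$ uses that $R_{\hat x}$ is a left module homomorphism ($d\cdot R_{\hat x}(e_\gamma)=R_{\hat x}(d e_\gamma)\to R_{\hat x}(d)$) to reach the same value. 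Hence $\lim_\beta a\cdot T_\beta=R(a)$, so $T_\beta\to(L,R)$ strictly, and strict-closedness of $N$ gives $(L,R)\in N$.

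Finally, $L$ and $R$ are pointwise norm limits of the nets $L_{T_\beta}$ and $R_{T_\gamma'}$, whose completely bounded norms are bounded by $\|\hat x\|_{cb}$ uniformly and at every matrix level (since $\hat x\mapsto T_\beta$ and $\hat x\mapsto T_\gamma'$ are complete contractions); passing to the limit at each matrix level gives $\|(L,R)\|_{cb}\le\|\hat x\|_{cb}$ and, applied to a matrix $[\hat x_{ij}]$, shows that $\hat x\mapsto(L,R)$ is a complete contraction. I expect the one genuinely delicate step to be the coincidence of the two limit processes in the previous paragraph, which is carried entirely by the multiplier identity for $\hat x$ (no property of $\alpha$ is needed here); the remaining difficulty is purely bookkeeping of which copy of $M_*$ each module action lands on, which the associativity and functoriality of the Haagerup tensor product keep under control.
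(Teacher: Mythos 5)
Your proof is correct and follows essentially the same route as the paper's: both hinge on the factorisation $T_\beta\cdot(a_1\otimes a_2)=\big((\operatorname{ev}_{a_1}\circ\alpha)\otimes I_{M_*}\big)(\hat x\cdot e_\beta a_2)$ (your $\operatorname{ev}_{a_1}$ is the paper's $E^l_{a_1}$), norm convergence $e_\beta a_2\to a_2$, and uniform boundedness to pass from elementary tensors to all of $M_*\hten M_*$, with the cb estimate coming from the uniform matrix-level bounds on the net. Your extra step of verifying the multiplier identity via strict convergence of $(T_\beta)$ to $(L,R)$ and strict closedness of the multiplier space is just a careful unpacking of the paper's ``it is then easy to verify'' remark, and correctly isolates where the multiplier relation $d\cdot L_{\hat x}(e_\beta)=R_{\hat x}(d)\cdot e_\beta$ enters.
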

\begin{proof}
Define $E^l_{a_1}: M_{M_*}^{cb}(E\hten M_*) \rightarrow E\hten M_*$ to be
the map $\hat y\mapsto \hat y\cdot a_1$, which is completely bounded.
Similarly define $E^r_{a_1}$.

Suppose that $a=a_1\otimes a_2 \in M_* \otimes M_*$.  Let $\epsilon>0$, and let
$\beta$ be such that $\|e_\beta a_2 - a_2\|<\epsilon$.  We can find $\tau = \sum_{i=1}^n
x_i\otimes c_i \in E \otimes M_*$ such that $\|\hat x\cdot e_\beta - \tau\|^h<\epsilon$.
Then, using the definition of $\phi$, we see that
\[ \big\| \phi(\alpha\otimes I_{M_*})(\hat x\cdot e_\beta) \cdot a
- \sum_i \alpha(x_i)\cdot a_1 \otimes c_i a_2 \big\|^h < \epsilon\|a\|. \]
However, as $\|\hat x\cdot e_\beta \cdot a_2 - \hat x \cdot a_2 \|^h
< \epsilon\|\hat x\|$, we also have that $\| \sum_i x_i \otimes c_ia_2 -
\hat x\cdot a_2\|^h < \epsilon(\|a_2\|+\|\hat x\|)$.  So
\[ \big\| \phi(\alpha\otimes I_{M_*})(\hat x\cdot e_\beta) \cdot a
- (E^l_{a_1}\alpha\otimes I_{M_*})(\hat x\cdot a_2) \big\|^h <
\epsilon(\|a_2\|+\|\hat x\|+\|a\|). \]
It follows that the net $(\phi(\alpha\otimes I_{M_*})(\hat x\cdot e_\beta) \cdot a)$
converges, and so $L(a)$ is well-defined.  Similar remarks apply to $R(a)$, as
\[ R(a) = (E^r_{a_1}\alpha\otimes I_{M_*})(a_2\cdot\hat x). \]
Indeed, we could \emph{define} $L$ and $R$ by this formula, but it's
not clear (to the author) that this formula defines a bounded map on
$M_*\hten M_*$.  However, clearly the limit does exist for all $a\in M_*\hten M_*$.

It is then easy to verify that $(b_1\otimes b_2)\cdot L(a_1\otimes a_2)
= R(b_1\otimes b_2)\cdot(a_1\otimes a_2)$, so that $(L,R)$ is a multiplier.
The net $(\phi(\alpha\otimes I_{M_*})(\hat x\cdot e_\beta))$ is bounded in
$M^{cb}_{M_*\hten M_*}(E\hten M_*\hten M_*)$, and so $L$ is completely bounded;
similarly $R$.  Indeed, $\|(L,R)\|_{cb} \leq \|\hat x\| \|\alpha\|_{cb}$.
Similarly, it now easily follows that the linear map $\hat x\mapsto (L,R)$ is
a complete contraction.
\end{proof}

We haven't really motivated the construction of $\alpha\otimes\iota$, so let
us do so now.
Suppose that $\hat x = x\otimes\hat a\in E\hten M(M_*) \subseteq M(E\hten M_*)$.
Then, for $a=b\otimes c\in M_*\hten M_*$, we have that
\[ L(a) = \lim_\beta (\alpha(x) \otimes \hat a e_\beta) \cdot a
= \alpha(x)\cdot b \otimes \hat a c, \quad
R(a) = b\cdot\alpha(x) \otimes c\hat a. \]
Thus $(L,R)$ can be identified with $\alpha(x)\otimes\hat a$, as we might hope.

Thus we have defined all of our maps, and so have (a proposal for) a notion
of a corepresentation of a Multiplier Hopf convolution algebra.

\subsection{Avoiding multipliers}

An alternative way to define a corepresentation would be to use the extended
Haagerup tensor product directly.  That is, a corepresentation of $M_*$ would
consist of an operator space $E$ and a completely bounded map $\alpha:E
\rightarrow E \ehten M_*$ such that $(\alpha\otimes I_{M_*}) \alpha =
(I_E\otimes m_*)\alpha$.  Here $\alpha\otimes I_{M_*}$ and $(I_E\otimes m_*)$
are defined without further work, and map into $E \ehten M_*\ehten M_*$.

We hence have two proposals for what a corepresentation of a (multiplier) Hopf
convolution algebra should be.  In particular, these apply to $A(G)$ for amenable
$G$.  It would be interesting to explore this theory further: recently Runde has
shown in \cite{runde3} that, essentially, one cannot move away from Hilbert spaces
when considering co-representations of Hopf von Neumann algebras.  Is the theory
for Hopf convolution algebras richer?  Alternatively, as the corepresentation
theory of $M_*$ should correspond to the representation theory of $M$, perhaps
we should only be interested in the case when $E$ is a Hilbert space (maybe
even with the column structure).  Is the theory easier in this case?

\section{Weak$^*$ topologies}

The following is surely known, but we have been unable to find a suitably
self-contained reference.  As we also wish to check that the result holds for
completely bounded maps, we include a proof here for convenience.

\begin{lemma}\label{weakstar_lemma}
Let $E$ and $F$ be Banach spaces, and let $T:E^*\rightarrow F^*$ be
a bounded linear map.  Then the following are equivalent:
\begin{enumerate}
\item\label{ws:one} $T$ is weak$^*$ continuous;
\item\label{ws:onea} for a bounded net $(\mu_\alpha)$ in $E^*$, if
$\mu_\alpha\rightarrow\mu\in E^*$ weak$^*$, then $T(\mu_\alpha)\rightarrow T(\mu)$
weak$^*$ in $F$.
\item\label{ws:two} $T^*\kappa_F(F) \subseteq \kappa_E(E)$;
\item\label{ws:three} there exists a bounded linear map $S:F\rightarrow E$ with $S^*=T$;
\end{enumerate}
If $T$ is an isomorphism, then $S$ in (\ref{ws:three}) is also an isomorphism,
and the properties above are also equivalent to:
\begin{enumerate}
\item[(5)] for a bounded net $(\mu_\alpha)$ in $E^*$ and $\mu\in E^*$,
we have that $\mu_\alpha\rightarrow\mu$ weak$^*$ if and only if
$T(\mu_\alpha)\rightarrow T(\mu)$ weak$^*$.
\end{enumerate}
The same holds for operator spaces and completely bounded maps.
\end{lemma}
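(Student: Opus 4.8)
The plan is to establish the cycle $(\ref{ws:three})\Rightarrow(\ref{ws:one})\Rightarrow(\ref{ws:onea})\Rightarrow(\ref{ws:two})\Rightarrow(\ref{ws:three})$, recording the converse $(\ref{ws:three})\Rightarrow(\ref{ws:two})$ along the way, and then treating the isomorphism addendum and the operator space version separately at the end.

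First I would dispatch the routine implications. For $(\ref{ws:three})\Rightarrow(\ref{ws:one})$, the adjoint $S^*$ of any bounded $S\colon F\to E$ is weak$^*$-continuous, essentially by the definition of the weak$^*$ topologies, so $T=S^*$ is weak$^*$-continuous. The implication $(\ref{ws:one})\Rightarrow(\ref{ws:onea})$ is immediate, as topological continuity forces continuity along (bounded) nets. For $(\ref{ws:two})\Rightarrow(\ref{ws:three})$, together with its converse, I use the standard identity $S^{**}\kappa_F=\kappa_E S$: when $(\ref{ws:two})$ holds, each $T^*\kappa_F(y)$ lies in $\kappa_E(E)$, so I may define $S\colon F\to E$ by $\kappa_E S(y)=T^*\kappa_F(y)$, well-defined by injectivity of $\kappa_E$ and bounded with $\|S\|\le\|T\|$; a one-line computation with the duality pairings shows $S^*=T$. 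Conversely $S^*=T$ gives $T^*\kappa_F=\kappa_E S$, whence $(\ref{ws:two})$.

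The substantive step is $(\ref{ws:onea})\Rightarrow(\ref{ws:two})$. Fix $y\in F$ and consider the linear functional $\phi_y\colon E^*\to\mathbb C$ given by $\phi_y(\mu)=\ip{T(\mu)}{y}=\ip{\mu}{T^*\kappa_F(y)}$; thus $\phi_y$ \emph{is} the element $T^*\kappa_F(y)\in E^{**}$, and proving $\phi_y$ weak$^*$-continuous is exactly proving $T^*\kappa_F(y)\in\kappa_E(E)$. By $(\ref{ws:onea})$, $\phi_y$ carries bounded weak$^*$-convergent nets to convergent nets, so its restriction to the weak$^*$-compact closed unit ball $B_{E^*}$ is weak$^*$-continuous, since every net in $B_{E^*}$ is bounded. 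Hence $\ker\phi_y\cap B_{E^*}$ is weak$^*$-closed, and likewise for every multiple of the unit ball; by the Krein--Smulian theorem the convex set $\ker\phi_y$ is then weak$^*$-closed, so $\phi_y$ is weak$^*$-continuous. This appeal to Krein--Smulian, upgrading continuity on bounded sets to genuine weak$^*$-continuity, is the one genuinely non-formal ingredient and the main obstacle.

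For the isomorphism addendum, if $T=S^*$ is invertible then $S^*$ injective forces $S$ to have dense range and $S^*$ surjective forces $S$ to be bounded below; together these make $S$ an isomorphism, and then $T^{-1}=(S^{-1})^*$ is again an adjoint, hence weak$^*$-continuous. Property (5) follows: its forward half is $(\ref{ws:onea})$, while its converse half is obtained by applying the weak$^*$-continuous map $T^{-1}$ to $T(\mu_\alpha)\to T(\mu)$. Finally, for operator spaces and completely bounded maps, I note that $(\ref{ws:one})$, $(\ref{ws:onea})$, $(\ref{ws:two})$ and (5) refer only to the underlying Banach spaces and their weak$^*$ topologies, so the arguments above are unchanged; the only point needing comment is $(\ref{ws:two})\Leftrightarrow(\ref{ws:three})$, where, since $\kappa_E$ and $\kappa_F$ are complete isometries and the adjoint of a completely bounded map is completely bounded with the same cb-norm, the map $S=\kappa_E^{-1}T^*\kappa_F$ is completely bounded exactly when $T$ is. The isomorphism claim carries over verbatim, using that $S^{-1}$ is completely bounded when $S$ is a complete isomorphism.
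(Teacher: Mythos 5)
Your proposal is correct, and its overall architecture --- the cycle of implications, the treatment of the isomorphism addendum via the duality between dense range/bounded below for $S$ and injectivity/surjectivity for $S^*$, and the reduction of the operator space case to the single implication (\ref{ws:two})$\Rightarrow$(\ref{ws:three}) --- coincides with the paper's. The one genuine difference is in the substantive step (\ref{ws:onea})$\Rightarrow$(\ref{ws:two}). You regard $\phi_y = T^*\kappa_F(y)\in E^{**}$ as a functional on $E^*$, note that (\ref{ws:onea}) makes it weak$^*$-continuous on every ball $rB_{E^*}$, and invoke the Krein--Smulian theorem (applied to the convex set $\ker\phi_y$) to upgrade this to genuine weak$^*$-continuity, i.e.\ membership in $\kappa_E(E)$. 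The paper argues dually: it takes $M\in\kappa_E(E)^\perp\subseteq E^{***}$, uses Goldstine's theorem to realise $M$ as the weak$^*$-limit of a bounded net $(\mu_\alpha)$ in $E^*$, observes that then $\mu_\alpha\rightarrow 0$ weak$^*$ in $E^*$ so that (\ref{ws:onea}) forces $\ip{M}{T^*\kappa_F(x)}=\lim_\alpha\ip{T(\mu_\alpha)}{x}=0$, and concludes from the Hahn--Banach theorem that $T^*\kappa_F(x)$ lies in the norm-closed subspace $\kappa_E(E)$. Both routes are standard: yours isolates the general principle that weak$^*$-continuity on bounded sets implies weak$^*$-continuity, while the paper's is marginally more self-contained, using only Goldstine and the bipolar description of closed subspaces. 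The only point I would tighten is your closing remark in the operator space case: ``using that $S^{-1}$ is completely bounded when $S$ is a complete isomorphism'' is circular as stated, since $S$ being a complete isomorphism is precisely what has to be proved; the fix is the paper's one-line matrix-level estimate $\|S(x)\|=\|T^*\kappa_F(x)\|\geq\|T^{-1}\|_{cb}^{-1}\|x\|$ for $x\in\mathbb M_n(F)$, or equivalently an appeal to the identity $\kappa_F S^{-1}=(T^{-1})^*\kappa_E$.
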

\begin{proof}
Clearly (\ref{ws:one}) implies (\ref{ws:onea}).
If (\ref{ws:onea}) holds, then let $x\in F$ and let $M\in\kappa_E(E)^\perp
\subseteq E^{***}$.  Let $(\mu_\alpha)$ be a bounded net in $E^*$ tending
weak$^*$ to $M$ in $E^{***}$.  Thus $\mu_\alpha\rightarrow0$ weak$^*$ in
$E^*$ and so $T(\mu_\alpha)\rightarrow 0$ weak$^*$ in $F^*$.  Thus
$\ip{M}{T^*\kappa_F(x)} = \lim_\alpha \ip{T^*\kappa_F(x)}{\mu_\alpha}
= \lim_\alpha \ip{T(\mu_\alpha)}{x} = 0$.  This shows that $T^*\kappa_F(x)
\in \kappa_E(E)$, which implies that (\ref{ws:two}) holds.

If (\ref{ws:two}) holds then there exists $S:F\rightarrow E$ with
$\kappa_E S = T^*\kappa_F$.  As $\kappa_E$ and $\kappa_F$ are linear isometries,
it follows that $S$ is linear and bounded.  For $\mu\in E^*$ and $x\in F$,
we have that $\ip{S^*(\mu)}{x} = \ip{\mu}{S(x)} = \ip{T^*\kappa_F(x)}{\mu}
= \ip{T(\mu)}{x}$, showing that $S^*=T$.  So (\ref{ws:three}) holds.
Finally, (\ref{ws:three}) clearly implies (\ref{ws:one}).

If $T$ is an isomorphism, then so is $T^*$, and hence, as $\kappa_E S = T^*\kappa_F$,
it follows that $S$ is injective and bounded below.  If $\mu\in S(F)^\perp$ then
$S^*(\mu) = T(\mu)=0$ so $\mu=0$.  So $S$ is an isomorphism.  Then clearly (5)
holds, and obviously (5) implies (\ref{ws:one}).

If now $E$ and $F$ are operator spaces and $T$ is completely bounded, then
the only part to check is that (\ref{ws:two}) implies (\ref{ws:three}).
As $\kappa_E$ and $\kappa_F$ are complete isometries, and $\kappa_E S = T^*\kappa_F$,
it follows that $S$ is completely bounded, with $\|S\|_{cb} \leq \|T^*\|_{cb}
= \|T\|_{cb}$.  Finally, if $T$ is a complete isomorphism, then $S^{-1}$
exists and is bounded.  For $x\in\mathbb M_n(F)$, we have
\[ \| S(x)\| = \|\kappa_E S(x)\| = \|T^*\kappa_F(x)\|
\geq \|(T^*)^{-1}\|_{cb}^{-1} \|\kappa_F(x)\| = \|T^{-1}\|_{cb}^{-1} \|x\|. \]
It hence follows that $\|S^{-1}\|_{cb} \leq \|T^{-1}\|_{cb}$, as required.
\end{proof}

\bigskip
\noindent\textbf{Author's address:}
\parbox[t]{5in}{School of Mathematics,\\
University of Leeds,\\
Leeds LS2 9JT\\
United Kingdom}

\smallskip
\noindent\textbf{Email:} \texttt{matt.daws@cantab.net}

\end{document}